\newcommand\notion[1]{\emph{#1}\index[notion]{#1}}
\newcommand\wcnotion[2]{\emph{#1}\index[notion]{#2}}
\newcommand\wcnotionsym[3]{\emph{#1}\index[notation]{#2}\index[notion]{#3}}
\newcommand\wcsnotion[3]{\emph{#1}\index[notion]{#2!\textit{#3}}}
\newcommand\snotion[2]{\emph{#1}\index[notion]{#1!\textit{#2}}}
\newcommand\snotionsym[3]{\emph{#1}\index[notion]{#1!\textit{#3}}\index[notation]{#2!\textit{#3}}}
\newcommand\notionsym[2]{\emph{#1}\index[notion]{#1!}\index[notation]{#2}}
\newcommand\wcsnotionsym[4]{\emph{#1}\index[notation]{#2!\textit{#4}}\index[notion]{#3!\textit{#4}}}
\newcommand\wcnotation[2]{\emph{#1}\index[notation]{#2}}
\newcommand\wcsnotation[3]{\emph{#1}\index[notation]{#2!\textit{#3}}}
\newcommand\sym[1]{\index[notation]{#1}}
\newcommand\ssym[2]{\index[notation]{#1!\textit{#2}}}
\newcommand{\nc}{{\neg\mathrm{c}}}
\newcommand{\comp}{\mathrm{c}}
\newcommand{\seg}{\mathrm{seg}}
\newcommand{\cseg}{\mathrm{cseg}}
\newcommand{\Zb}{\mathbb{Z}} 
\newcommand{\Nb}{\mathbb{N}}
\newcommand{\Ib}{\mathbb{I}}
\def\-{\raisebox{.75pt}{-}}
\newcommand{\uvar}{\_}
\newcommand{\sat}{\mathrm{sat}}
\newcommand{\Db}{\mathbf{D}}
\newcommand{\Fb}{\mathbf{F}} 
\DeclareMathOperator{\Gb}{G} 
\DeclareMathOperator{\N}{N}
\newcommand{\cell}{\mathrm{cell}}
\newcommand{\W}{\mathrm{W}}
\newcommand{\M}{\mathrm{M}}
\renewcommand{\O}{\mathrm{O}}
\DeclareMathOperator*{\CDA}{ADC}
\DeclareMathOperator*{\CDAB}{ADC_B}
\newcommand\omegacat{\omega\mbox{-$\cat$}}
\DeclareMathOperator\Set{Set}
\DeclareMathOperator\Sp{Sp}
\DeclareMathOperator{\Hom}{Hom}
\DeclareMathOperator*{\Arr}{Arr}
\newcommand{\colim}{\operatornamewithlimits{colim}}
\newcommand\iun{(\infty,1)}
\newcommand\io{(\infty,\omega)}
\newcommand\zo{(0,\omega)}
\DeclareMathOperator{\hstar}{\hat{\star}}
\newcommand{\costar}{\mathbin{\overset{co}{\star}}}
\newcommand{\fwedge}{\mathbin{\rotatebox[origin=c]{270}{$\gtrdot$}}}
\newcommand{\invamalg}{\mathbin{\rotatebox[origin=c]{180}{$\amalg$}}}
\DeclareMathOperator{\botimes}{\bar{\otimes}}
\DeclareMathOperator\Operatormark{mk}
\newcommand{\mk}{\Operatormark}
\DeclareMathOperator\Fun{Fun}
\DeclareMathOperator\End{End}
\DeclareMathOperator\cat{cat}
\DeclareMathOperator\R{R}
\newcommand\ocat{\omega\mbox{-$\cat$}}
\DeclareMathOperator\ocatB{\ocat_B}
\newcommand\iocat{(\infty,\omega)\mbox{-$\cat$}}
\newcommand\ncat[1]{#1\mbox{-$\cat$}}
\newcommand\incat[1]{(\infty,#1)\mbox{-$\cat$}}
\newcommand\zncat[1]{(0, #1)\mbox{-$\cat$}}
\DeclareMathOperator{\OperatorinfiniPsh}{Psh^\infty}
\DeclareMathOperator{\OperatorPsh}{Psh}
\DeclareMathOperator{\OperatormPsh}{mPsh}
\DeclareMathOperator{\OperatortPsh}{tPsh}
\newcommand\iPsh[1]{\OperatorinfiniPsh({#1})}
\newcommand\Psh[1]{\OperatorPsh({#1})}
\newcommand\tPsh[1]{\OperatortPsh({#1})}
\newcommand\tPshM[1]{{\OperatortPsh}_M({#1})}
\newcommand\mPsh[1]{\OperatormPsh({#1})}
\newcommand\mPshM[1]{{\OperatormPsh}_M({#1})}
\DeclareMathOperator{\OperatorSeg}{Seg}
\DeclareMathOperator{\OperatortSeg}{tSeg}
\DeclareMathOperator{\OperatormSeg}{mSeg}
\newcommand\Seg{\OperatorSeg}
\newcommand\mSeg{\OperatormSeg}
\newcommand\stratSeg{\OperatortSeg}
\DeclareMathOperator{\Sset}{\Psh{\Delta}}
\newcommand{\mSset}{\mPsh{\Delta}}
\newcommand{\stratSset}{\tPsh{\Delta}}
\title{\Huge{The complicial model of  $(\infty,\omega)$-categories}}
\author{Félix Loubaton}
\date{}
\begin{document}

\maketitle
\dominitoc
\tableofcontents

\cleardoublepage
\phantomsection
\addcontentsline{toc}{chapter}{Introduction} 
\chapter*{Introduction}
%
%
%
%
%
%
%

A \textit{category} consists of a set of objects, and for any pair of objects $a, b$, a set of morphisms $\hom_C(a, b)$ equipped with composition operations satisfying associativity laws.

\textit{$\iun$-Categories} are a homotopical generalization of categories. Intuitively, they are defined similarly to categories, except that we replace sets of objects and morphisms with spaces of objects and morphisms, and the associativity and unit laws are no longer satisfied strictly but homotopically.

Thanks in particular to the work of Joyal (\cite{Joyal_Quasi-categories_and_Kan_complexes}) and Lurie (\cite{Lurie_Htt}), most of the important concepts and theorems of category theory now have their $\iun$-categorical analogues. These objects have become important tools in many areas of mathematics, including algebraic geometry, algebraic topology, and representation theory.

Another generalization of the notion of category is obtained by replacing the set of morphisms between two objects $a$ and $b$ with a category of morphisms between $a$ and $b$. These new objects are called \textit{$2$-categories}. By replacing sets of morphisms between two objects $a$ and $b$ with $(n-1)$-categories of morphisms between $a$ and $b$, one can define by induction the notion of \textit{$n$-category} for any integer $n$, and by limit, the notion of $\omega$-category.

For $n\in \Nb\cup \{\omega\}$, the notion of \textit{$(\infty,n)$-category} is obtained by making both of these generalizations simultaneously. These objects are now found in many areas, including derived algebraic geometry, where the $6$-functors formalism is expressed and manipulated using the theory of $(\infty,2)$-categories (\cite{gaitsgory2019study}), and in topological quantum field theory, where $(\infty,n)$-categories are essential for formulating and proving the cobordism hypothesis (\cite{Baez_Higher-dimensional_algebra_and_topological_quantum_field_theory}, \cite{Lurie_on_the_classification_of_topological_field_theories}, \cite{Grady_the_geometric_cobordism_hypothesis}, \cite{Calaque_a_note_on_the_category_of_cobordism}).
\vspace{1cm}

This text is devoted to the study of \textit{models of $(\infty,n)$-categories}. By this, we mean any model category whose associated $\iun$-category is $\incat{n}$. Among the known models of $(\infty,n)$-categories, we have for example Rezk's complete Segal $\Theta_n$-spaces, $n$-fold Segal spaces, and Segal $n$-categories (we refer to \cite{Barwick_on_the_unicity_of_the_theory_of_higher_categories} for a comprehensive presentation of these models and their equivalence). The common feature of all the models we have mentioned is that they rely on a globular combinatoric.

 \vspace{1cm}
The main result of this text is to demonstrate that \textit{complicial sets}, defined and extensively studied by Verity, are a model for $(\infty,n)$-categories. Unlike the other models, complicial sets rely on the combinatorics of the iterated lax cone, i.e. the combinatorics of simplicies.

Let's now try to explain the intuition underlying the definition of complicial sets. To do this, we must first introduce stratified simplicial sets.
A \textit{stratified simplicial set} is a pair $(K, tK)$ where $K$ is a simplicial set and $tK$ is a subset of the simplices of $K$ containing the degenerate simplices. A simplex in $tK$ is called \textit{marked}. We denote by $\stratSset$ the category of stratified simplicial sets.

\vspace{1cm}
Given a stratified simplicial set $(K, tK)$, we would like to view it as a "sort of $\omega$-category". The $0$-simplices correspond to objects, the $1$-simplices
\[\begin{tikzcd}
    a & b
    \arrow["u"{description}, no head, from=1-1, to=1-2]
\end{tikzcd}\]
to $1$-cells between $a$ and $b$, the $2$-simplices 
\[\begin{tikzcd}
    & b \\
    a && c
    \arrow["u"{description}, no head, from=2-1, to=1-2]
    \arrow["v"{description}, no head, from=1-2, to=2-3]
    \arrow[""{name=0, anchor=center, inner sep=0}, "w"{description}, no head, from=2-1, to=2-3]
    \arrow["\alpha"{description}, draw=none, from=1-2, to=0]
\end{tikzcd}\]
to $2$-cells with source $w$ and target the composite of $u$ and $v$, and more generally the $n$-simplices to $n$-cells whose source is a composition of odd faces and target a composition of even faces. Marked $n$-simplices are those whose corresponding $n$-cell is weakly invertible.

For this interpretation to be viable, some conditions on $(K, tK)$ must be imposed so that these "cells" "compose", and so that the marked simplices (i.e., "weakly invertible cells") satisfy a two out of three axiom. These conditions have been formalized by Verity in \cite{Verity_weak_complicial_sets_I}. A stratified simplicial set satisfying them is called a \textit{complicial set}.

In practice, these conditions are expressed using lifting properties. As these liftings are non-unique, compositions are not unique either. However, it can be shown that they are unique up to homotopy. Thus, a complicial set resembles a "weak $\omega$-category". Similarly, an \textit{$n$-complicial set} (i.e., a complicial set where all simplices of dimension strictly greater than $n$ are marked) is a kind of "weak $n$-category".

It was therefore conjectured (\cite{Street_algebra_of_orianted_simplexes}, \cite{Verity_a_complicial_compendium}, \cite{Barwick_on_the_unicity_of_the_theory_of_higher_categories}) that that for any $n\in \Nb\cup\{\omega\}$, the Verity model structure for $n$-complicial sets (whose fibrants-cofibrants are exactly $n$-complicial sets) was a model for $(\infty,n)$-categories.

However, for historical reasons, in the original definition of complicial sets, the marked simplices do not satisfy the 2 out 6 property, even though this property is satisfied by the equivalences in $(\infty,n)$-categories.\footnote{Given a category $C$, a class $W$ of morphisms in $C$ satisfies the 2 out of 6 property if, whenever $f$, $g$, and $g$ are composable morphisms such that $fg$ and $gh$ belong to $W$, then $f$, $g$, $h$, and $fgh$ all belong to $W$.} To remedy this issue, the notion of a \emph{saturated complicial set} was introduced, and a model structure on which they are the fibrant objects was introduced by Ozornova and Rovelli in \cite{Ozornova_model_structure_for_infini_n_categories}.
 Consequently, the preceding conjecture admits two versions: the Verity model structure on $n$-complicial sets models non-complete $(\infty,n)$-categories, and the the Ozornova--Rovelli model structure on saturated $n$-complicial sets models $(\infty,n)$-categories.

\vspace{1cm}

The case $n=1$ is was proven Verity in \cite{Verity_weak_complicial_sets_I}, and the case $n=2$ by Gagna, Harpaz, and Lanari in \cite{Gagna_on_the_equivallence_of_all_model_for_infini2_cat}. The goal of this text is to provide a positive answer to this conjecture in the general case, i.e. for any $n\in \Nb\cup\{\omega\}$.

\addcontentsline{toc}{subsection}{Summary of results} 
\subsection*{Summary of results}
\paragraph{Chapter 1.}
The first section is devoted to the definition of $\omega$-categories and of the category $\Theta$ of Joyal. We also show that the category $\Theta$ presents the category of $\omega$-categories, and we also exhibit an other presentation of this category (corollary \ref{cor:xi and ocat}).

The second section begins with a review of Steiner theory, which is an extremely useful tool for providing concise and computational descriptions of $\omega$-categories. Following Ara and Maltsiniotis, we employ this theory to define the Gray tensor product, denoted by $\otimes$, in $\omega$-categories. We then introduce the Gray operations, starting with the Gray cylinder $\uvar\otimes[1]$ which is the Gray tensor product with the directed interval $[1]:=0\to 1$. Then, we have the \textit{Gray cone}, the \textit{Gray $\circ$-cone} and the \textit{Gray op-cone}, denoted by $\uvar\star 1$, $1\costar \uvar$ and $1\star \uvar$, that send an $\omega$-category $C$ onto the following pushouts:
\[\begin{tikzcd}
	{C\otimes\{1\}} & {C\otimes[1]} & {C\otimes\{0\}} & {C\otimes[1]} & {\{0\}\otimes C} & {[1]\otimes C} \\
	1 & {C\star 1} & 1 & {1\costar C} & 1 & {1\star C}
	\arrow[from=1-4, to=2-4]
	\arrow[from=1-3, to=2-3]
	\arrow[from=2-3, to=2-4]
	\arrow[from=1-3, to=1-4]
	\arrow[from=1-2, to=2-2]
	\arrow[from=2-1, to=2-2]
	\arrow[from=1-1, to=2-1]
	\arrow[from=1-1, to=1-2]
	\arrow["\lrcorner"{anchor=center, pos=0.125, rotate=180}, draw=none, from=2-2, to=1-1]
	\arrow["\lrcorner"{anchor=center, pos=0.125, rotate=180}, draw=none, from=2-4, to=1-3]
	\arrow[from=1-5, to=2-5]
	\arrow[from=2-5, to=2-6]
	\arrow[from=1-5, to=1-6]
	\arrow[from=1-6, to=2-6]
	\arrow["\lrcorner"{anchor=center, pos=0.125, rotate=180}, draw=none, from=2-6, to=1-5]
\end{tikzcd}\]

We also present a formula that illustrates the interaction between the suspension and the Gray cylinder. As this formula plays a crucial role in this text, we provide its intuition at this stage.

 If $A$ is any $\omega$-category, the \textit{suspension} of $A$, denoted by $[A,1]$, is the $\omega$-category having two objects - denoted by $0$ and $1$- and such that 
$$\Hom_{[A,1]}(0,1) := A,~~~\Hom_{[A,1]}(1,0) := \emptyset,~~~\Hom_{[A,1]}(0,0)=\Hom_{[A,1]}(1,1):=\{id\}.$$
We also define $[1]\vee[A,1]$ as the gluing of $[1]$ and $[A,1]$ along the $0$-target of $[1]$ and the $0$-source of $[A,1]$. We define similarly $[A,1]\vee[1]$.
These two objects come along with \textit{whiskerings}:
$$\triangledown:[A,1]\to [1]\vee [A,1] ~~~~\mbox{and}~~~~ \triangledown:[A,1] \to [A,1]\vee [1]$$ 
that preserve the extremal points.

The $\omega$-category $[1]\otimes [1]$ is induced by the diagram:
\[\begin{tikzcd}
	00 & 01 \\
	10 & 11
	\arrow[from=1-1, to=2-1]
	\arrow[from=2-1, to=2-2]
	\arrow[from=1-1, to=1-2]
	\arrow[from=1-2, to=2-2]
	\arrow[shorten <=4pt, shorten >=4pt, Rightarrow, from=1-2, to=2-1]
\end{tikzcd}\]
and is then equal to the colimit of the following diagram: 
$$[1]\vee [1]\xleftarrow{\triangledown} [1]\hookrightarrow [[1],1]\hookleftarrow[1]\xrightarrow{\triangledown } [1]\vee [1].$$
The $\omega$-category $ [[1],1]\otimes [1]$ is induced by the diagram:
\[\begin{tikzcd}
	00 & 01 & 00 & 01 \\
	10 & 11 & 10 & 11
	\arrow[from=1-1, to=1-2]
	\arrow[""{name=0, anchor=center, inner sep=0}, from=1-1, to=2-1]
	\arrow[from=2-1, to=2-2]
	\arrow[""{name=1, anchor=center, inner sep=0}, from=1-2, to=2-2]
	\arrow[shorten <=4pt, shorten >=4pt, Rightarrow, from=1-2, to=2-1]
	\arrow[""{name=2, anchor=center, inner sep=0}, from=1-3, to=2-3]
	\arrow[from=1-3, to=1-4]
	\arrow[""{name=3, anchor=center, inner sep=0}, from=1-4, to=2-4]
	\arrow[shorten <=4pt, shorten >=4pt, Rightarrow, from=1-4, to=2-3]
	\arrow[""{name=4, anchor=center, inner sep=0}, curve={height=30pt}, from=1-1, to=2-1]
	\arrow[from=2-3, to=2-4]
	\arrow[""{name=5, anchor=center, inner sep=0}, curve={height=-30pt}, from=1-4, to=2-4]
	\arrow["{ }"', shorten <=6pt, shorten >=6pt, Rightarrow, from=0, to=4]
	\arrow["{ }"', shorten <=6pt, shorten >=6pt, Rightarrow, from=5, to=3]
	\arrow[shift left=0.7, shorten <=6pt, shorten >=8pt, no head, from=1, to=2]
	\arrow[shift right=0.7, shorten <=6pt, shorten >=8pt, no head, from=1, to=2]
	\arrow[shorten <=6pt, shorten >=6pt, from=1, to=2]
\end{tikzcd}\]
and is then equal to the colimit of the following diagram: 
 $$[1]\vee[[1],1]\xleftarrow{\triangledown} [[1]\otimes\{0\},1]\hookrightarrow[[1]\otimes[1],1]\hookleftarrow [[1]\otimes\{1\},1]\xrightarrow{\triangledown}[[1],1]\vee[1]$$
We prove a formula that combines these two examples:

\begin{itheorem}[\ref{theo:appendice formula for otimes}]
In the category of $\omega$-categories, there exists an isomorphism, natural in $A$, between $[A,1]\otimes[1]$ and the colimit of the following diagram
\[\begin{tikzcd}
	{[1]\vee[A,1]} & {[A\otimes\{0\},1]} & { [A\otimes[1],1]} & {[A\otimes\{1\},1]} & {[A,1]\vee[1]}
	\arrow["\triangledown"', from=1-2, to=1-1]
	\arrow[from=1-4, to=1-3]
	\arrow["\triangledown", from=1-4, to=1-5]
	\arrow[from=1-2, to=1-3]
\end{tikzcd}\]
\end{itheorem} 

We also provide similar formulas for the {Gray cone}, the {Gray $\circ$-cone} and the Gray op-cone.
\begin{itheorem}[\ref{theo:appendice formula for star}]
There is a natural identification between $1\costar [A,1]$ and the colimit of the following diagram
\[\begin{tikzcd}
	{[1]\vee[A,1]} & {[A,1]} & { [A\star 1,1]}
	\arrow["\triangledown"', from=1-2, to=1-1]
	\arrow[from=1-2, to=1-3]
\end{tikzcd}\]
There is a natural identification between $[A,1]\star 1$ and the colimit of the following diagram
\[\begin{tikzcd}
	{ [1\costar A,1]} & {[A,1]} & {[A,1]\vee[1]}
	\arrow[from=1-2, to=1-1]
	\arrow["\triangledown", from=1-2, to=1-3]
\end{tikzcd}\]
There is a natural identification between $1\star [A,1]$ and the colimit of the following diagram.
\[\begin{tikzcd}
	{ [1\star A,1]} & {[A,1]} & {[1]\vee[A,1]}
	\arrow[from=1-2, to=1-1]
	\arrow["\triangledown", from=1-2, to=1-3]
\end{tikzcd}\]
\end{itheorem}

\paragraph{Chapter 2.}
This chapter is dedicated to the study of \textit{Verity complicial sets}, defined and extensively studied by Verity (\cite{Verity_weak_complicial_sets_I})

One of the benefits of complicial sets is that they admit a simple definition of the Gray tensor product. Being strongly linked to $\omega$-categories by the Street nerve, they are also a privileged framework for stating and proving strictification results, as done in \cite{Ozornova_Fundamental_pushouts_of_n_complical_set}, \cite{Gagna_Nerves_and_cones_of_free_loop_free_omega_categories}, \cite{Ozornova_a_quillen_adjunction_between_globular_and_complicial} and \cite{Maehara_oriental_as_free_weak_omega_categories}. 
However, they do not interact \textit{a priori} well with the globular language. The goal of this chapter is to show that, with some computation, it is possible to have a globular point of view on theses objects.

The first section is a recollection of usual results and definitions about complicial sets. 
In the second section, we aim to prove an analogue of the formula given in \ref{theo:appendice formula for otimes} to the complicial setting.
We also have a suspension in this category, which is denoted by $X\mapsto \Sigma X$. Objects $[1]\fwedge \Sigma X$ and $\Sigma X\fwedge [1]$ are defined in \ref{cons:wedge}, but for now, we can suppose that they are fibrant replacements of respectively $[1]\coprod_{[0]}\Sigma X$ and $\Sigma X\coprod_{[0]}[1]$.
They come along with morphisms that are analogue to whiskerings, and that we also note by $\triangledown$: 
$$\triangledown:\Sigma X\to [1]\fwedge\Sigma X ~~~~\mbox{and}~~~~ \triangledown:\Sigma X\to\Sigma X\fwedge [1].$$ 
We then show the following theorem:
\begin{itheorem}[\ref{theo:interval_first_formula}]
There exists a zigzag of acyclic cofibrations, natural in $X$, between $(\Sigma X)\otimes [1]$ and the colimit of the following diagram:
 $$\Sigma X\fwedge [1]\xleftarrow{\triangledown} \Sigma (X\otimes\{0\}) \hookrightarrow \Sigma (X\otimes[1])\hookleftarrow \Sigma (X\otimes\{1\})\xrightarrow{\triangledown} [1]\fwedge \Sigma X.$$
\end{itheorem}
We also provide similar formulas for the \textit{Gray cone} and Gray \textit{$\circ$-cone}:
\begin{itheorem}[\ref{theo:cyl_formula}]
There exists a zigzag of acyclic cofibrations, natural in $X$, between $\Sigma X \star[0]$ and the colimit of the following diagram: 
$$ \Sigma X\fwedge [1]\leftarrow \Sigma X\to \Sigma([0]\costar X).$$
There exists a zigzag of acyclic cofibrations, natural in $X$, between  $[0]\costar \Sigma X$ and the colimit of the following diagram: 
$$\Sigma(X\star[0]) \leftarrow \Sigma X\to [1]\fwedge\Sigma X.$$
\end{itheorem}

The third section uses this formula and the strictification result of Gagna, Ozornova and	 Rovelli (\cite{Gagna_Nerves_and_cones_of_free_loop_free_omega_categories}) to demonstrate a criterion for detecting autoequivalences of complicial sets by their behavior on globes.
Indeed, in section \ref{section:Globular equivalences}, by iterating the suspension, we construct a globular object: 
\[\begin{tikzcd}
	{\Db_0} & {\Db_1} & {\Db_2} & {...}
	\arrow["{i_0^+}", shift left=2, from=1-1, to=1-2]
	\arrow["{i_1^+}", shift left=2, from=1-2, to=1-3]
	\arrow["{i_3^+}", shift left=2, from=1-3, to=1-4]
	\arrow["{i_0^-}"', shift right=2, from=1-1, to=1-2]
	\arrow["{i_1^-}"', shift right=2, from=1-2, to=1-3]
	\arrow["{i_3^-}"', shift right=2, from=1-3, to=1-4]
\end{tikzcd}\]
We the show:
\begin{itheorem}[\ref{theo:criterion_to_be_linked_to_identity}]
Let $i$ be a left Quillen endofunctor for the model category for complicial sets. Suppose that there exists a zigzag of weakly invertible natural transformations:
$$i(\Db_{\uvar}) \leftrightsquigarrow \Db_{\uvar}.$$
Then, there exists a zigzag of weakly invertible natural transformations between $i$ and $id$.
\end{itheorem} 
Proposition 15.10 of \cite{Barwick_on_the_unicity_of_the_theory_of_higher_categories} provides a similar result for models of $(\infty,n)$-categories.

\paragraph{Chapter 3.}

Results of Gagna, Harpaz et Lanari (\cite{Gagna_on_the_equivallence_of_all_model_for_infini2_cat}) states that saturated $2$-complicial sets are a model of $(\infty,2)$-categories 
The purpose of this chapter is to generalize this result to any $n\in \Nb\cup\{\omega\}$ but also to the non-saturated case.

To this end, we first address the more general problem of finding sufficient conditions on a model category $A$ to build a \textit{Gray cylinder} $C\mapsto I\otimes C$ and a \textit{Gray cone} $C\mapsto e\star C$ on stratified Segal precategories enriched in $A$. These two operations should be linked by the following homotopy cocartesian square
\[\begin{tikzcd}
	{\{0\}\otimes C} & {I\otimes C} \\
	e & {e\star C}
	\arrow[from=1-2, to=2-2]
	\arrow[from=1-1, to=2-1]
	\arrow[from=2-1, to=2-2]
	\arrow[from=1-1, to=1-2]
\end{tikzcd}\]
where $e$ is the terminal object. The conditions that $A$ has to fulfill are encapsulated in the notion of a \textit{Gray module} (paragraph \ref{defi:Gray module}). Thanks to the Gray cylinder and cone, we can show the following theorem:

\begin{itheorem}[\ref{theo:complicialGray module}]
If $A$ is a complicial Gray module, there is a Quillen adjunction between the model structure for complicial sets on stratified simplicial sets and stratified Segal precategories enriched in $A$, where the left adjoint sends $[n]$ to $e \star e \star \ldots \star e \star \emptyset$.
\end{itheorem}

By iterating the previous construction, we will be able to construct a Quillen adjunction between a model of (resp. non-complete) $(\infty,n)$-categories and stratified simplicial sets. We will then demonstrate the conjecture:

\begin{itheorem}[\ref{theo:theorem model 1}]
Let $n\in \mathbb{N}\cup \{\omega\}$. The model structure for $n$-complicial sets is a model of non-complete $(\infty,n)$-categories. The model structure for saturated $n$-complicial sets is a model of $(\infty,n)$-categories.
\end{itheorem}

\adjustmtc
\chapter{$\omega$-Categories and presheaves on $\Theta$}
\label{chapter:The category of zocategories}
\minitoc
\vspace{1cm}

%
%
%
%
%
%
%
%

\section{Basic constructions}
\label{chapter:Basica construciton preliminaire}
\subsection{$\omega$-Categories}
\label{section:zocategories}

\begin{definition}
 A \notion{globular set} is a presheaf on the \textit{category of globes} $\Gb$, which is the category induces by the diagram
\[\begin{tikzcd}
	{\Db_0} & {\Db_1} & {\Db_2} & {...}
	\arrow["{i_0^+}", shift left=2, from=1-1, to=1-2]
	\arrow["{i_1^+}", shift left=2, from=1-2, to=1-3]
	\arrow["{i_3^+}", shift left=2, from=1-3, to=1-4]
	\arrow["{i_0^-}"', shift right=2, from=1-1, to=1-2]
	\arrow["{i_1^-}"', shift right=2, from=1-2, to=1-3]
	\arrow["{i_3^-}"', shift right=2, from=1-3, to=1-4]
\end{tikzcd}\]
with the relations $i_n^{+} i_{n-1}^\epsilon = i_n^{-} i_{n-1}^\epsilon $ for any $n>0$ and $\epsilon \in \{+,-\}$. 
For any $n>k$ and $\epsilon \in \{+,-\}$, we also denote by $i^{\epsilon}_k$ the composite $\Db_{k} \xrightarrow{i^{\epsilon}_k} \Db_{k+1}\xrightarrow{f} \Db_n$ where $f$ is any map. These and the identity arrows are the only maps in the category $\Gb$.

If $X$ is a globular set, we denotes by $X_n$ the set $X(\Db_n)$. Its elements are called \wcsnotion{$n$-cells}{cell@$n$-cell}{for $\omega$-categories}. The $0$-cells are sometimes called \textit{objects}. The maps $X_n \to X_k$ induced by $i^\epsilon_k : \Db_k \to \Db_n$ is denoted by $\pi^\epsilon_k$.
\end{definition}

\begin{definition}
\label{defi:def of omega cat}
An \wcnotion{$\omega$-category}{category@$\omega$-category} is a globular set $X$ together with
\begin{enumerate}
\item operations of \textit{compositions}
\[ X_n\times_{X_k} X_n\to X_n ~~~(0\leq k<n) \]
which associate to two $n$-cells $(x,y)$ verifying $\pi_k^-(x) = \pi_k^+(y)$, a $n$-cells $x\circ_ky$,
\item as well as \textit{units}
\[X_n\to X_{n+1}\]
which associate to an $n$-cell $x$, a $(n+1)$-cell $\Ib_x$, 
\end{enumerate}
and satisfying the following axioms:
\begin{enumerate}

\item $\forall x \in X_n, \pi^\epsilon_n(\Ib_x) = x $.

\item $\pi^+_k (x \circ_n y) = \pi_k^{+}(x)$ and $\pi^-_k(x \circ_n y) = \pi_k^-(y)$ whenever the composition is defined and $k \leqslant n$.

\item $\pi^\epsilon_k (x \circ_n y) = \pi_k^{\epsilon}(x) \circ_n \pi^\epsilon_k(y)$ whenever the composition is defined and $k > n$.

\item $ x \circ_n \Ib_{\pi^-_n x} = x$ and $ \Ib_{\pi^+_n x} \circ_n x = x$.

\item $(x \circ_n y) \circ_n z = x \circ_n (y \circ_n z) $ as soon as one of these is defined.

\item If $k <n$
\[ (x \circ_n y) \circ_k ( z \circ_n w) = (x \circ_k z) \circ_n (y \circ_k w) \]
when the left-hand side is defined.

\end{enumerate}
A $n$-cell $a$ is \textit{non trivial}\index[notion]{non trivial $n$-cell} if is not in the image of the application $\Ib:X_{n-1}\to X_n$.

A \notion{morphism of $\omega$-categories} is a map of globular sets commuting with compositions and units. The category of $\omega$-categories is denoted by \textit{$\omegacat$}.
\end{definition}

\begin{definition}
\index[notion]{globe@$n$-globe!\textit{for $\omega$-categories}}
By abuse of notation, we also denote by \wcsnotation{$\Db_n$}{(da@$\Db_n$}{for $\omega$-categories} the $\omega$-category that admits for any $k<n$ only two $k$-non-trivial cells, denoted by $e_k^-$ and $e_k^+$, and a single $n$-non-trivial cell, denoted by $e_n$ verifying :
\[
\begin{array}{rcl}
\pi_l^{-}(e_k^\epsilon)= e_l^{-}&\pi_l^{+}(e_k^\epsilon)= e_l^{+}& \mbox{ for $l\leq k<n$}\\
\pi_l^{-}(e_n)= e_l^{-}&\pi_l^{+}(e_n)= e_l^{+}& \mbox{ for $l\leq n$}\\
\end{array}
\]
\end{definition}
Remark furthermore that the $\omega$-category $\Db_n$ represents $n$-cells, in the sense that $\Hom(\Db_n,C)\cong C_n$. We will not make the difference between $n$-cells and the corresponding morphism $\Db_n\to C$. 

\begin{definition}
The $\omega$-category $\partial\Db_n$ is obtained from $\Db_n$ by removing the $n$-cell $e_n$. We thus have a morphism
\[i_n: \partial\Db_n\to \Db_n.\]
Note that $\partial \Db_0 = \emptyset$.

\end{definition}
\begin{definition}
We say that an $\omega$-category $X$ is a \notion{polygraph} if it can be constructed from the empty $\omega$-category by freely adding arrows with specified source and target. That is if $X$ can be obtained as a transfinite composition $\emptyset = X_0 \to X_1 \to \dots \to X_i \to \colim X_i = X$ where for each $i$, the map $X_i \to X_{i+1}$ is a pushout of $\coprod_S \partial \Db_n \to \coprod_S \Db_{n+1}$.

 An arrow of a polygraph is said to be a \emph{generator} if it is one of the arrows that has been freely added at some stage.
 \end{definition}

Each cell in a polygraph can be written as a composite of generators or iterated unit of generators (not necessarily in a unique way). For a $n$-cell $f$, the set of generators of dimension $n$ that appear in such an expression (and even the number of times they appear) is the same for all such expressions. As a consequence, a composition of non trivial cells is always non trivial.

\begin{definition}
 \label{defi:dualities strict case}
 For any subset $S$ of $\Nb^*$, we define the functor $(\uvar)^S:\omegacat\to \omegacat$  sending a $\omega$-category $C$ to the category $C^S$ such that for any $n$, there is an isomorphism $C_n\to C_{n}^S$ that sends every $n$-cell $f$ to a cell $\overline{f}$ fulfilling
$$\pi_{n-1}^-(\overline{f})=\overline{\pi^+_{n-1}(f)}~~~~\pi_{n-1}^+(\overline{f})=\overline{\pi^-_{n-1}(f)}$$
if $i\in S$ and 
$$\pi_{n-1}^-(\overline{f})=\overline{\pi^-_{n-1}(f)}~~~~\pi_{n-1}^+(\overline{f})=\overline{\pi^+_{n-1}(f)}$$
if $i\notin S$.
These functors are called \notion{dualities} as they are inverse of themselves. Even if there are plenty of them, we will be interested in only a few of them. In particular, we have the \notionsym{odd duality}{((b60@$(\uvar)^{op}$} $(\uvar)^{op}$, corresponding to the set of odd integers, the \notionsym{even duality}{((b50@$(\uvar)^{co}$} $(\uvar)^{co}$, corresponding to the set of non negative even integers and the \notionsym{full duality}{((b80@$(\uvar)^{\circ}$} $(\uvar)^{\circ}$, corresponding to the set of all non negative integers. Eventually, we have equivalences
$$((\uvar)^{co})^{op}\sim (\uvar)^{\circ} \sim ((\uvar)^{op})^{co}.$$
\end{definition}

\begin{definition}
\label{defi:suspension zocat}
 Let $\Psh{\Gb}_{\bullet,\bullet}$ be the category of globular set with two distinguished points, i.e. of triples $(X,a,b)$ where $a$ and $b$ are elements of $X_0$.
Let $[\uvar,1]:\Gb\to \Psh{\Gb}_{\bullet,\bullet}$ be the functor sending $\Db_n$ on $(\Db_{n+1},\{0\},\{1\})$ and $i_n^{\epsilon}$ on $i_{n+1}^{\epsilon}$. This induces by left Kan extension a functor $[\uvar,1]:\Psh{\Gb}\to \Psh{\Gb}_{\bullet,\bullet}$ that we call the \textit{suspension}. We leave it to the reader to check that whenever $C$ has a structure of $\omega$-category, $[C,1]$ inherits one from it. This functor then induces a functor 
$$[\uvar,1]:\omegacat\to \omegacat$$
that we calls again the \snotionsym{suspension}{((d60@$[\uvar,1]$}{for $\omega$-categories}. Eventually, we denote by $i_0^-:\{0\}\to [C,1]$ (resp. $i_0^+:\{1\}\to [C,1]$) the morphism corresponding to the left point (resp. to the right point). For an integer $n$, we define by induction the functor $\Sigma^n:\Psh{\Gb}\to \Psh{\Gb}$\index[notation]{(sigma@$\Sigma^n$} with the formula:
$$\Sigma^0:= id ~~~~~\Sigma^{n+1}:=\Sigma^n[\uvar,1].$$
\end{definition}

\begin{definition}
Let $n$ be an integer. An \wcnotion{$n$-category}{category2@$n$-category} is an $\omega$-category whose cell of dimension strictly higher than $n$ are units. The category of $n$-categories is denoted by \wcnotation{$\ncat{n}$}{(ncat@$\ncat{n}$} and is the full subcategory of $\ocat$ whose objects are $n$-categories.
\end{definition}

\begin{definition}
 Let $n$ be a non null integer.
A $n$-cells $f:s\to t$ is an \notion{equivalence} if there exists $n$-cells $g:t\to s$ and $g':t\to s$ such that 
$$f\circ_{n-1} g=\Ib_t~~~~~~g\circ_{n-1} f=\Ib_s$$
\end{definition}
\begin{definition}
\label{defi:of complete cat}
A \notion{complete $\omega$-category} is an $\omega$-category whose only equivalences are the identities.
These objects are called \textit{Gaunt $\omega$-categories} in \cite{Barwick_on_the_unicity_of_the_theory_of_higher_categories} and \textit{rigid $\omega$-categories} in \cite{Rezk_a_cartesian_of_weak_n_categories}. Remark that $\omega$-categories are stable under suspensions and dualities.

Given $n\in \Nb\cup\{\omega\}$,
we denote by \wcnotation{$\ncat{n}^\comp$}{(ncatcomp@$\ncat{n}^\comp$}  the full subcategory of $\ncat{n}$ whose objects are the complete $n$-categories
\end{definition}

\begin{construction}
 Remark that the category $\ncat{n}$ is the localization of $\ocat$ along morphisms $\Db_{k}\to \Db_{n}$ for $k\geq n$. We then have for any $n$ an adjunction 
\[\begin{tikzcd}
	{\iota_n:\ncat{n}} & {\ocat:\tau_n}
	\arrow[""{name=0, anchor=center, inner sep=0}, shift left=2, from=1-2, to=1-1]
	\arrow[""{name=1, anchor=center, inner sep=0}, shift left=2, from=1-1, to=1-2]
	\arrow["\dashv"{anchor=center, rotate=-90}, draw=none, from=1, to=0]
\end{tikzcd}\]
The right adjoint is called the \wcsnotionsym{$n$-truncation}{(tau@$\tau_n$}{truncation@$n$-truncation}{for $\omega$-categories}.
\end{construction}

\begin{construction}
\label{cons:inteligent truncatoin}
For any $n$, we define the colimit preserving functor $\tau^i_n:\ocat\to \ncat{n}$, called the \snotionsym{intelligent $n$-truncation}{(taui@$\tau^i_n$}{for $\omega$-categories}, sending $\Db_k$ on $\Db_{\min(n,k)}$. The functor $\tau^i_n$ fits in an adjunction
\[\begin{tikzcd}
	{\tau^i_n:\ocat} & {\ncat{n}:\iota_n}
	\arrow[""{name=0, anchor=center, inner sep=0}, shift left=2, from=1-1, to=1-2]
	\arrow[""{name=1, anchor=center, inner sep=0}, shift left=2, from=1-2, to=1-1]
	\arrow["\dashv"{anchor=center, rotate=-90}, draw=none, from=0, to=1]
\end{tikzcd}\]
\end{construction}
\begin{notation}
We will identify objects of $\ncat{n}$ with their image in $\ocat$ and we will then also note by $\tau_n$ and $\tau^i_n$ the composites $\iota_n\tau^i_n$ and $\iota_n\tau^i_n$.
\end{notation}

\begin{remark}
The family of truncation functor induces a sequence 
$$...\to \ncat{n+1}\xrightarrow{\tau_{n}} \ncat{n}\to...\to \ncat{1}\xrightarrow{\tau_{0}}\ncat{0}.$$
The canonical morphism
$$\ocat\to \lim_{n:\Nb}\ncat{n},$$
that sends an $\omega$-category $C$ to the sequence $(\tau_n C,\tau_n\tau_{n+1}C\cong \tau_n C)$, has an inverse given by the functor
$$\colim_{\Nb}:\lim_{n:\Nb}\ncat{n}\to \ocat$$
that sends a sequence $(C_n, \tau_{n}C_{n+1}\cong C_n) $ to the colimit of the induced sequence:
$$\iota_0C_0\to \iota_1C_1\to...\to \iota_nC_n\to...$$	
 We then have an equivalence 
$$\ocat\cong \lim_{n:\Nb}\ncat{n}.$$
\end{remark}
\subsection{Elegant Reedy Category}

In the next sections, we will identify two dense subcategories of $\ocat$. One of their important features is to be elegant Reedy categories, a notion that we now introduce.

\begin{definition}
 \label{defi:reedy}
A \notion{Reedy category} is a small category $A$ equipped with two subcategories $A_+$, $A_-$ and a \textit{degree} function $d:ob(A)\to \Nb$ such that: 
\begin{enumerate}
\item for every non-identity morphism $f:a\to b$, if $f$ belongs to $A_-$, then $d(a)>d(b)$, and if $f$ belongs to $A_+$, then $d(a)<d(b)$.
\item every morphism of $A$ uniquely factors as a morphism of $A_-$ followed by a morphism of $A_+$.
\end{enumerate}

A Reedy category $A$ is \wcnotion{elegant}{elegant Reedy category} if for any presheaf $X$ on $A$, for any $a\in A$ and any $c\in X(a)$, there exists a unique morphism $f:a\to a'\in A_{-}$ and a unique non-degenerate object $c'\in X(a')$ such that $c=X(f)(c')$.
\end{definition}

\begin{prop}
\label{prop:elelangat stable by slice}
Let $X$ be a presheaf on an elegant Reedy category $A$. The category $A_{/X}$ is an elegant Reedy category.
\end{prop}

\begin{proof}
We have a canonical projection $\pi:A_{/X}\to A$. A morphism is positive (resp. negative) if its image by $\pi$ is. The degree of an element $c$ of $A_{/X}$ is the degree of $\pi(c)$. We leave it to the reader to check that this endows $A_{/X}$ with a structure of Reedy category.

The fact that $A_{/X}$ is elegant is a direct consequence of the isomorphism $\Psh{A_{/X}}\cong \Psh{A}_{/X}$.
\end{proof}

\begin{definition}
\label{defi: of Delta[..]}
Given a small category $A$, we denote $\Delta[A]$ the category fitting in the pushout:
\[\begin{tikzcd}
	{\{[0]\}\times A} & {\Delta\times A} \\
	{[0]} & {\Delta[A]}
	\arrow[from=1-1, to=1-2]
	\arrow[from=1-1, to=2-1]
	\arrow[from=1-2, to=2-2]
	\arrow[from=2-1, to=2-2]
	\arrow["\lrcorner"{anchor=center, pos=0.125, rotate=180}, draw=none, from=2-2, to=1-1]
\end{tikzcd}\]
Given an element $a\in A$, and an integer $n$, we will denote $[a,n]$\ssym{((g10@$[a,n]$}{for $\Delta[A]$} the element of $\Delta[A]$ that is the image of $(n,a)$ by the canonical functor $\Delta\times A\to \Delta[A]$. \sym{(deltaa@$\Delta[A]$}
\end{definition}

\begin{prop}
\label{prop:delta[B] is reedy}
Let $A$ be an elegant Reedy category. Then the category $\Delta[A]$ also has the structure of an elegant Reedy category.
\end{prop}

\begin{proof}
Remark first that $\Hom_{\Delta[A]}([a,n],[b,m])$ fits in the following cocartesian square:
\[\begin{tikzcd}
	{\coprod_{k\leq m}\Hom_{A}(a,b)\times \Hom_{\Delta}([n],\{k\})} & {\Hom_{A}(a,b)\times \Hom_{\Delta}([n],[m])} \\
	{\coprod_{k\leq m} \Hom_{\Delta}([n],\{k\})} & {\Hom_{\Delta[A]}([a,n],[b,m])}
	\arrow[from=1-2, to=2-2]
	\arrow[from=1-1, to=2-1]
	\arrow[from=2-1, to=2-2]
	\arrow[from=1-1, to=1-2]
\end{tikzcd}\]
We then define the degree functor $ob(\Delta[A])\to \Nb$ by the formula $d([b,n])=d(b)d(n)$. The subcategory $(\Delta[A])_{+}$ is the image of $\Delta_+\times A_+$, and the subcategory $(\Delta[A])_{-}$ is the image of $\Delta_-\times A_-$. 

We recall that we suppose that the Reedy category $A$ is elegant. Let $X$ be a presheaf on $\Delta[A]$, $[a,n]$ an element of $\Delta[A]$, $[f,g]:[a,n]\to [a',n']$ and $[h,i]:[a,n]\to [a',n']$ two negative morphisms, an element $x$ of $X([a,n])$, two non-degenerate elements $y\in X([a',n'])$ and $z\in X([a'',n''])$ such that $[f,g]^*y=x$, $[h,i]^*z=x$.

We suppose first that $n\neq 0$. We denote by $\pi:A\times \Delta\to \Delta[A]$ the canonical projection and 
$$\pi^*:\Psh{\Delta[A]}\to \Psh{\Delta\times A}$$ the functor obtained by precocomposing. Remark that for any $a,n$, $(\pi^*X)(a,n)=X([a,n])$. Furthermore, we have again equalities $(f,g)^*y=x$, $(h,i)^*z=x$. As $\Delta\times A$ is Reedy elegant, this implies that $f=h$, $g=i$ and $y=z$. 

If $n=0$, then $[f,g]$ and $[h,i]$ are the identity, and we directly have $y=z$. The Reedy category $\Delta[A]$ is then elegant.
\end{proof}

\subsection{The category $\Theta$}
\label{subsection:the category theta}

\begin{definition}
 Let $n$ be a non-negative integer and $\textbf{a}:=\{a_0,a_1,...,a_{n-1}\}$ a sequence of $\omega$-categories. We denote \wcnotation{$[\textbf{a},n]$}{((g00@$[\textbf{a},n]$} the colimit of the following diagram
\[\begin{tikzcd}
	& 1 && 1 && 1 \\
	{[a_0,1]} && {[a_1,1]} && {...} && {[a_{n-1},1]}
	\arrow["{i_0^+}"', from=1-2, to=2-1]
	\arrow["{i_0^-}", from=1-2, to=2-3]
	\arrow["{i_0^+}"', from=1-4, to=2-3]
	\arrow["{i_0^-}", from=1-4, to=2-5]
	\arrow["{i_0^+}"', from=1-6, to=2-5]
	\arrow["{i_0^-}", from=1-6, to=2-7]
\end{tikzcd}\]
where $[\uvar,1]$ is the suspension functor defined in \ref{defi:suspension zocat}.
\end{definition}

\begin{definition}
 \label{defi:les sommes glob}
We define \wcnotation{$\Theta$}{(theta@$\Theta$} as the smallest full subcategory of $\ocat$ that includes the terminal $\omega$-category $[0]$, and such that for any non-negative integer $n$, and any finite sequence $\textbf{a}:=\{a_0,a_1,...,a_{n-1}\}$ of objects of $\Theta$, it includes the $\omega$-category $[\textbf{a},n]$. Objects of $\Theta$ are called \notion{globular sums}.
\end{definition}

\begin{remark}
\label{remark:morphism of theta}
A morphism $g:[\textbf{a},n]\to [\textbf{b},m]$ is exactly the data of a morphism $f:[n]\to [m]$, and for any integer $i$, a morphism
$$a_i\to \prod_{f(i)\leq k< f(i+1)}b_k.$$
\end{remark}

\begin{example}
\label{exemple:of globular sum}
For any $n$, $\Db_n$ is a globular sum. The $\omega$-category induced by the $\omega$-graph 
\[\begin{tikzcd}
	\bullet & \bullet & \bullet
	\arrow[from=1-1, to=1-2]
	\arrow[""{name=0, anchor=center, inner sep=0}, curve={height=-24pt}, from=1-2, to=1-3]
	\arrow[""{name=1, anchor=center, inner sep=0}, curve={height=24pt}, from=1-2, to=1-3]
	\arrow[""{name=2, anchor=center, inner sep=0}, from=1-2, to=1-3]
	\arrow[shorten <=3pt, shorten >=3pt, Rightarrow, from=0, to=2]
	\arrow[shorten <=3pt, shorten >=3pt, Rightarrow, from=2, to=1]
\end{tikzcd}\]
is a globular sum.
\end{example}

\begin{definition}
 For a globular sum $a$ and an integer $n$, we define $[a,n]:=[\{a,a,\ldots,a\},n]$.\ssym{((g10@$[a,n]$}{for $\omega$-categories} For a sequence of integers $\{n_0,\ldots,n_k\}$ and a sequence of globular sums $\{a_0,\ldots,a_k\}$, we define \wcsnotation{$[a_0,n_0]\vee[a_1,n_1]\vee\ldots\vee [a_k,n_k]$}{((g20@$[a_0,n_0]\vee[a_1,n_1]\vee\ldots\vee [a_k,n_k]$}{for $\Theta$} as the globular sum $[\{a_0,\ldots,a_1,\ldots,a_k,\ldots\},n_0+n_1+\ldots+n_k]$.

We denote by $[0]$ the terminal $\omega$-category, and $[n]$ the globular sum $[[0],n]$. This induces a fully faithful functor $\Delta\to \Theta$ sending $[n]$ onto $[n]$.
\end{definition}

\begin{definition}
 We define by induction the \wcnotion{dimension}{dimension of a globular sum} of a globular sum $a$, denoted by $|a|$. The dimension of $[0]$ is $0$, and the dimension of $[\textbf{a},n]$ is the maximum of the set $\{|a_k|+1\}_{k< n}$. We denote by \wcnotation{$\Theta_n$}{(thetan@$\Theta_n$} the full subcategory of $\Theta$ whose objects are the globular sums of dimension less than or equal to $n$. We set by convention $\Theta_\omega:=\Theta$.
\end{definition}

\begin{prop}[Berger, Bergner-Rezk]
\label{prop:theta is elegan reedy}
For any $n\in \Nb\cup\{\omega\}$, the category $\Theta_n$ is an elegant Reedy category.

A morphism $g:[\textbf{a},n]\to [\textbf{b},m]$ is \wcnotion{degenerate}{degenerate morphism of $\Theta$} (i.e., a morphism of $\Theta_{-}$) if the corresponding morphism $f:[n]\to [m]$ is a degenerate morphism of $\Delta$, and for any $i<n$ and any $f(i)\leq k<f(k+1)$, the corresponding morphism $a_i\to b_k$ is degenerate. Furthermore, a morphism is degenerate if and only if it is an epimorphism in $\Psh{\Theta}$.

A morphism is in $\Theta^+$ if and only if it is a monomorphism in $\Psh{\Theta}$.
\end{prop}

\begin{proof}
The Reedy structure is a consequence of lemma 2.4 of \cite{Berger_a_cellular_nerve}. The fact that for any $n<\omega$, $\Theta_n$ is elegant is stated in \cite[corollary 4.5.]{Bergner_reedy_category_and_the_theta_construction}. As for any $n<\omega$, the inclusion $\Theta_n\to \Theta$ preserves strong pushouts, the characterization of elegant Reedy categories given by \cite[proposition 3.8.]{Bergner_reedy_category_and_the_theta_construction} implies that $\Theta$ is also elegant.
\end{proof}

\begin{definition}
\label{defi:algebraic and globular}
We recall that a morphism $g:[\textbf{a},n]\to [\textbf{b},m]$ is exactly the data of a morphism $f:[n]\to [m]$, and for any integer $i$, a morphism
$$a_i\to \prod_{f(i)\leq k< f(i+1)}b_k.$$
The morphism $g$ is \textit{globular} \index[notion]{globular morphism} if for any $k<n$, $f(k+1)=f(k)+1$ and the morphism $a_k\to b_k$ is globular. The morphism $g$ is \wcnotion{algebraic}{algebraic morphism of $\Theta$} if it cannot be written as a composite $ig'$ where $i$ is a globular morphism.
\end{definition}

\begin{example}
\label{example:algebraic morphism}
The morphism 
\[\begin{tikzcd}
	\bullet & \bullet & \bullet && \bullet & \bullet & \bullet
	\arrow[from=1-5, to=1-6]
	\arrow[""{name=0, anchor=center, inner sep=0}, curve={height=-24pt}, from=1-6, to=1-7]
	\arrow[""{name=1, anchor=center, inner sep=0}, curve={height=24pt}, from=1-6, to=1-7]
	\arrow[""{name=2, anchor=center, inner sep=0}, from=1-6, to=1-7]
	\arrow[""{name=3, anchor=center, inner sep=0}, from=1-2, to=1-3]
	\arrow[""{name=4, anchor=center, inner sep=0}, curve={height=-24pt}, from=1-2, to=1-3]
	\arrow[from=1-1, to=1-2]
	\arrow[shorten <=14pt, shorten >=14pt, maps to, from=1-3, to=1-5]
	\arrow[shorten <=3pt, shorten >=3pt, Rightarrow, from=0, to=2]
	\arrow[shorten <=3pt, shorten >=3pt, Rightarrow, from=2, to=1]
	\arrow[shorten <=3pt, shorten >=3pt, Rightarrow, from=4, to=3]
\end{tikzcd}\]
is globular. This is not the case for the morphism
\[\begin{tikzcd}
	\bullet & \bullet & \bullet && \bullet & \bullet & \bullet
	\arrow[from=1-5, to=1-6]
	\arrow[""{name=0, anchor=center, inner sep=0}, curve={height=-24pt}, from=1-6, to=1-7]
	\arrow[""{name=1, anchor=center, inner sep=0}, curve={height=24pt}, from=1-6, to=1-7]
	\arrow[""{name=2, anchor=center, inner sep=0}, from=1-6, to=1-7]
	\arrow[""{name=3, anchor=center, inner sep=0}, curve={height=24pt}, from=1-2, to=1-3]
	\arrow[""{name=4, anchor=center, inner sep=0}, curve={height=-24pt}, from=1-2, to=1-3]
	\arrow[from=1-1, to=1-2]
	\arrow[shorten <=14pt, shorten >=14pt, maps to, from=1-3, to=1-5]
	\arrow[shorten <=3pt, shorten >=3pt, Rightarrow, from=0, to=2]
	\arrow[shorten <=3pt, shorten >=3pt, Rightarrow, from=2, to=1]
	\arrow[shorten <=6pt, shorten >=6pt, Rightarrow, from=4, to=3]
\end{tikzcd}\]
that sends the $2$-cell of the left globular sum to the $1$-composite of the two $2$-cells of the right globular sum.
\end{example}

\begin{prop}[{\cite[Proposition 3.3.10]{Ara_thesis}}]
\label{prop:algebraic ortho to globular}
Every morphism in $\Theta$ can be factored uniquely into an algebraic morphism followed by a globular morphism.
\end{prop}

\begin{remark}

Globular morphisms belong to $\Theta_+$ (and so morphisms of $\Theta_-$ are algebraic), but the converse is false. For example, the second morphism of example \ref{example:algebraic morphism} is not globular but belongs to $\Theta_+$. We then have two different factorizations on $\Theta$: the one coming from the Reedy elegant structure, and the one given in proposition \ref{prop:algebraic ortho to globular}.
\end{remark}

\begin{definition}
\label{defi:definition of W}
The suspension functor $[\uvar,1]:\Theta\to \Theta$ induces by left Kan extension a functor 
$$[\uvar,1]:\Psh{\Theta}\to \Psh{\Theta}.$$
We define by induction on $a$ a $\Theta$-presheaf \wcnotation{$\Sp_a$}{(sp@$\Sp_{a}$} and a morphism $\Sp_a\to a$. 
If $a$ is $[0]$, we set $\Sp_{[0]}:=[0]$. For $n>0$, we define 
$\Sp_{[\textbf{a},n]}$ as the set-valued presheaf on $\Theta$ obtained as the colimit of the diagram
\[\begin{tikzcd}
	& 1 && 1 && 1 \\
	{[ \Sp_{a_0},1]} && {[\Sp_{a_1},1]} && \cdots && {[\Sp_{a_{n-1}},1]}
	\arrow["{i_0^-}", from=1-2, to=2-3]
	\arrow["{i_0^+}"', from=1-4, to=2-3]
	\arrow["{i_0^+}"', from=1-2, to=2-1]
	\arrow["{i_0^-}", from=1-6, to=2-7]
	\arrow["{i_0^-}", from=1-4, to=2-5]
	\arrow["{i_0^+}"', from=1-6, to=2-5]
\end{tikzcd}\]
We define \wcnotation{$E^{eq}$}{(eeq@$E^{eq}$} as the set-valued presheaf on $\Delta$ obtained as the colimit of the diagram
\[\begin{tikzcd}
	& {[1]} && {[1]} && {[1]} \\
	{[0]} && {[2]} && {[2]} && {[0]}
	\arrow[from=1-2, to=2-1]
	\arrow["{d^1}", from=1-2, to=2-3]
	\arrow["{d^1}"', from=1-6, to=2-5]
	\arrow["{d^0}"', from=1-4, to=2-3]
	\arrow["{d^2}", from=1-4, to=2-5]
	\arrow[from=1-6, to=2-7]
\end{tikzcd}\]
For any integer $n$, the functor $\Sigma^n:\Theta\to \Theta$, which is the $n$-iteration of $[\uvar,1]$, induces by left Kan extension a functor 
$$\Sigma^n:\Psh{\Theta}\to \Psh{\Theta}.$$
 We define two sets of morphisms of $\Psh{\Theta}$:\sym{(w@$\W$}\sym{(wsat@$\W^\sat$}
$$\W := \{\Sp_a\to a,~a\in\Theta\}~~~~\W^\sat:= \{\Sigma^n E^{eq}\to \Db_n,~n\geq 0\}$$
For any $n$, we also define $$\W_n:= \W\cap \Theta_n.~~~~ \W_n^\sat:= \W^{\sat}\cap \Theta_n.$$
\end{definition}

\begin{definition}
Let $C$ be a category and $S$ a set of monomorphisms. A morphism $f: x \to y$ is \wcnotion{$S$-local}{Slocal@$S$-local} if it has the unique right lifting property against morphisms of $S$. An object $x$ is \textit{$S$-local} if $x \to 1$ is {$S$-local}, or equivalently, if for any $i: a \to b \in S$, the induced functor $\Hom(i, x): \Hom(b, x) \to \Hom(a, x)$ is an isomorphism.
\end{definition}

\begin{construction}
Let $C$ be a presentable category and $S$ a set of monomorphisms with small codomains. We define \textit{$C_{S}$} as the full subcategory of $C$ composed of $S$-local objects. The theorem 4.1 of \cite{BOUSFIELD1977207} implies that $\iota:C_S\to C$ is part of an adjunction
\[\begin{tikzcd}
	{\Fb_S:C} & {C_S:\iota}
	\arrow[""{name=0, anchor=center, inner sep=0}, shift left=2, from=1-1, to=1-2]
	\arrow[""{name=1, anchor=center, inner sep=0}, shift left=2, from=1-2, to=1-1]
	\arrow["\dashv"{anchor=center, rotate=-90}, draw=none, from=0, to=1]
\end{tikzcd}\]
where $\Fb_S:C\to C_S$ is the localization of $C$ by the smallest class of morphisms containing $S$ and stable under composition and colimits.
\end{construction}

\begin{theorem}[Berger]

\label{theo:theta and ocat}
Let $n\in \Nb\cup\{\omega\}$. 
The functor $\Psh{\Theta_n}\to \ncat{n}$ defined as the left Kan extension of the canonical inclusion $\Theta_n\to \ncat{n}$ induces isomorphisms 
$$\Psh{\Theta_n}_{\W_n}\cong \ncat{n}~~~~~\Psh{\Theta_n}_{\W_n\cup \W_n^{\sat}}\cong \ncat{n}^\comp$$
\end{theorem}

\begin{proof}
The first equivalence is \cite[corollary 12.3]{Barwick_on_the_unicity_of_the_theory_of_higher_categories}; the second follows directly.
\end{proof}

\subsection{The category $\Xi$}
\label{section:Xi}

We now introduce a new category, denoted $\Xi$, along with subcategories denoted $\Xi_n$ consisting of elements of dimension less than or equal to $n$. We recall that given a category $A$, we denote $\Delta[A]$ the category fitting in the pushout:

\[\begin{tikzcd}
	{\{[0]\}\times A} & {\Delta\times A} \\
	{[0]} & {\Delta[A]}
	\arrow[from=1-1, to=1-2]
	\arrow[from=1-1, to=2-1]
	\arrow[from=1-2, to=2-2]
	\arrow[from=2-1, to=2-2]
	\arrow["\lrcorner"{anchor=center, pos=0.125, rotate=180}, draw=none, from=2-2, to=1-1]
\end{tikzcd}\]

\begin{definition}
We define by induction on $n\in \Nb\cup\{\omega\}$ the category $\Xi_n$ by setting $\Xi_0:=\{[0]\}$, $\Xi_{n+1}:=\Delta[\Xi_n]$, and $\Xi:=\Xi_{\omega}$ as the limit of the sequence $\Xi_0\to \Xi_1\to  \Xi_2...  $. 

Objects of $\Xi_n$ are then of the shape $[a,m]$ where $m$ is an integer and $a$ is an element of $\Xi_{n-1}$. We will denote simply $[m]$ the element $[[0],m]$, where $[0]$ is the terminal element. \sym{(xi@$\Xi$}
\end{definition}

\begin{prop}
For any $n\in \Nb\cup\{\omega\}$, the category $\Xi_n$ is Reedy elegant.
\end{prop}

\begin{proof}
This follows  by induction using proposition  \ref{prop:delta[B] is reedy}.
\end{proof}

\begin{definition}
\label{defi:globes for xi}
For any integer $k\leq n$, we define the element $\Db_k$, called the \wcsnotionsym{$n$-globe}{(da@$\Db_n$}{globe@$n$-globe}{for $\Xi$} of $\Xi_n$ by setting $\Db_0:=[0]$ and $\Db_k:=[\Db_{k-1},1]$ for $k>0$. This defines a functor $\Gb_{\leq n}\to \Xi_n$.
\end{definition}

\begin{definition}
\label{definition: of Mn}\sym{(m@$\M$}\sym{(ma@$\M^\sat$}
We define the set of maps $\M_n$ of $\Psh{\Xi_n}$ by setting $\M_0:=\{[0]\to [0]\}$, 
$$\M_n:= \{[a,\Sp_n]\to [a,n],~a\in \Xi_{n-1}\}\cup \{[f,1],~ f\in \M_{n-1}\}$$
and $\M:=\M_\omega:= \cup_n\M_n$. 

We also define the set $\M^\sat_n$ by setting $\M^\sat_1:=\{E^{eq}\to [1]\}$, $$\M^\sat_n:= \{E^{eq} \to [0]\}\cup \{[f,1],~ f\in \M^\sat_{n-1}\}$$
and $\M^\sat:=\M^\sat_\omega:= \cup_n\M_n^\sat.$ 
\end{definition}

\begin{construction}
\label{cons: of the functor $j$}
Let $n\in\Nb\cup \{\omega\}$. We have a canonical functor 
$j:\Xi_n\to \Theta_n$ sending $[a,n]$ onto the globular sum $[\{a,a,\ldots,a\},n]$.
This induces an adjunction
\[\begin{tikzcd}
	{j_!:\Psh{\Xi_n}} & {\Psh{\Theta_n}:j^*}
	\arrow[""{name=0, anchor=center, inner sep=0}, shift left=2, from=1-1, to=1-2]
	\arrow[""{name=1, anchor=center, inner sep=0}, shift left=2, from=1-2, to=1-1]
	\arrow["\dashv"{anchor=center, rotate=-90}, draw=none, from=0, to=1]
\end{tikzcd}\]
\end{construction}
\begin{remark}
We can  easely check that $j_!$ and $j^*$ preserve the globes.
\end{remark}
We will now study the link between presheaves on $\Theta$ and on $\Xi$. To this extent, we will use the notion of \textit{precocomplete class}, defined in \ref{defi:precocomplet}. Given a set of morphisms $S$, we denote $\overline{S}$ the smallest precocomplete class contaiining $S$.

\begin{theorem}
\label{theo:unit and counit are in Sigma}
Let $n\in \Nb\cup\{\omega\}$. We have inclusions
$$j_!(\M_n)\subset \mbox{$\W_n$},~~~~~~~~~j^*(\mbox{$\W_n$})\subset \overline{\M_n},$$
and equalities
$$j_!(\M^\sat_n)=\W^\sat_n, ~~~~~~~~~ j^*(\W^\sat_n)=\M^\sat_n.$$

Moreover, for any $a\in \Xi$ and $b\in \Delta[\Theta]$, the morphism $j_!j^*a\to a$ and $b\to j^*j_! b$ are respectively in $\overline{\W}$ and $\overline{\Sigma}$.
\end{theorem}

We directly give a corollary of this theorem, which states that $\Xi_n$ is also a dense subcategory of $\ncat{n}$.

\begin{cor}
\label{cor:xi and ocat}
Let $n\in \Nb\cup\{\omega\}$. 
The functor $\Psh{\Xi_n}\to \ncat{n}$ defined as the left Kan extension of the canonical inclusion $\Xi_n\xrightarrow{j} \Theta_n \to \ncat{n}$ induces isomorphisms 
$$\Psh{\Xi_n}_{\M_n}\cong \ncat{n}~~~~~\Psh{\Xi_n}_{\M_n\cup \M_n^{\sat}}\cong \ncat{n}^\comp$$
\end{cor}

\begin{proof}
The inclusion $j_!(\M_n)\subset \Sigma_n$ induces a left adjoint $\Psh{\Xi_n}_{\M_n}\to \Psh{\Theta_n}_{\W_n}$. Now note that the class of morphisms sent to an identity by the functors $\Psh{\Xi_n}\to \Psh{\Xi_n}_{\M_n}$ and $\Psh{\Theta_n}\to \Psh{\Theta_n}_{\W_n}$ are saturated. The result then directly follows from the theorems \ref{theo:theta and ocat} and \ref{theo:unit and counit are in Sigma}.
\end{proof}

To show theorem \ref{theo:unit and counit are in Sigma}, we will proceed by induction on $n$ and consider the category $\Delta[\Theta_n]$, which fits between $\Xi_n$ and $\Theta_n$.

\begin{definition}
\label{defi:defi of delta theta}
Let $n\in\Nb\cup \{\omega\}$.
We define a set of morphisms of $\Psh{\Delta[\Theta_n]}$:\sym{(m@$\M$} 
$$
\begin{array}{c}
\Sigma_{n+1} := \{[a,\Sp_n]\to [a,n],~a:\Theta\}\cup\{[f,1],~f\in \W_n\}\\
\end{array}$$
\end{definition}

\begin{definition}
We denote by 
$$[\uvar,\uvar]: \Psh{\Theta}\times \Psh{\Delta}\to \Psh{\Delta[\Theta]}$$
the left Kan extension of the functor $\Theta\times \Delta\to \Psh{\Delta[\Theta]}$ sending $(a,n)$ to $[a,n]$.
For an integer $n$, we denote
$$[\uvar,n]:\Psh{\Theta}^n\to \Psh{\Theta}$$ 
the left Kan extension of the functor 
$\Theta^n\to\Psh{\Theta}$ sending $\textbf{a}:=\{a_1,...,a_n\}$ to $[\textbf{a},n]$. Eventually, we define 
$$[\uvar,d^0\cup d^n]:\Psh{\Theta}^n\to \Psh{\Theta}$$ 
the left Kan extension of the functor 
$\Theta^n\to\Psh{\Theta}$ sending $\textbf{a}:=\{a_1,...,a_n\}$ to the colimit of the span.
$$[\{a_0,...,a_{n-2}\},{n-1}]\leftarrow [\{a_1,...,a_{n-2}\},{n-2}]\to [\{a_1,...,a_{n-1}\},{n-1}]$$
\end{definition}

\begin{lemma}
\label{lemma:the functor [] preserves classes}
The image of $\overline{\W}\times \overline{\W_1}$ by the functor $[\uvar,\uvar]:\Psh{\Theta}\times \Psh{\Delta}\to \Psh{\Delta[\Theta]}$ is included in $\overline{\W}$.
\end{lemma}

\begin{proof}
As $[\uvar,\uvar]$ preserves colimits and monomorphisms, it is enough to show that for any pair $f,g\in \W\times \W_1$, $[f,g]$ is in $\W$, which is obvious.
\end{proof}

\begin{lemma}
\label{lemma:i etoile of W is in M 0}
For any globular sum $v$, and any integer $n$,
the morphism $[v,d^0\cup d^n]\cup[\partial v,n]\to [v,n]$ appearing in the diagram
\[\begin{tikzcd}
	{[\partial v,d^0\cup d^n]} & {[v,d^0\cup d^n]} \\
	{[\partial v,n]} & {[\partial v,n]\cup[v,d^0\cup d^n]} \\
	&& {[ v,n]}
	\arrow[from=1-1, to=2-1]
	\arrow[from=1-2, to=2-2]
	\arrow[from=2-2, to=3-3]
	\arrow[curve={height=18pt}, from=2-1, to=3-3]
	\arrow[curve={height=-18pt}, from=1-2, to=3-3]
	\arrow[from=1-1, to=1-2]
	\arrow[from=2-1, to=2-2]
\end{tikzcd}\]
is in $\overline{\Sigma}$.
\end{lemma}

\begin{proof}
Let $a$ be a globular sum. Remark that the morphism $[a,\Sp_n]\to [a,d^0\cup d^n]$ is in $\overline{\Sigma}$. By two out of three, this implies that $[a,d^0\cup d^n]\to [a,n]$ is in $\overline{\Sigma}$. Let $X$ be a presheaf on $\Theta$. As $X$ is a colimit of globular sums indexed by the Reedy cofibrant diagram $\Theta_{/X}\to \Psh{\Theta}$ (definition \ref{defi:reedycof}), and as $[\uvar,d^0\cup d^n]\to [\uvar,n]$ preserves cofibrations, this implies that $[X,d^0\cup d^n]\to [X,n]$ is in $\overline{\Sigma}$. In particular, $[\partial v,d^0\cup d^n]\to [\partial v,n]$ is in $\overline{\Sigma}$, and so is $[v,d^0\cup d^n]\to [\partial v,n]\cup[v,d^0\cup d^n]$ by stability by coproduct. A final use of the stability by two out of three then concludes the proof.
\end{proof}

\begin{definition} 
Let $[b,m]$ be an element of $\Delta[\Theta]$. We denote $\Hom^*(i([b,m]),[\textbf{a},n])$ the subset of $\Hom(i([b,m]),[\textbf{a},n])$ that consists of morphisms that preserve extremal objects. The explicit expression of morphisms in $\Theta$ given in remark \ref{remark:morphism of theta} implies the bijection:
\begin{equation}
\label{eq:hom in theta}
\Hom_{\Theta}^*(i([b,m]),[\textbf{a},n])\cong\Hom_{\Delta}([n],[m])^*\times \prod_{i<n}\Hom_{\Theta}(b,a_i)
\end{equation}
where $\Hom_{\Delta}^*([n],[m])$ is the subset of $\Hom_{\Delta}([n],[m])$ consisting of morphisms that preserve extremal objects.

Let $\textbf{a}:=\{a_0,a_1,...,a_{n-1}\}$ be a finite sequence of globular sums. We define $\Theta^{\hookrightarrow}_{/\textbf{a}}$ as the category whose objects are collections of maps $\{b\to a_i\}_{i<n}$ such that there exists no degenerate morphism $b\to b'$ factorizing all $b\to a_i$. Morphisms are monomorphisms $b\to b'$ making all induced triangles commute.

The bijection \eqref{eq:hom in theta} induces a bijection between the objects of $\Theta^{\hookrightarrow}_{/\textbf{a}}$ and the morphisms $[b,n]\to i^*[\textbf{a},n]$ that are the identity on objects and that cannot be factored through any degenerate morphism $[b,n]\to [\tilde{b},n]$.
\end{definition}

\begin{lemma}
\label{lemma:i etoile of W is in M 1}
For any morphism $p:[b,m]\to i^*[\textbf{a},n]$ in $\Psh{\Delta[\Theta]}$ that preserves extremal objects, there exists a unique pair $(\{b'\to a_i\}_{i<n},[f,i]:[b,m]\to [b',n])$ where $\{b'\to a_i\}_{i<n}$ is an element of $\Theta^{\hookrightarrow}_{/\textbf{a}}$, $f$ is a degenerate morphism, and such that the induced triangle
\[\begin{tikzcd}
	{[b,m]} & {[b',n]} \\
	& {i^*[\textbf{a},n]}
	\arrow["{[f,i]}", from=1-1, to=1-2]
	\arrow["{p'}", from=1-2, to=2-2]
	\arrow["p'", from=1-1, to=2-2]
\end{tikzcd}\]
commutes.
\end{lemma}

\begin{proof}
By adjunction and thanks to the bijection \eqref{eq:hom in theta}, $p$ corresponds to a pair $(j:[m]\to [n], \{b\to a_i\}_{i<n})$, and $i$ has to be equal to $j$.

Using this bijection once again, and the fact that degeneracies are epimorphisms, we have to show that there exists a unique degenerate morphism $g:b\to b'$ that factors the morphisms $b\to a_i$ for all $i<n$, and such that the induced family of morphisms $\{b'\to a_i\}_{i<n}$ is an element of $\Theta^{\hookrightarrow}_{/\textbf{a}}$.

As any infinite sequence of degenerate morphisms is constant at some point, the existence is immediate.

Suppose we are given two morphisms $b\to b'$, $b\to b''$ fulfilling the previous condition. Proposition 3.8 of \cite{Bergner_reedy_category_and_the_theta_construction} implies that there exists a globular sum $\tilde{b}$ and two degenerate morphisms $b'\to \tilde{b}$ and $b''\to \tilde{b}$ such that the induced square
\[\begin{tikzcd}
	b & {b'} \\
	{b''} & {\tilde{b}}
	\arrow[from=1-1, to=2-1]
	\arrow[from=2-1, to=2-2]
	\arrow[from=1-1, to=1-2]
	\arrow[from=1-2, to=2-2]
\end{tikzcd}\]
is cartesian. The universal property of the pushout implies that $b\to \tilde{b}$ also fulfills the previous condition. By the definition of $b'$ and $b''$, this implies that they are equal to $\tilde{b}$, and this shows the uniqueness.
\end{proof}

\begin{lemma}
\label{lemma:i etoile of W is in M 0.5}
Let $\{b \to a_i\}_{i < n}$ be an element of $\Theta^{\hookrightarrow}_{/\textbf{a}}$ and $i: b' \to b$ a monomorphism of $\Theta$. The induced family $\{b' \to b \to a_i\}_{i < n}$ is an object of $\Theta^{\hookrightarrow}_{/\textbf{a}}$.
\end{lemma}

\begin{proof}
The lemma \ref{lemma:i etoile of W is in M 1} implies that there exists a unique degenerate morphism $j:b'\to \tilde{b}$ that factors all the morphisms $b'\to b\to a_i$ for $i<n$, and such that the induced family of morphisms $\{\tilde{b}\to a_i\}_{i<n}$ is an element of $\Theta^{\hookrightarrow}_{/\textbf{a}}$. We proceed by contradiction, and we then suppose that $j$ is different from the identity.

We then have, for any $i<n$, a commutative square
\[\begin{tikzcd}
	{b'} & b \\
	{\tilde{b}} & {a_i}
	\arrow["i", from=1-1, to=1-2]
	\arrow[from=1-2, to=2-2]
	\arrow["j"', from=1-1, to=2-1]
	\arrow[from=2-1, to=2-2]
\end{tikzcd}\]
As the morphism $j$ is degenerate and different from the identity, there exists an integer $k$ and a non-trivial $k$-cell $d$ of $b'$ that is sent to an identity by $j$. Now, let $d'$ be a $k$-generator of the polygraph $b$ that appears in the decomposition of $i(d)$. The commutativity of the previous square and the fact that the $\omega$-categories $a_i$ are polygraphs imply that for any $i$, the $k$-cell $a'$ is sent to an identity by the morphism $b\to a_i$. As for any $i< n$ and any $l\geq k$, there is no non-trivial $l$-cell in $a_i$ whose $(k-1)$-source and $(k-1)$-target are the same, this implies that every $l$-cell of $b$ that is $(k-1)$-parallel with $d'$ is sent to the identity by the morphism $b\to a_i$.

We denote $\bar{b}$ the globular sum obtained by crushing all $l$-cells of $b$ that are $(k-1)$-parallel with $d'$. The induced degenerate morphism $b\to \bar{b}$ factors all the morphisms $b\to a_i$, which is in contradiction with the fact that $\{b\to a_i\}_{i<n}$ is an element of $\Theta^{\hookrightarrow}_{/\textbf{a}}$.
\end{proof}

\begin{definition}
We say that an element $\{v\to a_i\}_{i<n}$ in the category $\Theta^{\hookrightarrow}_{/\textbf{a}}$ is \textit{of height $0$} if $v\to a_0$ factors through $\partial a_0$ or $v\to a_{n-1}$ factors through $\partial a_{n-1}$. The \textit{height of an element $w$} is the maximal integer $m$ such that there exists a sequence 
$v_0\to v_1\to...\to v_m=w$ in $\Theta^{\hookrightarrow}_{/\textbf{a}}$ with $v_i\neq v_{i+1}$ for any $i<m$ and such that $v_0$ is of height $0$ and $v_1$ is not. As $\Theta$ is a Reedy category, all elements have finite height.
\end{definition}

\begin{lemma}
\label{lemma:i etoile of W is in M 1.5}
For any morphism $p:[b,m]\to i^*[\textbf{a},n]$ that preserves extremal objects, there exists a unique integer $k$, a unique element $\{b'\to a_i\}_{i<n}$ of height $k$, and a unique morphism $[f,i]:[b,m]\to [b',n]$ that doesn't factor through $[\partial b',n]$, and such that the induced triangle
\[\begin{tikzcd}
	{[b,m]} & {[b',n]} \\
	& {i^*[\textbf{a},n]}
	\arrow["{p'}", from=1-2, to=2-2]
	\arrow["{[f,i]}", from=1-1, to=1-2]
	\arrow[from=1-1, to=2-2]
\end{tikzcd}\]
commutes.

If $\{\tilde{b}\to a_i\}_{i<n}$ is any other object of non-negative height, and $[\tilde{f},j]:[b,m]\to [\tilde{b},n]$ is a morphism that makes the induced triangle
\[\begin{tikzcd}
	{[b,m]} & {[\tilde{b},n]} \\
	& {i^*[\textbf{a},n]}
	\arrow["{\tilde{p}}", from=1-2, to=2-2]
	\arrow["{[\tilde{f},j]}", from=1-1, to=1-2]
	\arrow[from=1-1, to=2-2]
\end{tikzcd}\]
commutative, then $\{\tilde{b}\to a_i\}_{i<n}$ is of height strictly greater than $k$ and $[\tilde{f},j]$ factors through $[\partial\tilde{b},n]$.
\end{lemma}

\begin{proof}
The lemma \ref{lemma:i etoile of W is in M 1} implies the first assertion. For the second one, suppose we are given an object $\{\tilde{b}\to a_i\}_{i<n}$ of non-negative height and a morphism  $[\tilde{f},j]:[b,m]\to [\tilde{b},n]$ fulfilling the desired condition. The bijection \eqref{eq:hom in theta} directly implies that $j$ is equal to $i$, and the first assertion implies that $\tilde{f}$ is non-degenerate.

We can then factor $\tilde{f}:b\to \tilde{b}$ into a degenerate morphism $b\to \bar{b}$ followed by a monomorphism $ \bar{b}\to \tilde{b}$ which is not the identity. The lemma \ref{lemma:i etoile of W is in M 0.5} then implies that $\{\bar{b}\to \tilde{b}\to a_i\}_{i<n}$ is an element of  $\Theta^{\hookrightarrow}_{/\textbf{a}}$. The first assertion then implies that the two morphisms $[b,m]\to [b',n]$ and $[b,m]\to [\bar{b},n]$ are equal. As the monomorphism $[\bar{b},n]\to [\tilde{b},n]$ is not the identity, this concludes the proof.
\end{proof}

\begin{lemma}
\label{lemma:i etoile of W is in M 2}
The morphism $i^*[\partial^0\textbf{a},n]\cup i^*[\partial^{n-1}\textbf{a},n]\to i^*[\textbf{a},n]$ is in $\overline{\Sigma}$, where $\partial^j\textbf{a}$ corresponds to the sequence $\{a_1,\ldots,\partial a_j,\ldots,a_n\}$.
\end{lemma}

\begin{proof}
For $k\in\Nb\cup\{\infty\}$, we define $x_k$ as the smallest subobject of $i^*[\textbf{a},n]$ such that for any element of height less than or equal to $k$ of $\Theta^{\hookrightarrow}_{/\textbf{a}}$, the corresponding morphism $[b,n]\to i^*[\textbf{a},n]$ factors through $x_k$. In particular, we have $x_0= i^*[\partial^0\textbf{a},n]\cup i^*[\partial^{n-1}\textbf{a},n]$, and lemma \ref{lemma:i etoile of W is in M 1} implies that $x_{\infty} =i^*[\textbf{a},n]$. We denote $(\Theta^{\hookrightarrow}_{/\textbf{a}})_{k}$ the set of elements of $\Theta^{\hookrightarrow}_{/\textbf{a}}$ of height $k$.

Every morphism $[b,m]\to i^*[\textbf{a},n]$ that does not preserve extremal points then factors through $x_0$. The lemma \ref{lemma:i etoile of W is in M 1.5} implies that for any integer $k$, the canonical square 
\begin{equation}
\label{eq:lemma:i etoile of W is in M 2}
\begin{tikzcd}
	{\coprod_{(\Theta^{\hookrightarrow}_{/\textbf{a}})_{k+1}}[b,d^0\cup d^n]\cup[\partial b,n]} & {x_k} \\
	{\coprod_{(\Theta^{\hookrightarrow}_{/\textbf{a}})_{k+1}}[b,n]} & {x_{k+1}}
	\arrow[from=1-1, to=2-1]
	\arrow[from=2-1, to=2-2]
	\arrow[from=1-1, to=1-2]
	\arrow[from=1-2, to=2-2]
\end{tikzcd}
\end{equation}
is cocartesian. The lemma \ref{lemma:i etoile of W is in M 0} and the stability under pushout of $\overline{\Sigma}$ imply that $x_k\to x_{k+1}$ is in $\overline{\Sigma}$. As $i^*[\textbf{a},n]$ is the transfinite composition of the sequence $x_0\to x_1\to...$, this implies that $x_0\to i^*[\textbf{a},n]$ is in $\overline{\Sigma}$, which concludes the proof.
\end{proof}

\begin{lemma}
The morphism $i^*\Sp_a \to i^*a$ is in $\overline{\Sigma}$ for any globular sum $a$.
\end{lemma}

\begin{proof}
Let $[\textbf{a},n]:= a$. As $\overline{\Sigma}$ is closed under pushouts and composition, lemma \ref{lemma:i etoile of W is in M 2} implies that the morphism 
$$i^*[\{a_0,...,a_{n-2}\},n-1]\cup i^*[\{a_1,...,a_{n-1}\},n-1]\to i^*[\textbf{a},n]$$
is in $\widehat{\M}$. 
An easy induction on $n$ shows that this is also the case for the morphism 
$$[a_0,1]\cup... \cup [a_{n-1},1]= i^*[a_0,1]\cup... \cup i^*[a_{n-1},1]\to i^*[\textbf{a},n].$$
Now note that $i^*\Sp_{[\textbf{a},n]}$ is equivalent to 
$$[\Sp_{a_0},1]\cup... \cup [\Sp_{a_{n-1}},1].$$
As the morphisms $[\Sp_i,1]\to [a_i,1]$ are by definition in $\M$, this concludes the proof.
\end{proof}

\begin{lemma}
\label{lemma:i etoile of W is in M}
Let $n\in \Nb\cup \{\omega\}$.
There is an inclusion $i^*(\W_n)\subset \overline{\Sigma_n}$.
\end{lemma}

\begin{proof}
The case where $n=\omega$ is precisely the content of the last lemma. The case $n<\omega$ directly follows.
\end{proof}

We are now ready to prove theorem \ref{theo:unit and counit are in Sigma}.

\begin{construction}
We denote $k:\Xi_n\to \Delta[\Theta_n]$ the functor sending $[a,n]$ onto $[j(a),n]$. This induces an adjunction
\[\begin{tikzcd}
	{k_!:\Psh{\Xi_{n+1}}} & {\Psh{\Delta[\Theta_{n}]}:k^*}
	\arrow[""{name=0, anchor=center, inner sep=0}, shift left=2, from=1-1, to=1-2]
	\arrow[""{name=1, anchor=center, inner sep=0}, shift left=2, from=1-2, to=1-1]
	\arrow["\dashv"{anchor=center, rotate=-90}, draw=none, from=0, to=1]
\end{tikzcd}\]
\end{construction}

\begin{proof}[Proof of theorem \ref{theo:unit and counit are in Sigma}]
First, note that the two equalities $j_!(\M^\sat_n)=\W^\sat_n$ and $j^*(\W^\sat_n)=\M^\sat_n$ and the inclusion $j_!(\M_n)\subset \Sigma_n$ are obvious. We show by induction on $n$ that we have an inclusion $j^*(\W_n)\subset \overline{\M_n}$. The case $n=1$ is straightforward. Suppose then the result is proven at the stage $n$. As we have $k^*[a,1]\sim [j^*a,1]$, the induction implies that $k^*(\Sigma_{n+1})\subset \overline{\M_{n+1}}$. Combined with proposition \ref{lemma:i etoile of W is in M}, this then implies that $j^*(\W_{n+1})\subset \overline{\M_{n+1}}$.

It remains to show the last part of the assertion. We can restrict to the case where $n=\omega$. Let $a$ be an element of $\Theta$. If $a$ is of the shape $\Db_n$, then $j_!j^*a = a$. Suppose now that $a$ is any globular sum. We then have a commutative diagram
\[\begin{tikzcd}
	{j_!j^*\Sp_a} & {\Sp_a} \\
	{j_!j^*a} & a
	\arrow[from=1-1, to=2-1]
	\arrow[Rightarrow, no head, from=1-1, to=1-2]
	\arrow[from=1-2, to=2-2]
	\arrow[from=2-1, to=2-2]
\end{tikzcd}\]
where the upper horizontal morphism is an identity. The two inclusions $j_!(\Sigma)\subset \W$ and $j^*(\W)\subset \overline{\Sigma}$ imply that the vertical morphisms of the previous diagram are in $\overline{\W}$. By two out of three, this implies that $i_!i^*a\to a$ belongs to $\overline{\W}$ for any globular sum. We proceed analogously to show that for any $b\in \Xi$, $b\to j^*j_! b$ is in $\overline{\Sigma}$.
\end{proof}

\section{Gray Operations}
\subsection{Recollection on Steiner theory}
\label{section:Steiner thery} 

We present here the Steiner theory developed in \cite{Steiner_omega_categories_and_chain_complexes}.

\begin{definition}
An augmented directed complex $(K,K^*,e)$ is given by a complex of abelian groups $K$, with an augmentation $e$: $$\Zb \xleftarrow{e} K_0 \xleftarrow{\partial_0} K_1 \xleftarrow{\partial_1} K_2 \xleftarrow{\partial_2} K_3 \xleftarrow{\partial_3}. .. $$
and a graded set $K^* = (K^*_n)_{n\in\Nb}$ such that for any $n$, $K_n^*$ is a submonoid of $K_n$. A morphism of directed complexes between $(K,K^*,e)$ and $(L,L^*,e')$ is given by a morphism of augmented complexes of abelian groups $f : (K,e)\to (L,e')$ such that $f(K^*_n)\subset L^*_n$ for any $n$. We note by \wcnotation{$\CDA$}{(adc@$\CDA$} the category of augmented directed complexes. 
\end{definition}

Steiner then constructs an adjunction
\[\begin{tikzcd}
	{\lambda:\omegacat} & {\CDA:\nu}
	\arrow[""{name=0, anchor=center, inner sep=0}, shift left=2, from=1-1, to=1-2]
	\arrow[""{name=1, anchor=center, inner sep=0}, shift left=2, from=1-2, to=1-1]
	\arrow["\dashv"{anchor=center, rotate=-90}, draw=none, from=0, to=1]
\end{tikzcd}\]
The functor $\lambda$ is the simplest to define: \sym{(lambda@$\lambda:\omegacat\to \CDA$}

\begin{construction}
Let $C$ be a $\omega$-category.
We denote by $(\lambda C)_n$ the abelian group generated by the set $\{[x]_n: x\in C_n\}$ and the relations
$$[x*_m y]_n \sim [x]_n + [y]_n \mbox{ for $m<n$ }.$$
We define the morphism $\partial_n: (\lambda C)_{n+1}\to (\lambda C)_n$ on generators by the formula:
$$\partial_n([x]_{n+1}) := [d_n^+ x]_{n} - [d_n^- x]_{n}.$$
We can easily check that the morphism $\partial$ is a differential. We define an augmentation $e:(\lambda C)_{0}\to \Zb$ by setting $e([x]_0) = 1$ on generators. 
We denote by $(\lambda C)_n^*$ the additive submonoid generated by the elements $[x]_n$. We then set:
$$\lambda C := (\{(\lambda C)_n \}_{n\in \Nb},\{(\lambda C)^*_n \}_{n\in \Nb},e ).$$ This assignation lifts to a functor:
$$\begin{array}{ccccc}
\lambda &:& \omegacat&\to&\CDA\\
&&C&\mapsto&\lambda C.
\end{array}$$
\end{construction}
\begin{example}~
\begin{enumerate}
\item
For any integer $n$, $\lambda\Db_n$ is the augmented directed complex whose underlying chain complex is given by:
$$
\Zb\xleftarrow{e}
\Zb[e_0^-,e_0^+] \xleftarrow{\partial_0}
... \xleftarrow{\partial_{n-2}}
\Zb[e_{n-1}^-,e_{n-1}^+] \xleftarrow{\partial_{n-1}}
\Zb[e_{n}] \xleftarrow{\partial_{n}}
0\leftarrow ...$$
where for any $0<k<n$ and $\alpha\in\{-,+\}$
$$e(e_0^\alpha)=1~~~\partial_{k-1}(e_k^\alpha)= e_{k-1}^+-e_{k-1}^-~~~\partial_{n-1}(e_n)= e_{n-1}^+-e_{n-1}^-.$$
\item
The augmented directed complex $\lambda[n]$ has for underlying chain complex:
$$
\Zb\xleftarrow{e}
\Zb[v_0,v_1,...,v_n] \xleftarrow{\partial_0}
\Zb[v_{0,1},v_{1,2}...,v_{n-1,n}] \xleftarrow{\partial_{1}}
0\leftarrow ...$$
where for any $k<n$ and $\alpha\in\{-,+\}$
$$e(v_k)=e(v_n)=1~~~ \partial_{1}(v_{k,k+1})=v_{k+1}-v_k.$$
\end{enumerate}
\end{example}

\begin{definition}
 We now define the functor $\nu:\CDA\to \omegacat$. Throughout, we fix an augmented directed complex $(K,K^*,e)$.
A \textit{Steiner array} (or simply a \notion{array}) of dimension $n$ is the data of a finite double sequence: \sym{(nu@$\nu:\CDA\to \omegacat$}
$$\left(\begin{matrix}
x^-_0 &x^-_1&x^-_2&x^-_3 &...&x_n^-\\
x^+_0 &x^+_1&x^+_2&x^+_3 &...&x_n^+
\end{matrix}\right)$$
such that
\begin{enumerate}
\item $x^-_n=x^+_n$;
\item For any $i\leq n$ and $\alpha\in\{-,+\}$, $x_i^\alpha$ is an element of $K^*_i$;
\item For any $0<i\leq n$, $\partial_{i-1}(x_i^\alpha)= x_{i-1}^+ - x_{i-1}^-$;
\end{enumerate}
An array is said to be \wcnotion{coherent}{coherent array} if $e(x^+_0) = e(x^-_0) = 1$.
\end{definition}

\begin{definition}
We define the globular set $\nu K$, whose $n$-cells are the coherent arrays of dimension $n$. The source and target maps are defined for $k<n$ by the formula: 

$$d^\alpha_k\begin{pmatrix}
x^-_0 &x^-_1&x^-_2&...&x^-_n\\
x^+_0 &x^+_1&x^+_2&...& x^+_n
\end{pmatrix} = \begin{pmatrix}
x^-_0 &x^-_1&x^-_2&...& x^-_{k-1}&x^\alpha_k\\
x^+_0 &x^+_1&x^+_2&...& x^+_{k-1}&x^\alpha_k\end{pmatrix}$$

There is an obvious group structure on the arrays:
$$\begin{pmatrix}
x^-_0 &x^-_1&...& x^-_n\\
x^+_0 &x^+_1&...& x^+_n
\end{pmatrix}
+
\begin{pmatrix}
y^-_0 &y^-_1&...& y^-_n\\
y^+_0 &y^+_1&...& y^+_n
\end{pmatrix}
=
\begin{pmatrix}
x^-_0+y^-_0 &x^-_1+ y^-_1&...&x^-_n+ y^-_n \\
x^+_0+y^+_0 &x^+_1+ y^+_1&...&x^+_n +y^+_n 
\end{pmatrix}
$$
\label{defi:definition of composition and units of nu k}

\begin{itemize}
\item[$-$]
For two coherent arrays $x$ and $y$ such that $d^-_k(x) =d^+_k(y) = z$, we define their $k$-composition by the following formula: 
$$x*_k y := x- z + y .$$ More explicitly:
$$\begin{pmatrix}
x^-_0 &...& x^-_n\\
x^+_0 &...& x^+_n
\end{pmatrix}
*_k
\begin{pmatrix}
y^-_0 &...& y^-_n\\
y^+_0 &...& y^+_n
\end{pmatrix}
 := 
\begin{pmatrix}
y^-_0&...&y_k^-& y_{k+1}^- + x_{k+1}^- & ...& y_{n}^- + x_{n}^-\\
x^+_0 &...&x_k^+& y_{k+1}^+ + x_{k+1}^+ & ...& y_{n}^+ + x_{n}^+ 
\end{pmatrix}
$$
\item[$-$]
For an integer $m>n$, we define the $m$-sized array $1^m_x$ as follows:
$$1^m_x :=
\begin{pmatrix}
x^-_0 &...& x^-_n& 0 &...&0\\
x^+_0 &...& x^+_n& 0 &...&0	
\end{pmatrix}$$
\end{itemize}
The globular set $\nu K$, equipped with these compositions and units is an $\omega$-category.
\end{definition}

\begin{construction}
We define the functor $\nu: \CDA \to \omegacat$ which associates to an augmented directed complex $K$, the $\omega$-category $\nu K$, and to a morphism of augmented directed complexes $f: K \to L$, the morphism of $\omega$-categories.
$$
\begin{array}{rccc}
\nu f : &\nu K &\to& \nu L\\
& \left(\begin{matrix}
x^-_0 &...&x_n^-\\
x^+_0&...&x_n^+
\end{matrix}\right) 
&\mapsto&
\left(\begin{matrix}
f_0(x^-_0) &...&f_n(x_n^-)\\
f_0(x^+_0)&...&f_n(x_n^+)
\end{matrix}\right) 
\end{array}
$$
\end{construction}

\begin{theorem}[Steiner]
\label{theo:ajdonction de steiner avec unite et counite explicite}
The functors $\lambda$ and $\nu$ form an adjoint pair 
\[\begin{tikzcd}
	{\lambda:\omegacat} & {\CDA:\nu}
	\arrow[""{name=0, anchor=center, inner sep=0}, shift left=2, from=1-1, to=1-2]
	\arrow[""{name=1, anchor=center, inner sep=0}, shift left=2, from=1-2, to=1-1]
	\arrow["\dashv"{anchor=center, rotate=-90}, draw=none, from=0, to=1]
\end{tikzcd}\]
For a $\omega$-category $C$, the unit of the adjunction is given by:
$$\begin{array}{rrcl}
~~~~~\eta :& C &\to & \nu \lambda C \\
& x\in C_n &\mapsto & 
\begin{pmatrix}
[d^-_0(x)]_0&...&[d^-_{n-1}(x)]_{n-1}&[x]_n\\
[d^+_0(x)]_0&...& [d^+_{n-1}(x)]_{n-1}&[x]_n
\end{pmatrix}
\end{array}
$$
For an augmented directed complex $K$, the counit is given by:
$$\begin{array}{rrcl}
\pi :& \lambda \nu K &\to & K~~~~~~~~~~~~~~~~ \\
& [x ]_n \in (\lambda \nu K)_n&\mapsto & x_n^+ = x_n^-
\end{array}
$$
\end{theorem}
\begin{proof}
This is \cite[theorem 2.11]{Steiner_omega_categories_and_chain_complexes}.
\end{proof}

\begin{definition}
A \snotion{basis}{for augmented directed complexes} for an augmented directed complex $(K,K^*,e)$ is a graded set $B = (B_n)_{n\in\Nb}$ such that for every $n$, $B_n$ is both a basis for the monoid $K_n^*$ and for the group $K_n$.
\end{definition}

\begin{remark}
The elements of $B_n$ can be characterized as the minimal elements of $K_n^*\backslash{0}$ for the following order relation:
	$$x\leq y \mbox{ iff } y-x \in K_n^*$$
This shows that if a basis exists, it is unique.
\end{remark}
\vspace{1cm}

Any element of $K_n$ can then be written uniquely as a sum $\sum_{b\in B_n} \lambda_b b$. This leads us to define new operations:
\begin{definition}
\label{defi:support}
For an element $x := \sum_{b\in B_n} \lambda_b b$ of $K_n$, we define the \textit{positive part} and the \textit{negative part}:
$$
\begin{array}{rcl}
(x)_+ &:=& \sum_{b\in B_n, \lambda_b> 0} ~\lambda_bb\\
(x)_- &:=& \sum_{b\in B_n, \lambda_b< 0} -\lambda_bb
\end{array}
$$
We then have $x = (x)_+ - (x)_-$. An element $x$ is \textit{positive} (resp. \textit{negative}) when $x =(x)_+$ (resp. when $x =-(x)_-$).
Let $y = \sum_{b\in B_n} \mu_b b$, we set : 
$$
\begin{array}{rcl}
x\wedge y &:=& \sum_{b\in B_n} \mbox{ min}(\lambda_b, \mu_b)~ b \\
\end{array}
$$
Eventually, we set \sym{(partialna@$\partial_n^+(\uvar)$}\sym{(partialnb@$\partial_n^-(\uvar)$}
$$
\begin{array}{rcl}
\partial_n^+(\uvar) &:=& (\partial_n(\uvar))_+ : K_{n+1}\to K^*_n\\
\partial_n^-(\uvar) &:= &(\partial_n(\uvar))_- : K_{n+1}\to K^*_n
\end{array}
$$

When an element $b$ of the basis is in the support of $x$, i.e $\lambda_b\neq 0$, we say that \textit{$b$ belongs to $x$}, which is denoted by $b\in x$.
\end{definition}

\begin{example}
For any integer $n$, $\lambda\Db_n$ admits a basis, given by the graded set $B_{\lambda\Db_n}$ fulfilling:
$$(B_{\lambda\Db_n})_k:= \left\{ 
\begin{array}{ll}
\{e_k^-,e_k^+\}&\mbox{ if $k<n$}\\
\{e_n\}&\mbox{ if $k=n$}\\
\emptyset&\mbox{ if $k>n$}\\
\end{array}\right.$$ 
The augmented directed complex $\lambda[n]$ also admits a basis, given by the graded set $B_{\lambda\Db_n}$ fulfilling:
$$(B_{\lambda\Db_n})_k:= \left\{ 
\begin{array}{ll}
\{v_0,v_1,...,v_n\}&\mbox{ if $k=0$}\\
\{v_{0,1},v_{1,2}...,v_{n-1,n}\}&\mbox{ if $k=1$}\\
\emptyset&\mbox{ if k>1}\\
\end{array} \right.$$ 
\end{example}

\begin{definition}
Let $a\in K^*_n$. We set by a decreasing induction on $k\leq n$ : 
 $$ \begin{array}{rclc}
 \langle a\rangle_k^\alpha &:= & a & \mbox{if $k = n$}\\
 &:= & \partial_k^\alpha\langle a\rangle^\alpha_{k+1} & \mbox{if not}
\end{array} 
$$
The array associated to $a$ is then: 
$$\langle a\rangle := \begin{pmatrix}
\langle a\rangle^-_0 &...&\langle a\rangle^-_{n-1}&a\\
\langle a\rangle^+_0 &...&\langle a\rangle^+_{n-1}&a
\end{pmatrix}$$
The basis is said to be \wcnotion{unitary}{unitary basis} when for any $b\in B$, the array $\langle b\rangle$ is coherent.
\end{definition}

\begin{definition}
 We define the relation $\odot$ on $B$ as being the smallest transitive and reflexive relation such that for any pair of elements of the basis $a,b$, 
$$a\odot b \mbox{ if } \mbox{($|a|>0$ and $b\in\langle a\rangle_{|a|-1}^-$)}~~\mbox{or}~~\mbox{($|b|>0$ and $a\in \langle b\rangle_{|b|-1}^+$)}$$
A basis is said to be \wcsnotion{loop free}{loop free basis}{for augmented directed complexes} the relation $\odot$ is a (partial) order on $B$.
\end{definition}

\begin{remark}
In \cite{Ara_Maltsiniotis_joint_et_tranche}, this notion is called \textit{strongly loop free}.
\end{remark}

\begin{example}
For any integer $n$, $\lambda\Db_n$ and $\lambda[n]$ admit a loop free and unitary basis.
\end{example}

\begin{definition}
 We now define the subcategory \wcnotation{$\CDAB$}{(adcb@$\CDAB$} of $\CDA$ composed of augmented directed complexes which admit a unitary and loop free basis. 
 \end{definition}

We will now describe the analog of the notion of basis for $\omega$-categories. 

\begin{definition}
A $\omega$-category $C$ is \wcnotion{generated by composition}{generated by composition} by a set $E\subset C$ when any cell can be written as a composition of elements of $E$ and iterated units of elements of $E$. This set is a \snotion{basis}{for $\omega$-categories} if $\{[e]_{d(e)}\}_{e\in E}$ is a basis of the augmented directed complex $\lambda C$. 
\end{definition}

\begin{prop}
An $\omega$-category $C$ that admits a basis is an $\omega$-category.
\end{prop}
\begin{proof}
Let $C$ be an $\omega$-category that admits a basis $E$. Suppose that there exists a non trivial $n$-cell $\alpha$ that admits an inverse $\beta$. We then have $[\alpha]_n+ [\beta]_n=[\alpha \circ_{n-1} \beta]_n =0$. As $\lambda C$ is free, we have $[\alpha]_n=0$. This implies the equality $[e]_n=0$ for any element $e\in E$ of dimension $n$ that appears in a decomposition of $\alpha$. This is obviously in contradiction with the fact that $\{[e]_{d(e)}\}_{e\in E}$ is a basis of the augmented directed complex $\lambda C$. 
\end{proof}

\begin{definition}
\label{defi:loop free and atomic}
A basis $E$ of an $\omega$-category is : 
\begin{enumerate}
\item \wcsnotion{Loop free}{loop free basis}{for $\omega$-categories} when $\{[e]_{d(e)}\}_{e\in E}$ is.
\item \wcnotion{Atomic}{atomic basis} when $[d_n^+ e]_n \wedge [d_n^- e]_n = 0$ for any $e\in E$ and any natural number $n$ strictly smaller than the dimension of $e$. 
\end{enumerate}
\end{definition}

\begin{prop}
 If a loop free basis $E$ is atomic then $\{[e]\}_{e\in E}$ is unitary.
 \end{prop}
\begin{proof}
 This is \cite[proposition 4.6]{Steiner_omega_categories_and_chain_complexes}.
 \end{proof}

\begin{example}
For any integer $n$, $\Db_n$ and $[n]$ admit a loop free and atomic basis.
More generally, \cite[proposition 4.13]{Ara_Maltsiniotis_joint_et_tranche} states that 
any globular sum admits a loop free and atomic basis. 
\end{example}

\begin{definition}
Proposition $1.23$ of \cite{Ara_a_categorical_characterization_of_strong_Steiner_omega_categories} states that if an $\omega$-category admits a loop-free and atomic basis, it is unique.
We then define the category \wcnotation{$\ocatB$}{(ocat@$\ocatB$} as the full subcategory of $\omegacat$ composed of $\omega$-categories admitting an atomic and loop-free basis.
\end{definition}

 \begin{theorem}[Steiner]
 \label{theorem:steiner}
 Once restricted to $\ocat_B$ and $\CDAB$, the adjunction 
\[\begin{tikzcd}
	{\lambda:\omegacat} & {\CDA:\nu}
	\arrow[""{name=0, anchor=center, inner sep=0}, shift left=2, from=1-1, to=1-2]
	\arrow[""{name=1, anchor=center, inner sep=0}, shift left=2, from=1-2, to=1-1]
	\arrow["\dashv"{anchor=center, rotate=-90}, draw=none, from=0, to=1]
\end{tikzcd}\]
becomes an adjoint equivalence, i.e. :
$$ \lambda_{|\ocatB } \circ \nu_{|\CDAB} \cong id_{|\CDAB}~~~~~~~ id_{|\ocatB }\cong \nu_{|\CDAB} \circ \lambda_{|\ocatB }$$
\end{theorem}
\begin{proof}
See \cite[theorem 5.11]{Steiner_omega_categories_and_chain_complexes}.
\end{proof}

\begin{remark}
If $K$ is an augmented directed complex admitting a unitary and loop-free basis $B$, then the $\omega$-category $\nu K$ admits an atomic and loop-free basis given by the set $\langle B\rangle := \{\langle b\rangle,b\in B\}$. Conversely if an $\omega$-category $C$ admits an atomic and loop-free basis $E$, then the augmented directed complex $\lambda C$ admits a unitary and loop-free basis given by the family of sets $[E_n] := \{[e]_{d(e)}, e\in E_n\}$. 
The isomorphisms
$$\lambda \nu K\cong K \mbox{~~~ and ~~~} C\cong \nu\lambda C$$
induce isomorphisms:
$$[\langle B\rangle ]\cong B \mbox{~~~ and ~~~} E \cong \langle [E]\rangle.$$
\end{remark}

\begin{definition}
Let $f:M\to N$ be a morphism between two augmented directed complexes admitting unitary and loop-free bases $B_M$ and $B_N$. The morphism $f$ is \wcnotion{quasi-rigid}{quasi-rigid morphism} if for any $n$, and any $b\in (B_M)_n$,
$$f_n(b)\neq 0 ~\Rightarrow ~ f_n(b)\in B_N\mbox{ and }\nu(f)\langle b\rangle = \langle f_n(b)\rangle.$$
\end{definition}

\begin{theorem}
\label{theo:Kan condition}
Suppose given a commutative square in $\CDAB$
\[\begin{tikzcd}
	K & {M_1} \\
	{M_0} & M
	\arrow["{k^0}", from=1-1, to=1-2]
	\arrow["{l^1}", from=1-2, to=2-2]
	\arrow["{k^0}"', from=1-1, to=2-1]
	\arrow["{l^0}"', from=2-1, to=2-2]
\end{tikzcd}\]
and such that all morphisms are quasi-rigid. Let $B_K,~B_{M_0},~B_{M_1},~B_{M}$ be the bases of $K,~M_0,~M_1,~ M$.

Then, this square is cocartesian if and only if for any $n$, the induced diagram of sets
\[\begin{tikzcd}
	{(B_{K})_n\cup\{0\}} & {(B_{M_1})_n\cup\{0\}} \\
	{(B_{M_0})_n\cup\{0\}} & {(B_{M})_n\cup\{0\}}
	\arrow["{k^0_n}", from=1-1, to=1-2]
	\arrow["{l^1_n}", from=1-2, to=2-2]
	\arrow["{k^0_n}"', from=1-1, to=2-1]
	\arrow["{l^0_n}"', from=2-1, to=2-2]
\end{tikzcd}\]
is cocartesian. Furthermore, the induced square in $\ocat$
\[\begin{tikzcd}
	{\nu K} & {\nu M_1} \\
	{\nu M_0} & {\nu M}
	\arrow["{\nu k^0}", from=1-1, to=1-2]
	\arrow["{\nu l^1}", from=1-2, to=2-2]
	\arrow["{\nu k^0}"', from=1-1, to=2-1]
	\arrow["{\nu l^0}"', from=2-1, to=2-2]
\end{tikzcd}\]
is cocartesian.
\end{theorem}
\begin{proof}
This is a combination of theorems 3.1.2 and 3.2.7 of \cite{Loubaton_condition_de_kan}.
\end{proof}

\subsection{$2$-Polygraphs and presheaves on $\Theta_2$}

The objective of this section is to prove the following theorem
\begin{theorem}
\label{theo: case of 1 and 2 category}
Let $k\leq 1$ be an integer, and let $C$ and $D$ be two $2$-categories admitting loop-free and atomic bases (definition \ref{defi:loop free and atomic}). Suppose there is a cocartesian square in $\ocat$ of shape:
\[\begin{tikzcd}
	{\partial [[k],1]} & C \\
	{[[k],1]} & D
	\arrow["x"', from=2-1, to=2-2]
	\arrow["{\partial x}", from=1-1, to=1-2]
	\arrow["j", from=1-2, to=2-2]
	\arrow["\lrcorner"{anchor=center, pos=0.125, rotate=180}, draw=none, from=2-2, to=1-1]
	\arrow[from=1-1, to=2-1]
\end{tikzcd}\]
Then, viewed as a morphism of $\Psh{\Theta_2}$, the morphism $j:C\cup x\to D$ is in $\overline{\W_2}$ which is the smallest precocomplete class of morphism (definition \ref{defi:precocomplet}) containing $\W_2$ ( definition \ref{defi:definition of W}).
\end{theorem}
Informally, this theorem shows that the square appearing in the previous statement is homotopically cocartesian. This result is therefore a special case of the similar but much more general theorem proved by Campion in \cite{campion2023infty}.

\vspace{1cm}

We  fix  a $2$-category $D$  admitting a loop free and atomic basis until the end of this section.

\begin{definition}
Let $v$ be a 2-cell of $ D$. The \textit{2-support of $v$}, denoted $B_2^v$, is the support of $[v]_2$ (definition \ref{defi:support}). 
The \textit{1-support of $ v$}, denoted $B_1^v$, is the union of the support of $[\pi_1^+v]_1$ with   $(\partial^-_1B_2^v)\cup B_2^v$.

For $i=1,2$, we define the relation $ <^v_i$ as the smallest transitive relation on $ B_i^v$ such that $ c<_i d$ whenever 
\[ \langle c \rangle^-_i  \wedge  \langle d \rangle^+_i  \neq 0. \]
\end{definition}

\begin{remark}
Remark that the two inclusions $(B_0^v,<^v_0)\to (B,\odot)$ and $(B_1^v,<^v_1)\to (B,\odot)$ are strictly increasing. As a consequence, $<^v_0$ and $<^v_1$ are (partial) orders. 
\end{remark}

\begin{remark}
The theorem \ref{theorem:steiner} implies that $B_1^v$ is also equal to the union of the support of $[\pi_1^-v]_1$ with $(\partial^+_1B_2^v)\cup B_2^v$.
\end{remark}

\begin{lemma}
\label{lemma:other characterization of <0}
Let $v$ be a 2-cell of $D$, and $b,b'$ be two elements of $B_1^v$. The assertion $b<^v_0b'$ holds if and only if there exists a well-defined $0$-composite 
$$b*_0....*_0 b'.$$
\end{lemma}
\begin{proof}
Straightforward.
\end{proof}

\begin{definition}
Given a finite set $E$ endowed with a strict order $<$, \textit{an ordering of $E$} is a bijective sequence $(x_i)_{i\leq n}$ of elements of $E$ such that for every $i<j$, $\neg (x_j<x_i)$. 
\end{definition}

\begin{theorem}
\label{theo:decomposition de condition de Kan}
Let $v$ be a $2$-cell of $D$, and $(w_i)_{i\leq n}$ an ordering of $B_2^v$.
There exists a decomposition of $v$ as 
$$v:=v_0*_1...*_1 v_{n}$$
such that for every $i<n$, $v_i$ is a $0$-composition of an element of $w_i$ with several $1$-generators of $D$. 

Moreover, for any decomposition of $v$ as 
$$v:=v_0'*_1...*_1 v'_{n}$$
such that $v_i'$ is a $0$-composition of a unique element $w_i'$ of $B_2^v$ with several $1$-generators of $D$, then the sequence $\{w_i\}_{i\leq n}$ is an ordering of $B_2^v$.
\end{theorem}
\begin{proof}
The first assertion is a consequence of \cite[theorem 2.47]{Loubaton_condition_de_kan}. 

To show the second assertion, suppose given such a decomposition. We will proceed by contradiction and then suppose that there exist $i<j$ such that $w'_j<w'_i$. We can suppose without loss of generality that $i=0$ and $j=n$. 

By a direct induction on $n$ using \cite[lemma 2.43]{Loubaton_condition_de_kan}, we have
$$\partial_1^+([v'_0]_2) \leq \partial_1^+([v_0'*_1...*_1 v'_{n}]_2)=\partial_1^+([v]_2)$$
$$\partial_1^-([v'_n]_2) \leq \partial_1^-([v_0'*_1...*_1 v'_{n}]_2)=\partial_1^-([v]_2)$$
Moreover, the inequality $w'_n<w'_0$ implies
$$\partial_1^+([v'_0]_2)\wedge \partial_1^-([v'_n]_2)\neq 0$$ 
and then
$$\partial_1^+([v]_2)\wedge \partial_1^-([v]_2)\neq 0$$
which is absurd as $\partial_1^+([v]_2)$ and $\partial_1^-([v]_2)$ are respectively defined as the positive part and the negative part of $\partial([v]_2)$.
\end{proof}

\begin{lemma}
\label{lem:functorialite de <}
Let $ D$ be a $ 2$-category and $ f:C\to D$ be a morphism. 
Let $ v$ be a $2$-cell of $C$ and $ b,b'$ two elements in the $1$-support of $ v$.
\begin{enumerate}
\item $ b<^v_0b'$ implies that for all $ c\in B_1^{f(b)}$ and $ c'\in B_1^{f(b')}$, $ c<^{f(v)}_0c'$.
\item $ b<^v_1b'$ implies that for all $ c\in B_2^{f(b)}$ and $ c'\in B_2^{f(b')}$, $ \neg (c'<^{f(v)}_1 c)$.
\end{enumerate}
\end{lemma}
\begin{proof}
According to lemma \ref{lemma:other characterization of <0}, we have a well defined $0$-composite 
$$b*_0...*_0b'$$
and so a well defined $0$-composite
$$f(b)*_0...*_0f(b')$$
Applying the decomposition given in theorem \ref{theo:decomposition de condition de Kan} to $f(b)$ and $f(b')$, we get a well-defined composite 
$$w*_0...*_0w'.$$
where $ w$ (resp. $w'$) is a $0$-composite of $c$ (resp. $c'$) with 1-generators. This then implies $c<^{f(v)}_0c'$. 

We now deal with the second case. Let $c\in B_2^{f(b)}$ and $c'\in B_2^{f(b')}$.
According to theorem  \ref{theo:decomposition de condition de Kan} there exists a decomposition of $v$ of shape 
\[ v:=v_0*v_1*_1....*_1v_n \]
where for all $i\leq n$, $v_0$ is a $0$-composite of a unique $2$-generator with $1$-generators. Moreover, the unique $i$ (resp. the unique $j$) such that $b$ belongs to $v_i$ (resp. such that $b'$ belongs to $v_j$) verifies $i<j$.

Applying the morphism $f$ and decomposing each $f(v_i)$ the same way, we get a decomposition
\[ f(v):=u_0*u_1*_1....*_1u_m \]
where for all $i\leq m$, $u_0$ is a $0$-composite of a $2$-generator with $1$-generators, and such that the unique $i$ (resp. the unique $j$) such that $c$ belongs to $u_i$ (resp. such that $c'$ belongs to $w_j$) verifies $i<j$. The second assertion of theorem \ref{theo:decomposition de condition de Kan} then implies that $ \neg (c'<^{f(v)}_1 c)$.
\end{proof}

\begin{lemma}
\label{lemma:2 incompatiblite1}
Let $v$ be a 2-cell, and $b,b'$ two different elements of the 2-support of $v$. Then $\neg (b<^v_1 b')\wedge \neg (b'<^v_1 b)$ implies that $(b<^v_0 b')\vee (b'<^v_0 b)$ holds.
\end{lemma}
\begin{proof}
We suppose that $\neg (b<^v_1 x)\wedge \neg (x<^v_1 b)$. We can then find an ordering with respect to $<^v_i$ of $B_2^v$ such that $b$ and $b'$ are one after the other. According to theorem \ref{theo:decomposition de condition de Kan}, we have a decomposition of $v$ of shape
 $...*_1 v_i*_1 v_{i+1}*_1....$ such that $v_i$ can be written as a $0$-composite of $b$ and $1$-generators and $v_{i+1}$ can be written in a $0$-composite of $b'$ and $1$-generators. We then have 
 \[ v_{i}:= ...*_0b*_0...~~~~ v_{i+1}:= ...*_0b'*_0... \]
 and then an equality between the following $1$-cells
 \[ ...*_0\pi^-_1b*_0...=\pi^-_1v_i=\pi^+_1v_{i+1}= ...*_0\pi^+_1b'*_0... \]
 As $\pi^-_1b\wedge \pi^+_1b'=0$, this implies that $\pi^-_1v_i=\pi^+_1v_{i+1}$ can be written as 
 \[ ... *_0 \pi^-_1b *_0 ... *_0 \pi^+_1b' *_0... \mbox{~~~or as~~~}... *_0 \pi^+_1b'*_0 ... *_0 \pi^-_1 b *_0...  \]
The cell $v_i*_1v_{i+1}$ can then be written as 
 \[ ... *_0 b *_0 ... *_0 b' *_0... \mbox{~~~or as~~~}... *_0 b'*_0 ... *_0 b *_0...  \]
This implies that $(b<_0 x)\vee (x<_0 b)$ holds.
\end{proof}

\begin{lemma}
\label{lemma:2 incompatiblite2}
Let $v$ be a $2$-cell, and $b,b'$ two elements of the $2$-support of $v$. Then $b<^v_0 b'$ implies that for all $\alpha\in \{-,+\}$, for all $c$ in $\langle b\rangle^\alpha_1$, $c<^v_0 b'$ holds.
\end{lemma}
\begin{proof}
By lemma \ref{lemma:other characterization of <0},
there exists a sequence $(b_i)_{i\leq n}$ such that $b_0=b$, $b_n=b'$ and for all $i<n$, $b_i$ and $b_{i+1}$ are $0$-composable. The sequence 
$$b*_0b_1*_0... *_0b_{n-1}*_0b'$$ is well defined, and then so is the sequence
$$\pi_1^\alpha b*_0b_1*_0... *_0b_{n-1}*_0b'.$$
As $\pi_1^\alpha b$ is a $0$-composite of $c$ with other elements of $B_1^v$, this concludes the proof.
\end{proof}

%

\begin{lemma}
\label{lemma:decompasiition}
 Let $r,u$ be two $2$-cells of $D$ such that $B_1^u\subset B_1^r$. Let $x$ in $B_2^u$.
Then there exists a unique decomposition of $u$ of shape
$$u= v*_{1}w*_{1}t$$ such that
\begin{enumerate}
\item for any element $b$ in $B_2^v$, $b<^r_1x$;
\item for any element $b$ in $B_2^t$, $x  <^r_1 b$;
\item for any element $b$ in $B_2^w$, $\neg (b<^r_1x) \vee  \neg (x<^r_1b)$
\end{enumerate}
If for any element of $b$ in $B_2^u$ different from $x$, $\neg (b<^r_1x) \vee  \neg (x<^r_1b)$, then  there exists a unique decomposition of $u$ of shape
$$u= v*_{0}w*_{0}t$$ such that
\begin{enumerate}
\item for any element $b$ in $B_1^v$, $b<^r_0x$;
\item for any element $b$ in $B_1^t$, $x  <^r_0 b$;
\item $w$ is either $x$ or a cell of lower dimension.
\end{enumerate}
\end{lemma}
\begin{proof}
We will construct these two decompositions at the same time. To this extend, we will use the Steiner theory recalled in section \ref{section:Steiner thery}.

Let $i$ be either $1$ or $0$. If $i=0$, we then suppose furthermore that for any element of $b$ in $B_2^u$ different from $x$, $\neg (b<^r_1x) \vee  \neg (x<^r_1b)$.
We denote by
$$\left(
\begin{array}{rcl}
u_0^-& u_1^-& u_2^-\\
u_0^+& u_1^+& u_2^+
\end{array}\right)$$
the array corresponding to the cell $u$.  
For any $i<j\leq 2$ and $\alpha\in\{-,+1\}$, we denote
$$
v_j^{\alpha}:=\sum \{b\in [u]_j^{\alpha}, ~b<_i x\} ~~~~~~
t_j^{\alpha}:=\sum \{b\in [u]_j^{\alpha},~ b>_i x\}  $$
$$
w_j^{\alpha}:=\sum \{b\in [u]_j^{\alpha}, ~\neg (b<_j x) \wedge \neg (b<_jx)\} 
$$
and 
$$
\begin{array}{lll}
v_i^{+}:= u_i^+& w_i^{+}:= v_i^{-} & t_i^{+}:= w_i^{-} \\
v_i^{-}:=  u_i^+- \partial (v_{i+1}^{-}) &
w_i^{-}:= v_i^{-} - \partial (w_{i+1}^{-}) &
t_i^{-}:= u_i^- 
\end{array}
$$
and for any $j<i$ and $\alpha\in\{-,+1\}$
$$
\begin{array}{lll}
v_j^{\alpha}:= u_j^{\alpha} & w_j^{\alpha}:= u_j^{\alpha} & t_j^{\alpha}:= u_j^{\alpha}\\
\end{array}
$$

By construction, 
we then have for any $i\leq j\leq 2$
$$u_j^{\alpha}=v_j^{\alpha}+w_j^{\alpha}+t_j^{\alpha}.$$
and 
$$\partial(v_{i+1}^-)= v_i^+-v_i^-~~~~
\partial(w_{i+1}^-)= w_i^+-w_i^- ~~~~
\partial(t_{i+1}^-)= t_i^+-t_i^-$$
and 
$$\partial(u_{i}^\alpha)= \partial(v_{i}^\alpha)=\partial(w_{i}^\alpha)=\partial(t_{i}^\alpha)$$
It then remains to show that  for any $i+1<j\leq 2$
\begin{equation}
\label{eq:first eq to show}
\partial v_j^\alpha = v_{j-1}^+-v_{j-1}^-~~~~~
\partial w_j^\alpha = w_{j-1}^+-w_{j-1}^-~~~~~
\partial t_j^\alpha = t_{j-1}^+-t_{j-1}^-
\end{equation}
and
\begin{equation}
\label{eq:seond eq to show}
v_i^-\geq 0~~~~ w_i^-\geq 0
\end{equation}
Indeed, if the assertions \eqref{eq:first eq to show} and \eqref{eq:seond eq to show} are fulfilled, this implies that the sequences $\{v_j^\beta\}$, $\{w_j^\beta\}$ and $\{t_j^\beta\}$  are arrays and then correspond respectively to the unique cells $v,w$ and $t$  fulfilling the desired condition. 

We first deal with the assertion \eqref{eq:first eq to show}.
Suppose first that there exists an integer $j$ such that $i+1<j\leq 2$. This implies that $i=0$. The lemma \ref{lemma:2 incompatiblite1} then implies that $w^\alpha_2=\lambda x$ with $\lambda\in \{0,1\}$.
By assumption, we have 
$$\partial (u^\beta_2) = u^+_1-u^-_1$$
and then 
$$\partial (v^\beta_2)+ \partial (w^\beta_2)  + \partial (t^\beta_2) = v^+_1-v^-_1+ w^+_1-w^-_1+ t^+_1-t^-_1$$
The lemma \ref{lemma:2 incompatiblite2} implies that  any element of the base belonging to $\partial (v^\beta_2)$ (resp. to $\partial (t^\beta_2)$) is $0$-inferior to $x$ (resp. $0$-superior to $x$). Moreover,  for any $b\in \partial (w^\beta_2)=\lambda \partial x$, we have $ \neg (b<^r_1x) \vee  \neg (x<^r_1b)$.

 The previous equality then implies
$$\partial (v^\beta_2) = v^+_1-v^-_1 ~~~~~~
\partial (w^\beta_2) =  w^+_1-w^-_1~~~~~~
 \partial (t^\beta_2) =  t^+_1-t^-_1$$

We now deal with the assertion \eqref{eq:first eq to show}.
We claim that we have
$$\partial^+ v^\alpha_{i+1}\wedge \partial^- w^\alpha_{i+1}=0~~~~~~~~\partial^+ w^\alpha_{i+1}\wedge \partial^- t^\alpha_{i+1}=0~~~~~~~~
\partial^+ v^\alpha_{i+1}\wedge \partial^- t^\alpha_{i+1}=0
$$

Indeed, suppose that $\partial^+ v^\alpha_{i+1}\wedge \partial^- w^\alpha_{i+1}\neq 0$. This implies that there exists an element of the base $b\in w^\alpha_{i+1}$ and $c\in v^\alpha_{i+1}$ such that $b<_i c$. As we have by definition $c<_i x$, this directly implies that $b<_i x$ which is absurd. We show similarly the two other equalities.
This implies that 
$$
\begin{array}{rcl}
u_i^{+} &\geq& \partial(u_{i+1}^-)\\
 &=& \partial^+(v_{i+1}^{-}+w_{i+1}^{-}+t_{i+1}^{-})\\
&=&\partial^+(v_{i+1}^{-}) + (\partial^+(w_{i+1}^{-}) - \partial^-(v_{i+1}^{-}))_+ +  (\partial^+(t_{i+1}^{-}) - \partial^-(w_{i+1}^{-}) -  \partial^-(v_{i+1}^{-}))_+
\end{array}$$
As a consequence, we have 
$$
\begin{array}{rcl}
v^-_{i} &=& u_i^+- \partial (v_{i+1}^{-})\\
&=& u_i^+- \partial^+ (v_{i+1}^{-})+ \partial^- (v_{i+1}^{-})\\
&\geq&  (\partial^+(w_{i+1}^{-}) - \partial^-(v_{i+1}^{-}))_+ +  (\partial^+(t_{i+1}^{-}) - \partial^-(w_{i+1}^{-}) -  \partial^-(v_{i+1}^{-}))_+ + \partial^- (v_{i+1}^{-})\\
&\geq&  (\partial^+(w_{i+1}^{-}) - \partial^-(v_{i+1}^{-}))_+ + \partial^- (v_{i+1}^{-})\\
&\geq & \partial^+(w_{i+1}^{-})\\
\end{array}
$$
and
$$w^-_{i} =  v_i^{-} - \partial (w_{i+1}^{-})= v_i^{-} - \partial^+ (w_{i+1}^{-})+ \partial^- (w_{i+1}^{-})\geq 0$$
The two assertions \eqref{eq:first eq to show} and \eqref{eq:seond eq to show} are then fulfilled, which concludes the proof.
\end{proof}

\begin{lemma}
\label{lemma:unicity of fact 1}
Let $C$ be a  $2$-category with a atomic and loop free basis. Let $x$ be a element of the base of $C$, and $y$ an element of the base of $D$. Let $f:C\to D$ be a morphism such that $\lambda f x=y$. Let $u$ be an $2$-cell of $C$. We denote by $u=:u_0*_0u_1*_0u_2$ and $f(u)=:v_0*_1v_1*_1v_2$ the decomposition given by the lemma \ref{lemma:decompasiition}. Then
$$f(u_0)=v_0~~~~ f(u_1)=v_1~~~~ f(u_2)=v_2$$
\end{lemma}
\begin{proof}
This is a direct consequence of lemma \ref{lemma:unicity of fact 1}.
\end{proof}

\begin{lemma}
\label{lemma:unicity of fact 2}
Let $C$ be a  $2$-category with a atomic and loop free basis. Let $x$ be a element of the base of $C$, and $y$ an element of the base of $D$. Let $f:C\to D$ be a morphism such that $y$ belongs to $\lambda f x$. Let $u$ be an $2$-cell of $C$. We denote by $u=:u_0*_1u_1*_1u_2$  and $f(u)=:v_0*_1v_1*_1v_2$ the decompositions given by lemma \ref{lemma:decompasiition}. For any $i\leq 2$,  we denote by $f(u_i)=:u_{i0}*_1u_{i1}*_1u_{i2}$ the decomposition given by lemma \textit{op cit}.Then
$$v_0=u_{00}~~~~v_1 = u_{01}*_1u_{02}*_1u_{10}*_1u_{11}*_1u_{12}*_1 u_{20}*_1u_{21}~~~~ v_2=u_{22}$$
\end{lemma}
\begin{proof}
This is a direct consequence of lemma \ref{lemma:unicity of fact 1}.
\end{proof}

\begin{notation}
Let $a$ be a globular sum of dimension lower or equal to $2$. We denote by $\triangledown$ the unique algebraic morphism $\Db_2\xrightarrow{} a$. The  $2$-cell $\triangledown$ is called the \textit{composite cell} of $a$.
\end{notation}

\begin{remark}
\label{rem:composite cell an algebraic morphism}
If $i:a\to a'$ is an algebraic morphism, and $f:a'\to C$ any morphism, the composite cell of $f:a'\to C$ is the same as the composite cell of $fi:a\to C$.
\end{remark}

\begin{definition}
Let $b$ be an element of the base of $D$.
A $2$-cell $v$ of $D$ is \textit{$0$-comparable} with $b$ if $b\in B_2^v$ and if for any $b'\in B_2^v$, the assertion 
$\neg (b<_1^v b') \wedge \neg (b'<_1^v b)$
holds.
\end{definition} 
\begin{lemma}
\label{lemma:simplification gamma,0}
Let $a$ be a globular sum of dimension lower or equal to $2$. Let $x$ be a $2$-cell of $D$.
Let $f:a\to D$ be a morphism such that $f(\triangledown)$ is $0$-comparable with $x$.
Then there exists a commutative triangle 
\[\begin{tikzcd}
	& {a'\vee [[1],1]\vee a''} \\
	a & D
	\arrow["f"', from=2-1, to=2-2]
	\arrow["i", from=2-1, to=1-2]
	\arrow["{f'\vee x\vee f''}", from=1-2, to=2-2]
\end{tikzcd}\]
Moreover, this factorization is functorial in $C$.
\end{lemma} 
\begin{proof}
Let $d$ be the (necessarily unique) element of the basis of $a$ such that $x\in [f(d)]_2$. Let $k\leq1$ and $j:[[k],1]\to \Sp_a$ be an element of the basis, i.e., a globular morphism.

If $j=d$, we consider the diagram
\[\begin{tikzcd}
	& {[[1],1]\vee[[1],1]\vee[[1],1]} \\
	{[[1],1]} & D
	\arrow["fj"', from=2-1, to=2-2]
	\arrow[from=2-1, to=1-2]
	\arrow["{f'\vee x\vee f''}", from=1-2, to=2-2]
\end{tikzcd}\]
and if $j$ is different of $d$ by share the same $0$-source and $0$-target, we consider the diagram
\[\begin{tikzcd}
	& {[[k],1]\vee[1]\vee[[k],1]} \\
	{[[k],1]} & D
	\arrow["fj"', from=2-1, to=2-2]
	\arrow[from=2-1, to=1-2]
	\arrow[from=1-2, to=2-2]
\end{tikzcd}\]
where these two decompositions are induced by lemma \ref{lemma:decompasiition}. If the $0$-source and $0$-target of $j$ are different of the one of $d$, we consider the diagram
\[\begin{tikzcd}
	& {[[k],1]} \\
	{[[k],1]} & D
	\arrow["fj"', from=2-1, to=2-2]
	\arrow[from=2-1, to=1-2]
	\arrow["fj", from=1-2, to=2-2]
\end{tikzcd}\] 
Taking the colimit over all such $j:[[k],1]\to a$, this induces a factorization
\[\begin{tikzcd}
	& {a'\vee [[1],1]\vee a''} \\
	a & D
	\arrow["f"', from=2-1, to=2-2]
	\arrow["i", from=2-1, to=1-2]
	\arrow["{f'\vee x\vee f''}", from=1-2, to=2-2]
\end{tikzcd}\]
fulfilling the desired property. Eventually, the functoriality of this factorization is a consequence of the unicity of the decomposition given in lemma \ref{lemma:decompasiition} and of lemma \ref{lem:functorialite de <}.
\end{proof}
\vspace{1cm}

Until the end of this section, we fix an other $2$-category  $C$ admitting a loop-free and atomic basis, and fitting in a cocartesian square of $\ocat$ of shape:
\[\begin{tikzcd}
	{\partial[[1],1]} & C \\
	{[[1],1]} & D
	\arrow["f", from=1-2, to=2-2]
	\arrow["x"', from=2-1, to=2-2]
	\arrow[from=1-1, to=2-1]
	\arrow["{\partial x}", from=1-1, to=1-2]
	\arrow["\lrcorner"{anchor=center, pos=0.125, rotate=180}, draw=none, from=2-2, to=1-1]
\end{tikzcd}\]

\begin{construction}
We define $\Gamma_0$ as the full subcategory of $(\Theta_2)_{/D}$ whose objects are morphisms $f:a\to D$ such that either $f$ factors through $C$, or the following conditions are fulfilled:
\begin{enumerate}
\item $f(\triangledown)$ is $0$-comparable with $x$.
\item $\Sp_a\to a\to D$ factors through the $\Theta$-set $C\cup x$.
\end{enumerate}
We define $\Gamma_1$ as the full subcategory of $(\Theta_2)_{/D}$ whose objects are morphisms $v:a\to D$ such that $\Sp_a\to a\to D$ factors through the $\Theta$-set $C\cup \colim_{\Gamma_0}a$.
\end{construction}

\begin{lemma}
\label{lem:injectif 1}
The canonical morphism of $\Theta$-sets $\iota:\colim_{\Gamma_0}a\to D$ is injective. Its image corresponds to morphisms $f:a\to D$ such that either $f$ factors through $C$, or the $2$-cell $f(\triangledown)$ is $0$-comparable with $x$.
\end{lemma}
\begin{proof}
First, remark that the morphism $C\to \colim_{\Gamma_0}a$ is injective. To complete the characterization of the image of $\iota$, let $f:a\to D$ be a morphism such that $f(\triangledown)$ is $0$-comparable with $x$.

Consider now the factorization $a\xrightarrow{i} a'\xrightarrow{g} D$ of $f$ given by lemma \ref{lemma:simplification gamma,0}. Every element of $\Sp a'$ is sent to either an element of $C$ or to $x$. This implies that $g$ belongs to $\Gamma_0$, which concludes the characterization of the image of $\iota$.

Now, for the injectivity, suppose that there exists another element $h:b\to D$ of $\Gamma_0$ and a decomposition $a\xrightarrow{j} b\xrightarrow{h} D$ of $f:a\to D$. Up to further factorization, we can suppose that $j$ is algebraic and, according to lemma \ref{lemma:simplification gamma,0}, that $j(\triangledown)$ is $0$-comparable with the (necessarily unique) element of the basis $c$ of $b$ such that $g(c)=x$. 

Using once again the factorization lemma \ref{lemma:simplification gamma,0} on the morphism $j$ and the object $c$, and using the functoriality of this factorization, we get a commutative diagram
\[\begin{tikzcd}
	a & b \\
	{a'} & D
	\arrow["i"', from=1-1, to=2-1]
	\arrow["g"', from=2-1, to=2-2]
	\arrow["h", from=1-2, to=2-2]
	\arrow["j", from=1-1, to=1-2]
	\arrow[from=2-1, to=1-2]
\end{tikzcd}\]
 completing the proof of injectivity.
\end{proof}


\begin{lemma}
\label{lem:injectif 2}
The canonical morphism of $\Theta$-sets $\iota:\colim_{\Gamma_1}a\to D$ is an equivalence.
\end{lemma}
\begin{proof}
First, remark that the morphism $C\to \colim_{\Gamma_1}a$ is injective. To complete the surjectivity of $\iota$, let $f:a\to D$ be a morphism such that $x$ belongs to $[f(\triangledown)]_2$. We denote by $c$ as the (necessary) unique element of the base of $a$ such that $x\in [f(c)]_2$.

Let $k\leq 1$ and $j:[[k],1]\to \Sp_a$ be an element of the basis. If $j$ is $c$, we consider the following diagram
$$[[1],1]\to [[3],1]\to D$$
induced by the decomposition of lemma \ref{lemma:decompasiition}. Moreover, lemma \ref{lem:injectif 1} implies that $l$ belongs to $\Gamma_1$.
If $j$ is different from $c$, we consider the diagram
$$[[k],1]\to [[k],1]\to D$$
Moreover, $fj$ factors through $C$ and then belongs to $\Gamma_1$. Taking the colimit over all such $j$, this induces a diagram
$$a\xrightarrow{i} a' \xrightarrow{g} D$$
whose composite is $f$ and such that $g$ is in $\Gamma_1$. This concludes the proof of the surjectivity of $\iota$.

To prove the injectivity, suppose now that there exists another element $h:b\to D$ and a decomposition $a\xrightarrow{j} b\xrightarrow{h} D$ of $f:a\to D$ with $h$ in $\Gamma_1$. If $j$ is $c$, we consider the diagram
\[\begin{tikzcd}
	{[[1],1]} & {[[3],1]} & {a'} \\
	{[[3],1]} & {[[9],1]} \\
	b && D
	\arrow[from=2-1, to=2-2]
	\arrow[from=1-1, to=2-1]
	\arrow["g", from=1-3, to=3-3]
	\arrow["h"', from=3-1, to=3-3]
	\arrow["t", from=2-2, to=3-3]
	\arrow[from=2-1, to=3-1]
	\arrow[from=1-2, to=1-3]
	\arrow["{[\sigma,1]}", from=1-2, to=2-2]
	\arrow[from=1-1, to=1-2]
\end{tikzcd}\]
where the left vertical morphisms are induced by the decomposition of  lemma \ref{lemma:decompasiition}, the morphism $t$ obtained in applying for each $2$-cell the decomposition of lemma \textit{op cit}, and the morphism $\sigma$ send $0$ on $0$, $1$ on $1$, $2$ on $8$ and $3$ on $9$. The commutativity of this diagram is a consequence of  lemma \ref{lemma:unicity of fact 2}. 

If $j$ is different from $c$, we consider the diagram
\[\begin{tikzcd}
	{[[k],1]} & {[[k],1]} & {a'} \\
	{[[k],1]} & {[[k],1]} \\
	b && D
	\arrow[Rightarrow, no head, from=2-1, to=2-2]
	\arrow[Rightarrow, no head, from=1-1, to=2-1]
	\arrow[from=1-3, to=3-3]
	\arrow["h"', from=3-1, to=3-3]
	\arrow[from=2-2, to=3-3]
	\arrow[from=2-1, to=3-1]
	\arrow[from=1-2, to=1-3]
	\arrow[Rightarrow, no head, from=1-1, to=1-2]
	\arrow[Rightarrow, no head, from=1-2, to=2-2]
\end{tikzcd}\]
Taking the colimit over all such $j$, this induces a diagram
\[\begin{tikzcd}
	a & {a'} & {a'} \\
	{a'} & {a''} \\
	b && D
	\arrow[from=2-1, to=2-2]
	\arrow["i"', from=1-1, to=2-1]
	\arrow["g", from=1-3, to=3-3]
	\arrow["h"', from=3-1, to=3-3]
	\arrow[from=2-2, to=3-3]
	\arrow[from=2-1, to=3-1]
	\arrow[Rightarrow, no head, from=1-2, to=1-3]
	\arrow["i", from=1-1, to=1-2]
	\arrow[from=1-2, to=2-2]
\end{tikzcd}\]
where $a''\to D$ is in $\Gamma_1$,
which concludes the proof of injectivity.	
\end{proof}

\begin{lemma}
\label{lemma: lambdA gamma}
Let $f: a\to D$ be a morphism of $\Gamma_0$. We denote by $\Lambda^{\Gamma_0} a$  the subobject of $a$ composed of all $i\in {\Theta_2}_{/a}$ such that $fi$ factors through the  $\Theta_2$-set $C\cup x$. Then the morphism $\Lambda^{\Gamma_0} a\to a$ is in $\overline{\W_2}$.
\end{lemma}
\begin{proof}
If $f$ factors through $C$, then $\Lambda^{\Gamma_0}a$ is equal to $a$. Suppose then that there exists a (necessarily unique) element of the base $b$ such that $f(b)=x$. 

There exists a unique decomposition of $a$ as
$$a\cong a' \vee [[k]\vee[1]\vee[k'],1]\vee a''$$
where the cell $[[1],1]\to a$ is $b$ and where
$$[[k],1]\to a\to D~~~~\mbox{and}~~~~[[k'],1]\to a\to D$$
factors through $C$.

We then have
$$\Lambda^{\Gamma_0} a\cong a'\vee[[k]\coprod_{[0]} [1]\coprod_{[0]}[k'],1]\vee a''$$
As the functor $a'\vee[\uvar,1]\vee a:\Psh{\Delta}\to \Psh{\Theta}$ sends $\overline{\W_1}$ to $\overline{\W_2}$, and as $$[k]\coprod_{[0]} b\coprod_{[0]}[k']\to [k+1+k']$$ is in $\overline{\W_1}$, this concludes the proof.
\end{proof}

\begin{lemma}
\label{lemma: lambdA gamma2}
Let $f: a\to D$ be a morphism of $\Gamma_1$. We denote by $\Lambda^{\Gamma_1} a$  the subobject of $a$ composed of all $i\in \Theta_{/a}$ such that $fi$ factors through $\colim_{\Gamma_0}a$. Then the morphism $\Lambda^{\Gamma_1} a\to a$ is in $\overline{\W_2}$.
\end{lemma}
\begin{proof}
If $f$ factors through $C$, then $\Lambda^{\Gamma_1}a$ is equal to $a$. Suppose then that there exists a (necessarily unique) element of the base $b$ such that $x$ belongs to $[f(b)]_2$.

There exists a unique decomposition of $a$ as
$$a\cong a' \vee [[n]\vee[k]\vee[1]\vee[k']\vee[n'],1]\vee a'' $$
where the cell $[[1],1]\to a$ is $b$, and where $k$ and $k'$ are the maximal integers such that the image by the composite cell of 
$$[[k]\vee[1]\vee[k'],1]\to a$$
is $0$-comparable with $x$, and such that
$$[[k],1]\coprod [[k'],1]\to a\to D$$
factors through $C$.

We then have
$$\Lambda^{\Gamma_0} a\cong a'\vee[[n+k]\coprod_{[k]} [k+1+k']\coprod_{[k']}[k'+n'],1]\vee a''$$
As the functor $a'\vee[\uvar,1]\vee a:\Psh{\Delta}\to \Psh{\Theta}$ sends $\overline{\W_1}$ to $\overline{\W_2}$, and as 
$$[n+k]\coprod_{[k]} [k+1+k']\coprod_{[k']}[k'+n']\to [n+k +1+k'+n']$$ is in $\overline{\W_1}$, this concludes the proof.
\end{proof}

\begin{prop}
\label{prop: case of 2 category}
Let $C$ and $D$ be two $2$-categories admitting loop-free and atomic bases, fitting in a  cocartesian square of shape:
\[\begin{tikzcd}
	{\partial[[1],1]} & C \\
	{[[1],1]} & D
	\arrow["f", from=1-2, to=2-2]
	\arrow["x"', from=2-1, to=2-2]
	\arrow["{\partial x}", from=1-1, to=1-2]
	\arrow["\lrcorner"{anchor=center, pos=0.125, rotate=180}, draw=none, from=2-2, to=1-1]
	\arrow[from=1-1, to=2-1]
\end{tikzcd}\]
Then, viewed as a morphism of $\Psh{\Theta_2}$, the morphism $j:C\cup x\to D$ is in $\overline{\W_2}$.
\end{prop}
\begin{proof}
The category $\Gamma_0$ inherits from $\Theta_{/D}$ a structure of Reedy elegant category. The two functors 
$$\begin{array}{ccccccc}
\Gamma_0&\to &\Psh{\Delta} &~~~~~&\Gamma_0&\to &\Psh{\Delta}\\
a\to D&\mapsto &\Lambda^{\Gamma_0}a &~~~~~& a\to D &\mapsto & a
\end{array}$$
are Reedy cofibrant (definition \ref{defi:reedycof}). The morphism 
$$C\cup x\cong \colim_{\Gamma_0}\Lambda^{\Gamma_0}a\to \colim_{\Gamma_0}\Lambda^{\Gamma_0}a$$
is then in $\overline{\W_2}$. We proceed similarly to demonstrate that the morphism
$$ \colim_{\Gamma_0}\Lambda^{\Gamma_0}a\cong \colim_{\Gamma_1}\Lambda^{\Gamma_1} a\to \Lambda^{\Gamma_1} a\cong D$$
is in $\overline{\W_2}$. By stability by composition of $\overline{\W_2}$, this concludes the proof.
\end{proof}

\begin{prop}
\label{prop: case of 1 category}
Let $C$ and $D$ be two $1$-categories admitting loop-free and atomic bases,  fitting in a  cocartesian square of shape:
\[\begin{tikzcd}
	{\partial[1]} & C \\
	{[1]} & D
	\arrow["f", from=1-2, to=2-2]
	\arrow["x"', from=2-1, to=2-2]
	\arrow["{\partial x}", from=1-1, to=1-2]
	\arrow["\lrcorner"{anchor=center, pos=0.125, rotate=180}, draw=none, from=2-2, to=1-1]
	\arrow[from=1-1, to=2-1]
\end{tikzcd}\]
Then, viewed as a morphism of $\Psh{\Delta}$, the morphism $j:C\cup x\to D$ is in $\overline{\W_1}$.
\end{prop}
\begin{proof}
We denote by $\Upsilon$  the full subcategory of $\Delta_{/D}$ whose objects are morphisms $f:[n]\to D$ such that $\Sp_{[n]}\to [n]\to D$ factors through the $\Theta$-set $C\cup x$. 

Given $f:[n]\to D$ in $\Upsilon$, we denote by $\Lambda^{\Upsilon}[n]$ the subobject of $[n]$ composed of all $i\in \Delta_{/[n]}$ such that $fi$ factors through $C\cup x$. We can proceed as in lemma \ref{lemma: lambdA gamma} to show that the canonical morphism $\Lambda^{\Upsilon}[n]\to [n]$ is in $\overline{\W_1}$.

Now, remark that the category $\Upsilon$ inherits from $\Delta_{/D}$ a structure of Reedy elegant category. The two functors 
$$\begin{array}{ccccccc}
\Upsilon&\to &\Psh{\Delta} &~~~~~&\Upsilon&\to &\Psh{\Delta}\\
{[n]}\to D&\mapsto &\Lambda^{\Upsilon}[n] &~~~~~& [n]\to D &\mapsto & [n]
\end{array}$$
are Reedy cofibrant (definition \ref{defi:reedycof}). As the colimit of the first one is $C\cup x$ and the colimit of the second one is $D$, this concludes the proof.
\end{proof}

\begin{proof}[Proof of theorem \ref{theo: case of 1 and 2 category}]
If $n=0$, this is straightforward, and if $n=2$, it follows from proposition \ref{prop: case of 2 category}.

It then remains to prove the case $n=1$. 
Let $S$ be the set of generators of $C$ of dimension $2$. A repeated application of proposition \ref{prop: case of 2 category} and the stability by pushout  and transfinite composition of $\overline{\W_2}$ implies that the two vertical morphisms of the following square are in $\overline{\W_2}$:
\[\begin{tikzcd}
	{\tau_1C\cup x\cup_{y\in S} y} & {\tau_1D \cup_{y\in S} y} \\
	{C\cup x} & D
	\arrow[from=1-2, to=2-2]
	\arrow[from=1-1, to=2-1]
	\arrow[from=2-1, to=2-2]
	\arrow[from=1-1, to=1-2]
\end{tikzcd}\]
Moreover, the proposition \ref{prop: case of 1 category} implies that the canonical morphism 
$$\tau_1 C\cup x\to \tau_1 D$$ is in $\overline{\W_2}$, and so is the top horizontal morphism of the previous square. By two out of three of $\overline{\W_2}$, this concludes the proof.
\end{proof}

\subsection{Gray operations on augmented directed complexes}
We follow Steiner (\cite{Steiner_omega_categories_and_chain_complexes}) and Ara-Maltsiniotis (\cite{Ara_Maltsiniotis_joint_et_tranche}) for the definitions and first properties of Gray operations on augmented directed complexes.

\begin{definition}
Let $(K,K^*,e)$ and $(L,L^*,f)$ be two augmented directed complexes. We define the \snotion{Gray tensor product}{for augmented directed complexes} of $(K,K^*,e)$ and $(L,L^*,f)$ as the augmented directed complex
$$(K,K^*,e)\otimes (L,L^*,f):= (K\otimes L,(K\otimes L)^*,e\otimes f)$$
where 
\begin{enumerate}
\item[$-$] $K\otimes L$ is the chain complex whose value on $n$ is:
$$(K\otimes L)_n:= \oplus_{k+l=n}K_k\otimes L_l$$
and the differential is the unique graded group morphism fulfilling: 
$$\partial (x\otimes y):= \partial x\otimes y + (-1)^{|x|}x\otimes \partial y$$
where we set the convention $\partial x:=0$ if $|x|=0$.
\item[$-$] $(K\otimes L)^*$ is given on all integer $n$ by :
$$(K\otimes L)^*_n:= \oplus_{k+l=n}K_k^*\otimes L_l^*.$$
\item[$-$] $e\otimes f:K_0\otimes L_0\to \Zb$ is the unique morphism fulfilling 
$$(e\otimes f)(x\otimes y)= e(x)f(y).$$
\end{enumerate}
\end{definition}
The Gray tensor product induces a monoidal structure on $\CDA$. Its unit is given by $\lambda \Db_0$. Furthermore, Steiner shows that if $K$ and $L$ admit loop free and unitary bases, so does $K\otimes L$. The basis of $K\otimes L$ is given by the set of elements of shape $b\otimes b'$ where $b$ and $b'$ are respectively elements of the bases of $K$ and $L$.
 The monoidal structure then restricts to a monoidal structure on $\CDAB$.

\begin{notation}
 To simplify notation, the augmented directed complex $\lambda[1]$ will simply be denoted by $[1]$. 
\end{notation}
\begin{definition}
The induced functor 
$$\uvar\otimes [1]:\CDA\to \CDA$$
is called the \snotionsym{Gray cylinder}{((d30@$\uvar\otimes[1]$}{for augmented directed complexes}. 
For $(K,K^*,e)$ an augmented directed complex, we then have
$$(K,K^*,e)\otimes [1]:=(K\otimes [1] ,(K\otimes [1])^*,e)$$
where
\begin{enumerate}
\item[$-$] $K\otimes [1]$ is the chain complex whose value on $n$ is:
$$(K\otimes [1])_n:=\left\{
\begin{array}{ll}
\{x\otimes \{\epsilon\},x\in K_0,\epsilon=0,1\}&\mbox{if $n=0$}\\
\{x\otimes \{\epsilon\},x\in K_n,\epsilon=0,1\}\oplus \{x\otimes[1],x\in K_{n-1}\} &\mbox{if $n>0$}
\end{array}\right.$$
and the differential is the unique graded group morphism fulfilling: 
$$\partial (x\otimes [1]):= \partial x\otimes [1] + (-1)^{|x|}(x\otimes \{1\}-x\otimes \{0\} )~~~~~\partial (x\otimes\{\epsilon\}) = (\partial x)\otimes\{\epsilon\}$$
for $\epsilon\in\{0,1\}$, and
where we set the convention $\partial x:=0$ if $|x|=0$.
\item[$-$] $(K\otimes [1])^*$ is given on all integer $n$ by :
$$(K\otimes [1])^*_n:=\left\{
\begin{array}{ll}
\{x\otimes \{\epsilon\},x\in K^*_0,\epsilon=0,1\}&\mbox{if $n=0$}\\
\{x\otimes\{ \epsilon\},x\in K^*_n,\epsilon=0,1\}\oplus \{x\otimes[1],x\in K^*_{n-1}\} &\mbox{if $n>0$}
\end{array}\right.$$
\item[$-$] $e:(K\otimes [1])_0\to \Zb$ is the unique morphism fulfilling 
$$e(x\otimes \{0\})=e(x\otimes \{1\})= e(x).$$
\end{enumerate}
\end{definition}

\begin{prop}
\label{prop:non trivial automorphisme 0}
Let $A$ be an augmented directed complex admitting no non-trivial automorphisms. Then the augmented directed complexe $A\otimes [1]$ has no non-trivial automorphisms.
\end{prop}
\begin{proof}
Let $\phi:A \otimes[1]\to A\otimes [1]$ be an automorphism. The morphism $\phi$ then induces a bijection on the elements of the basis of $A\otimes [1]$.

Let $(E,F)$ be a partition of the set $(B_{A\otimes[1]})_0$ such that
\begin{enumerate}
\item there exists no element of $(B_{A\otimes[1]})_1$ whose source is in is $F$ and target in $E$.
\item for any $x,y\in E$ and $v\in (B_{A\otimes[1]})_1$ such that $\partial v=y-x$, there exist an element $w\in (B_{A\otimes[1]})_1$  such that $\partial^- w=y$ and an element $\alpha\in (B_{A\otimes[1]})_2$ with $\partial^+\alpha=w+v$.
\item for any $x,y\in F$ and $v\in (B_{A\otimes[1]})_1$ such that $\partial v=y-x$, there exist an element $w\in (B_{A\otimes[1]})_1$  such that $\partial^+ w=x$ and an element $\alpha\in (B_{A\otimes[1]})_2$ with $\partial^-\alpha=w+v$.
\end{enumerate}
Suppose now that there exists an object $a$ of $(B_A)_0$ such that $a\otimes \{1\}$ in $E$. As we have $\partial a\otimes[1]=a\otimes\{1\}-a\otimes\{0\}$, $a\otimes\{1\}$ is in $E$. There exist then an element $\alpha \in  (B_{A\otimes[1]})_2$ with $\partial^+ \alpha = a\otimes [1]+w$ with $\partial^+ a\otimes [1]= \partial^- w$. However, by construction of $A\otimes[1]$, there exist no such element $\alpha$. This implies that any element of $E$ is of shape $a\otimes\{0\}$ and we can show similarly that every element of $F$ is of shape $a\otimes\{1\}$. 

Conversely, we claim that the partition $((B_{A\otimes\{0\}})_0, (B_{A\otimes\{1\}})_0)$ fulfills these conditions.  The first one is obvious. For the second, there exist $a\in (B_A)_0$ and $u\in (B_A)_0$ such that $y=a\otimes\{0\}$ and $v:= u\otimes\{0\}$ and we then choose $w:=a\otimes[1]$ and $\alpha:= u\otimes [1]$. We proceed similarly for the last condition. 

The partition $((B_{A\otimes\{0\}})_0, (B_{A\otimes\{1\}})_0)$ is then the unique one fulfilling the previous three condition. As $\phi$ preserves such partition, this implies that $\phi(B_{A\otimes\{0\}})= B_{A\otimes\{0\}}$ and $\phi(B_{A\otimes\{1\}})= B_{A\otimes\{1\}}$.

Now, remark that for any element $e\in (A\otimes[1])^*_{n+1}$, there exists $x\in A^*_n$ such that $x\otimes[1] \leq e$ if and only if there exists $y\in A^*_{n-1}$ such that $y\otimes[1] \leq \partial^+e$. By a direct induction, this implies that there exists $x\in A^*_n$ such that $x\otimes[1]\leq  e$ if and only if $\partial^-_0e$ is in $A_0^*\otimes\{0\}$ and $\partial^+_0e$ is in $A_0^*\otimes\{1\}$.

Combined with the previous observation, this implies that for any element $x$ of the basis of $A_{n}$, $\phi(x\otimes\{\epsilon\})$ is of shape $x'\otimes\{\epsilon\}$ with $\epsilon\in\{0,1\}$.
The automorphism $\phi$ then induces by restriction  automorphisms $\phi_{|A\otimes\{0\}}:A\otimes\{0\}\to A\otimes\{0\}$ and $\phi_{|A\otimes\{1\}}:A\otimes\{1\}\to A\otimes\{1\}$, and the hypothesis implies that they are the identity.

We now show by induction on $n$ that $\phi_n:(A\otimes[1])_n\to (A\otimes[1])_n$ is the identity. Suppose the result true at the stage $n$. For any element $x$ of the basis of $A_{n}$, we then have 
$$\partial \phi(x\otimes[1]) = \phi(\partial (x\otimes[1])) = \partial (x\otimes[1]).$$
By the definition of the derivative of $A\otimes[1]$, and as $\phi$ preserves the basis, this forces the equality $\phi(x\otimes[1])=x\otimes[1]$. As we already know that for any element $x$ of the basis of $A_{n+1}$ we have $\phi(x\otimes\{\epsilon\})=x\otimes\{\epsilon\}t$ for any $\epsilon\in\{0,1\}$, this concludes the induction.

We then have $\phi=id$ and $A\otimes[1]$ has no non trivial automorphisms.
\end{proof}

\begin{definition}
We define the \snotionsym{Gray cone}{((d40@$\uvar\star 1$}{for augmented directed complexes}
$$\begin{array}{ccc}
\CDA &\to&\CDA\\
K&\mapsto &K\star 1 
\end{array}
$$
where $K\star 1$ is defined as the following pushout: 
\begin{equation}
\label{eq:defin of cstar costar CDA}
\begin{tikzcd}
	{K\otimes\{1\}} & {K\otimes [1]} \\
	1 & {K\star 1}
	\arrow[from=1-1, to=2-1]
	\arrow[from=1-1, to=1-2]
	\arrow[from=2-1, to=2-2]
	\arrow[from=1-2, to=2-2]
	\arrow["\lrcorner"{anchor=center, pos=0.125, rotate=180}, draw=none, from=2-2, to=1-1]
\end{tikzcd}
\end{equation}

According to \cite[corollary 6.21]{Ara_Maltsiniotis_joint_et_tranche}, if $K$ admits a loop free and unitary basis, this is also the case for $K\star 1$. The {Gray cone}  then induces a functor:
$$\begin{array}{ccc}
\CDAB&\to&\CDAB\\
K&\mapsto &K\star 1 \\
\end{array}
$$
\end{definition}

\begin{remark}
 Unfolding the definition, we have
$$(K,K',e)\star 1:=(K\star 1, (K\star 1)^*,e)$$
where
\begin{enumerate}
\item[$-$] $K\star 1$  is the chain complex whose value on $n$ is:
$$(K\star 1)_n:=\left\{
\begin{array}{ll}
\Zb[\emptyset\star 1]\oplus \{x\star \emptyset,x\in K_0\}&\mbox{if $n=0$}\\
\{\emptyset\star x,x\in K_n\}\oplus \{x\star 1,x\in K_{n-1}\} &\mbox{if $n>0$}
\end{array}\right.$$
and the differentials are the unique graded group morphisms fulfilling: 
$$\begin{array}{rr}
\partial (x\star 1)= \partial x\star 1 + (-1)^{|x|} x\star \emptyset&\partial( x \star \emptyset )=\partial x\star \emptyset \\
\end{array}$$
where we set the convention $\partial x:=0$ if $|x|=0$.
\item[$-$] The graded monoids $(K\star 1)^*$ is given on any integer $n$ by :
$$(K\star 1)^*:=\left\{
\begin{array}{ll}
\Nb[\emptyset\star 1]\oplus \{x\star \emptyset,x\in K^*_0\}&\mbox{if $n=0$}\\
\{\emptyset\star x,x\in K^*_n\}\oplus \{x\star 1,x\in K^*_{n-1}\} &\mbox{if $n>0$}
\end{array}\right.$$

\item[$-$] The augmentation $e:(K\star 1)_0\to \Zb$ is the unique ones fulfilling 
$$
\begin{array}{cc}
e( \emptyset \star 1) =1 & e(x\star \emptyset)=e(x)\\
\end{array}$$
\end{enumerate}
The basis of $K\star 1$ is given by the reunion of $\emptyset\star 1$ and of the set of elements of shape $b\star 1$  where $b$ is an element of the basis of $K$. 
\end{remark}

\begin{prop}
\label{prop:non trivial automorphisme 1}
Let $A$ be an augmented directed complex admitting no non-trivial automorphisms. Then the augmented directed complexe $A\star 1$  has no non-trivial automorphisms.
\end{prop}
\begin{proof}
Let $\phi:A\star 1\to A\star 1$ be an automorphism. The morphism $\phi$ then induces a bijection on the elements of the basis of $A\star 1$.

 As the element $\emptyset\star 1\in (A\star 1)_0$ is the only element of the basis such that for all $v\in (A\star 1)_1$  $\partial_0^-(v)\neq \emptyset\star 1$, it is preserved by $\phi$. As a consequence, for any element $x$ of the basis of $A_0$, $\phi(x\star \emptyset)$ is of shape $x'\star \emptyset$. The morphism $\phi$ then preserves $(A\star \emptyset)_0$.

Now, remark that for any element $e\in (A\star 1)^*_{n+1}$, there exists $x\in A^*_n$ such that $x\star 1\leq e$ if and only if there exists $y\in A^*_{n-1}$ such that $y\star 1\leq \partial^+e$. By a direct induction, this implies that there exists $x\in (A\star 1)^*_n$ such that $x\star 1\leq e$ if and only if $\partial^+_0e\in \Zb[\emptyset\star 1]$.

Combined with the previous observation, this implies that for any element $x$ of the basis of $A_{n}$, $\phi(x\star \emptyset)$ is of shape $x'\star \emptyset$.
The automorphism $\phi$ then induces by restriction an automorphism $\phi_{|A\star\emptyset}:A\to A$, and the hypothesis implies that it is the identity.

We now show by induction on $n$ that $\phi_n:(A\star 1)_n\to (A\star 1)_n$ is the identity. Suppose the result true at the stage $n$. For any element $x$ of the basis of $A_{n}$, we then have 
$$\partial \phi(x\star 1) = \phi(\partial (x\star 1)) = \partial (x\star 1).$$
By the definition of the derivative of $A\star 1$, and as $\phi$ preserves the basis, this forces the equality $\phi(x\star 1)=x\star 1$. As we already know that for any element $x$ of the basis of $A_{n+1}$ we have $\phi(x\star \emptyset)=x\star \emptyset$, this concludes the induction.

We then have $\phi=id$ and $A\star 1$ has no non trivial automorphisms.

\end{proof}

\begin{definition}
We define the \snotionsym{suspension}{((d60@$[\uvar,1]$}{for augmented directed complexes} as the functor 
$$[\uvar,1]:\CDA\to \CDA$$
where $[K,1]$ is defined as the following pushout:
\begin{equation}
\label{eq:def of suspension cda}
\begin{tikzcd}
	{K\otimes \{0,1\}} & {K\otimes [1]} \\
	{1\coprod 1} & {[K,1]}
	\arrow[from=1-1, to=2-1]
	\arrow[from=2-1, to=2-2]
	\arrow[from=1-1, to=1-2]
	\arrow[from=1-2, to=2-2]
	\arrow["\lrcorner"{anchor=center, pos=0.125, rotate=180}, draw=none, from=2-2, to=1-1]
\end{tikzcd}
\end{equation}
We leave to the reader to check that $[K,1]$ admits a loop free and unitary basis when this is the case for $K$. This functor then induces a functor:
$$[\uvar,1]:\CDAB\to \CDAB$$
\end{definition}

\begin{remark}
 Unfolding the definition, we have
$$[(K,K',e),1]:=([K,1] ,([K,1])^*,e)$$
where
\begin{enumerate}
\item[$-$] $[K,1]$ is the chain complex whose value on $n$ is:
$$[K,1]:=\left\{
\begin{array}{ll}
 \Zb[\{0\},\{1\}]&\mbox{if $n=0$}\\
 \{[x,1],x\in K_{n-1}\} &\mbox{if $n>0$}
\end{array}\right.$$
and the differential is the unique graded group morphism fulfilling: 
$$\partial([x,1]):= \left\{
 \begin{array}{lll} 
 \{1\}-\{0\}&\mbox{if $|x|=0$}\\ 
 ~[\partial x,1]&\mbox{if $|x|>0$}
 \end{array}\right.
$$
\item[$-$] $([K,1])^*$ is given on all integer $n$ by:
$$([K,1])^*_n:=\left\{
\begin{array}{ll}
\Nb[0,1]&\mbox{if $n=0$}\\
 \{[x,1],x\in K^*_{n-1}\} &\mbox{if $n>0$}
\end{array}\right.$$
\item[$-$] $e:([K,1])_0\to \Zb$ is the unique morphism	 fulfilling 
$$e( 0)=e( 1)= e(x).$$
\end{enumerate}
The basis of $[K,1]$ is given by the reunion of $\{0\}$, $\{1\}$ and of the set of elements of shape $[b,1]$  where $b$ is an element of the basis of $K$. 
\end{remark}

\begin{prop}
\label{prop:non trivial automorphisme 2}
Let $A$ be a non null augmented directed complex admitting no non-trivial automorphisms. Then the augmented directed complex $[A,1]$ has no non-trivial automorphisms.
\end{prop}
\begin{proof}
Let $\phi:[A,1]\to [A,1]$ be an automorphism. As the element $\{1\}\in ([A,1])_0$ is the only element of the basis such that for all $v\in [A,1]_1$  $\partial_0^-(v)\neq \{1\}$, it is preserved by $\phi$. As a consequence, $\phi$ also preserves $\{0\}$. The induced morphism $\phi_0:[A,1]_0\to [A,1]_0$ is then the identity. 

Now, remark that $(\phi_{n+1})_{n\in \Nb}:A\to A$ is an automorphism and is then the identity. This implies that for all $n>0$, $\phi_n:[A,1]_n\to [A,1]_n$ is then identity, which concludes the proof.
\end{proof}

\begin{definition}
We define the \textit{wedges} as the functors
$$[\uvar,1]\vee[1]:\CDA\to \CDA~~~~~~ [1]\vee[\uvar,1]:\CDA\to \CDA$$
where $[K,1]\vee [1]$ and $[1]\vee[K,1]$ are defined as the following pushouts:
\[\begin{tikzcd}
	{\lambda [0]} & {[1]} && {\lambda [0]} & {[K,1]} \\
	{[K,1]} & { [K,1]\vee[1]} && {[1]} & {[1]\vee[K,1]}
	\arrow["{\{1\}}"', from=1-1, to=2-1]
	\arrow[from=2-1, to=2-2]
	\arrow["{\{0\}}", from=1-1, to=1-2]
	\arrow[from=1-2, to=2-2]
	\arrow["\lrcorner"{anchor=center, pos=0.125, rotate=180}, draw=none, from=2-2, to=1-1]
	\arrow["{\{0\}}", from=1-4, to=1-5]
	\arrow["{\{1\}}"', from=1-4, to=2-4]
	\arrow[from=1-5, to=2-5]
	\arrow[from=2-4, to=2-5]
	\arrow["\lrcorner"{anchor=center, pos=0.125, rotate=180}, draw=none, from=2-5, to=1-4]
\end{tikzcd}\]
Once again, we can easily check that $[K,1]\vee[1]$ and $[1]\vee[K,1]$ have a loop free and unitary basis when this is the case for $K$. These functors then induce functors
$$[\uvar,1]\vee[1]:\CDAB\to \CDAB~~~~~~ [1]\vee[\uvar,1]:\CDAB\to \CDAB$$
\end{definition}

\vspace{1cm}

 Unfolding the definition, we have
$$[(K,K',e),1]\vee [1]:=([K,1]\vee [1] ,([K,1]\vee [1])^*,e)$$ $$
[1]\vee(K,K',e),1]:=([1]\vee[K,1] ,([1]\vee[K,1])^*,e)$$
where
\begin{enumerate}
\item[$-$] $[K,1]\vee [1]$ and $[1]\vee[K,1]$ are the chain complexes whose value on $n$ are:
$$[K,1]\vee[1]:=\left\{
\begin{array}{ll}
\Zb[\{0\},\{1\},\{2\}]&\mbox{if $n=0$}\\
 \{[x,1],x\in K_{0}\}\oplus \Zb[e_1] &\mbox{if $n=1$}\\
 \{[x,1],x\in K_{n-1}\} &\mbox{if $n>1$}
\end{array}\right.$$
$$[1]\vee[K,1]:=\left\{
\begin{array}{ll}
\Zb[\{0\},\{1\},\{2\}]&\mbox{if $n=0$}\\
\Zb[e_1] \oplus \{[x,1],x\in K_{0}\} &\mbox{if $n=1$}\\
 \{[x,1],x\in K_{n-1}\} &\mbox{if $n>1$}
\end{array}\right.$$
and the differentials are the unique graded group morphism fulfilling: 
$$\partial_{[K,1]\vee[1]} (e_1):= \{2\}-\{1\}
~~~
\partial_{[K,1]\vee[1]} ([x,1]):=
\left\{
\begin{array}{ll}
 \{1\}-\{0\}&\mbox{if $|x|=0$}\\
 ~[\partial x,1]&\mbox{if $|x|>0$}\\
\end{array}\right.
$$
$$
\partial_{[1]\vee[K,1]} (e_1):= \{1\}-\{0\}
~~~
\partial_{[1]\vee[K,1]} ([x,1]):=
\left\{
\begin{array}{ll}
 \{2\}-\{1\}&\mbox{if $|x|=0$}\\
~ [\partial x,1]&\mbox{if $|x|>0$}\\
\end{array}\right.
$$
\item[$-$] $([K,1]\vee [1])^*$ and $([1]\vee[K,1])^*$ are given on all integer $n$ by:
$$([K,1]\vee[1])^*:=\left\{
\begin{array}{ll}
\{\{0\},\{1\},\{2\}\}&\mbox{if $n=0$}\\
 \{[x,1],x\in K_0^*\}\oplus \Nb[e_1] &\mbox{if $n=1$}\\
 \{[x,1],x\in K_{n-1}\} &\mbox{if $n>1$}
\end{array}\right.$$
$$([1]\vee[K,1])^*:=\left\{
\begin{array}{ll}
\{\{0\},\{1\},\{2\}\}&\mbox{if $n=0$}\\
\Nb[e_1]\oplus\cup \{[x,1],x\in K^*_{0}\} &\mbox{if $n=1$}\\
 \{[x,1],x\in K^*_{n-1}\} &\mbox{if $n>1$}
\end{array}\right.$$
\item[$-$] The augmentations $e$ are the unique morphism fulfilling 
$$e( \{0\})=e(\{ 1\})= e(\{2\})=1.$$
\end{enumerate}

\begin{prop}
\label{prop:non trivial automorphisme 3}
Let $A$ be a non null augmented directed complex admitting no non-trivial automorphisms. Then the augmented directed complexes $[A,1]\vee[1]$ and $[1]\vee[A,1]$ have no non-trivial automorphisms.
\end{prop}
\begin{proof}
The proof is similar to the one of proposition \ref{prop:non trivial automorphisme 2} and we leave it to the reader.
\end{proof}

\begin{definition}
There are two canonical morphisms 
$$\triangledown:\Sigma K\to \Sigma K \vee [1]
~~~~~~~ \triangledown:\Sigma K\to [1]\vee \Sigma K $$
that are the unique ones fulfilling
$$\triangledown(\{0\}):= \{0\}~~~\triangledown(\{1\}):= \{2\}~~~
\triangledown([x,1]):=\left\{ 
\begin{array}{ll}
~[x,1]+e_1&\mbox{if $|x|=0$}\\
~[x,1]&\mbox{if $|x|>0$}\\
\end{array}\right.$$
When we write $ \Sigma K\to \Sigma K \vee [1]$ and $\Sigma K\to [1]\vee \Sigma K$ and nothing more is specified, it will always mean that we considered the morphisms $\triangledown$.
\end{definition}

\begin{prop}
 \label{prop:appendice formula for otimes cda}
 Let $K$ be an augmented directed complex. 
 There is a natural transformation between the colimit of the following diagram
$$
\begin{tikzcd}
	{[1]\vee [K,1]} & {[K\otimes\{0\},1]} & {[K\otimes [1],1]} & {[K\otimes\{1\},1]} & {[K,1]\vee [1]}
	\arrow[from=1-2, to=1-1]
	\arrow[from=1-2, to=1-3]
	\arrow[from=1-4, to=1-3]
	\arrow[from=1-4, to=1-5]
\end{tikzcd}$$
and $[K,1]\otimes [1]$.
\end{prop}
\begin{proof}
The cone is induced by morphisms
$$
\begin{array}{rl}
&[1]\vee [K,1]\to [K,1]\otimes [1]\\
(\mbox{resp}.&[ K,1]\vee[1]\to [ K,1] \otimes [1])
\end{array}
$$ sending an element $x$ in the basis of $[1]$ to $\{0\}\otimes x$ (resp. $\{1\}\otimes x$), an element $y$ in the basis of $[ K,1]$ to $y\otimes\{1\}$ (resp. $y\otimes\{0\}$), 
and by the morphism 
$$f:[K\otimes [1],1]\to [K,1]\otimes [1]$$
defined by the formula 
$$f([x\otimes y,1]):= [ x,1]\otimes y$$ 
for $x$ in the basis of $K$ and $y$ in the basis of $[1]$.
We leave it to the reader to check the compatibilities of this three morphisms.
\end{proof}

\subsection{Gray operations on $\omega$-categories}
\label{section:definition of Gray operations}
We follow Ara-Maltsiniotis \cite{Ara_Maltsiniotis_joint_et_tranche} for the definitions and first properties of Gray operations on $\omega$-categories. 

\begin{theorem}[Steiner, Ara-Maltsiniotis]
\label{theo:otimes in zocat}
There is a unique colimit preserving monoidal structure on $\ocat$,
up to a unique monoidal isomorphism, making the functor
$\nu_{|\CDAB}:\CDAB\to \ocat$
a monoidal functor, when $\CDAB$ is endowed with the monoidal structure given by the Gray tensor product.
\end{theorem}
\begin{proof}
This is \cite[theorem A.15]{Ara_Maltsiniotis_joint_et_tranche}.
\end{proof}

\begin{definition}
The monoidal product on $\ocat$ induced by the previous theorem is called the \snotionsym{Gray tensor product}{((d00@$\otimes$}{for $\omega$-categories} and is denoted by $\otimes$. It's unit is $ \Db_0$. If $C$ and $D$ are $\omega$-categories with an atomic and loop free basis, we have by construction
$$C\otimes D := \nu(\lambda C\otimes \lambda D).$$
\end{definition}

\begin{prop}
\label{prop:otimes and duality}
There are equivalences
$$(C\otimes D)^{op}\cong D^{op}\otimes C^{op}~~~~~~ (C\otimes D)^\circ\cong C^{\circ}\otimes D^{\circ}~~~~~~(C\otimes D)^{co}\cong D^{co}\otimes C^{co}$$
natural in $C,D:\ocat$.
\end{prop}
\begin{proof}
This is \cite[proposition A.20]{Ara_Maltsiniotis_joint_et_tranche}.
\end{proof}

\begin{definition}
The functors
$$\uvar\otimes[1]:\ocat\to \ocat~~~~[1]\otimes\uvar:\ocat\to \ocat$$
are respectively called the \snotionsym{Gray cylinder}{((d30@$\uvar\otimes[1]$}{for $\omega$-categories} and the \snotionsym{Gray $\circ$-cylinder}{((d31@$[1]\otimes \uvar$}{for $\omega$-categories}.
\end{definition}

\begin{prop}
\label{prop:suspensino and gray}
Let $C$ be an $\omega$-category.
The following canonical square 
\[\begin{tikzcd}
	{C\otimes\{0,1\}} & {C\otimes[1]} \\
	{1\coprod 1} & {[C,1]}
	\arrow[from=1-1, to=2-1]
	\arrow[from=1-1, to=1-2]
	\arrow[from=2-1, to=2-2]
	\arrow[from=1-2, to=2-2]
	\arrow["\lrcorner"{anchor=center, pos=0.125, rotate=180}, draw=none, from=2-2, to=1-1]
\end{tikzcd}\]
is cocartesian
\end{prop}
\begin{proof}
As all these functors commute with colimits, it is sufficient to demonstrate this assertion when $C$ is a globular sum, and \textit{a fortiori} when $C$ admits a loop free and atomic basis. In this case, remark that all the morphisms appearing in canonical cartesian square
\[\begin{tikzcd}
	{\lambda C\otimes\{0,1\}} & {\lambda C\otimes[1]} \\
	{1\coprod 1} & {[\lambda C,1]}
	\arrow[from=1-1, to=2-1]
	\arrow[from=1-1, to=1-2]
	\arrow[from=2-1, to=2-2]
	\arrow[from=1-2, to=2-2]
	\arrow["\lrcorner"{anchor=center, pos=0.125, rotate=180}, draw=none, from=2-2, to=1-1]
\end{tikzcd}\]
 are quasi-rigid. 
The results then follow from an application of theorem \ref{theo:Kan condition}.
\end{proof}

\begin{remark}
\label{defi:explicit Dbn otiomes [1]}
Applying the duality $(\uvar)^{op}$ to the computation achieved in appendix B.1 of \cite{Ara_Maltsiniotis_joint_et_tranche}, we can give an explicit expression of $\Db_n\otimes [ 1]$. As a polygraph, the generating arrows of $\Db_n\otimes [1]$ are:
$$ e^\epsilon_k\otimes\{0\}~~~~~e^\epsilon_k\otimes\{1\}~~~~~e^\epsilon_k\otimes[1]$$
 \[ a^-_0 \otimes e^\epsilon_k \qquad a^+_0 \otimes e^\epsilon_k \qquad a \otimes e^\epsilon_k \]
 where $\epsilon$ is either $+$ or $-$, $k \leqslant n$ and $e^+_n = e^-_n$. Their source and target are given as follows:
 \[ \pi^-( e^\epsilon_k \otimes\{0\}) = e^-_{k-1} \otimes\{0\} \qquad\qquad\qquad \pi^+(e^\epsilon_k \otimes\{0\}) = e^+_{k-1}\otimes\{0\} \]
 \[ \pi^-(e^\epsilon_k \otimes\{1\} ) = e^-_{k-1} \otimes\{1\}\qquad\qquad\qquad \pi^+(e^\epsilon_k\otimes\{1\} ) = e^+_{k-1}\otimes\{1\} \]
$$\pi^{-}(e^\epsilon_{2k}\otimes[1]) =...\circ_2(e^+_0\otimes[1])\circ_0(e^\epsilon_{2k}\otimes\{0\})\circ_1 (e^-_1\otimes[1])\circ_3... \circ_{2k-1}(e_{2k-1}^-\otimes[1])$$
$$\pi^{+}(e^\epsilon_{2k}\otimes[1]) = (e_{2k-1}^+\otimes[1])\circ_{2k-1}...\circ_3(e^+_1\otimes[1])\circ_1(e^\epsilon_{2k}\otimes\{1\})\circ_0 (e^-_0\otimes[1])\circ_2...$$
$$\pi^{-}(e^\epsilon_{2k+1}\otimes[1]) = ...\circ_3(e^+_1\otimes[1])\circ_1(e^\epsilon_{2k+1}\otimes\{1\})\circ_0 (e^-_0\otimes[1])\circ_2...\circ_{2k}(e_{2k}^-\otimes[1])$$
$$\pi^{+}(e^\epsilon_{2k+1}\otimes[1]) = (e_{2k}^+\otimes[1])\circ_{2k}...\circ_2(e^+_0\otimes[1])\circ_0(e^\epsilon_{2k+1}\otimes\{0\})\circ_1 (e^-_1\otimes[1])\circ_3...$$
 We did not put parenthesis in the expression above, to keep them shorter, the default convention is to do the composition $\circ_i$ in order of increasing values of $i$.
\end{remark}
 
\begin{example}
The $\omega$-category $\Db_1\otimes[1]$ is the polygraph: 
\[\begin{tikzcd}
	00 & 01 \\
	10 & 11
	\arrow[from=1-1, to=2-1]
	\arrow[from=2-1, to=2-2]
	\arrow[from=1-1, to=1-2]
	\arrow[from=1-2, to=2-2]
	\arrow[shorten <=4pt, shorten >=4pt, Rightarrow, from=1-2, to=2-1]
\end{tikzcd}\]
The $\omega$-category $\Db_2\otimes[1]$ is the polygraph: 
\[\begin{tikzcd}
	00 & 01 & 00 & 01 \\
	10 & 11 & 10 & 11
	\arrow[from=1-1, to=1-2]
	\arrow[""{name=0, anchor=center, inner sep=0}, from=1-1, to=2-1]
	\arrow[from=2-1, to=2-2]
	\arrow[""{name=1, anchor=center, inner sep=0}, from=1-2, to=2-2]
	\arrow[shorten <=4pt, shorten >=4pt, Rightarrow, from=1-2, to=2-1]
	\arrow[""{name=2, anchor=center, inner sep=0}, from=1-3, to=2-3]
	\arrow[from=1-3, to=1-4]
	\arrow[""{name=3, anchor=center, inner sep=0}, from=1-4, to=2-4]
	\arrow[shorten <=4pt, shorten >=4pt, Rightarrow, from=1-4, to=2-3]
	\arrow[""{name=4, anchor=center, inner sep=0}, curve={height=30pt}, from=1-1, to=2-1]
	\arrow[from=2-3, to=2-4]
	\arrow[""{name=5, anchor=center, inner sep=0}, curve={height=-30pt}, from=1-4, to=2-4]
	\arrow["{ }"', shorten <=6pt, shorten >=6pt, Rightarrow, from=0, to=4]
	\arrow["{ }"', shorten <=6pt, shorten >=6pt, Rightarrow, from=5, to=3]
	\arrow[shift left=0.7, shorten <=6pt, shorten >=8pt, no head, from=1, to=2]
	\arrow[shift right=0.7, shorten <=6pt, shorten >=8pt, no head, from=1, to=2]
	\arrow[shorten <=6pt, shorten >=6pt, from=1, to=2]
\end{tikzcd}\]
\end{example}

\begin{construction}
\label{cons:Gray cone for omega cat}
 We define the \snotionsym{Gray cone}{((d40@$\uvar\star 1$}{for $\omega$-categories}, the \snotion{Gray $\circ$-cone}{for $\omega$-categories}\index[notation]{((d50@$1\overset{co}{\star}\_$!\textit{for $\omega$-categories}} and the \snotion{Gray op-cone}{for $\omega$-categories}\index[notation]{((d55@$1\star\_$!\textit{for $\omega$-categories}}:
$$\begin{array}{ccccccccccc}
\ocat &\to&\ocat_{\cdot}&&\ocat &\to&\ocat_{\cdot}&&\ocat &\to&\ocat_{\cdot}\\
C&\mapsto &C\star 1 & &C &\mapsto &1\costar C&& C&\mapsto  &1\star C
\end{array}
$$
where $C\star 1$, $1\costar C$ and $1\star C$ are defined as the following pushouts: 
\[\begin{tikzcd}
	{C\otimes\{1\}} & {C\otimes [1]} & {C\otimes\{0\}} & {C\otimes [1]} & {\{0\}\otimes C} & {[1]\otimes C} \\
	1 & {C\star 1} & 1 & {1\costar C} & 1 & {1\star C}
	\arrow[from=1-1, to=2-1]
	\arrow[from=1-1, to=1-2]
	\arrow[from=2-1, to=2-2]
	\arrow[from=1-2, to=2-2]
	\arrow["\lrcorner"{anchor=center, pos=0.125, rotate=180}, draw=none, from=2-2, to=1-1]
	\arrow[from=1-3, to=2-3]
	\arrow[from=2-3, to=2-4]
	\arrow[from=1-3, to=1-4]
	\arrow[from=1-4, to=2-4]
	\arrow["\lrcorner"{anchor=center, pos=0.125, rotate=180}, draw=none, from=2-4, to=1-3]
	\arrow[from=2-5, to=2-6]
	\arrow[from=1-5, to=2-5]
	\arrow[from=1-5, to=1-6]
	\arrow[from=1-6, to=2-6]
\end{tikzcd}\]
\end{construction}

\begin{remark}
We could also define the \textit{Gray co-cone} $C\costar 1$, but we have omitted it as it will not appear in this text.
\end{remark}

\begin{prop}
\label{prop:star and duality}
There are equivalences
$$(C\star 1)^{\circ}\cong 1\costar C^{\circ} ~~~~~(1\star C)^{op}\cong C^{op}\star 1~~~~~ (1\costar C)^{co}\cong 1\star C^{co}$$
natural in $C:\ocat$.
\end{prop}
\begin{proof}
This directly follows from the definition of these operations and from proposition \ref{prop:otimes and duality}. 
\end{proof}

\begin{example}
The $\omega$-categories $\Db_1\star 1$ and $1\costar \Db_1$ correspond respectively to the polygraphs: 
\[\begin{tikzcd}
	0 &&&& 0 \\
	1 & \star && \star & 1
	\arrow[from=1-1, to=2-1]
	\arrow[from=2-1, to=2-2]
	\arrow[""{name=0, anchor=center, inner sep=0}, from=1-1, to=2-2]
	\arrow[""{name=1, anchor=center, inner sep=0}, from=1-5, to=2-5]
	\arrow[from=2-4, to=1-5]
	\arrow[""{name=2, anchor=center, inner sep=0}, from=2-4, to=2-5]
	\arrow[shorten <=2pt, Rightarrow, from=0, to=2-1]
	\arrow[shift right=2, shorten <=4pt, shorten >=4pt, Rightarrow, from=1, to=2]
\end{tikzcd}\]
The $\omega$-categories $\Db_2\star 1$ and $1\costar \Db_2$ correspond respectively to the polygraphs: 
\[\begin{tikzcd}
	0 & {~} & 0 &&& 0 & {~} & 0 \\
	1 & \star & 1 & \star & \star & 1 & \star & 1
	\arrow[""{name=0, anchor=center, inner sep=0}, from=1-1, to=2-1]
	\arrow[from=2-1, to=2-2]
	\arrow[""{name=1, anchor=center, inner sep=0}, from=1-3, to=2-3]
	\arrow[""{name=2, anchor=center, inner sep=0}, curve={height=30pt}, from=1-1, to=2-1]
	\arrow[from=2-3, to=2-4]
	\arrow[""{name=3, anchor=center, inner sep=0}, from=1-1, to=2-2]
	\arrow[""{name=4, anchor=center, inner sep=0}, draw=none, from=1-2, to=2-2]
	\arrow[""{name=5, anchor=center, inner sep=0}, from=1-3, to=2-4]
	\arrow[from=1-6, to=2-5]
	\arrow[""{name=6, anchor=center, inner sep=0}, from=1-6, to=2-6]
	\arrow[""{name=7, anchor=center, inner sep=0}, from=2-5, to=2-6]
	\arrow[from=1-8, to=2-7]
	\arrow[""{name=8, anchor=center, inner sep=0}, from=1-8, to=2-8]
	\arrow[""{name=9, anchor=center, inner sep=0}, from=2-8, to=2-7]
	\arrow[""{name=10, anchor=center, inner sep=0}, curve={height=-30pt}, from=1-8, to=2-8]
	\arrow[""{name=11, anchor=center, inner sep=0}, draw=none, from=1-7, to=2-7]
	\arrow["{ }"', shorten <=6pt, shorten >=6pt, Rightarrow, from=0, to=2]
	\arrow[shorten <=2pt, shorten >=2pt, Rightarrow, from=3, to=2-1]
	\arrow[shift left=0.7, shorten <=6pt, shorten >=8pt, no head, from=4, to=1]
	\arrow[shift right=0.7, shorten <=6pt, shorten >=8pt, no head, from=4, to=1]
	\arrow[shorten <=6pt, shorten >=6pt, from=4, to=1]
	\arrow[shorten <=2pt, Rightarrow, from=5, to=2-3]
	\arrow[shorten <=6pt, shorten >=6pt, Rightarrow, from=10, to=8]
	\arrow[shift right=2, shorten <=4pt, shorten >=4pt, Rightarrow, from=8, to=9]
	\arrow[shift right=2, shorten <=4pt, shorten >=4pt, Rightarrow, from=6, to=7]
	\arrow[shift right=0.7, shorten <=6pt, shorten >=8pt, no head, from=6, to=11]
	\arrow[shorten <=6pt, shorten >=6pt, from=6, to=11]
	\arrow[shift left=0.7, shorten <=6pt, shorten >=8pt, no head, from=6, to=11]
\end{tikzcd}\]
\end{example}

\begin{prop}
Let $C$ be an $\omega$-category with an unitary and loop free basis. The canonical comparaison
$$ (\lambda C)\star 1\to \lambda (C\star 1) $$
is an equivalence.

Let $K$ be an augmented directed complex with a loop free and unitary basis. The canonical comparaisons 
$$(\nu K)\star 1\to \nu(K\star 1)$$
is an equivalence.
\end{prop}
\begin{proof}
The first assertion directly follows from the fact $\lambda$ commutes with colimits. For the second one,
we can easily check that all the morphisms appearing in the squares \eqref{eq:defin of cstar costar CDA} are quasi-rigid.
The results then follow from an application of theorem \ref{theo:Kan condition}.
\end{proof}

The following theorems express the link between the Gray operations and the suspension. They will play a fundamental role in the rest of this work.
\begin{theorem}
 \label{theo:appendice formula for otimes} 
 Let $C$ be an $\omega$-category.
There is a natural identification between $[ C,1]\otimes [1]$ and the colimit of the following diagram
$$
\begin{tikzcd}
	{[1]\vee [ C,1]} & {[C\otimes\{0\},1]} & {[C\otimes [1],1]} & {[C\otimes\{1\},1]} & {[C,1]\vee[1]}
	\arrow[from=1-2, to=1-1]
	\arrow[from=1-2, to=1-3]
	\arrow[from=1-4, to=1-3]
	\arrow[from=1-4, to=1-5]
\end{tikzcd}$$
\end{theorem}
\begin{proof}
As all these functors preserve colimits, it is sufficient to construct the comparison when $C$ is a globular sum, and to show that it is an equivalence when $C$ is a globe. 
As globular sums have atomic and loop free bases, the comparison is induced by proposition \ref{prop:appendice formula for otimes cda}. Using the explicit description of the $\omega$-category $\Db_n\otimes[1]$ given in definition \ref{defi:explicit Dbn otiomes [1]}, it is straightforward to see that it induces an equivalence on globes.
\end{proof}

\begin{theorem}
 \label{theo:appendice formula for star} 
There is a natural identification between $1\costar [C,1]$ and the colimit of the following diagram
\[\begin{tikzcd}
	{[1]\vee [C,1]} & {[C,1]} & {[C\star 1,1]}
	\arrow[from=1-2, to=1-3]
	\arrow[from=1-2, to=1-1]
\end{tikzcd}\]
There is a natural identification between $[C,1]\star 1$ and the colimit of the following diagram
\[\begin{tikzcd}
	{[1\costar C,1]} & {[C,1]} & {[C,1]\vee[1]}
	\arrow[from=1-2, to=1-3]
	\arrow[from=1-2, to=1-1]
\end{tikzcd}\]
There is a natural identification between $1\star[C,1]$ and the colimit of the following diagram   
\[\begin{tikzcd}
	{[1\star C,1]} & {[C,1]} & {[1]\vee[C,1]}
	\arrow[from=1-2, to=1-3]
	\arrow[from=1-2, to=1-1]
\end{tikzcd}\]
\end{theorem}
\begin{proof}
This directly follows from the definition of these operations, from theorem \ref{theo:appendice formula for otimes}  and from proposition \ref{prop:star and duality}.
\end{proof}

\vspace{1cm}

We are now willing to show the following  theorem: 
\begin{theorem}
\label{theo:appendince unicity of operation}
Let $F$ be an endofunctor of $\ocat$ such that the induced functor $\ocat\to \ocat_{F(\emptyset)/}$ is colimit preserving and $\psi$ an invertible natural transformation between $F(\Db_n)$ and $G(\Db_n)$ where $G$ is either the Gray cylinder, the Gray cone, the Gray $\circ$-cone, the Gray op-cone or an iterated suspension.

Then, the natural transformation $\psi$ can be uniquely extended to an natural transformation between $F$ and $G$.  Moreover, this natural transformation is unique.
\end{theorem}
The previous theorem implies that the equations given in theorem \ref{theo:appendice formula for otimes} and \ref{theo:appendice formula for star} characterize respectively the Gray cylinder, the Gray cone, the Gray $\circ$-cone and the Gray op-cone.

\begin{lemma}
\label{lemma:sub categgory of Theta}
A sub category $\Theta'$ of $\Theta$, stable by colimit is equal to $\Theta$ iff
\begin{enumerate}
\item for any integer $n$ and $\alpha\in\{-,+1\}$, $i_n^{\alpha}:\Db_n\to \Db_{n+1}$ belongs to $\Theta'$.
\item For any integer $n$, the unit $\Ib_n:\Db_{n+1}\to \Db_n$ belongs to $\Theta'$.
\item For any pair of integers $k<n$, the composition $\triangledown_{k,n}:\Db_n\to \Db_n\coprod_{k}\Db_n$ belongs to $\Theta'$.
\end{enumerate}
\end{lemma}
\begin{proof}
Suppose that $\Theta'$ fulfills these conditions.
As globular morphisms are compositions of pushouts along morphisms of shape $i_n^{\alpha}$, they belong to $\Theta'$.
 As algebraic morphisms are compositions of colimits of morphism of shape $\triangledown_{k,n}$ or $\Ib_n$, they belong to $\Theta'$.
The result then follows from proposition \ref{prop:algebraic ortho to globular} that states that every morphism factors as an algebraic morphism followed by a globular morphism.
\end{proof}

\begin{lemma}
\label{lemma:unit forced}
Let $n$ be an integer, and $G$ be either the Gray cylinder, the Gray cone, the Gray $\circ$-cone, the Gray op-cone or an iterated suspension, and suppose
given a square 
\[\begin{tikzcd}
	{G(\Db_n)} \\
	& {G(\Db_{n+1})} & {G(\Db_n)} \\
	{G(\Db_n)}
	\arrow["f", from=2-2, to=2-3]
	\arrow["{G(i_n^-)}", from=1-1, to=2-2]
	\arrow["{G(i_n^+)}"', from=3-1, to=2-2]
	\arrow["id", curve={height=-18pt}, from=1-1, to=2-3]
	\arrow["id"', curve={height=18pt}, from=3-1, to=2-3]
\end{tikzcd}\]
Then, the morphism $f$ is $G(\Ib_n)$.
\end{lemma}
\begin{proof}
As the proof for any possibilities of $G$ are similar, we will show only the case $G:=\uvar\otimes [1]$.
As for any integer $n$, $\Db_n\otimes[1]$ admits a loop free and atomic basis, we can then show the desired assertion after applying the functor $\lambda$.
Remark first that the assumption implies that $\partial f((e_{n+1}\otimes \{\alpha\})=0$, and so $f((e_{n+1}\otimes \{\alpha\}) =0$. We also have $f(e_{n+1}\otimes[1])=0$ as $\lambda (\Db_n\otimes[1])_{n+2} =0$. This implies that $f$ is equal to $\lambda(G(\Ib_n))$.
\end{proof} 
\begin{lemma}
\label{lemma:comp forced}
Let $k<n$ be two integers, and $G$ be either the Gray cylinder, the Gray cone, the Gray $\circ$-cone or an iterated suspension, and suppose
given a square 
\[\begin{tikzcd}
	{G(\Db_{n-1})} && { G(\Db_{n-1}\coprod_k\Db_{n-1})} \\
	& {G(\Db_n)} && { G(\Db_{n}\coprod_k\Db_n)} \\
	{G(\Db_{n-1})} && { G(\Db_{n-1}\coprod_k\Db_{n-1})}
	\arrow["f", from=2-2, to=2-4]
	\arrow["{G(i_n^-)}"{description}, from=1-1, to=2-2]
	\arrow["{G(i_n^+)}"{description}, from=3-1, to=2-2]
	\arrow["{G(i_n^+)\coprod_k G(i_n^+)}"{description}, from=3-3, to=2-4]
	\arrow["{G(i_n^-)\coprod_k G(i_n^-)}"{description}, from=1-3, to=2-4]
	\arrow["{\triangledown_{n-1,k}}"', from=3-1, to=3-3]
	\arrow["{\triangledown_{n-1,k}}", from=1-1, to=1-3]
\end{tikzcd}\]
where we set $\triangledown_{n,n}:=id$.
Then, the morphism $f$ is $G(\triangledown_{n,k})$.
\end{lemma}
\begin{proof}
As the proof for any possibilities of $G$ are similar, we will show only the case $G:=\uvar\otimes [1]$.
As for any integer $n$, $\Db_n\otimes[1]$ admits a loop free and atomic basis, we can then show the desired assertion after applying the functor $\lambda$. Suppose first that $k<n-1$.
By assumption, we have 
$$
\begin{array}{rcl}
\partial f(e_n\otimes \{\alpha\})&=& \partial (e_n^0\otimes \{\alpha\} +e_n^1\otimes \{\alpha\})\\
\partial f(e_n\otimes [1])&=& \partial (e_n^0\otimes [1]) + \partial (e_n^1\otimes [1]) \\
\end{array}
$$
This forces the equalities
$$
\begin{array}{rcl}
 f(e_n\otimes \{\alpha\})&=& e_n^0\otimes \{\alpha\} +e_n^1\otimes \{\alpha\}\\
 f(e_n\otimes [1])&=& e_n^0\otimes [1] + e_n^1\otimes [1] \\
\end{array}
$$
and $f$ is then equal to $\triangledown_{n,k}\otimes[1]$. The case $k=n-1$ is similar.
\end{proof}

\begin{proof}[Proof of theorem \ref{theo:appendince unicity of operation}]
As every globular sum is a colimit of globes, we can extend $\psi$ to a (\textit{a priori} non natural) transformation, 
$\psi:F_{|\Theta}\to G_{|\Theta}$.
Let $\Theta'$ be the maximal sub category of $\Theta$ such that $\psi_{|\Theta'}$ is natural.  The category $\Theta'$ is  closed by colimit. 
The assumption implies that	 $\Theta'$ fulfills the first condition of lemma \ref{lemma:sub categgory of Theta}. The lemma \ref{lemma:unit forced} implies that it fulfills the second condition, and an easy induction on $(n-k)$ using lemma \ref{lemma:comp forced} implies that it fulfills the last condition. Applying the lemma \ref{lemma:sub categgory of Theta}, $\psi:F_{|\Theta}\to G_{|\Theta}$ is then poitwise an isomorphism, and can be extended by colimits to a invertible natural transformation between $F$ and $G$. The unicity of this extension is a consequence of lemma \ref{lemma:non trivial automorphisme 4}.
\end{proof}

\vspace{1cm}

We conclude this section by giving some technical results that we will use later.

\begin{lemma}
\label{lemma:non trivial automorphisme 4}
The set of $\omega$-categories admitting no non-trivial automorphisms is  stable 
\begin{enumerate}
\item by isomorphisms,
\item  by $[\uvar,1]\vee[1]$ and $[1]\vee[\uvar,1]$,
\item by the Gray cylinder, the Gray cone, the Gray $\circ$-cone, the Gray op-cone and the  iterated suspensions,
\end{enumerate}
and contains globular sums.
\end{lemma}
\begin{proof}
Let $S$ be the smallest set of $\omega$-categories stable by isomorphism,  $[\uvar,1]\vee[1]$, $[1]\vee[\uvar,1]$, the Gray cylinder, the Gray cone and by  iterated suspensions. As the set of $\omega$-categories admitting no non-trivial automorphisms is stable by dualities and by proposition \ref{prop:star and duality}, we have to show that it includes $S$.

Remarks now that $S$ is contained in the set of $\omega$-categories admitting an atomic and loop free basis also fulfills. Using theorem \ref{theorem:steiner}, it is then sufficient to show that any augmented directed complex in $\lambda(S)$ has no non-trivial automorphisms. This directly follows from propositions \ref{prop:non trivial automorphisme 0}, \ref{prop:non trivial automorphisme 1}, \ref{prop:non trivial automorphisme 2} and \ref{prop:non trivial automorphisme 3}.

It remains to show that $S$ contains globular sums. We proceed by induction, and we suppose that $S$ contains any globular sum of dimension $k$. Let $[\textbf{a},n]$ be a globular sum of dimension $k+1$, and let $\phi:[\textbf{a},n]\to [\textbf{a},n]$ be an isomorphism. In particular $\phi$ induces an automorphism on $[n]$, and we then have $\phi{i}=i$ for any $i\leq n$. The automorphism $\phi$ then induces for all $i<n$ an automorphism $\phi_i:[a_i,1]\cong [a_i,1]$. However, the stability by suspension of $S$ and the induction hypothesis implies that for any $i<n$, $[a_i,1]$ has no non trivial automorphisms and $\phi_i$ is then the identity. This implies that $\phi$ is also the identity which concludes the proof.
\end{proof}

\begin{prop}
\label{prop:the globes a non non trivial automorphisms}
Let $n$ be an integer $n$. The $\omega$-categories $\Db_n$ and $\underbrace{1\star 1\star ... \star 1}_{n}$ have no non-trivial automorphisms.
\end{prop}
\begin{proof}
This is a direct consequence of lemma \ref{lemma:non trivial automorphisme 4} as these two $\omega$-categories belong to $S$.
\end{proof}

\subsection{Gray tensor product of simplicial sets}

\begin{notation}
We denote  by
\[\begin{tikzcd}
	{\Psh{\Theta}} & \ocat
	\arrow["\Fb", shift left=2, from=1-1, to=1-2]
	\arrow["\iota", shift left=2, from=1-2, to=1-1]
\end{tikzcd}\]
the adjunction between presheaves on $\Theta$ and $\omega$-categories.
\end{notation}

\begin{construction}
We define the functor  $\uvar\otimes \uvar :\Psh{\Theta}\times \Psh{\Theta}\to \Psh{\Theta}$, called once again the \textit{Gray tensor product}, as the left Kan extension of the functor 
$$\Theta\times \Theta \xrightarrow{\uvar\otimes \uvar} \ocat\xrightarrow{\iota} \Psh{\Theta}$$
where $\otimes:\Theta\times \Theta\to \ocat$ is the Gray tensor product defined in theorem \ref{theo:otimes in zocat}.

By construction, the functor $\Fb$ preserves the Gray tensor product, and the functor $\iota$ preserves the Gray tensor product of globular sums.
\end{construction}

The aim of this section is to prove the following result:

\begin{theorem} 
\label{theo:otimes presserves W}
The functor
$$\uvar\otimes\uvar:\Psh{\Delta}\times \Psh{\Delta}\to \Psh{\Theta_2}$$
sends $\W_1\times \W_1$ onto $\overline{\W_2}$, and $\W_1^\sat\times \W_1^\sat$ onto $\overline{\W_2^\sat}$, where $\W_i$ and $\W_i^\sat$ are defined in \ref{defi:definition of W}, and $\overline{(\uvar)}$ in \ref{defi:precocomplet}.
\end{theorem}

In the second part of this section, we will show a similar result for the op-joint.

\begin{prop}
\label{prop:explicit Gray}
The $\Theta$-set $[1]\otimes[1]$ is the colimit, computed in $\Psh{\Theta}$, of the diagram
\[\begin{tikzcd}
	{[2]} & {[1]} & {[[1],1]} & {[1]} & {[2]}
	\arrow["\triangledown"', from=1-2, to=1-1]
	\arrow["{[d^1,1]}", from=1-2, to=1-3]
	\arrow["{[d^0,1]}"', from=1-4, to=1-3]
	\arrow["\triangledown", from=1-4, to=1-5]
\end{tikzcd}\]
\end{prop}
\begin{proof}
We denote by $P$ the colimit of this diagram. Remark that $\Fb P$ is the $\omega$-category generated by the diagram
\[\begin{tikzcd}
	00 & 01 \\
	10 & 11
	\arrow[from=1-2, to=2-2]
	\arrow[from=2-1, to=2-2]
	\arrow[from=1-1, to=2-1]
	\arrow[from=1-1, to=1-2]
	\arrow[shorten <=4pt, shorten >=4pt, Rightarrow, from=1-2, to=2-1]
\end{tikzcd}\]
and we then have $\Fb P\cong [1]\otimes[1]$.  To conclude the proof, we have to show that $P$ is a $\omega$-category, i.e. that it has the unique right lifting property against $\W$. 

Let $f:[\textbf{a},n]\to P$ (resp. $f:\Sp_{[a,n]}\to p$) be a morphism. If there exists an integer $i<n$ such that  $f(i)=00$ and $f(i+1)=11$, then $f$ uniquely factors through $[[1],1]\to P$. If there exists an integer $i$ such that $f(i)=10$, then $f$ uniquely factors through the left inclusion $[2]\to P$. If there exists an integer $i$ such that $f(i)=01$, then $f$ uniquely factors through the right inclusion $[2]\to P$.
If none of these conditions is satisfied, then $f$ factors through $00$ or $11$.

As $[2]$ and $[[1],1]$ are $\omega$-categories, they have the unique right lifting property against $\W$, and so has $P$.
\end{proof}

\begin{lemma}
\label{lemma:when Gray and pullback work}
Let $C,D, E$ be three $\omega$-categories with loop-free and atomic bases. Let $f:C\to D$ be a morphism such that $f$ sends every generator of $C$ to a cell which is not a unit. The following square is then cartesian:
\[\begin{tikzcd}
	{C\otimes E} & {D\otimes E} \\
	C & D
	\arrow["f"', from=2-1, to=2-2]
	\arrow[from=1-2, to=2-2]
	\arrow[from=1-1, to=2-1]
	\arrow["{f\otimes E}", from=1-1, to=1-2]
\end{tikzcd}\]
\end{lemma}
\begin{proof}
We can show this result at the level of the corresponding augmented directed complex, where it is an easy computation. 
\end{proof}
\begin{lemma}
\label{lemma:product is a nice colimit}
Let $(a)_{i\leq n}$ be a sequence of elements of $\Theta$. There exists a diagram $F:I\to \Theta^+$ such that the presheaf $a_0\times...\times a_n$ is the colimit of $F$. 
\end{lemma}
\begin{proof}
Let $I$ be the full subcategory of $\Theta_{/a_0\times...\times a_n}$ whose objects are $n$-tuples of morphisms $(j_i:b\to a_i)_{i\leq n}$ such that there exists no morphism $b\to b'$ in $\Theta^{-}$ that factors all the $j_i$. Morphisms are the ones such that $b\to b'$ in $\Theta^{+}$.

Let $(j_i:b\to a_i)_{i\leq n}$ be any element of $\Theta_{/a_0\times...\times a_n}$. 
We will show that there exists a unique degenerate morphism $g:b\to b'$ that factors the morphisms $b\to a_i$ for all $i<n$, and such that the induced family of morphisms $\{b'\to a_i\}_{i<n}$ is an element of $I$. It will implies that $g$ is an initial object of  the category $I_{(j_i)_{i\leq n}/}$, and then that $\alpha: I\to  \Theta_{/a_0\times...\times a_n}$ is final, which will concludes the proof.

As any infinite sequence of degenerate morphisms is constant at some point, the existence is immediate.
Suppose given two morphisms $b\to b'$, $b\to b''$ fulfilling the previous condition. By proposition \ref{prop:theta is elegan reedy}, the category $\Theta$ is Reedy elegant  and 
the proposition 3.8 of \cite{Bergner_reedy_category_and_the_theta_construction} then implies that there exists a globular sum $\tilde{b}$ and two degenerate morphisms $b'\to \tilde{b}$ and $b''\to \tilde{b}$ such that the induced square
\[\begin{tikzcd}
	b & {b'} \\
	{b''} & {\tilde{b}}
	\arrow[from=1-2, to=2-2]
	\arrow[from=2-1, to=2-2]
	\arrow[from=1-1, to=2-1]
	\arrow[from=1-1, to=1-2]
\end{tikzcd}\]
is cocartesian. The universal property of pushout implies that $b\to \tilde{b}$ also fulfills the previous condition. By definition of $b'$ and $b''$, this implies that they are equal to $\tilde{b}$, and this shows the uniqueness.
\end{proof}

\begin{lemma}
\label{lemma:nice colimit in presheaves and Gray}
Let $C$ be a $\omega$-category such that there exists a diagram $F:I\to \Theta^+$ with $\iota(C)$ being  the colimit of $F$. Let $a$ be an element of $\Theta$. The canonical morphism $\iota(C)\otimes a \to \iota (C\otimes a)$ is an isomorphism.
\end{lemma}
\begin{proof}
The lemma \ref{lemma:when Gray and pullback work} implies that the natural transformation $F(i)\otimes b\to F(i)$ is cartesian. As a consequence, for any $i$, the square
\[\begin{tikzcd}
	{F(i)\otimes a} & {(\colim_I F)\otimes a\cong \iota(C)\otimes a} \\
	{F(i)} & {\colim_I F\cong \iota(C)}
	\arrow[from=1-2, to=2-2]
	\arrow[from=1-1, to=1-2]
	\arrow[from=2-1, to=2-2]
	\arrow[from=1-1, to=2-1]
	\arrow["\lrcorner"{anchor=center, pos=0.125}, draw=none, from=1-1, to=2-2]
\end{tikzcd}\]
is cartesian. 

Now, to show the desired result, we have to demonstrate that the $\Theta$-set $\iota(C)\otimes a$ already has a structure of $\io$-category, i.e. that it is $\W$-local. It is sufficient to show that for all $f:X\to Y$ in $\W$, any square
\[\begin{tikzcd}
	X & { \iota(C)\otimes a} \\
	Y & { \iota(C)}
	\arrow[from=1-2, to=2-2]
	\arrow[from=1-1, to=2-1]
	\arrow[from=1-1, to=1-2]
	\arrow[from=2-1, to=2-2]
\end{tikzcd}\]
admits a unique lift. Indeed, as $\iota(C)$ is an $\omega$-category, it is $\W$-local, and this will imply that $\iota(C)\otimes a$ also is. Suppose then given such a square. As every codomain of morphism of $\W$ is representable, there exists a (not necessarily unique) element $i$ of $I$, such that the bottom morphism factors as $Y\to F(i)\to \iota(C)$. The previous square then factors as
\[\begin{tikzcd}
	X & {F(i)\otimes a} & { \iota(C)\otimes a} \\
	Y & {F(i)} & { \iota(C)}
	\arrow[from=1-3, to=2-3]
	\arrow[from=1-2, to=1-3]
	\arrow[from=2-2, to=2-3]
	\arrow[from=1-2, to=2-2]
	\arrow["\lrcorner"{anchor=center, pos=0.125}, draw=none, from=1-2, to=2-3]
	\arrow[from=2-1, to=2-2]
	\arrow[from=1-1, to=1-2]
	\arrow[from=1-1, to=2-1]
\end{tikzcd}\]
where the right square is a pullback. The middle vertical morphism is $\W$-local because it's domain and codomain are, and this concludes the proof. 
\end{proof}
\begin{lemma}
\label{lemma:times et otimes}
Given $(a)_{i\leq n}$ and $b$ elements of $\Theta$, we have 
$$\iota((a_0\times ...\times a_n\otimes b)\cong (a_0\times ...\times a_n)\otimes b$$
\end{lemma}
\begin{proof}
This is a direct consequence of lemmas \ref{lemma:product is a nice colimit} and \ref{lemma:nice colimit in presheaves and Gray}
\end{proof}

\begin{lemma}
\label{lemma:times et otimes2}
Let $A,B, C$ be three presheaves on $\Theta$. We have a canonical morphism 
$$A \otimes (B\otimes C)\to (A\times B)\otimes C$$
\end{lemma}
\begin{proof}
It is sufficient to demonstrate the result when $A$, $B$ and $D$ are representable. In this case the lemma \ref{lemma:times et otimes} implies that $(A\times B)\otimes C$ is in the image of $\iota$. By adjunction, the desired comparaison morphism is induced by
$$\iota (A \otimes (B\otimes C))\cong  \iota (A) \otimes (\iota(B)\otimes \iota(C))) \cong (\iota(A)\otimes \iota(B))\otimes \iota(X) \to  (\iota(A)\times \iota(B))\otimes \iota (C)$$
\end{proof}

\begin{lemma}
\label{lemma:technical steiner}
Let $A$, $B$, $C$, $D$, and $E$ be presheaves on $\Theta$, and $k, m, n$ be integers. There exists a natural morphism 
$$(\uvar)_A:\Hom([B,m],C\otimes[D,n])\to \Hom([A\otimes B,m],C\otimes[A\otimes D,n])$$
such that for any pair of morphisms $f:[B,m]\to C\otimes[n]$ and $g:[F,k]\to E\otimes[m]$, 
$$\Fb(((E\otimes f)\circ g_B)_A)=\Fb((E\otimes f_A)\circ (g_B)_A)$$
\end{lemma}
\begin{proof}
It is sufficient to describe this morphism when $A,B,C,D$, and $E$ are representable. This allows us the use of Steiner theory to construct this application.
Let $f:[B,m]\to C\otimes[D,n]$ be a morphism. We set  $f_A:[A\otimes B,m]\to C\otimes[A\otimes D,n]$ as the unique morphism of $\omega$-categories such that for every $a\in B_A$, $b\in B_B$, and $m\in B_m$
$$\lambda f_A( [a\otimes b,m]):= \sum_{i\leq n} c_i\otimes [a\otimes d_i,n_i]$$
where $(c_i,d_i,n_i)$ is the unique sequence of elements of $B_C\times B_D\times B_{[n]}$ such that $\lambda f([b,m])= \sum_{i\leq n} c_i\otimes [d_i,n_i]$.
The equality $\lambda f_A\partial=\partial\lambda f_A$ and the equality $\Fb(((E\otimes f)\circ g_B)_A)=\Fb((E\otimes f_A)\circ (g_B)_A)$ is a straightforward calculation using Steiner theory.
\end{proof}

\begin{lemma}
\label{lemma:technical steiner2}
Let $A$, $B$, $C$, $D$, $E$, and $F$ be presheaves on $\Delta$, and $k, m, n, l$ be integers. There exists a natural morphism 
$$\alpha: \Hom([A,k],B\otimes[m])\times \Hom([C,m],D\otimes[n])\to \Hom([C\times A,k],(B\times D)\otimes[n])$$
and such that for any $f:[A,k]\to B\otimes[m]$, $g:[C,m]\to D\otimes[n]$, and $h:[E,n]\to F\otimes[l]$, 
\begin{equation}
\label{eq:alpha}
\alpha(\alpha(f,g),h) = \alpha(f,\alpha(g,h))
\end{equation}
\end{lemma}
\begin{proof}
Let $f:[A,k]\to B\otimes[m]$ and $g:[C,m]\to D\otimes[n]$ be two morphisms. Using the application of lemma \ref{lemma:technical steiner} and the canonical morphism of \ref{lemma:times et otimes2}, we get a sequence of arrows 
\[\begin{tikzcd}
	{[C\otimes A,k]} & { B\otimes[C,m]} & {B\otimes (D\otimes[n])} & {(B\times D)\otimes [n]}
	\arrow["{f_C}", from=1-1, to=1-2]
	\arrow["{B\otimes g}", from=1-2, to=1-3]
	\arrow[from=1-3, to=1-4]
\end{tikzcd}\]
whose composite is denoted $\alpha'(f,g)$.
Remark now that $(B\times D)\otimes [n]$ is a $\Theta_2$-set. Moreover, we have an isomorphism 
$$\tau^i_2([C\otimes A,k])\cong [\tau^i_1(C\otimes A),k]\cong [C\times A,k]$$
We then set 
$$\alpha(f,g):=\tau^i_2(\alpha'(f,g)):=[C\times A,k]\to (B\times D)\otimes [n].$$

Now, suppose given two arrows $f:[A,k]\to B\otimes[m]$, $g:[C,m]\to D\otimes[n]$, and $h:[E,n]\to F\otimes[l]$. Unfolding the definition, we have that $\alpha(\alpha(f,g),h)$ and $\alpha(f,\alpha(g,h))$ are respectively the image by $\tau^i_2$ of the morphism 	
\[\begin{tikzcd}
	{[E\otimes (C\otimes A),k]} && {B\otimes (D\otimes[E,n])} & {B\otimes (D\otimes( F\otimes [k]))} & {(B\times D\times F)\otimes[l]}
	\arrow["{B\otimes (D \otimes h)}", from=1-3, to=1-4]
	\arrow[from=1-4, to=1-5]
	\arrow["{(B\otimes g_E)\circ(f_C)_E}", from=1-1, to=1-3]
\end{tikzcd}\]
and
\[\begin{tikzcd}
	{[E\otimes (C\otimes A),k]} && {B\otimes (D\otimes[E,n])} & {B\otimes (D\otimes( F\otimes [k]))} & {(B\times D\times F)\otimes[l]}
	\arrow[from=1-4, to=1-5]
	\arrow["{(B\otimes g\circ (f_C)_E}", from=1-1, to=1-3]
	\arrow["{B\otimes (D \otimes h)}", from=1-3, to=1-4]
\end{tikzcd}\]
Remark moreover that if $A$, $B$, $C$,  $D$, and $E$ are representable, lemma \ref{lemma:times et otimes} implies that $\alpha(\alpha(f,g),h)$ and $\alpha(f,\alpha(g,h))$ are morphisms of $\omega$-categories, and they are then equal to the image by $\tau^i_2$ and $\Fb$ of the two previous morphism. The equality given in lemma \ref{lemma:technical steiner} then implies 
$$\alpha(\alpha(f,g),h) =\alpha(f,\alpha(g,h))$$
\end{proof}

\begin{lemma}
\label{lemma:otimes presserves W1}
Let $n$ and $m$ be two integers. The canonical morphism 
$$\Sp_{[n]}\otimes\Sp_{[m]}\to [n]\otimes[m]$$ 
is in $\overline{\W_2}$.
\end{lemma}
\begin{proof}
Let $\Delta^{glob}$ be the subcategory of $\Delta$ whose morphisms are the globular ones.
We consider the functor $g:\Delta^{glob}\times \Delta^{glob}\to \Psh{\Theta_2}$ by the formula 
$$g([n],[m]):=\tau_0([n]\otimes [m])\cup_{x\in S_{n,m}}x$$
where $S_{n,m}$ is the set of $1$-generators of $\tau_1([n]\otimes[m])$.
We have a canonical transformation $g(n,m)\to \tau_1([n]\otimes[m])$ which is pointwise in $\widehat{\W_2}$ by repeated application of theorem \ref{theo: case of 1 and 2 category}. For any pair of integers $n,m$, the morphism 
$$g([n],[m])\cong \colim_{\Sp_{[n]}\times \Sp_{[m]}}g\to \tau_1(\Sp_{[n]}\otimes\Sp_{[m]})$$ 
then also belongs to $\overline{\W_2}$. 
By two out of three, so is the morphism
$$\tau_1(\Sp_{[n]}\otimes \Sp_{[m]})\to \tau_1([n]\otimes[m])$$
Remark now that we have a cocartesian square
\[\begin{tikzcd}
	{\tau_1(\Sp_{[n]}\otimes\Sp_{[m]})} & {\Sp_{[n]}\otimes\Sp_{[m]}} \\
	{\tau_1([n]\otimes[m])} & {\tau_1([n]\otimes[m])\cup_{x\in T_{n,m}}x}
	\arrow[from=1-1, to=2-1]
	\arrow[from=1-1, to=1-2]
	\arrow[from=2-1, to=2-2]
	\arrow[from=1-2, to=2-2]
\end{tikzcd}\]
where $T_{n,m}$ is the set of $2$-generators of  $[n]\otimes[m]$. The theorem \ref{theo: case of 1 and 2 category} implies that 
$$\tau_1([n]\otimes[m])\cup_{x\in T_{n,m}}x\to [n]\otimes[m]$$
is in $\widehat{\W_2}$, and by stability by composition and pushout, so is 
$$\Sp_{[n]}\otimes\Sp_{[m]}\to [n]\otimes[m].$$
\end{proof}

\begin{prop}
\label{prop:otimes and suspension in presheaves}
Let $K$ be a simplicial set. The canonical morphism
$$1\coprod_{K\otimes\{0\}}K\otimes[1]\coprod_{K\otimes\{1\}}1\to [K,1]$$
is in $\overline{\W_2}$.
\end{prop}
\begin{proof}
As $K$ is a colimit of representables indexed by the Reedy cofibrant diagram $\Delta_{/K}\to \Sset$ (definition \ref{defi:reedycof}), and as 
$1\coprod_{\uvar\otimes\{0\}}\uvar\otimes[1]\coprod_{\uvar\otimes\{1\}}1$ and  $[\uvar,1]$ preserve cofibrations,  it is sufficient to demonstrate the result when $K:=[n]$ for $n$ an integer. As  $[\uvar,1]$ and, by lemma \ref{lemma:otimes presserves W1}, $\uvar\otimes[1]$ send $\Sp_{[n]}\to [n]$ to $\overline{\W_2}$, it is sufficient to demonstrate the result when $[n]=[1]$. By proposition \ref{prop:explicit Gray}, the morphism $$1\coprod_{[1]\otimes\{0\}}[1]\otimes[1]\coprod_{[1]\otimes\{1\}}1\to [[1],1]$$ fits in the cocartesian square
\[\begin{tikzcd}
	{[0]\coprod_{[1]}[2]~~~\coprod~~~[0]\coprod_{[1]}[2]} & {1\coprod_{[1]\otimes\{0\}}[1]\otimes[1]\coprod_{[1]\otimes\{1\}}1} \\
	{[1]~~~\coprod~~~[1]} & {[[1],1]}
	\arrow[from=2-1, to=2-2]
	\arrow[from=1-2, to=2-2]
	\arrow[""{name=0, anchor=center, inner sep=0}, from=1-1, to=1-2]
	\arrow[from=1-1, to=2-1]
	\arrow["\lrcorner"{anchor=center, pos=0.125, rotate=180}, draw=none, from=2-2, to=0]
\end{tikzcd}\]
As the canonical morphisms $[0]\coprod_{[1]}[2]\to [1]$ and $[2]\coprod_{[1]}\to [1]$ are in $\overline{\W_2}$, this concludes the proof.
\end{proof}

\begin{lemma}
\label{lemma:otimes presserves W2}
Let $n$ be an integer. The two morphisms
$$E^{eq}\otimes[n]\to [n] \quad\text{and}\quad [n]\otimes E^{eq}\to [n]$$
are in $\overline{\W_2^\sat}$.
\end{lemma}
\begin{proof}
As $\otimes$ sends spine inclusions to $\overline{\W_2}\subset \overline{\W_2^\sat}$, we can reduce to the case where $n=1$. By stability by pushouts along monomorphisms, and using lemma \ref{prop:otimes and suspension in presheaves}, the composite
$$E^{eq}\otimes[1]\to 1\coprod_{E^{eq}\otimes\{0\}}E^{eq}\otimes[1]\coprod_{E^{eq}\otimes\{1\}}1\to [E^{eq},1]$$
is in $\overline{\W_2^\sat}$. As $[E^{eq},1]\to [1]$ is in $\W_2^\sat$, this concludes the first assertion. We show the second one similarly.
\end{proof}

\begin{proof}[Proof of theorem \ref{theo:otimes presserves W}]
This is the content of lemmas \ref{lemma:otimes presserves W1} and \ref{lemma:otimes presserves W2}.
\end{proof}

\vspace{1cm}

 We will also need the same analysis for the op-cone.

\begin{construction}
We define $1\star \uvar :\Psh{\Theta}\to \Psh{\Theta}$ as the left Kan extension of the functor 
$$\Theta \xrightarrow{1\star \uvar} \ocat\xrightarrow{\iota} \Psh{\Theta}.$$
\end{construction}

\begin{theorem}
\label{theo:otimes presserves Wversions star}
The functor
$$1\star \uvar:\Psh{\Delta}\to \Psh{\Theta_2}$$
sends $\W_1$ onto $\overline{\W_2}$ and $\W_1^\sat$ onto $\overline{\W_2^\sat}$.
\end{theorem}
\begin{proof}
The proof is similar to the proof of theorem \ref{theo:otimes presserves W}.
\end{proof}

\begin{prop}
\label{prop:exppliocit Graycone}
The $\Theta$-set $1\star[1]$ is the colimit, computed in $\Psh{\Theta}$, of the diagram
\[
\begin{tikzcd}
    {[[1],1]} & {[1]} & {[2]}
    \arrow["{d^1}", from=1-2, to=1-3]
    \arrow["{[d^0,1]}"', from=1-2, to=1-1]
\end{tikzcd}
\]
\end{prop}
\begin{proof}
We denote by $P$ the colimit of this diagram. Remark that $\Fb P$ is the $\omega$-category
\[
\begin{tikzcd}
    1\star\emptyset \\
    {\emptyset\star\{0\}} & {\emptyset\star\{1\}}
    \arrow[from=2-1, to=2-2]
    \arrow[""{name=0, anchor=center, inner sep=0}, from=1-1, to=2-2]
    \arrow[from=1-1, to=2-1]
    \arrow[shorten <=2pt, shorten >=2pt, Rightarrow, from=0, to=2-1]
\end{tikzcd}
\]
and we then have $\Fb P\cong 1\star [1]$. To conclude the proof, we have to show that $P$ is a $\omega$-category, i.e. that it has the unique right lifting property against $\W$. 

Let $f:[\textbf{a},n]\to P$ (resp. $f:\Sp_{[\textbf{a},n]}\to P$) be a morphism. If there exists an integer $i<n$ such that  $f(i)=1\star\emptyset$ and $f(i+1)=\emptyset\star \{1\}$, then $f$ uniquely factors through $[[1],1]\to P$. If there exists an integer $i$ such that $f(i)=\emptyset\star\{0\}$, then $f$ uniquely factors through $[2]\to P$. 
If none of these conditions is satisfied, then $f$ factors through $1\star\emptyset$ or $\emptyset\star\{1\}$.

As $[2]$ and $[[1],1]$ are $\omega$-categories, they have the unique right lifting property against $\W$, and so have $P$.
\end{proof}

\begin{lemma}
\label{lemma:exppliocit Grayconetech}
The $\Theta$-set $\iota(1\star[1]\coprod_{\emptyset\star \{1\}}[1])$ is the colimit, computed in $\Psh{\Theta}$, of the diagram
\[
\begin{tikzcd}
    {[[1],1]\vee[1]} & {[2]} & {[3]}
    \arrow["{d^1}", from=1-2, to=1-3]
    \arrow["{[d^0,2]}"', from=1-2, to=1-1]
\end{tikzcd}
\]
\end{lemma}
\begin{proof}
We denote by $P$ the colimit of this diagram. Remark that $\Fb P$ is the $\omega$-category
\[
\begin{tikzcd}
    1\star\emptyset \\
    {\emptyset\star\{0\}} & {\emptyset\star\{1\}} & {\emptyset\star\{2\}}
    \arrow[from=2-1, to=2-2]
    \arrow[""{name=0, anchor=center, inner sep=0}, from=1-1, to=2-2]
    \arrow[from=1-1, to=2-1]
    \arrow[from=2-2, to=2-3]
    \arrow[shorten <=2pt, shorten >=2pt, Rightarrow, from=0, to=2-1]
\end{tikzcd}
\]
and we then have $\Fb P\cong 1\star[1]\coprod_{\emptyset\star \{1\}}[1]$.  To conclude the proof, we have to show that $P$  is a $\omega$-category, i.e. that it has the unique right lifting property against $\W$. The proof of this assertion is an easy adaptation of the one of lemma \ref{prop:exppliocit Graycone}.
\end{proof}

\begin{prop}
\label{prop:exppliocit Graycone2}
The $\Theta$-set $1\star[2]$ is the colimit, computed in $\Psh{\Theta}$, of the diagram
\[
\begin{tikzcd}
    {[[2],1]} & {[[1],1]} & {[[1],1]\vee[1]} & {[2]} & {[3]}
    \arrow["{d^1}", from=1-4, to=1-5]
    \arrow["{[d^0,2]}"', from=1-4, to=1-3]
    \arrow["{[[1],d^1]}", from=1-2, to=1-3]
    \arrow["{[d^0,1]}"', from=1-2, to=1-1]
\end{tikzcd}
\]
\end{prop}
\begin{proof}
We recall that lemma \ref{lemma:exppliocit Grayconetech} states that the colimit of the subdiagram
\[
\begin{tikzcd}
    {[[1],1]\vee[1]} & {[2]} & {[3]}
    \arrow["{d^1}", from=1-2, to=1-3]
    \arrow["{[d^0,2]}"', from=1-2, to=1-1]
\end{tikzcd}
\] 
is equivalent to $\iota(1\star[1]\coprod_{\emptyset\star \{1\}}[1])$.

We denote by $P$ the colimit of this diagram given in the statement of the proposition. Remark that $\Fb P$ is the $\omega$-category
\[
\begin{tikzcd}
    1\star\emptyset & {} \\
    {\emptyset\star\{0\}} & {\emptyset\star\{1\}} & {\emptyset\star\{2\}}
    \arrow[from=2-1, to=2-2]
    \arrow[""{name=0, anchor=center, inner sep=0}, from=1-1, to=2-2]
    \arrow[from=1-1, to=2-1]
    \arrow[from=2-2, to=2-3]
    \arrow[curve={height=-12pt}, from=1-1, to=2-3]
    \arrow[""{name=1, anchor=center, inner sep=0}, draw=none, from=1-2, to=2-3]
    \arrow[shorten <=2pt, shorten >=2pt, Rightarrow, from=0, to=2-1]
    \arrow[shorten <=2pt, Rightarrow, from=1, to=2-2]
\end{tikzcd}
\]
and we then have $\Fb P\cong 1\star [2]$.  To conclude the proof, we have to show that $P$  is a $\omega$-category, i.e. that it  has the unique right lifting property against $\W$. 

Let $f:[\textbf{a},n]\to P$ (resp. $f:\Sp_{[\textbf{a},n]}\to P$) be a morphism. If there exists an integer $i<n$ such that  $f(i)=1\star\emptyset$ and $f(i+1)=\emptyset\star \{2\}$, then $f$ uniquely factors through $[[2],1]\to P$. If there exists an integer $i$ such that $f(i)=\emptyset\star\{0\}$, then $f$ uniquely factors through $\iota(1\star[1]\coprod_{\emptyset\star \{1\}}[1])$. 
If none of these conditions is satisfied, then $f$ factors through $1\star\emptyset$ or $\emptyset\star\{2\}$.

As $[2]$ and $\iota(1\star[1]\coprod_{\emptyset\star \{1\}}[1])$ are $\omega$-categories, they have the unique right lifting property against $\W$, and so have $P$.
\end{proof}

\begin{lemma}
\label{lemma:techiniqcal steiner2version star}
Let $A$, $B$ be presheaves on $\Delta$, and $k,m,n,l$ be integers. There exists a natural morphism 
$$\beta: \Hom([A,k],1\star [m])\times \Hom([B,m],1\star [n])\to \Hom([B\times A,k],1\star [n])$$
such that for any $f:[A,k]\to 1\star [m]$, $g:[B,m]\to 1\star [n]$ and $h:[B,n]\to 1\star[l]$,
\begin{equation}
\label{eq:beta} 
\beta(\beta(f,g),h) = \beta(f,\beta(g,h))
\end{equation}
\end{lemma}
\begin{proof}
Similar to the proof of lemma \ref{lemma:technical steiner2}.
\end{proof}

\begin{prop}
\label{prop:comparaison in theta set between otimes and star}
Let $K$ be a simplicial set. The canonical morphism 
$$1\coprod_{\{0\}\otimes K}[1]\otimes K\to 1\star K$$
is in $\overline{\W_2}$.
\end{prop}
\begin{proof}
As $K$ is a colimit of representables indexed by the Reedy cofibrant diagram $\Delta_{/K}\to \Sset$ (definition \ref{defi:reedycof}), and as 
$1\coprod_{\{0\}\otimes \uvar}[1]\otimes \uvar$ and  $1\star \uvar$ preserve cofibrations,  it is sufficient to demonstrate the result when $K:=[n]$ for $n$ an integer. By theorems \ref{theo:otimes presserves W}  and \ref{theo:otimes presserves Wversions star}, the functors $1\coprod_{\{0\}\otimes \uvar}[1]\otimes \uvar$ and $1\star \uvar$  send $\Sp_{[n]}\to [n]$ to $\overline{\W_2}$. It  is then sufficient to demonstrate the result when $[n]=[1]$. By propositions \ref{prop:explicit Gray} and \ref{prop:exppliocit Graycone}, the morphism $$1\coprod_{\{0\}\otimes [1]}[1]\otimes[1]\to 1\star[1]$$ fits in the cocartesian square
\[\begin{tikzcd}
	{[0]\coprod_{[1]}[2]} & {1\coprod_{\{0\}\otimes [1]}[1]\otimes[1]} \\
	{[1]} & {1\star[1]}
	\arrow[from=2-1, to=2-2]
	\arrow[from=1-2, to=2-2]
	\arrow[""{name=0, anchor=center, inner sep=0}, from=1-1, to=1-2]
	\arrow[from=1-1, to=2-1]
	\arrow["\lrcorner"{anchor=center, pos=0.125, rotate=180}, draw=none, from=2-2, to=0]
\end{tikzcd}\]
As the canonical morphism $[0]\coprod_{[1]}[2]\to [1]$ is in $\overline{\W_2}$, this concludes the proof.
\end{proof}



\chapter{Study of complicial sets}
\label{chapter:Studies of the complicial model}

\minitoc
\vspace{1cm}

\section{Preliminaries}

\subsection{Homotopy colimits}
\label{chapter:Generalities on model categories V2}
For this section, we fix a model category $C$ whose cofibrations are monomorphisms.
We give first some results on homotopy colimits. These results will be used freely throughout these text.

\begin{prop}
\label{prop:hom colimit 1}
Suppose given a square
\[\begin{tikzcd}
	a & b \\
	c & d
	\arrow[from=1-1, to=2-1]
	\arrow[from=1-2, to=2-2]
	\arrow[from=2-1, to=2-2]
	\arrow[from=1-1, to=1-2]
\end{tikzcd}\]
such that the two horizontal morphisms are weak equivalences. Then this square is homotopy cocartesian. 
\end{prop}
\begin{proof}
This is \cite[proposition 2.3.26]{Cisinski_Higher_categories_and_homotopical_algebra}.
\end{proof}
\begin{prop}
\label{prop:hom colimit 2}
Suppose given a cocartesian square
\[\begin{tikzcd}
	a & b \\
	c & d
	\arrow[from=1-1, to=2-1]
	\arrow[from=1-2, to=2-2]
	\arrow[from=2-1, to=2-2]
	\arrow[from=1-1, to=1-2]
	\arrow["\lrcorner"{anchor=center, pos=0.125, rotate=180}, draw=none, from=2-2, to=1-1]
\end{tikzcd}\]
where the left vertical morphism is a cofibration. Then this square is homotopy cocartesian. 
\end{prop}
\begin{proof}
This is \cite[corollary 2.3.28]{Cisinski_Higher_categories_and_homotopical_algebra}.
\end{proof}

\begin{prop}
\label{prop:hom colimit 2.5}
Weak equivalences are stable by pushout along cofibrations.
\end{prop}
\begin{proof}
This is \cite[proposition 13.1.2]{Hirschhorn_Model_categories_and_their_localizations}.
\end{proof}

\begin{prop}
\label{prop:hom colimit 3}
Let $F:\alpha\to C$ be a diagram indexed by an ordinal. The transfinite composition $\colim_\alpha F$ is the homotopy colimit of the diagram $F$.
\end{prop}
\begin{proof}
This is \cite[proposition 2.3.13]{Cisinski_Higher_categories_and_homotopical_algebra}.
\end{proof}

\begin{prop}
\label{prop:hom colimit 4}
Suppose given a diagram 
\[\begin{tikzcd}
	& {b_0} && {...} && {b_{n-1}} \\
	{a_0} && {a_1} && {a_{n-1}} && {a_{n}}
	\arrow[from=1-2, to=2-1]
	\arrow[hook, from=1-2, to=2-3]
	\arrow[from=1-4, to=2-3]
	\arrow[hook, from=1-4, to=2-5]
	\arrow[from=1-6, to=2-5]
	\arrow[hook, from=1-6, to=2-7]
\end{tikzcd}\]
where all morphisms labelled by $\hookrightarrow$ are cofibrations.
The colimit of this diagram is also the homotopy colimit of this diagram.
\end{prop}
\begin{proof}
Let $I_n$ be the category indexing the previous diagram. We denote by $i_0$, $j_0$,..., $i_{n-1}$, $j_{n-1}$, $i_{n}$ it's objects.
The projective model structure on $\Fun(I_n,C)$ is given by functor $G$ such that for any $k<n$, $F(j_k)\to F(i_k)$, $F(j_k)\to F(i_{k+1})$ are monomorphisms, and such that for any $0<k<n$, $F(j_k)\coprod F(j_{k+1}) \to F(i_k)$ is a monomorphism. Remark that such presheaves verify the condition given in the statement of the proposition.

 We will show on induction on $n$ that a natural transformation $\psi$ between two diagrams $F,G:I_n\to C$ that fulfills the desired condition induces a weak equivalence between their colimits. As we can always chose $F$ to be the cofibrant replacement of $G$ in the projective model structure on $\Fun(I_n,C)$, it will imply the desired result. 

The case $n=1$ is proposition \ref{prop:hom colimit 2}. Suppose now the result is true at the stage $(n-1)$ and let $\psi$ be a weakly invertible natural transformation between two diagram $F,G:I_n\to C$ that fulfills the desired condition. We denote by $\iota:I_{n-1}\to I_n$ the canonical inclusion that sends $i_k$(resp. $j_k$) on $i_k$(resp. $j_k$) for $k<n$ (resp. $k<n-1$).
We then have a diagram 
\[\begin{tikzcd}
	{\colim_{I_{n-1}}F\circ\iota} & {F(j_{n-1})} & {F(i_n)} \\
	{\colim_{I_{n-1}}G\circ\iota} & {G(j_{n-1})} & {G(i_n)}
	\arrow[hook, from=1-2, to=1-3]
	\arrow[from=1-2, to=1-1]
	\arrow["\sim"', from=1-1, to=2-1]
	\arrow["\sim"', from=1-3, to=2-3]
	\arrow["\sim"', from=1-2, to=2-2]
	\arrow[from=2-2, to=2-1]
	\arrow[hook, from=2-2, to=2-3]
\end{tikzcd}\]
where all arrows labeled by $\sim$ are weak equivalences. Remark furthermore that the limit of the two lines are respectively $\colim_{I_{n}}F$ and $\colim_{I_{n}}G$. A last application of proposition \ref{prop:hom colimit 2} concludes the proof.
\end{proof}

\begin{definition}
\label{defi:nice model structure}
A model structure is \wcnotion{nice}{nice model structure} if it is simplicial, combinatorial, and its cofibrations are monomorphisms. 
\end{definition}

 \begin{definition}
\label{defi:reedycof}
Let $C$ be a cocomplete category. A functor $F:A\to C$ is \textit{Reedy cofibrant}\index[notion]{Reedy cofibrant functor} if $A$ has a structure of Reedy elegant category (definition \ref{defi:reedy}) and for every object $a$, the  induced morphism $\colim_{\partial a}F\to F(a)$ is a monomorphism.
\end{definition}

As all the presheaves categories that we will encounter through this text are presheaves on elegant Reedy categories, we will use freely the following theorem:
\begin{theorem}[Hirschhorn]
\label{theo:hom colimi}
We suppose that $C$ is a nice model category.
Let $A$ be a elegant Reedy category, and $F:A\to C$ a Reedy cofibrant diagram. The object $\colim_A F$ is the homotopy colimit of $F$. In particular, if $C$ is $\Psh{B}$, with $B$ an elegant Reedy category, every object $X$ is the homotopy colimit of the diagram $B_{/X}\to B\to \Psh{B}$.
\end{theorem}
\begin{proof}
Using the characterization of elegant Reedy category given by proposition 3.8 of \cite{Bergner_reedy_category_and_the_theta_construction}, and \cite[proposition 15.10.2]{Hirschhorn_Model_categories_and_their_localizations},
it's easy to see that they have fibrant constant in the sens of \cite[definition 15.10.1]{Hirschhorn_Model_categories_and_their_localizations}. We can then apply the theorem 19.9.1	 of \cite{Hirschhorn_Model_categories_and_their_localizations}.
\end{proof}

The previous results justifies the following definition.
\begin{definition}
\label{defi:precocomplet}
A class of morphisms $T$ of a cocomplete category $C$ is \wcnotionsym{precocomplete}{(ss@$\overline{S}$}{precocomplete class of morphisms} if
\begin{enumerate}
\item monomorphisms in $T$ are closed under transfinite compositions and pushouts.
\item $T$ is closed by pushout along monomorphism
\item It is closed under two out of three.
\item For any Reedy cofibrant diagram $F:A\to \Arr(C)$ that is pointwise in $T$, the morphism $\colim_AF$ is in $T$.
\end{enumerate} 
For a set of morphisms $T$, we denote $\overline{T}$ as the smallest precocomplete class of morphisms containing $T$.
\end{definition}

\subsection{Building model structures}

In this section, we provide a theorem to build model structures starting from a few data. All the results here are specifications of much more general results taken from the Cisinski theory presented, for example, in \cite{Cisinski_Higher_categories_and_homotopical_algebra}.

\begin{prop}
\label{prop:weak equivalence are precocomplete}
Weak equivalences of a nice model category form a precocomplete class in the sense of definition \ref{defi:precocomplet}.
\end{prop}

\begin{proof}
The conditions $(1)$ and $(3)$ of definition \ref{defi:precocomplet} are obviously fulfilled by the class of weak equivalences. The condition $(2)$ follows from proposition \ref{prop:hom colimit 2.5}, and the last one from theorem \ref{theo:hom colimi}.
\end{proof}

\begin{theorem}
\label{theo:free_model_structure}
Let $B$ be an elegant reedy category. Suppose we are given a left adjoint $\alpha:\Psh{\Delta}\to \Psh{B}$ preserving monomorphisms and finite products, and let $J$ be a set of monomorphisms in $\Psh{B}$. Then there exists a nice model structure denoted $\Psh{B}^{\alpha,J}$ whose weak equivalences are the smallest precocomplete class of maps containing $J$ and $b\times \alpha([n])\to b$ for $b\in B$ and integer $n$.
\end{theorem}

\begin{proof}
We denote $J_{\mathrm{Kan}}$ as a set of generating acyclic cofibrations of the Kan-Quillen model structure on $\Psh{\Delta}$. We fix a cellular model $\cell(B)$ of $\Psh{B}$ and a cellular model $\cell(\Delta)$ of $\Psh{\Delta}$. The functor $X\mapsto X\times \alpha([1])$ is an exact cylinder in the sense of \cite[definition 2.4.8]{Cisinski_Higher_categories_and_homotopical_algebra}. We denote $\Lambda$ as the smallest set of morphisms in $\Psh{B}$ that is closed under pushout, transfinite composition, and retract, and that includes 
$$f\hat{\times}\alpha(g):b\times \alpha(K)\cup a\times \alpha(L)\to b\times \alpha(L)$$
whenever $(f,g)\in J\times \cell(\Delta)$ or $(f,g)\in \cell(B)\times J_{\mathrm{Kan}}$. Using the fact that $\alpha$ preserves finite products and by \cite[proposition 3.1.2]{Cisinski_Higher_categories_and_homotopical_algebra}, we have that $\Lambda$ is a class of $\alpha([1])$-anodyne extensions in the sense of \cite[definition 2.4.11]{Cisinski_Higher_categories_and_homotopical_algebra}. By \cite[theorem 2.4.19]{Cisinski_Higher_categories_and_homotopical_algebra}, this induces a model structure on $\Psh{B}$. 
By \cite[Scholie 1.3.45]{cisinski_prefaisceaux_comme_modele}, we have that the class of weak equivalences $W$ of this model structure is the smallest one stable under two out of three, and containing $\Lambda$.

We will denote $W'$ the smallest precocomplete class of maps containing $J$ and $b\times \alpha([n])\to b$ for $b\in B$ and integer $n$.
We can easily check that for any anodyne extension $K\to L$ of the Kan-Quillen model structure, the morphism $b\times K\to b\times L$ is a weak equivalence of the model structure of $\Psh{B}$. In particular, it implies that $b\times \alpha(\{0\})\to b\times \alpha([n])$ is a weak equivalence. By two out of three, this implies that $b\times \alpha([n])\to b$ is also a weak equivalence. As $J$ is also contained in $\Lambda$, proposition \ref{prop:weak equivalence are precocomplete} implies that we have $W'\subset W$.

Note now that the class of morphisms of simplicial sets $K\to L$ such that $b\times \alpha(K)\to b\times\alpha(L)$ is in $W'$ verifies the three conditions of \cite[corollaire 2.1.20]{cisinski_prefaisceaux_comme_modele}, and then includes all weak equivalences of the Kan-Quillen model structure. 
Let $a\to b$ be a morphism in $W'$. By two out of three, the class of simplicial sets $K$ such that $a\times \alpha(K)\to b\times \alpha(K)$ is in $W'$ contains every representable simplicial set and is closed under colimits indexed by Reedy cofibrant diagrams by definition. It then includes all simplicial sets by proposition \ref{prop:elelangat stable by slice}.

Suppose now we are given $f:a\to b$ in $\Psh{B}$, $g:K\to L$ in $\Psh{\Delta}$ and consider the induced diagram
\[\begin{tikzcd}
	{a\times\alpha(K)} & {a\times\alpha(L)} & \\
	{b\times\alpha(K)} & {b\times\alpha(K)\cup a\times\alpha(L)} \\
	&& {b\times\alpha(L)}
	\arrow["{a\times \alpha(g)}", from=1-1, to=1-2]
	\arrow["{f\times \alpha(K)}"', from=1-1, to=2-1]
	\arrow[from=1-2, to=2-2]
	\arrow["{f\times \alpha(L)}", curve={height=-18pt}, from=1-2, to=3-3]
	\arrow[from=2-1, to=2-2]
	\arrow["{b\times \alpha(g)}"', curve={height=18pt}, from=2-1, to=3-3]
	\arrow["{f\hat{\times} g}", from=2-2, to=3-3]
\end{tikzcd}\]
If $(f,g)\in J\times \cell(\Delta)$, then $f\times \alpha(K)$ and $f\times \alpha(L)$ are in $W'$. By stability under pushout and two out of three, $f\hat{\times} g$ also belongs to it. If $(f,g)\in \cell(A)\times J_{\mathrm{Kan}}$, then $a\times \alpha(g)$ and $b\times \alpha(g)$ are in $W'$. By stability under pushout and two out of three, so is $f\hat{\times} g$. We then have an inclusion $\Lambda\subset W'$ and then $W\subset W'$ which concludes the proof.
\end{proof}

\begin{remark}
\label{remark:free model structure and loc}
Remark that $\Psh{B}^{\alpha,J}$ is always the left Bousfield localization of  $\Psh{B}^{\alpha,\emptyset}$ along $J$.
\end{remark}

\begin{remark}
\label{rem:mapping from a free model structure}
Let $F:C\to D$ be a left adjoint between two nice model structures that preserves monomorphisms. The proposition \ref{prop:weak equivalence are precocomplete} implies that the set $F^{-1}(\W_{D})$ is a precocomplete class. In particular, if the weak equivalences of $C$ are of shape $\overline{S}$ for $S$ a set of morphisms (as is the case for all model structures constructed from theorems \ref{theo:free_model_structure} and \ref{theo:free_model_structure_on_marking}), then $F$ is a left Quillen adjoint if and only if $F(S)$ consists of weak equivalences.
\end{remark}

\subsection{Zig-zag of acyclic cofibrations}

\begin{definition}
 Let $i:A\to B$ and $i':A'\to B'$ be two cofibrations. A \notion{zigzag of acyclic cofibrations} between $i$ and $i'$, denoted $i\leftrightsquigarrow i'$, is a zigzag in the category of arrows such that all the horizontal maps are acyclic cofibrations, and all the vertical maps are cofibrations.
\end{definition}

\begin{lemma}
Let $i$ and $j$ be two cofibrations, and $f:X\to Y$ a fibration between fibrant objects. Suppose that we have a morphism in the category of arrows $i\to j$ which is pointwise an acyclic cofibration. Then, if $j$ has the left lifting property against $f$, so does $i$.
\end{lemma}

\begin{proof}
We consider a diagram of the following shape:
\[\begin{tikzcd}
	A & {A'} & X \\
	B & {B'} & Y.
	\arrow["i", from=1-1, to=2-1]
	\arrow["\sim", from=1-1, to=1-2]
	\arrow["\sim"', from=2-1, to=2-2]
	\arrow["j"', from=1-2, to=2-2]
	\arrow[from=1-3, to=2-3]
	\arrow[curve={height=-18pt}, from=1-1, to=1-3]
	\arrow[curve={height=18pt}, from=2-1, to=2-3]
	\arrow["{l_0}"', dotted, from=2-2, to=2-3]
	\arrow["{l_1}"{description}, dotted, from=1-2, to=1-3]
	\arrow["{l_2}"{description}, dotted, from=2-2, to=1-3]
\end{tikzcd}\]
We construct, one after the other, the liftings $l_0$, $l_1$, and $l_2$.
\end{proof}

\begin{lemma}
Let $i$ and $j$ be two cofibrations, and $f:X\to Y$ a fibration between fibrant objects. Suppose that we have a morphism in the category of arrows $i\to j$ which is pointwise an acyclic cofibration. Then, if $i$ has the right lifting property against $f$, so does $j$.
\end{lemma}

\begin{proof}
We consider a diagram of the following shape:
\[\begin{tikzcd}
	A & {A'} &&& X \\
	B & {B\coprod_A A'} \\
	&& {B'} && Y.
	\arrow[from=1-1, to=2-1]
	\arrow["\sim"', from=1-1, to=1-2]
	\arrow["\sim", from=2-1, to=2-2]
	\arrow["\sim"{description}, from=2-2, to=3-3]
	\arrow["\sim"', curve={height=6pt}, from=2-1, to=3-3]
	\arrow[curve={height=-6pt}, from=1-2, to=3-3]
	\arrow[from=1-2, to=2-2]
	\arrow[from=1-2, to=1-5]
	\arrow[from=3-3, to=3-5]
	\arrow[from=1-5, to=3-5]
	\arrow["{l_0}"{description}, dotted, from=2-2, to=1-5]
	\arrow["{l_1}"{description}, dotted, from=3-3, to=1-5]
\end{tikzcd}\]
We construct, one after the other, the liftings $l_0$, $l_1$.
\end{proof}

\begin{prop}
\label{prop:lifting_property_zigzag_of_acyclic_cofibration}
Let $f$ be a fibration between fibrant objects and $i$ and $j$ two cofibrations such that there exists a zigzag of acyclic cofibrations $i\leftrightsquigarrow j$. Then $f$ has the right lifting property against $i$ if and only if it has the right lifting property against $j$.
\end{prop}

\begin{proof}
This is a direct consequence of the last two lemmas.
\end{proof}

\subsection{Marked and stratified presheaves}
\label{section:Marked and stratified presheaves}
\begin{definition}
\label{defi:of pshMB}
Let $B$ be an elegant Reedy category and $M$ a subset of the set of objects of $B$. A \wcnotion{$M$-stratified presheaf on $B$}{stratified presheaf on $B$}, or just a \textit{stratified prehsheaf on $B$} when the subset $M$ will be non-ambiguous, is a pair $(X,tX)$ where $X$ is a presheaf on $B$ and $tX:=\coprod_{a\in M} tX_a$ is the disjoint union of sets, such that for any $a\in M$, $tX_a$ is a subset of $X_a$ including degeneracies, i.e the image of morphisms $X_p:X_b\to X_a$ for $p:b\to a$ in $B_-$.

A \notion{stratified morphism} $f:(X,tX)\to (Y,tY)$ is the data of a morphism on the underlying presheaf such that $f(tX_n)\subset tY_n$.
The category of stratified presheaves is denoted by \wcnotation{$\tPshM{B}$}{(tpsh@$\tPshM{B}$}. 
\end{definition}

\begin{definition}
\label{defi:entire}
A morphism between two stratified presheaves is \wcnotion{entire}{entire morphism} if it is the identity on the underlying presheaves.
\end{definition}

\begin{construction}
\label{construction: definition of sharp}
We have adjunctions
\[\begin{tikzcd}
	{(\uvar)^\natural:\tPshM{B}} && {\Psh{B}}
	\arrow[""{name=0, anchor=center, inner sep=0}, from=1-1, to=1-3]
	\arrow[""{name=1, anchor=center, inner sep=0}, "{(\uvar)^\flat}", shift left=3, from=1-3, to=1-1]
	\arrow[""{name=2, anchor=center, inner sep=0}, "{(\uvar)^\sharp}"', shift right=3, from=1-3, to=1-1]
	\arrow["\dashv"{anchor=center, rotate=90}, draw=none, from=0, to=2]
	\arrow["\dashv"{anchor=center, rotate=90}, draw=none, from=1, to=0]
\end{tikzcd}\]
where $(\uvar)^\flat$ and \wcsnotation{$(\uvar)^\sharp$}{((b61@$(\uvar)^{\sharp}$}{for stratified presheaves} are fully faithful inclusions that send a presheaf $X$ onto $(X,S)$ where $S$ is respectively the smaller and maximal stratification on $X$, and where the right adjoint is the obvious forgetful functor. We will identify presheaves on $B$ with their image by the functor $(\uvar)^\flat$, and will denote $(\uvar)^{\sharp}:\tPshM{B}\to \tPshM{B}$ the functor $((\uvar)^{\natural})^{\sharp}$.
\end{construction}

\begin{construction}
\label{construction: of tB}
 If $b$ is an object of $M$, we denote by $b_t$ the stratifed presheaf $(b,S)$, where $S$ is the smaller stratification that includes $id:b\to b$.

We then define $t_MB$ as the full subcategory of $\tPshM{B}$ spanned by the objects of shape $a$ or $b_t$ with $a\in B$ and $b\in M$. We then have equalities:
$$\begin{array}{rcl}
\Hom_{t_MB}(a,b)&:=& \Hom_B(a,b),\\
\Hom_{t_MB}(a,b_t)&:=& \Hom_B(a,b),\\
\Hom_{t_MB}(a_t,b)&:=& \Hom_B(a,b)\cap B_- \diagdown \{id_a\}, \\
\Hom_{t_MB}(a_t,b_t)&:=&\Hom_B(a,b)\cap B_-.\\
\end{array}$$
The canonical functor $B\to t_MB$ is then fully faithful and we will identify object of $B$ with their image through this functor.

The category of $M$-stratified presheaves is then equivalent to the fully faithful subcategory of presheaves $X$ on $t_MB$ such that for any $b\in M$, $X(b_t)\to X(b)$ is a monomorphism. 
In particular, we have an adjunction 
\begin{equation}
\label{eq:entre presheaveds on tB et stratified presheages}
\begin{tikzcd}
	{\pi:\Psh{t_MB}} & {\tPshM{B}:\iota}
	\arrow[""{name=0, anchor=center, inner sep=0}, shift left=2, from=1-1, to=1-2]
	\arrow[""{name=1, anchor=center, inner sep=0}, shift left=2, from=1-2, to=1-1]
	\arrow["\dashv"{anchor=center, rotate=-90}, draw=none, from=0, to=1]
\end{tikzcd}
\end{equation}
\end{construction}

\begin{prop}
\label{prop:reedy structure on tB}
The category $t_MB$ admits a structure of elegant Reedy category, that makes the inclusion $B\to t_MB$ a morphism of Reedy category. There is no non trivial negative morphism whose codomain is of shape $b_t$ for $b\in M$. There is no non trivial positive morphism whose domain is of shape $b_t$ for $b\in M$. 
\end{prop}
\begin{proof}
We define the degree degree function $ob(t_MB)\to \Nb$ by the assignment 
$$d'(b):= 2 d(b)~~~~~d'(b_t):= 2d(b)+1$$
The category $(t_MB)_+$ is the smallest that includes $B_+$ and morphisms of shape $a\to a_t$. The category $(t_MB)_-$ is the smallest that includes $B_-$ and morphisms of shape $b_t\to a$.

To prove the axioms of Reedy category, we can replicate the strategy used in proposition C.2 of \cite{Ozornova_model_structure_for_infini_n_categories} with obvious modification to this more general framework.

We still have to show that $tB$ is elegant. Let $X$ be a presheaf on $t_MB$, $a$ an element of $t_MB$, $f:a\to a'$ and $g:a\to a'$ two negative morphisms, an element $x$ of $X(a)$, two non degenerate elements $y\in X(a')$ and $z\in X(a'')$ such that $f^*y=x$, $g^*z=x$. 

Suppose first that $a$ is in $B$. In this case, $f$ and $g$ are also in $B$, and as this Reedy category is elegant by assumption, this implies $f=g$ and $y=z$. Suppose now that $a$ is of shape $b_t$ for $b\in B$. We denote by $\alpha$ the canonical morphism $\alpha:b\to b_t$. By definition of negative morphism, the codomain of $f$ and $g$ are in $B$. The morphisms $\alpha f $ and $\alpha g$ then are in $B$. Moreover, these two morphisms are negative, and we have $(\alpha f)^*y=\alpha^* x$, $(\alpha g)^*z=\alpha^* x$. As $B$ is elegant, $\alpha f=\alpha g$ and $y=z$. Eventually, remark that the first equality implies that $f$ is equal to $g$. 
\end{proof}
\begin{remark}
A cellular model for $t_MB$ is given by $C\cup\{b\to b_t,b\in M\}$ where $C$ is a cellular model for $B$.
\end{remark}

\begin{prop}
\label{prop:transfert from presheaves on tB to stratified presheaves}
Suppose given a combinatorial model structure on $\Psh{t_MB}$ whose cofibrations are monomorphisms. Then there exists a combinatorial model structure on $\tPshM{B}$ making the adjunction \ref{eq:entre presheaveds on tB et stratified presheages} a Quillen equivalence.

A morphism of $\tPshM{B}$ is a cofibration if and only if it is a monomorphism. A morphism is a fibration (resp. a weak equivalence) if and only if its image by $\iota$ is.

\end{prop}
\begin{proof}
We are willing to apply \cite[theorem 11.3.2]{Hirschhorn_Model_categories_and_their_localizations}. As two adjoints of \eqref{eq:entre presheaveds on tB et stratified presheages} preserve smallness, the first condition is obviously fulfilled. Using the fact that $\iota$ is fully faithful, the second condition of theorem \textit{op cit} is equivalent to asking that for any acyclic cofibration $i$ of $\Psh{t_MB}$, the morphism $\iota\pi i$ is a weak equivalence. 

However, remark that the unit $X\to \iota \pi X$ is a trivial fibration. Indeed, a cellular model is given $C\cup\{b\to b_t,b\in M\}$, where  $C$ is a cellular model for $B$, and the unit obviously has the right lifting property against it. The result then directly follows from the stability of weak equivalences by two out of three.

This provides the model structure. As the unit is pointwise a trivial fibration and the counit is the identity, the adjunction
\eqref{eq:entre presheaveds on tB et stratified presheages} induces a Quillen equivalence. 
\end{proof}

\begin{theorem}

\label{theo:free_model_structure_on_marking}
Let $B$ be an elegant Reedy category and $M$ a marking on it. Suppose we are given a left adjoint $\alpha:\Psh{\Delta}\to \tPshM{B}$ such that $\iota\alpha:\Psh{\Delta}\to \Psh{t_MB}$ preserves monomorphisms and finite products, and $J$ a set of monomorphisms in $\tPshM{B}$. Then there exists a nice model structure on $\tPshM{B}$ whose weak equivalences are the smallest precocomplete class of maps containing $J$ and $b\times \alpha([n])\to b$ for $b\in t_MB$ and integer $n$.
\end{theorem}

\begin{proof}
By proposition \ref{prop:transfert from presheaves on tB to stratified presheaves}, we can transfer to $\tPshM{B}$ the model structure on $\Psh{t_MB}$ obtained from the theorem \ref{theo:free_model_structure} applied to $\iota\alpha$ and $\iota(J)$.
\end{proof}

\begin{remark}
\label{rem: free model structure on marking}
 Remark that in the previous theorem, the adjunction between $\Psh{t_MB}$ and $\tPshM{B}$ given in \ref{construction: of tB} is a Quillen equivalence.
\end{remark}

\vspace{1cm}

We now fix a Reedy category $B$, a subset $M$ of objects of $B$, and we suppose given a nice model structure on $\tPshM{B}$ (as defined in definition \ref{defi:nice model structure}).
\begin{definition}
A \wcnotion{$M$-marked presheaf on $B$}{marked presheaf on $B$} is a stratified presheaf having the unique right lifting property against all entire acyclic cofibrations. In particular, any fibrant objects is marked. 

We denote by \wcnotation{$\mPshM{B}$}{(mpsh@$\mPshM{\uvar}$} the full subcategory of marked presheaves on $B$. We then have an adjunction: \sym{((b91@$(\uvar)_{\mk}$}
\begin{equation}
\label{eq:adj beetwen stratified and marked}
\begin{tikzcd}
	{(\uvar)_{\mk}:\tPshM{B}} & {\mPshM{B}:\iota}
	\arrow[""{name=0, anchor=center, inner sep=0}, shift left=2, from=1-2, to=1-1]
	\arrow[""{name=1, anchor=center, inner sep=0}, shift left=2, from=1-1, to=1-2]
	\arrow["\dashv"{anchor=center, rotate=-90}, draw=none, from=1, to=0]
\end{tikzcd}
\end{equation}
where the left adjoint $(\uvar)_{\mk}$ sends a stratified presheaf $(X,tX)$ to the marked presheaf $(X,\overline{tX})$, where $\overline{tX}$ is the smaller stratification that includes $tX$ and makes $(X,\overline{tX})$ a marked presheaf, and where the right adjoint is a fully faithful inclusion.
Remark furthermore that at the level of presheaves, these two adjoints are the identity. 
\end{definition}

\begin{prop}
\label{prop:X to Xmk is acycli cof}
Let $X$ be a $M$-stratified presheaf on $B$.
The canonical morphism $X\to \iota (X_{\mk})$ is an entire acyclic cofibration.
\end{prop}
\begin{proof}
Let $\kappa$ be a regular cardinal such that $X$ is $\kappa$-small. Remark first the domain of a entire monomorphism is $\kappa$-small if and only if its codomain is.

Let $I$ be the set of entire acyclic cofibrations with $\kappa$-small codomains and domains. This set generates via the small object argument a weak factorization system, and we denote by $X\to X'\to 1$ the factorization of $X\to 1$. We are willing to show that $X'$ is $M$-marked. As $X\to X'$ is an entire acyclic cofibration by construction, this will directly imply that $X'$ is equal to $\iota (X_{\mk})$ and so demonstrate the desired result.

Suppose then given a diagram
\[\begin{tikzcd}
	K & {X'} \\
	L & 1
	\arrow[from=2-1, to=2-2]
	\arrow[from=1-1, to=1-2]
	\arrow[from=1-2, to=2-2]
	\arrow["i"', from=1-1, to=2-1]
\end{tikzcd}\]
with $i$ an entire acyclic cofibration. We have to show that it admits a lift.
Remark that this square factors as:
\[\begin{tikzcd}
	K & {X'} & {X'} \\
	L & {X'\coprod_KL} & 1
	\arrow[from=1-1, to=1-2]
	\arrow["i"', from=1-1, to=2-1]
	\arrow[from=1-3, to=2-3]
	\arrow[Rightarrow, no head, from=1-2, to=1-3]
	\arrow[from=2-1, to=2-2]
	\arrow[from=2-2, to=2-3]
	\arrow["{i'}", from=1-2, to=2-2]
	\arrow["\lrcorner"{anchor=center, pos=0.125, rotate=180}, draw=none, from=2-2, to=1-1]
\end{tikzcd}\]
The morphism $i'$ is an entire acyclic cofibration with $\kappa$-small codomain and domain and then belongs to $i$. The right square of the previous diagram then admits a lift. This induces a lift in the in the original square, and this concludes the proof.
\end{proof}

\begin{prop} 
\label{prop:model structure on marked presheaves}
Suppose given a nice model structure on $\tPshM{B}$.
This induces a nice model structure on $\mPshM{B}$, making the adjunction \eqref{eq:adj beetwen stratified and marked} a Quillen equivalence. A morphism between two marked presheaves is a cofibration (resp. a fibration) (resp. a weak equivalence) if it is a cofibration (resp. a fibration) (resp. a weak equivalence) when seen as a morphism of $\tPshM{B}$. 
\end{prop} 
\begin{proof} Let $f:X\to Y$ be a fibration between stratified presheaves. If $Y$ is marked, so is $X$. The two weak factorization systems on $\mPshM{B}$ are then induced by the one of $\tPshM{B}$. We leave it to the reader to check that this model structure is nice. 

The unit is pointwise a weak equivalence according to proposition \ref{prop:X to Xmk is acycli cof} and the counit is the identity. 
The adjunction \eqref{eq:adj beetwen stratified and marked} is then a Quillen equivalence.
\end{proof}

\section{The complicial model}

\subsection{Model structure on stratified and marked simplicial sets}

\begin{definition}
Let $M$ be the set of simplices of non-negative dimension. We will simply denote $\tPsh{\Delta}$ and $t\Delta$ the categories $\tPshM{\Delta}$ and $t_M\Delta$ from constructions \ref{defi:of pshMB} and \ref{construction: of tB}. 

Unfolding the definition, the elements of $\tPsh{\Delta}$, called \notion{stratified simplicial set}, correspond to pairs $(X,tX)$ where $X$ is a simplicial set and $tX := \cup_{n>0}tX_n$ a graded set such that for any $n\geq 1$, $tX_n$ is a subset of $X_n$ that includes all degenerate simplices. A simplex in $tX$ is called a \wcnotion{thin}{thin simplex}.  

Morphisms $f:(X,tX)\to (Y,tY)$ of $\tPsh{\Delta}$, called \textit{stratified morphisms}, correspond to the data of a morphism on the underlying simplicial set such that $f(tX_n)\subset tY_n$.
\end{definition}

\begin{remark}
Given a functor $i:I\mapsto (F(i),tF(i))$ with values in stratified simplicial sets, its colimit is given by $(\colim F(i),M)$ where $M$ is the smaller stratification that includes the image of $tF(i)\to \colim F(i)$ for any $i:I$.
\end{remark}

\begin{definition}[Verity]
We can extend the join to stratified simplicial sets as follows: 
If $(X,tX)$ and $(Y,tY)$ are two stratified simplicial sets, we define $tX\star tY$ as the set of simplices of $X\star Y$ of shape $x\star y$ where either $x$ or $y$ is thin. We then define 
$$(X,tX)\star (Y,tY) := (X\star Y, tX\star tY).$$
\end{definition}

\begin{definition}
A stratified monomorphism $f:X\to Y$ is 
\begin{enumerate}
\item \textit{entire} if it is an identity on underlying simplicial sets.
\item \wcnotion{regular}{regular morphism} if for every $n\geq 1$ the following diagram is a pullback:
\[\begin{tikzcd}
	{tX_n} & {X_n} \\
	{tY_n} & {Y_n}.
	\arrow[from=2-1, to=2-2]
	\arrow[from=1-2, to=2-2]
	\arrow[from=1-1, to=1-2]
	\arrow[from=1-1, to=2-1]
	\arrow["\lrcorner"{anchor=center, pos=0.125}, draw=none, from=1-1, to=2-2]
\end{tikzcd}\]
\end{enumerate}
\end{definition}

\begin{definition}[Verity]
We define several stratified structures on $[n]$. 
\begin{enumerate}
\item \wcnotation{$[n]_t$}{((g31@$[n]_t$}. The top $n$-simplex is thin. All degeneracies are thin.
\item \wcnotation{$[n]^k$}{((g32@$[n]^k$}. All simplices that include $\{k-1,k,k+1\}\cap[n]$ are thin. All degeneracies are thin.
\item \wcnotation{$([n]^k)'$}{((g33@$([n]^k)'$}. All simplices that include $\{k-1,k,k+1\}\cap[n]$, together with the $(k-1)$-face and the $(k+1)$-face are thin. All degeneracies are thin.
\item \wcnotation{$([n]^k)''$}{((g34@$([n]^k)''$}. All simplices that include $\{k-1,k,k+1\}\cap[n]$, together with the $(k-1)$-face, the $k$-face, and the $(k+1)$-face are thin. All degeneracies are thin.
\item \wcnotation{$[3]^{eq}$}{((g35@$[3]^{eq}$}. All simplices of dimension strictly higher than $2$, together with $[0,2]$ and $[1,3]$ are thin. All degeneracies are thin.
\item \wcnotation{$[n]^\sharp$}{((g36@$[n]^\sharp$}. All simplices are thin.
\end{enumerate}
\end{definition}

\begin{definition}	

\label{defi:anodyne extension complicial}
 We define several sets of morphisms:
\begin{enumerate}
\item The \notion{complicial horn inclusions} are the regular extensions:
$$\Lambda^k[n]\to [n]^k,~n\geq 1,~ n\geq k\geq 0.$$
\item The \notion{complicial thinness extensions}:
$$([n]^k)'\to ([n]^k)'',~n\geq 2,~ n\geq k\geq 0.$$
\item The \notion{saturation extensions}:
$$[n]\star[3]^{eq}\star[m]\to [n]\star[3]^{\sharp}\star[m],~ n,m\geq -1.$$
\end{enumerate}
The union of complicial horn inclusions and complicial thinness extensions will be called \notion{complicial elementary anodyne extension}, and the further union with saturation extensions will be called \notion{saturated complicial elementary anodyne extension}.
\end{definition}

\begin{definition}[Verity]
\label{defi:complicial set}
Let $n\in \Nb\cup\{\omega\}$.	
A \wcnotion{$n$-complicial set}{complicial set} is a stratified set having the right lifting property against complicial elementary extensions and against all morphisms $[k]\to [k]_t$ for $k>n$. It is \wcnotion{saturated}{saturated complicial set} if it also has the right lifting property against saturation extensions. 

The $\omega$-complicial sets will simply be called \textit{complicial sets}.
\end{definition}

\begin{theorem}[Verity]
\label{theo:model structure on complicial set}
Let $n\in \Nb\cup\{\omega\}$.	
There exists a nice and cartesian model structure on stratified simplicial sets, denoted by \wcnotation{$\stratSset^n$}{(spshdeltasa@$\stratSset^n$} and called the \wcnotion{$n$-complicial model structure}{complicial model structure}, whose fibrant objects are $n$-complicial sets. Its weak equivalences are the smallest precocomplete class of maps containing complicial horn inclusions and complicial thinness extensions.
\end{theorem}

\begin{theorem}[Ozornova-Rovelli]
\label{theo:model structure on complicial set2}
Let $n\in \Nb\cup\{\omega\}$.	
The model structure $\stratSset^n$ admits a left Bousfield localization along saturation extensions, denoted \wcnotation{$\stratSset^n_\sat$}{(spshdeltasat@$\stratSset^n_\sat$} and called the \wcnotion{saturated $n$-complicial model structure}{saturated complicial model structure}, whose fibrants are saturated $n$-complicial sets.
\end{theorem}

\begin{proof}[Proof of theorems \ref{theo:model structure on complicial set} and \ref{theo:model structure on complicial set2}]
The first assertion is \cite[Theorem 98]{Verity_weak_complicial_sets_I} applied to the set $J_c$ given in \cite[Example 91]{Verity_weak_complicial_sets_I}, and the second one is \cite[Theorem 1.25]{Ozornova_model_structure_for_infini_n_categories}.
\end{proof}

\begin{notation}
When speaking about \textit{acyclic cofibrations} or \textit{weak equivalences} of stratified simplicial sets, we will always refer implicitly to the complicial model structure. We will speak of \textit{saturated weak equivalences} and \textit{saturated acyclic cofibrations} when we are working in the saturated complicial model structure.
\end{notation}

\begin{remark}
\label{rem:on forgetting some marking}
Let $M$ be the set of all simplices $[k]$ with $0<k\leq n$, and let's denote $\mathrm{t}_{\leq n}\Psh{\Delta}$ the category $\tPshM{\Delta}$ from constructions \ref{defi:of pshMB}. Remark that we have a canonical functor $\tPsh{\Delta}\to\mathrm{t}_{\leq n}\Psh{\Delta}$ that just forgets thin $k$-cells for $k>n$.

We can easily check that the model structure $t\Psh{\Delta}^n$ and $\tPsh{\Delta}^n_\sat$ induce two model structures denoted respectively $\mathrm{t}_{\leq n}\Psh{\Delta}^n$ and $\mathrm{t}_{\leq n}\Psh{\Delta}_\sat^n$ such that the comparison functor becomes a Quillen equivalence.
\end{remark}

In the case $n=1$, we can obtain the following easier characterization of the $1$-complicial model structure.

\begin{prop}
\label{prop:characterization of morphism depuis strratsset.}
Weak equivalences in $\stratSset^1$ are the smallest precocomplete class of morphisms that contains:
\begin{enumerate}
\item for any $n$, the morphism $\Sp_n\to [n]$,
\item the morphism $[1]_t\to [0]$,
\item for any $n>1$, the morphism $[n]_t\to [n]$.
\end{enumerate}
The model category $\stratSset^1_\sat$ is the Bousfield localization of this model structure along the morphism $E^{eq}\to [0]$.
\end{prop}

\begin{proof}
Let $W$ be the class of weak equivalences of $\stratSset^1$ and let $W'$ be the precocomplete class described in the statement. We can easely check that the functor $(\uvar)^{\flat}:\Sset\to \stratSset$ sends inner horn inclusions to weak equivalences. By proposition 3.7.4 of \cite{Cisinski_Higher_categories_and_homotopical_algebra}, it then sends spine inclusions to weak equivalences. Finally, as $[n]\to [n]_t$ for $n\geq 2$, and $[1]_t \to [0]$ are weak equivalences in $\stratSset^1$, and as $W$ is a saturated class by \ref{prop:weak equivalence are precocomplete}, we have $W'\subset W$.

Let's now show the other direction. We will first show that $W'$ is closed under the functor $\uvar\star K$ for any simplicial set $K$.

To this extent, remark that for all integers $n$, the morphism $\Sp_{[n+1]}\to \Sp_{[n]}\star [0]$ is a sequence of pushouts along $\Sp_{[2]}\to [2]$ and is therefore in $W'$. By two out of three, so is the morphism $ \Sp_{[n]}\star [0]\to [n]\star[0]\cong [n+1]$. Now remark that we have a cocartesian square
\[\begin{tikzcd}
	{[1]_t} & {[1]_t\coprod_{[0]}[1]} \\
	{\{0\}} & {[1]}
	\arrow[from=1-1, to=1-2]
	\arrow[from=1-1, to=2-1]
	\arrow[from=1-2, to=2-2]
	\arrow[from=2-1, to=2-2]
\end{tikzcd}\]
where the horizontal morphisms are monomorphisms. This implies that the right vertical morphism is in $W'$. Furthermore, this morphism factors as the composite of $[1]_t\coprod_{[0]}  [1]\to \Delta^2[2]$, which is in $W'$, and of $\Delta^2[2]\to [1]$. By  two out of three, the morphism $[1]_t\star[0]\cong \Delta^2[2]\to [0]\star[0]\cong [1]$ is then in $W'$. Eventually is clear that $[n]\star[0]\to [n]_t\star [0]$ for $n>1$ is a pushout of morphisms $[k]\to [k]_t$ for $k>1$, and so is also in $W'$. As a consequence, $W'$ is closed under the functor $\uvar\star[0]$ as this functor preserves monomorphisms. By a direction induction, $W'$ is then closed under the functor $\uvar\star[n]$ for any integer $n$. As any simplicial set $K$ is the colimit of the Reedy cofibrant diagram $\Delta_{/K}\to \Delta\to \Sset$, $W'$ is closed under $\uvar\star K$.

Now let $f:X\to Y$ be a monomorphism in $W'$. By stability under pushout of $W'$, the morphism 
$$X\star[n]\to X\star[n]\coprod_{X\star\partial [n]}Y\star[n]$$
is in $W'$.
By two out of three, so is the morphism 
$$X\star[n]\coprod_{X\star\partial [n]}Y\star \partial [n]\to Y\star [n].$$
Monomorphisms of $W'$ are then closed under the Leibniz product $\uvar~ \hat{\star}~ (\partial[n]\to [n])$. We can show similarly that they are closed under the Leibniz product $ (\partial[n]\to [n]) ~\hat{\star}~\uvar$.

Remark now that $[2]\to [2]^1$ is in $W'$, and by two out of three, this implies that $ \Lambda^{1}[2]\to [2]^1$ is in $W'$. By two out of three, we also have that $\Lambda^0[1]\to [1]^0$ and $\Lambda^1[1]\to [1]^1$ are in $W'$.

All put together, and as for any pair of integers $0<i<n$, $\Lambda^{i}[n]\to [n]$ is the Leibniz product
$$ (\partial[i-1]\to [i-1])~ \hat{\star}~(\mbox{$\Sp_2$}\to [2])~  \hat{\star}~ (\partial[n-i-1]\to [n-i-1])$$
and $\Lambda^0[n]\to[n]^0$ (resp. $\Lambda^n[n]\to[n]^n$) is the Leibniz product
$$(\Lambda^0[1]\to[1]^0)~\hat{\star}~ (\partial [n-2]\to [n-2])~~~~\mbox{(resp.}~~(\partial [n-2]\to [n-2])~\hat{\star}~(\Lambda^0[1]\to[1]^0) \mbox{)}$$
we deduce that $W'$ contains all complicial horn inclusions.

Remark now that the complicial thinness extensions for $n>3$ are pushouts along morphisms $[k]\to [k]_t$ for $k>1$, and so belong to $W'$. In the case $n=2$, remark that we have a pushout:
\[\begin{tikzcd}
	{([2]^k)'} & {[1]_t} \\
	{([2]^k)''} & {[1]_t}
	\arrow[from=1-1, to=1-2]
	\arrow[from=1-1, to=2-1]
	\arrow[from=1-2, to=2-2]
	\arrow[from=2-1, to=2-2]
\end{tikzcd}\]
where the top horizontal morphism is $s^0$ if $k\leq 1$ or $s^1$ if not. As seen earlier, the morphism $[2]_k\to [1]_t$ is in $W'$, and then so is the top horizontal morphism in the previous diagram. This is also the case for the lower horizontal one. By two out of three, this implies that $([2]^k)'\to ([2]^k)''$ is in $W'$. All complicial elementary anodyne extensions are then in $W'$, which implies that $W\subset W'$ and so these two sets are equal. 

The last assertion follows directly from the characterization of weak equivalences of $\tPsh{\Delta}^1$ and theorem \ref{theo:model structure on complicial set2}.
\end{proof}

\vspace{1cm}

\begin{definition} A \notion{marked simplicial set} is a stratified simplicial set that has the right lifting property against entire acyclic cofibrations of the model structure $\tPsh{\Delta}^\omega$. In particular, all complicial sets are marked. The category of marked simplicial sets is denoted by \wcnotation{$\mSset$}{(mpsh@$\mSset$}. There is an adjunction:
\begin{equation}
\label{adj:between marked and stratified}
\begin{tikzcd}
	{(\uvar)_{\mk}:\stratSset} & {\mSset:\iota}
	\arrow[""{name=0, anchor=center, inner sep=0}, shift left=2, from=1-1, to=1-2]
	\arrow[""{name=1, anchor=center, inner sep=0}, shift left=2, from=1-2, to=1-1]
	\arrow["\dashv"{anchor=center, rotate=-90}, draw=none, from=0, to=1]
\end{tikzcd}
\end{equation}
The left adjoint $(\uvar)_{\mk}$ sends a stratified simplicial set $(X,tX)$ to the marked simplicial set $(X,\overline{tX})$, where $\overline{tX}$ is the smaller stratification that includes $tX$ and makes $(X,\overline{tX})$ a marked simplicial set. Moreover, proposition \ref{prop:X to Xmk is acycli cof} implies that the canonical morphism $X\to \iota (X)_{\mk}$ is an entire acyclic cofibration.
\end{definition}

\begin{remark}
Given a functor $i:I\mapsto (F(i),tF(i))$ with value in marked simplicial sets, its colimit is given by $(\colim F(i),\overline{M})$ where $M$ is the smaller stratification that includes the image of $tF(i)\to \colim F(i)$ for any $i:I$.
\end{remark}

\begin{prop}
\label{prop:model structure on marked simplicial set}
The category of marked simplicial sets admits two model structures, denoted $\mSset$ and $\mSset_\sat$, that make the adjunction \ref{adj:between marked and stratified} a Quillen equivalence with respect to the complicial and saturated complicial model structures.
\end{prop}

\begin{proof}
This is a direct consequence of proposition \ref{prop:model structure on marked presheaves} and theorems \ref{theo:model structure on complicial set} and \ref{theo:model structure on complicial set2}
\end{proof}

\begin{notation}
When speaking about \textit{acyclic cofibrations} or \textit{weak equivalences} of marked simplicial sets, we will always refer implicitly to the complicial model structure. We will speak of \textit{saturated weak equivalences} and \textit{saturated acyclic cofibrations} when we are working in the saturated complicial model structure.
\end{notation}

\begin{construction}
\label{cons:intelligent truncation for simplicial set}
Let $n$ be an integer, and $(X,tX)$ a stratified simplicial set. We define $\tau^i_n(tX)$ as the union of $tX$ and all simplices of dimension strictly greater than $n$. This induces a functor, called the \snotionsym{intelligent $n$-truncation}{(taui@$\tau^i_n$}{for stratified simplicial sets}:
$$\begin{array}{rcll}
\tau^i_n :& \tPsh{\Delta}&\mapsto &\tPsh{\Delta}\\
 &(X,tX)&\mapsto &(X, {\tau^i_n(tX)}).
\end{array}$$
This functor preserves colimits and cofibrations. Moreover, we can easily check that it sends (resp. saturated) complicial elementary extensions to (resp. saturated) weak equivalences, and this functor is then left Quillen for the complicial and saturated complicial model structure. Its associated right adjoint is called the \wcnotion{$n$-truncation}{truncation@$n$-truncation} and is denoted by 
$$\tau_n:\mSset\to \mSset.$$

We will also denote $\tau^i_n,\tau_n:\mSset\to \mSset$, and call the \snotionsym{intelligent $n$-truncation}{(taui@$\tau^i_n$}{for marked simplicial sets}
and the \textit{$n$-truncation}, the two induced functors $(\tau_n^i(\iota(\uvar))_{\mk}$ and $(\tau_n(\iota(\uvar))_{\mk}$. \sym{(tau@$\diamond$}
\end{construction}

\subsection{Gray operations on marked simplicial sets}

From now one, we will always assume  that $\mSset$ is endowed with the complcial model structure from proposition \ref{prop:model structure on marked simplicial set}. We will speak of \textit{saturated weak equivalence} and \textit{saturated acyclic cofibrations} when we are working in the saturated complicial model structure.

\begin{construction}[Verity] For any $n,p,q\geq 0$ such that $n=p+q$, we define:
\begin{itemize}
\item the \notion{degeneration partition operator}:
$$
\begin{array}{rclllrrclll}
\invamalg^1_{p,q}:&[n]&\to&[p]&&~~~~~~&\invamalg^2_{p,q}:&[n]&\to&[q]&\\
&k&\mapsto &k &\mbox{if $k\leq p$} &&&k&\mapsto &0& \mbox{if $k\leq p$}\\
&k&\mapsto &p 	&\mbox{if $k>p$} &&&k&\mapsto &k-p& \mbox{if $k> p$}.
\end{array}
$$
\item the \notion{face partition operator}:
$$
\begin{array}{rcllrrcll}
\amalg^1_{p,q}:&[p]&\to&[n]&~~~~~~&\amalg^2_{p,q}:&[q]&\to&[n]\\
&k&\mapsto &k &&&k&\mapsto &k+p.
\end{array}
$$
\end{itemize}
\end{construction}

\begin{definition}[Verity]
Let $(X,tX)$ and $(Y,tY)$ be two stratified simplicial sets. 
We define the \snotionsym{Gray tensor product}{((d00@$\otimes$}{for stratified simplicial sets} of $(X,tX)$ and $(Y,tY)$ as the stratified simplicial set 
$$(X,tX)\otimes (Y,tY):=(X\times Y,tX\otimes tY)$$ where $tX\otimes tY$ is the set of pairs $(x,y)$ such that for any partitions $(p,q)$ of $n$ either $\amalg^1_{p,q}x$ or $\amalg^2_{p,q}y$ is thin. 
\end{definition}
\begin{remark}
Let $X,Y$ be two stratified simplicial sets such that all simplices of $X$ are thin. The morphism 
$X\otimes Y\to X\times Y$ is then an isomorphism.
\end{remark}

\begin{prop}
\label{prop:otimes et op simplocoal sets}
There is a canonical isomorphism 
$$(X\otimes Y)^{op}\cong Y^{op}\otimes X^{op}$$
natural in $X$ and $Y$.
\end{prop}
\begin{proof}
At the level of simplicial sets, this two objects are obviously isomorphic in a unique way. It is sufficient to check that the unique isomorphism preserves the marking, which is left to the reader.
\end{proof}

\begin{remark}
In \cite{Verity_weak_complicial_sets_I}, it is shown that the Gray tensor is associative. The problem of this operation comes from the fact that it doesn't commute with colimits. Verity then defines an other binary operation, which is cocontinuous, the \textit{Gray pretensor} (\cite[definition 135]{Verity_weak_complicial_sets_I}) $(X,tX)\boxtimes(Y,tY):=(X\times Y, tX\boxtimes tY)$, together with a natural transformation: 
$$\uvar\boxtimes\uvar\to \uvar\otimes\uvar$$
that is pointwise an entire acyclic cofibration (\cite[lemma 149]{Verity_complicial_set_characterising_the_simplicial_nerve}). Moreover, in \cite{Verity_weak_complicial_sets_I} and  \cite{Ozornova_Gray_tensor_product_and_saturated_n_complicia}, it is shown that this pretensor is a Quillen bifunctor for the model structures  $\stratSset$ and  $\stratSset_\sat$. 
\end{remark}

\begin{definition}
Let $X$ and $Y$ be two marked simplicial sets. We define the \snotionsym{Gray tensor product}{((d00@$\otimes$}{for marked simplicial sets} of $X$ and $Y$ as the marked simplicial set 
$$X\otimes Y:= (\iota(X)\otimes \iota(Y))_{\mk}$$
 where $((\uvar)_{\mk},\iota)$ is the adjunction \ref{adj:between marked and stratified}.
As $\uvar\boxtimes\uvar\to \uvar\otimes\uvar$ is pointwise a entire acyclic cofibration, we have an equality: 
$$X\otimes Y:= (\iota(X)\boxtimes \iota(Y))_{\mk}.$$
\end{definition}

\begin{prop}
\label{prop:R_commutes_with_gray_tensor}
We have equalities
$$(\uvar\boxtimes \uvar)_{\mk}=(\uvar\otimes \uvar)_{\mk}= (\uvar)_{\mk}\otimes (\uvar)_{\mk}.$$
\end{prop}
\begin{proof}
The first equality is a consequence of the fact that $\uvar\boxtimes\uvar\to \uvar\otimes\uvar$ is pointwise a entire acyclic cofibration.

 For the second one, we have to show that $(X\otimes Y)_{\mk}=(\iota(X_{\mk})\otimes \iota(Y_{\mk}))_{\mk}$.
The unit of the adjunction $(\iota,(\uvar)_{\mk})$ induces a morphism $h:(X\otimes Y)_{\mk}\to (\iota(X_{\mk})\otimes \iota(Y_{\mk}))_{\mk}$. This morphism is an entire acyclic cofibration according to proposition \ref{prop:X to Xmk is acycli cof}, 
and the corollary 2.2 of \cite{Ozornova_Gray_tensor_product_and_saturated_n_complicia} and the fact that $(\uvar)_{\mk}$ is a left Quillen functor.

 We then have lifts in the following diagram:
\[\begin{tikzcd}
	{(X\otimes Y)_{\mk}} & {(X\otimes Y)_{\mk}} \\
	{(\iota(X_{\mk})\otimes \iota(Y_{\mk}))_{\mk}}
	\arrow["id", from=1-1, to=1-2]
	\arrow["h"', from=1-1, to=2-1]
	\arrow["k"', from=2-1, to=1-2]
\end{tikzcd}\]
As both $k$ and $h$ are the identity on the underlying simplicial sets, this implies that the stratifications of $(X\otimes Y)_{\mk}$ and $(X\otimes Y)_{\mk}$ coincide, and this two objects are then equal. 
\end{proof}

We can then deduce the following proposition:
\begin{prop}
\label{prop:gray_product_is_a_left_Quillen_bifunctor}
The Gray tensor product is associative, and is a left Quillen bifunctor on $\mSset$ and $\mSset_\sat$.
\end{prop}
\begin{proof}
The first assertion is a consequence of proposition \ref{prop:R_commutes_with_gray_tensor} and the fact that the binary operation $\otimes$ on $\stratSset$ is associative. The second one is a consequence of proposition \ref{prop:R_commutes_with_gray_tensor} and \cite[Theorem 2.1]{Ozornova_Gray_tensor_product_and_saturated_n_complicia}.
\end{proof}

\begin{construction}
\label{cons:suspension functor of simplciial set}
Let $X$ be a marked simplicial set. We define the \snotion{suspension}{for marked simplicial sets} of $X$, noted by \wcnotation{$\Sigma X$}{(sigma@$\Sigma\uvar$}, as the following pushout:
\[\begin{tikzcd}
	{X\otimes\partial [1]} & {X\otimes [1]} \\
	{\partial[1]} & {\Sigma X}
	\arrow[from=1-2, to=2-2]
	\arrow["\lrcorner"{anchor=center, pos=0.125, rotate=180}, draw=none, from=2-2, to=1-1]
	\arrow[from=1-1, to=2-1]
	\arrow[from=2-1, to=2-2]
	\arrow[from=1-1, to=1-2]
\end{tikzcd}\]
This assignation defines a cocontinuous functor $\Sigma:\mSset\to \mSset_{\partial[1]/}.$ For every acyclic cofibration $K\to L$, we have cartesian squares
\[\begin{tikzcd}
	{L\otimes\partial[1]} & {K\otimes[1]\cup L\otimes\partial[1]} & {L\otimes[1]} \\
	{\partial[1]} & {\Sigma K} & {\Sigma L}
	\arrow[from=1-1, to=2-1]
	\arrow[""{name=0, anchor=center, inner sep=0}, from=1-1, to=1-2]
	\arrow[from=2-1, to=2-2]
	\arrow[from=1-2, to=2-2]
	\arrow[from=1-3, to=2-3]
	\arrow[from=2-2, to=2-3]
	\arrow[""{name=1, anchor=center, inner sep=0}, from=1-2, to=1-3]
	\arrow["\lrcorner"{anchor=center, pos=0.125, rotate=180}, draw=none, from=2-2, to=0]
	\arrow["\lrcorner"{anchor=center, pos=0.125, rotate=180}, draw=none, from=2-3, to=1]
\end{tikzcd}\]
The suspension then preserves ((saturated) acyclic cofibration and is then a left Quillen functor for the complicial and saturated complicial model structure.

This functor admits a right adjoint, that sends a pair $(a,b,C)$ to \wcnotation{$C(a,b)$}{(cab@$C(a,b)$} where $a,b$ are two $0$-simplices of $C$. If $p:C\to D$ is a morphism between complicial sets, and $a,b$ two $0$-simplices of $C$, we denote by 
$$p(a,b):C(a,b)\to D(pa,pb)$$
the induced morphism.

\end{construction}

\begin{construction}
We introduce an other operation, the \notion{diamond product}, that makes the link between the Gray tensor product and the join. 
Let $X$ and $Y$ be two marked simplicial sets. We define \sym{((d21@$\diamond$}$X\diamond Y$ as the colimit of the diagram:
\[\begin{tikzcd}
	X & {X\otimes \{0\}\otimes Y} & {X\otimes[1]\otimes Y} & {X\otimes \{1\}\otimes Y} & Y
	\arrow[from=1-4, to=1-3]
	\arrow[from=1-4, to=1-5]
	\arrow[from=1-2, to=1-1]
	\arrow[from=1-2, to=1-3]
\end{tikzcd}\]
The functors 
$$\uvar\diamond X:\mSset\to \mSset_{/X} ~~~~\mbox{and}~~~~ X\diamond \uvar:\mSset\to \mSset_{/X}$$
are colimit preserving. Furthermore, for (saturated) weak equivalence $K\to L$, the morphism $K\diamond X\to L\diamond X$ is the horizontal colimit of the diagram: 
\[\begin{tikzcd}
	{K\amalg X} & {K\otimes \partial[1]\otimes X} & {K\otimes [1]\otimes X} \\
	{L\amalg X} & {L\otimes \partial[1]\otimes X} & {L\otimes [1] \otimes X}
	\arrow[from=1-2, to=1-1]
	\arrow[from=1-2, to=1-3]
	\arrow[from=2-2, to=2-3]
	\arrow[from=2-2, to=2-1]
	\arrow[from=1-2, to=2-2]
	\arrow[from=1-1, to=2-1]
	\arrow[from=1-3, to=2-3]
\end{tikzcd}\]
However, these two horizontal colimits are homotopy colimits, and all the horizontal maps of the previous diagram are (saturated) weak equivalences. This morphism is then (saturated) weak equivalence. This shows that 
 $\uvar\diamond X$ is a left Quillen functor for the complicial and saturated complicial model structure. We show analogously that $X\diamond \uvar$  has the same property.
\end{construction}

\begin{prop}
\label{prop:diamond et op simplocoal sets}
There is a canonical isomorphism 
$$(X\diamond Y)^{op}\cong Y^{op}\diamond X^{op}$$
natural in $X$ and $Y$.
\end{prop}
\begin{proof}
This directly follows from proposition \ref{prop:otimes et op simplocoal sets}.
\end{proof}

\begin{lemma}
There exists a unique natural transformation $\gamma_{X,Y}:X\diamond Y\to X\star Y$ that fits in the following diagram: 
\[\begin{tikzcd}
	{X\coprod Y} & {X\star Y} \\
	{X\diamond Y} & {[1]}
	\arrow[from=1-1, to=2-1]
	\arrow[from=1-1, to=1-2]
	\arrow[from=1-2, to=2-2]
	\arrow[from=2-1, to=2-2]
	\arrow["{\gamma_{X,Y}}", from=2-1, to=1-2]
\end{tikzcd}\]
\end{lemma}
\begin{proof}
We begin by defining this morphism on simplicial sets, and for this we can suppose that both $X$ and $Y$ are representables, ie $X:=[n]$, $Y:=[m]$.
On object, this morphism is induced by the assignation:
$$p(k,0,l) := k~~~p(k,1,l) := l.$$ 

We need to verify that this morphism preserves thin cells. Suppose now that $(x,v,y)$ is a thin $n$-simplex of $X\diamond Y$. There are several cases to consider. \textbf{Case $v_n=0$.} The simplex $x$ is then thin, and is sent to $x\star \emptyset$ which is also thin. \textbf{Case $v_0=1$.} Similar. \textbf{Case $v_0=0$ and $v_n=1$.} Let $p$ be the smaller integer such that $v_p=1$. Either $\amalg_{p-1,n-p+1}^1(x)$ or $\amalg_{p,n-p}^2(y)$ is thin. This implies that $\phi_{X,Y}(x,v,y)= \amalg_{p-1,n-p+1}^1(x)\star \amalg_{p,n-p}^2(y)$ is thin. 
\end{proof}

\begin{prop}
\label{prop:equivalence between diamond and join product}
For any marked simplicial sets $X,Y$, the morphism $\gamma_{X,Y}$ is a weak equivalence. 
\end{prop}
\begin{proof}
The functor 
$$t\Delta_{/X}\times t\Delta_{/Y}\to \mSset\times\mSset\xrightarrow{\gamma} \Arr(\mSset)$$
is Reedy cofibrant (definition \ref{defi:reedycof}). It is then enough to show the result for any couples of representables. 

Let's start by the case $(X,Y)=([n],[m])$. Let $s:X\star Y\to X\diamond Y$ be the morphism defined on objects by the formula: 
$$s(k\star \emptyset) := (k,0,0)~~~s(\emptyset \star l) := (n,1,l)$$
We have
$$\gamma_{X,Y}s = id ~~~~s\gamma_{X,Y} (k,\epsilon,l) =(k + \epsilon (n-k), \epsilon,\epsilon l).$$

Let $\eta:[n]\diamond [m]\to [n]\diamond [m]$ be induced by the application
$$(k,\epsilon,l)\mapsto (k,\epsilon,\epsilon l).$$
We are now going to construct two morphisms
$$\epsilon_0: ([n]\diamond[m])\times [1]_t\to [n]\diamond[m]~~~~\mbox{ and }~~~~\epsilon_1: ([n]\diamond[m])\times [1]_t\to [n]\diamond[m]$$
such that $$
\begin{array}{rrl}
\epsilon_0(\uvar,0)=\eta&&\epsilon_0(\uvar,1)=s\gamma_{X,Y}\\
\epsilon_1(\uvar,0)=\eta&~~~~&\epsilon_1(\uvar,1)=id\\
\end{array}$$
The first one is induced on the level of simplicial sets by
$$(k,\epsilon,l,\alpha)\mapsto (k + \alpha\epsilon (n-k),\epsilon,\epsilon l ),$$
and the second one by
$$(k,\epsilon,l,\alpha)\mapsto (k,\epsilon,(\epsilon\vee\alpha)l),$$
where $\epsilon\vee\alpha := \epsilon+\alpha - \epsilon\alpha.$
These two morphisms extend to marked simplicial sets. 

We proceed in a similar way with cases $(X,Y) = ([n]_t,[m]), ([n],[m]_t)$ or $([n]_t,[m]_t)$. 
\end{proof}

\begin{remark}
As we already now that functors $\uvar\diamond X$ and $X\diamond \uvar$ preserve (saturated) weak equivalences, the previous proposition implies that for any marked simplicial sets $X$, functors $\uvar\star X$ and $X\star \uvar$ preserves (saturated) weak equivalences and are then
 left Quillen functors for the complicial and saturated complicial model structure.
\end{remark}

\begin{construction}
\label{cons:sigma star}
Let $X$ be a marked simplicial set. We now describe an variation on the suspension. We define \wcnotation{$\Sigma^\star X$}{(sigmastar@$\Sigma^\star\uvar$}, as the following pushout:
\[\begin{tikzcd}
	X & {X\star [0]} \\
	1 & {\Sigma^\star X}
	\arrow[from=1-2, to=2-2]
	\arrow["\lrcorner"{anchor=center, pos=0.125, rotate=180}, draw=none, from=2-2, to=1-1]
	\arrow[from=1-1, to=2-1]
	\arrow[from=1-1, to=1-2]
	\arrow[from=2-1, to=2-2]
\end{tikzcd}\]
This assignation defines a cocontinuous functor $\Sigma^\star:\mSset\to \mSset_{\partial[1]/}.$ Using proposition \ref{prop:equivalence between diamond and join product}, all the vertical morphisms of the following diagram are weak equivalences:
\[\begin{tikzcd}
	1 & X & {X\diamond 1} \\
	1 & X & X\star1
	\arrow[from=1-2, to=1-3]
	\arrow[from=1-2, to=1-1]
	\arrow[from=2-2, to=2-1]
	\arrow[from=1-3, to=2-3]
	\arrow[from=2-2, to=2-3]
	\arrow[from=1-2, to=2-2]
	\arrow[from=1-1, to=2-1]
\end{tikzcd}\]
Remark furthermore that the colimits of these lines are also homotopy colimits. Taking the horizontal colimit, this induces a weak equivalence
\begin{equation}
\label{eq:sigma et sigam star}
\Sigma X\to \Sigma^{\star}X
\end{equation}
natural in $X$, where $\Sigma$ is the functor construted in \ref{cons:suspension functor of simplciial set}.
\end{construction}

\begin{construction}
We define the \notion{co-join} of $X$ and $Y$, denoted by \index[notation]{((d22@$\overset{co}{\star}$}$X\costar Y$, as the colimit of the following diagram:
\[\begin{tikzcd}
	Y & {Y\otimes \{1\}\otimes X} & {Y\otimes [1]\otimes X} & {Y\otimes \{0\}\otimes X} & X
	\arrow[from=1-2, to=1-1]
	\arrow[from=1-2, to=1-3]
	\arrow[from=1-4, to=1-5]
	\arrow[from=1-4, to=1-3]
\end{tikzcd}\]
The functors 
$$\uvar\costar X:\mSset\to \mSset_{/X} ~~\mbox{and}~~ X\costar \uvar:\mSset\to \mSset_{/X}$$
are colimit preserving. Furthermore, for (saturated) weak equivalence $K\to L$, the morphism $K\costar X\to L\costar X$ is the horizontal colimit of the diagram:
\[\begin{tikzcd}
	{K\amalg X} & {X\otimes \partial[1]\otimes K} & {X\otimes [1]\otimes K} \\
	{L\amalg X} & {X\otimes \partial[1]\otimes L} & {X\otimes [1] \otimes K}
	\arrow[from=1-2, to=1-1]
	\arrow[from=1-2, to=1-3]
	\arrow[from=2-2, to=2-3]
	\arrow[from=2-2, to=2-1]
	\arrow[from=1-2, to=2-2]
	\arrow[from=1-3, to=2-3]
	\arrow[from=1-1, to=2-1]
\end{tikzcd}\]
However, all the horizontal maps of the previous diagram are (saturated) weak equivalences. This morphism is then also a (saturated) weak equvivalence
This shows that $\uvar\costar X$ is a left Quillen functor for the complicial and saturated complicial model structure. We show analogously that $X\costar \uvar$ enjows the same property.
\end{construction}
\begin{construction}
\label{cons:wedge} Let $X$ be a simplicial set. We define the \notion{wedge} of $\Sigma X$ and $[1]$, noted by \sym{(sigmavee@$\Sigma X~\rotatebox[origin=c]{270}{$\gtrdot$}~[1]$}\sym{(sigmave@$[1]~\rotatebox[origin=c]{270}{$\gtrdot$}\Sigma X$}$\Sigma X\fwedge [1]$, as the colimit of the following diagram:
\[\begin{tikzcd}
	{X\otimes[0,1]} & {X\otimes[2]_t} & {X\otimes[1,2]} \\
	{\Sigma X} & {X\fwedge[1]} & {[1,2]}
	\arrow[from=1-1, to=2-1]
	\arrow[from=1-3, to=2-3]
	\arrow[from=1-1, to=1-2]
	\arrow[from=1-3, to=1-2]
	\arrow[from=1-2, to=2-2]
	\arrow[from=2-3, to=2-2]
	\arrow[from=2-1, to=2-2]
\end{tikzcd}\]
This assignation defines a cocontinuous functor $\uvar\fwedge [1]:\mSset\to \mSset_{[0]\amalg [1]/}.$ For every  weak equivalence $K\to L$, the morphism $K\fwedge [1]\to L\fwedge [1]$ is the horizontal colimit of the diagram:
\[\begin{tikzcd}
	{[0]\coprod[1]} & {K\otimes([0]\coprod[1,2])} & {K\otimes[2]_t} \\
	{K\otimes[2]_t} & {L\otimes[2]_t} & {L\otimes[2]_t}
	\arrow[from=1-2, to=1-1]
	\arrow[from=1-2, to=1-3]
	\arrow[from=1-1, to=2-1]
	\arrow[from=2-2, to=2-1]
	\arrow[from=2-2, to=2-3]
	\arrow[from=1-3, to=2-3]
	\arrow[from=1-2, to=2-2]
\end{tikzcd}\]
However, these two horizontal colimits are homotopy colimits, and all the horizontal maps of the previous diagram are (saturated) weak equivalences. This morphism is then also a (saturated) weak equivalence.
This shows that this functor is a left Quillen functor for the complicial and saturated complicial model structure. We denote by $$\triangledown:\Sigma X\to \Sigma X\fwedge [1]$$ the morphism induced by the inclusion $X\otimes [0,2]\subset X\otimes [2]_t$ and 
$$\Sigma X\hookrightarrow \Sigma X\fwedge [1]$$
the morphism induced by the inclusion $X\otimes [1,2]\subset X\otimes [2]_t$.
We define similarly the left Quillen functor $$[1]\fwedge\uvar:\mSset\to \mSset_{[1]\amalg [0]/}$$ and the morphisms
$$\triangledown:\Sigma X\to [1]\fwedge\Sigma X~~~\mbox{and}~~~\Sigma X\hookrightarrow [1]\fwedge\Sigma X .$$
\end{construction}
\begin{prop}
Morphisms 
$$ \Sigma X\coprod_{[0]}[1]\to \Sigma X\fwedge [1]~~~~\mbox{and}~~~~ [1]\coprod_{[0]}\Sigma X\to [1]\fwedge \Sigma X$$
are acyclic cofibrations. 
\end{prop}
\begin{proof}
We have cartesian squares:
\[\begin{tikzcd}
	{X\otimes([0]\coprod[1,2])} & {X\otimes \Lambda^{1}[2]} & {X\otimes[2]_t} \\
	{[0]\coprod[1]} & {\Sigma X\coprod_{[0]} [1]} & {\Sigma X\fwedge [1].}
	\arrow[from=1-1, to=2-1]
	\arrow[from=2-1, to=2-2]
	\arrow[from=1-1, to=1-2]
	\arrow[from=1-2, to=1-3]
	\arrow[from=1-3, to=2-3]
	\arrow[from=1-2, to=2-2]
	\arrow["\lrcorner"{anchor=center, pos=0.125, rotate=180}, draw=none, from=2-3, to=1-2]
	\arrow["\lrcorner"{anchor=center, pos=0.125, rotate=180}, draw=none, from=2-2, to=1-1]
	\arrow[from=2-2, to=2-3]
\end{tikzcd}\]
The upper right horizontal morphism is an acyclic cofibration, and so is the downer right horizontal one. We proceed similarly for the other morphism.
\end{proof}

\begin{definition}
The Gray tensor product induces a functor
$$\uvar\otimes[1]:\mSset\to \mSset$$
called the \snotionsym{Gray cylinder}{((d30@$\uvar\otimes[1]$}{for marked simplicial sets}, that is left Quillen for the  the complicial and saturated complicial model structure.
The join and the co-join also incuce two left Quillen functors for  the complicial and saturated complicial model structure:
$$\uvar\star [0]:\mSset\to \mSset_{[0]/}~~~~~[0]\costar \uvar:\mSset\to \mSset_{[0]/}$$
called the \snotionsym{Gray cone}{((d40@$\uvar\star 1$}{for marked simplicial sets} and the \snotion{Gray $\circ$-cone}{for marked simplicial sets}\index[notation]{((d50@$1\overset{co}{\star}\_$!\textit{for marked simplicial sets}}. We denote by 
$$
\begin{array}{rclcrcl}
 \mSset_{\cdot} &\to &\mSset & & \mSset_{\cdot}&\to & \mSset\\
(X,x)&\mapsto & X_{/x} &~~~~~ & (X,x)&\mapsto & X_{x/}\\
\end{array}
$$
respectively called the \wcnotionsym{slice of $X$ over $x$}{(cc@$C_{c/}$}{slice over} and the \wcnotionsym{slice of $X$ under $x$}{(cc@$C_{/c}$}{slice under}, the right adjoints of the Gray cone and the Gray $\circ$-cone.

Remark furthermore that we have canonical natural transformation $X_{x/}\to X$ and $X_{/x}\to X$, induced by the natural transformation $X\to X\star [0]$ and $X\to [0]\costar X$.
\end{definition}

\subsection{Street nerve}

We recall that $\omega$-categories are defined in section \ref{section:zocategories}. The Gray operations on $\omega$-categories - 
$\uvar\otimes[1]$, $\uvar\star 1$, $1\costar \uvar$ -
are defined in section \ref{section:definition of Gray operations}.

\begin{construction}
\label{cons:Street nerve}
In \cite{Street_algebra_of_orianted_simplexes}, Street defines a cosimplicial object in $\ocat$, that associates to $n$, the $n^{th}$ \notion{oriental} $\O_n$. 
The original construction of this object is complicated, but Ara and Maltsiniotis have shown that it can be easily defined using Gray operations. Indeed, in \cite[Corollaire 7.10]{Ara_Maltsiniotis_joint_et_tranche}, these authors construct an isomorphism
$$\O_n\cong \overbrace{1\star...\star 1}^{n+1}$$
natural in $n$.

We can extend the functor $\O_{\uvar}:\Delta\to \ocat$ to $t\Delta$ by defining
$$(\O_n)_t:=\tau^i_{n-1}(\O_n),$$
where $\tau^i_{n-1}$ denote the intelligent truncation defined in construction \ref{cons:intelligent truncation for simplicial set}.

By extention by colimit, this induces a functor 
$$\R:\stratSset\to \ocat.$$
As explained in example 11 of \cite{Verity_weak_complicial_set_part2_nerve_of_complicial_Gray_categories}, $\R$ preserves the Gray tensor product, and so also the suspension, the wedge, the Gray cone and the Gray $\circ$-cone. 
 Moreover, \cite[Theorem 249]{Verity_complicial_set} states that this functor sends complicial horn inclusions and complicial thinness extensions to isomorphisms. This functor then sends every weak equivalences to isomorphisms, and then lifts to a colimit preserving functor $\R:\mSset\to \ocat$ and induces an adjoint pair: \sym{(r@$\R:\mSset\to \ocat$}\sym{(n@$\N:\ocat\to \mSset$}
\[\begin{tikzcd}
	{\R:\mSset} & {\ocat:\N}
	\arrow[""{name=0, anchor=center, inner sep=0}, shift left=2, from=1-1, to=1-2]
	\arrow[""{name=1, anchor=center, inner sep=0}, shift left=2, from=1-2, to=1-1]
	\arrow["\dashv"{anchor=center, rotate=-90}, draw=none, from=0, to=1]
\end{tikzcd}\]
\end{construction}

We now recall two fundamental results of strictification:
\begin{theorem}[Gagna--Ozornova--Rovelli, Maheara]
\label{theo:strict representable}
Let $n$ be an integer. The canonical morphism
$$[n]\to \N(\R([n]))$$
is an acyclic cofibration.
\end{theorem}
\begin{proof}
This is \cite[corollary 5.4]{Gagna_Nerves_and_cones_of_free_loop_free_omega_categories}.
\end{proof}
\begin{theorem}[Ozornova, Rovelli]
\label{theo:strict susension}
Let $C$ be an $\omega$-category.
The canonical morphism
$$\Sigma \N C \to \N([C,1])$$
is an acyclic cofibration.
\end{theorem}
\begin{proof}
The morphism \eqref{eq:sigma et sigam star} provides a weak equivalence
$\Sigma \N C\to \Sigma^{\star} \N C$.
As $R$ preserves the Gray tensor product and the Gray cone, it sends this morphism to an isomorphism. We then have a commutative triangle
\[\begin{tikzcd}
	& {\Sigma^{\star} \N C} \\
	{\Sigma \N C} && {\N([C,1])}
	\arrow[from=2-1, to=2-3]
	\arrow["\sim", from=2-1, to=1-2]
	\arrow[from=1-2, to=2-3]
\end{tikzcd}\]
The theorem 3.22 of \cite{Ozornova_a_quillen_adjunction_between_globular_and_complicial} stipulates that $\Sigma^{\star} \N C\to \N([C,1])$ is a weak equivalence, which concludes the proof.
\end{proof}

\begin{definition}
We define the \notion{Street endofunctor} \wcnotation{$i_{str}$}{(istr@$i_{str}$} to be the colimit preserving functor defined on representables by: 
$$i_{str}([n]) := \N(\R([n]))~~~\mbox{ and }~~~i_{str}([n]_t) :=\tau^i_{n-1} (i_{str}([n]))$$
\end{definition}

\begin{prop}
\label{prop:i_str_is_Quillen}
 The functor $i_{srt}$ is left Quillen and 
the natural transformation 
$$id \to i_{srt}$$ 
is weakly invertible.
\end{prop}
\begin{proof}
As noticed earlier, for any integer $n$, the map $[n]\to i_{srt}([n])$ is a weak equivalence.
We recall that the intelligent truncation functor $\tau^i_{n-1}:\mSset\to \mSset$ is a left Quillen functor, and so preserves weak equivalences between cofibrant objects. The morphism $[n]_t\to i_{str}([n]_t)$ is then a weak equivalence.
The set of objects $X$ such that the morphism $X\to i_{srt}X$ is a weak equivalence is closed by homotopy colimits and includes all representables. As $i_{srt}$ preserves monomorphisms, it then consists of all marked simplicial sets. Now let $K\to L$ be an acyclic cofibration. We have a commutative square:
\[\begin{tikzcd}
	K & {i_{str}(K)} \\
	L & {i_{str}(L)}
	\arrow["\sim"', from=1-1, to=2-1]
	\arrow["\sim", from=1-1, to=1-2]
	\arrow["\sim"', from=2-1, to=2-2]
	\arrow[from=1-2, to=2-2]
\end{tikzcd}\]
By two out of three, $i_{str}(K)\to i_{str}(L)$ is then an acyclic cofibration. The functor $i_{srt}$ is then left Quillen. 
\end{proof}

\section{Suspension and Gray operations}
\label{section:Suspension and Gray operation}
\subsection{Formula for the Gray cylinder}
The aim of this subsection is to demonstrate the following theorem, which is the analogue in marked simplicial sets of the theorem \ref{theo:appendice formula for otimes}.
\begin{theorem}
\label{theo:interval_first_formula}
There is a zigzag of acyclic cofibrations, natural in $X$, between the colimit of the diagram
$$[1]\fwedge\Sigma X\xleftarrow{\triangledown} \Sigma (X\otimes\{0\})\hookrightarrow \Sigma(X\otimes[1])\hookleftarrow \Sigma (X\otimes\{1\})\xrightarrow{\triangledown} \Sigma X\fwedge[1]$$
and $(\Sigma X)\otimes [1]$.
\end{theorem}

\begin{construction}

Let $C$ be the following colimit:
\[\begin{tikzcd}
	{[3]\times\{0\}\coprod [3]\times\{1\}} & {[3]\times[1]} \\
	{[1]\coprod[1]} & {C.}
	\arrow["{s^0s^0\coprod s^2s^3}"', from=1-1, to=2-1]
	\arrow[""{name=0, anchor=center, inner sep=0}, from=1-1, to=1-2]
	\arrow[from=1-2, to=2-2]
	\arrow[from=2-1, to=2-2]
	\arrow["\lrcorner"{anchor=center, pos=0.125, rotate=180}, draw=none, from=2-2, to=0]
\end{tikzcd}\]

We define several marked simplicial sets whose underlying simplicial sets are sub objects of C: 
\[\begin{tikzcd}
	{} & 00 & 01 && 00 & 01 \\
	& 10 & 11 && 20 & 21 \\
	& 20 & 21 && 00 & 01 \\
	& 30 & 31 & {} & 30 & 31
	\arrow[from=1-3, to=2-3]
	\arrow["{\large{A_1:=~~~~~~}}"', Rightarrow, no head, from=2-2, to=3-2]
	\arrow[Rightarrow, no head, from=2-3, to=3-3]
	\arrow["{\large{A_0:=~~~~~~}}"', Rightarrow, no head, from=1-2, to=2-2]
	\arrow[from=1-2, to=1-3]
	\arrow[from=3-2, to=3-3]
	\arrow[from=2-2, to=2-3]
	\arrow[""{name=0, anchor=center, inner sep=0}, from=1-2, to=2-3]
	\arrow[""{name=1, anchor=center, inner sep=0}, from=2-2, to=3-3]
	\arrow["{\large{A_3:=~~~~~~}}"', Rightarrow, no head, from=1-5, to=2-5]
	\arrow[from=1-6, to=2-6]
	\arrow[from=2-5, to=2-6]
	\arrow[""{name=2, anchor=center, inner sep=0}, from=1-5, to=2-6]
	\arrow[from=4-2, to=4-3]
	\arrow["{\large{A_4:=~~~~~~}}"', from=3-5, to=4-5]
	\arrow[from=4-5, to=4-6]
	\arrow[from=3-6, to=4-6]
	\arrow[from=3-5, to=3-6]
	\arrow[""{name=3, anchor=center, inner sep=0}, from=3-5, to=4-6]
	\arrow[from=1-5, to=1-6]
	\arrow["{\large{A_2:=~~~~~~}}"', from=3-2, to=4-2]
	\arrow[""{name=4, anchor=center, inner sep=0}, from=3-2, to=4-3]
	\arrow[Rightarrow, no head, from=3-3, to=4-3]
	\arrow["\sim"{description}, Rightarrow, draw=none, from=0, to=1-3]
	\arrow["\sim"{description}, Rightarrow, draw=none, from=1, to=2-3]
	\arrow["\sim"{description}, Rightarrow, draw=none, from=0, to=2-2]
	\arrow["\sim"{description}, Rightarrow, draw=none, from=2, to=1-6]
	\arrow[shorten <=2pt, Rightarrow, from=2, to=2-5]
	\arrow["\sim"{description}, Rightarrow, draw=none, from=3, to=3-6]
	\arrow[shorten <=2pt, Rightarrow, from=3, to=4-5]
	\arrow[shorten <=2pt, Rightarrow, from=1, to=3-2]
	\arrow["\sim"{description}, Rightarrow, draw=none, from=4, to=4-2]
	\arrow["\sim"{description}, Rightarrow, draw=none, from=4, to=3-3]
\end{tikzcd}\]
where arrows labeled by $=$ are degenerate and simplicies labeled by $\sim$ are thin.

Let $B_0$ be the sub object corresponding to the image of $[0,1,2]\times[0,1]$ where the marking includes all cells of dimension $\leq 2$, except $[10,20,21]$ and $[00,20,21]$.

Let $B_1$ be the sub object corresponding to the image of $[0,2,3]\times[0,1]$ where the marking includes all cells of dimension $\leq 2$, except $[00,20,21]$, $[00,30,31]$ and $[00,20,31]$.

Let $B$ be the reunion of $[0,1,2]\times[0,1]$ and $[0,2,3]\times[0,1]$ where the marking is the reunion of $B_0$ and $B_1$.
\end{construction}

\begin{lemma}
Morphisms $A_0\cup A_1\to B_0$ and $A_3\to B_0$ are acyclic cofibrations. 
\end{lemma}
\begin{proof}
The cofibration $A_0\cup A_1\to B_0$ fits in the following pushout square:
\[\begin{tikzcd}
	{\Lambda^{1}[2]\otimes [1]\cup[2]_t\otimes \partial[1]} & {A_1\cup A_2} \\
	{[2]_t\otimes [1]} & {B_0}
	\arrow[""{name=0, anchor=center, inner sep=0}, from=1-1, to=1-2]
	\arrow[from=1-1, to=2-1]
	\arrow[from=1-2, to=2-2]
	\arrow["{[0,1,2]\times[0,1]}"', from=2-1, to=2-2]
	\arrow["\lrcorner"{anchor=center, pos=0.125, rotate=180}, draw=none, from=2-2, to=0]
\end{tikzcd}\]

The cofibration $A_3\to B_0$ is a sequence of inclusions:
$$A_3=:(D_0,M_0)\subset (D_1,M_1)\subset (D_2,M_2)\subset(D_3,M_3)\subset(D_4,M_4)\subset (D_5,M_5)\subset (D_6,M_6):= B_0,$$ where 

\begin{itemize}[leftmargin=* ,parsep=0cm,itemsep=0cm,topsep=0cm]
\item $D_1 = D_0\cup [00,{01},11]$ ;
\item $D_2 = D_1\cup [ {00},10,11]$ ;
\item $D_2 = D_1\cup [ {00},10,21]$ ;
\item $D_4 = D_3\cup [00, {01},11,21]$; 
\item $D_5 = D_4\cup [ {00},10,11,21]$; 
\item $D_6 = D_5\cup [ {00},10,20,21]$; 
\end{itemize} and
\begin{itemize}[leftmargin=* ,parsep=0cm,itemsep=0cm,topsep=0cm]
\item $(D_0,M_0)\to (D_1,M_1)$ is a pushout of $\Lambda^1[2]\to [2]^1$;
\item $(D_1,M_1)\to (D_2,M_2)$ is a pushout of $\Lambda^0[2]\to [2]^0$;
\item $(D_2,M_2)\to (D_3,M_3)$ is a pushout of $\Lambda^0[2]\to [2]^0$;
\item $(D_3,M_3)\to (D_4,M_4)$ is a pushout of $\Lambda^1[3]\to [3]^1$;
\item $(D_4,M_4)\to (D_5,M_5)$ is a pushout of $\Lambda^0[3]\to [3]^0$;
\item $(D_5,M_5)\to (D_6,M_6)$ is a pushout of $\Lambda^0[3]\to [3]^0$.
\end{itemize}
\end{proof}

\begin{lemma}
Morphisms $A_2\cup A_3\to B_1$ and $A_4\to B_1$ are acyclic cofibrations. 
\end{lemma}
\begin{proof}
The cofibration $A_2\cup A_3\to B_1$ fits in the pushout square:
\[\begin{tikzcd}
	{\Lambda^{1}[2]\otimes [1]\cup [2]_t\otimes \partial[1]} & {A_2\cup A_3} \\
	{[2]_t\otimes [1]} & {B_1}
	\arrow[from=1-1, to=1-2]
	\arrow["{[0,2,3]\times[0,1]}"', from=2-1, to=2-2]
	\arrow[from=1-1, to=2-1]
	\arrow[from=1-2, to=2-2]
\end{tikzcd}\]
The cofibration $A_4\to B_1$ is a sequence of inclusions:
$$A_4=:(D_0,M_0)\subset (D_1,M_1)\subset (D_2,M_2)\subset(D_3,M_3)\subset(D_4,M_4)\subset (D_5,M_5)\subset (D_6,M_6):= B_1$$ where 
\begin{itemize}[leftmargin=* ,parsep=0cm,itemsep=0cm,topsep=0cm]
\item $D_1 = D_0\cup [00,21, {31}]$ ;
\item $D_2 = D_1\cup [20, {30},31]$ ;
\item $D_3 = D_2\cup [20,21, {31}]$;
\item $D_4 = D_3\cup [00,01,21, {31}]$;
\item $D_5 = D_4\cup [00,20, {30},31]$ ;
\item $D_6 = D_5\cup [00,20,21, {31}]$ ;
\end{itemize}
and
\begin{itemize}[leftmargin=* ,parsep=0cm,itemsep=0cm,topsep=0cm]
\item $(D_0,M_0)\to (D_1,M_1)$ is a pushout of $\Lambda^2[2]\to [2]^2$;
\item $(D_1,M_1)\to (D_2,M_2)$ is a pushout of $\Lambda^1[2]\to [2]^1$;
\item $(D_2,M_2)\to (D_3,M_3)$ is a pushout of $\Lambda^2[2]\to [2]^2$;
\item $(D_3,M_3)\to (D_4,M_4)$ is a pushout of $\Lambda^3[3]\to [3]^3$;
\item $(D_4,M_4)\to (D_5,M_5)$ is a pushout of $\Lambda^2[3]\to [3]^2$;
\item $(D_5,M_5)\to (D_6,M_6)$ is a pushout of $\Lambda^3[3]\to [3]^3$.
\end{itemize}
\end{proof}

\begin{lemma}
\label{lemma:formula for gray 0}
The maps $A_0\cup A_1\cup A_2\to B$ and $A_4\to B$ are acyclic cofibrations. 
\end{lemma}
\begin{proof}
This is a direct consequence of the last two lemmas.
\end{proof}

\begin{construction}
The marked simplicial set
$\overline{X\otimes B}$ is the pushout:
\[\begin{tikzcd}
	{X\otimes([00,01]\coprod [30,31])} & {X\otimes B} \\
	{[00,01]\coprod [30,31]} & {\overline{X\otimes B}.}
	\arrow[from=1-1, to=2-1]
	\arrow[from=2-1, to=2-2]
	\arrow[from=1-2, to=2-2]
	\arrow[""{name=0, anchor=center, inner sep=0}, from=1-1, to=1-2]
	\arrow["\lrcorner"{anchor=center, pos=0.125, rotate=180}, draw=none, from=2-2, to=0]
\end{tikzcd}\]

Let $\overline{X\otimes A_i}$ and $\overline{X\otimes B_i}$ be the sub-objects of $\overline{X\otimes B}$ corresponding to image of ${X\otimes A_i}$ and $	{X\otimes B_i}$.
\end{construction}

\begin{lemma}
\label{lemma:formula for gray 1}
The inclusion 
$	\overline{X\otimes A_{0}}\cup \overline{X\otimes A_{1}}\cup \overline{X\otimes A_{2}}\to	\overline{X\otimes B}$ and 
$\overline{X\otimes A_{4}}\to \overline{X\otimes B}$ are acyclic cofibrations.
\end{lemma}
\begin{proof}
Remark that we have cocartesian squares
\[\begin{tikzcd}
	{X\otimes([00,01]\coprod [30,31])} & {{X\otimes A_{0}}\cup {X\otimes A_{1}}\cup {X\otimes A_{2}}} & {{X\otimes B}} \\
	{[00,01]\coprod [30,31]} & {\overline{X\otimes A_{0}}\cup \overline{X\otimes A_{1}}\cup \overline{X\otimes A_{2}}} & {\overline{X\otimes B}}
	\arrow[""{name=0, anchor=center, inner sep=0}, from=1-2, to=1-3]
	\arrow[from=1-1, to=2-1]
	\arrow[from=2-1, to=2-2]
	\arrow[""{name=1, anchor=center, inner sep=0}, from=1-1, to=1-2]
	\arrow[from=1-2, to=2-2]
	\arrow[from=2-2, to=2-3]
	\arrow[from=1-3, to=2-3]
	\arrow["\lrcorner"{anchor=center, pos=0.125, rotate=180}, draw=none, from=2-2, to=1]
	\arrow["\lrcorner"{anchor=center, pos=0.125, rotate=180}, draw=none, from=2-3, to=0]
\end{tikzcd}\]
and 
\[\begin{tikzcd}
	{X\otimes([00,01]\coprod [30,31])} & {{X\otimes A_{4}}} & {{X\otimes B}} \\
	{[00,01]\coprod [30,31]} & {\overline{X\otimes A_{4}}} & {\overline{X\otimes B}}
	\arrow[""{name=0, anchor=center, inner sep=0}, from=1-2, to=1-3]
	\arrow[from=1-1, to=2-1]
	\arrow[from=2-1, to=2-2]
	\arrow[""{name=1, anchor=center, inner sep=0}, from=1-1, to=1-2]
	\arrow[from=1-2, to=2-2]
	\arrow[from=2-2, to=2-3]
	\arrow[from=1-3, to=2-3]
	\arrow["\lrcorner"{anchor=center, pos=0.125, rotate=180}, draw=none, from=2-2, to=1]
	\arrow["\lrcorner"{anchor=center, pos=0.125, rotate=180}, draw=none, from=2-3, to=0]
\end{tikzcd}\]
The result then follows from lemma \ref{lemma:formula for gray 0}.
\end{proof}
\begin{lemma}
\label{lemma:formula for gray 2}
The morphisms 
$\overline{X\otimes A_0} \to [1]\fwedge \Sigma X$ and $\overline{X\otimes A_2} \to \Sigma X\fwedge [1],$
induced by the morphism $A_0\to [00,01,11]_t$ and $A_2\to [20,30,31]_t$, are acyclic cofibrations. 
\end{lemma}
\begin{proof}
We have cocartesian squares
\[\begin{tikzcd}
	{X\otimes ([00,01]\coprod \{11\})} & {X\otimes [00,01]\coprod_{X\otimes[01]} X\otimes[01,11]} & {X\otimes A_0} \\
	{[00,01]\coprod \{11\}} & {[1]\coprod_{[0]}\Sigma X} & {\overline{X\otimes A_0}}
	\arrow[from=1-1, to=2-1]
	\arrow[""{name=0, anchor=center, inner sep=0}, "\sim", from=1-2, to=1-3]
	\arrow[""{name=1, anchor=center, inner sep=0}, from=1-1, to=1-2]
	\arrow["\sim", from=2-2, to=2-3]
	\arrow[from=1-2, to=2-2]
	\arrow[from=1-3, to=2-3]
	\arrow[from=2-1, to=2-2]
	\arrow["\lrcorner"{anchor=center, pos=0.125, rotate=180}, draw=none, from=2-2, to=1]
	\arrow["\lrcorner"{anchor=center, pos=0.125, rotate=180}, draw=none, from=2-3, to=0]
\end{tikzcd}\]
That shows that $[1]\coprod_{[0]} \Sigma X \to \overline{X\otimes A_0}$ is an acyclic cofibration. We then have a commutative diagram: 
\[\begin{tikzcd}
	{[1]\coprod_{[0]}\Sigma X} & {\overline{X\otimes A_0}} & {[1]\fwedge\Sigma X}
	\arrow["\sim", from=1-1, to=1-2]
	\arrow[from=1-2, to=1-3]
	\arrow["\sim", curve={height=-24pt}, from=1-1, to=1-3]
\end{tikzcd}\]
and by two out of three, this shows that $\overline{X\otimes A_0} \to [1]\fwedge \Sigma X$ is an acyclic cofibration. 
We proceed similarly for the second morphism. 
\end{proof}

\begin{lemma}
\label{lemma:formula for gray 3}
Marked simplicial sets $\overline{X\otimes A_1}$ and $\overline{X\otimes A_4}$ are respectively equal to $\Sigma (X\otimes [1])$ and $(\Sigma X)\otimes [1]$.
\end{lemma}
\begin{proof}
This is true by the definition of these objects.
\end{proof} 

\begin{proof}[Proof of theorem \ref{theo:interval_first_formula}]
According to lemma \ref{lemma:formula for gray 3} we have a cocartesian square
\[\begin{tikzcd}
	{\overline{X\otimes A_{0}}\coprod\overline{X\otimes A_{2}}} & {\overline{X\otimes A_{0}}\cup \overline{X\otimes A_{1}}\cup \overline{X\otimes A_{2}}} \\
	{[1]\fwedge\Sigma X\coprod \Sigma X\fwedge[1]} & {[1]\fwedge\Sigma X\coprod_{\Sigma (X\otimes\{0\})} \Sigma(X\otimes[1])\coprod_{\Sigma (X\otimes\{1\})} \Sigma X\fwedge[1]}
	\arrow[from=1-1, to=2-1]
	\arrow[from=1-1, to=1-2]
	\arrow[from=1-2, to=2-2]
	\arrow[from=2-1, to=2-2]
\end{tikzcd}\]
The left vertical morphism is a weak equivalence according to lemma \ref{lemma:formula for gray 2}, and the horizontal morphisms are cofibrations. By left properness, the right vertical morphism is a weak equivalence. Combined with lemmas \ref{lemma:formula for gray 1} and \ref{lemma:formula for gray 3}, this provides a zigzag of weak equivalences between 
$
[1]\fwedge\Sigma X\coprod_{\Sigma (X\otimes\{0\})} \Sigma(X\otimes[1])\coprod_{\Sigma (X\otimes\{1\})} \Sigma X\fwedge[1]$
 and $(\Sigma X)\otimes[1].$
\end{proof}

\subsection{Formulas for the Gray cone and the Gray $\circ$-cone}
The aim of this subsection is to demonstrate the following theorem, which is the analogue in stratified simplicial sets of the theorem \ref{theo:appendice formula for star}.
\begin{theorem}
\label{theo:cyl_formula}
There is a zigzag of acyclic cofibrations, natural in $X$, between the colimit of the diagram 
$$ \Sigma X\fwedge [1]\leftarrow \Sigma X\to \Sigma([0]\costar X)$$
and $\Sigma X \star[0]$.

There is a zigzag of acyclic cofibrations, natural in $X$, between the colimit of the diagram 
$$\Sigma(X\star[0]) \leftarrow \Sigma X\to [1]\fwedge\Sigma X$$
and $[0]\costar \Sigma X$.
\end{theorem}

\begin{proof}
We consider the diagram:
\[\begin{tikzcd}
	{[1]} & {[1]\coprod_{[0]}\Sigma X} & {\Sigma X\fwedge[1]\coprod_{\Sigma X} \Sigma( X\otimes[1]) \coprod_{\Sigma X}[1]\fwedge\Sigma X} \\
	{[1]} & {[1]\fwedge \Sigma X} & {\Sigma X\fwedge[1]\coprod_{\Sigma X} \Sigma( X\otimes[1]) \coprod_{\Sigma X}[1]\fwedge\Sigma X}
	\arrow["id", from=1-3, to=2-3]
	\arrow[from=1-2, to=1-1]
	\arrow[from=2-2, to=2-1]
	\arrow[from=1-2, to=1-3]
	\arrow[from=2-2, to=2-3]
	\arrow["\sim"', from=1-2, to=2-2]
	\arrow["id"', from=1-1, to=2-1]
\end{tikzcd}\]
All vertical morphisms are weak equivalences.
We denote by $A$ the colimit of the first line. The theorem \ref{theo:interval_first_formula} implies that there is a zigzag of acyclic cofibrations between $A$ and $X\diamond [0]$. Colimits of the two lines are homotopy colimits, and the comparison morphism is then an acyclic cofibration. 
We then have a zigzag of acyclic cofibrations: 
$$
X\star [0]\leftarrow X\diamond[0] \leftrightsquigarrow A\to \Sigma X\fwedge [1]\coprod_{\Sigma X} \Sigma([0]\costar X)
$$

The second assertion is demonstrated similarly.
\end{proof}

\begin{cor}
\label{cor:star and zigzag}
Let $f:C\to D$ be a fibration between complicial sets, and $K\to L$ a cofibration. It $f$
has the right lifting property against $$\Sigma( [0]\costar K\cup \emptyset \star L )\to \Sigma([0]\costar L),$$ then $f$
 has the right lifting property against $$(\Sigma K)\star [0]\cup (\Sigma L)\star \emptyset \to \Sigma K\star [0].$$
 
If $f$ has the right lifting property against $\Sigma [1]\to \Sigma[1]_t$, then $f$ has the right lifting property against
$$[1]_t\star\emptyset \cup[1]\star [0] \to [1]_t\star [0]$$
\end{cor}
\begin{proof}
Suppose that $f$ fulfills the condition. The class of cofibration having the right lifting property against $f$ is closed by pushouts and, according to \ref{prop:lifting_property_zigzag_of_acyclic_cofibration}, by zigzag of acyclic cofibration. The morphism 
$$\alpha:\Sigma L\fwedge [1]\coprod\limits_{\Sigma L} \Sigma([0]\costar K\coprod\limits_{\emptyset \star K}\emptyset\star L )\to
 \Sigma L\fwedge [1]\coprod\limits_{\Sigma L} \Sigma([0]\costar L)$$ is then in this class. 
Remark that we have a cocartesian square
\[\begin{tikzcd}
	{\Sigma L \cup[1]\coprod\limits_{\Sigma K \cup[1]}\Sigma K\fwedge [1]} & {\Sigma L \cup[1]\coprod\limits_{\Sigma K \cup[1]}\Sigma K\fwedge [1]\coprod\limits_{\Sigma L} \Sigma([0]\costar K)} \\
	{\Sigma L\fwedge [1]} & {\Sigma L\fwedge [1]\coprod\limits_{\Sigma L} \Sigma([0]\costar K\coprod\limits_{\emptyset \star K}\emptyset\star L )}
	\arrow[from=1-1, to=1-2]
	\arrow[from=1-1, to=2-1]
	\arrow[from=1-2, to=2-2]
	\arrow[from=2-1, to=2-2]
\end{tikzcd}\]
where the left vertical morphism, and so also the right vertical morphism, is an acyclic cofibration. This induces a zigzag of acyclic cofibration between $\alpha$ and $\beta$ where $\beta$ is 
$$\Sigma L \cup[1]\coprod\limits_{\Sigma K \cup[1]}\Sigma K\fwedge [1]\coprod\limits_{\Sigma L} \Sigma([0]\costar K)\to 
 \Sigma L\fwedge [1]\coprod\limits_{\Sigma L} \Sigma([0]\costar L)$$
Eventually, the theorem \ref{theo:cyl_formula} induces a zigzag of acyclic cofibration between $\beta$ and 
$(\Sigma K)\star [0]\cup (\Sigma L)\star \emptyset \to \Sigma K\star [0]$ which concludes the proof of the first assertion.

For the second assertion, remark that $[1]_t\star [0]$ is $\tau^i_1([1]_t\star\emptyset \cup [1]\star [0])$. As $\tau^i_1$ is a left Quillen functor, 
the theorem \ref{theo:cyl_formula} induces a zigzag of acyclic cofibration between $[1]_t\star\emptyset \cup[1]\star [0] \to [1]_t\star [0]$ and 
$$[1]_t\fwedge [1]\coprod_{[1]}\Sigma [1]\to [1]_t\fwedge [1]\coprod_{[1]}\Sigma [1]_t.$$
As this cofibration is a pushout of $\Sigma [1]\to \Sigma [1]_t$, this concludes the proof.
\end{proof}

\begin{cor}
\label{cor:costar and zigzag}
Let $f:C\to D$ be a fibration between complicial sets, and $K\to L$ a cofibration. It $f$
has the right lifting property against
$$\Sigma (L\star \emptyset \cup K \star [0])\to \Sigma (L\star [0]),$$
then $f$ has the right lifting property against 
$$ [0]\costar \Sigma K \cup \emptyset \star \Sigma L \to [0]\costar \Sigma L.$$

If $f$ has the right lifting property against $\Sigma [1]\to \Sigma[1]_t$, then $f$ has the right lifting property against
$$[0]\costar [1]\cup \emptyset \star [1]_t\to [0]\costar [1]_t$$
\end{cor}
\begin{proof}
The proof is similar to the one of corollary \ref{cor:star and zigzag}.

\end{proof}

\section{Globular equivalences}
\label{section:Globular equivalences}

\subsection{Marked homotopy categories}

\begin{definition}
\label{defi:definition of simplicial globes}
The \wcsnotionsym{$n$-globe}{(da@$\Db_n$}{globe@$n$-globe}{for marked simplicial sets} is the marked simplicial set $\Db_n:=\Sigma^n [0]$. We then have 
$\Db_0:=[0]$ and $\Db_{n+1}:= \Sigma \Db_n$.
This defines a globular object in $\mSset$:
\[\begin{tikzcd}
	{\Db_0} & {\Db_1} & {\Db_2} & {...}
	\arrow["{i_0^+}", shift left=2, from=1-1, to=1-2]
	\arrow["{i_1^+}", shift left=2, from=1-2, to=1-3]
	\arrow["{i_3^+}", shift left=2, from=1-3, to=1-4]
	\arrow["{i_0^-}"', shift right=2, from=1-1, to=1-2]
	\arrow["{i_1^-}"', shift right=2, from=1-2, to=1-3]
	\arrow["{i_3^-}"', shift right=2, from=1-3, to=1-4]
\end{tikzcd}\]
and we have equalities:
$$i_{n+1}^- i^+_n=i^+_{n+1} i^-_n~~~~i^+_{n+1} i^-_n=i^+_{n+1} i^+_n.$$
 We also set $(\Db_n)_t:= \tau^i_{n-1}(\Db_n)$ for $n>0$ and $\partial\Db_n:=\Sigma^n \emptyset$. 
 We then have a canonical inclusions 
 $$\partial \Db_0\to \Db_0$$ and
 for any $n>0$, we have canonical inclusions 
 $$\partial \Db_n\to \Db_n\to (\Db_n)_t.$$
\end{definition}

\vspace{1cm}

From now on, and until the end of this section, we fix a complicial set $C$. 
\begin{definition}
Let $C$ be a complicial set. A \wcsnotion{$n$-cell}{cell@$n$-cell}{for marked simplicial sets} $a$ of $C$ is a morphism $a:\Db_n\to C$. If $n$ is non null, the \textit{source} of $a$ (resp. the \textit{target} of $a$)
is the $(n-1)$-cell $a\circ i^-_{n-1}$ (resp. $a\circ i^+_{n-1}$). The cell $a$ is marked if the corresponding morphism $\Db_n\to C$ factorizes via $(\Db_n)_t$.
\end{definition}
	
\begin{definition}
Let $n$ be a non null integer, and $a,b$ two $n$-cells.
Cells $a$ and $b$ are \textit{parallel} if they share the same source and the same target. They are \textit{composable} if the source of $a$ is the target of $b$.

Let $a$ and $b$ be two parallel cells. The cell $a$ is \wcnotion{equivalent}{equivalent $n$-cells} to the cell $b$ if there exists a marked $(n+1)$-cell $d:a\to b$, or equivalently, if there exists a homotopy $\Db_n\times [1]_t$ between $a$ and $b$, and constant on $\partial \Db_n\times [1]_t$. This relation is denoted by $\sim$.
\end{definition}

\begin{lemma}
The relation $\sim$ is reflexive, symmetric and transitive.
\end{lemma}
\begin{proof}
This comes from usual properties of fibrant objects.
\end{proof}

\begin{lemma}
Let $a$, $b$ be two equivalent cells. If $a$ is marked, so is $b$.
\end{lemma}
\begin{proof}
As $\{0\}\to [1]_t$ is a weak equivalence, so is $\Db_n\times [1]_t\cup (\Db_n)_t\times \{0\}\to (\Db_n)_t\times [1]_t$. As $C$ is fibrant, this directly implies the result. 
\end{proof}

\begin{construction}
\label{cons:composition_homotopy_category}
Let $a,b$ be two composable $n$-cells . A composition of ${a}$ and ${b}$ is a $n$-cell $a\circ b$ that fits in a diagram:
\[\begin{tikzcd}
	{\Db_n\coprod_{\Db_{n-1}}\Db_n} \\
	{\Sigma^{n-1}([2]_t)} & C \\
	{\Db_{n}}
	\arrow[from=1-1, to=2-1]
	\arrow["{a\coprod b}", from=1-1, to=2-2]
	\arrow["{a\circ b}"', from=3-1, to=2-2]
	\arrow[from=3-1, to=2-1]
	\arrow[from=2-1, to=2-2]
\end{tikzcd}\]
As $C$ is a fibrant object, if $(a\circ b)'$ is any other composition, $(a\circ b)'\sim a\circ b$.  Morover if two of three cells $a$, $b$ or $a\circ b$ is marked, so is the third.
\end{construction}

\begin{lemma}
\label{lemma:associativity_of_composition_in_homotopy_category}
Let $a,b,c$ be three composable cells. There exists compositions such that $(a\circ b)\circ c = a\circ (b\circ c)$.
\end{lemma}
\begin{proof}
Let $M$ be the marking on $[3]$ that includes all simplices of dimension superior or equal to $2$. We define $\Sp_{[3]}$ as the simplicial set $[1]\coprod_{[0]} [1]\coprod_{[0]} [1]$. Remark that the cofibration $\Sp_{[3]}\to ([3],M)$ is acyclic. We then have a lift $f$ in the following diagram
\[\begin{tikzcd}
	{\Sigma^{n-1}\Sp_{[3]}} & C \\
	{\Sigma^{n-1}([3],M)}
	\arrow[from=1-1, to=2-1]
	\arrow["{a\coprod b\coprod c}", from=1-1, to=1-2]
	\arrow["f"', dashed, from=2-1, to=1-2]
\end{tikzcd}\]
The morphism $f$ provides all the desired compositions.
\end{proof}
\begin{definition}
  A \notion{marked category} is a pair $(C,tC)$ where $C$ is a category and $tC$ is a subset of isomorphisms stable by $2$ out of $3$. A functor $f:(C,tC)\to (D,tD)$ is a \notion{marked equivalence} if there exists a functor $g:(D,tD)\to (C,tC)$ and two natural transformations $fg\to id$ and $gf\to id$ that are pointwise marked.
\end{definition}

\begin{definition}

We define the marked category $\pi_0(C)$ whose objects are $0$-cells $x:s\to t$, and edges between $x,y:s\to t$ are equivalence classes of the set of $1$-cells $f:x\to y$ quotiented by the relation $\sim$. Marked edges correspond to equivalence classes of marked $1$-cells. The composition is given by construction \ref{cons:composition_homotopy_category} which is associative according to lemma \ref{lemma:associativity_of_composition_in_homotopy_category}. 

Let $n>0$ be an integer, and $s,t$ two parallel $(n-1)$-cells. We define the marked category $\pi_n(s,t,C)$ whose objects are $n$-cells $x:s\to t$, and edges between $x,y:s\to t$ are equivalence classes of the set of $(n+1)$-cells $f:x\to y$ quotiented by the relation $\sim$. Marked edges correspond to equivalence classes of marked $(n+1)$-cells. The composition is given by construction \ref{cons:composition_homotopy_category} which is associative according to lemma \ref{lemma:associativity_of_composition_in_homotopy_category}.
\end{definition}

For the definition to make sense, we have to show that the previous marking consists indeed of isomorphisms.

\begin{prop}
\label{prop:in_the_homotopy_category_marked_is_iso}
Let $x,y:s\to t$ be two parallel $n$-cells, and $f:x\to y$ a $(n+1)$-cell. If the cell $f$ is marked then $[f]:x\to y$ is an isomorphism in $\pi_n(s,t,C)$.
\end{prop}
\begin{proof}
There are liftings in the following diagrams:
\[\begin{tikzcd}
	{\Sigma^{n}\Lambda^0[2]} & C & {\Sigma^{n}\Lambda^2[2]} & C \\
	{\Sigma^{n}[2]^0} && {\Sigma^{n}[2]^0}
	\arrow["{f\amalg id}", from=1-1, to=1-2]
	\arrow[from=1-1, to=2-1]
	\arrow["h"', dotted, from=2-1, to=1-2]
	\arrow["{id\amalg f}", from=1-3, to=1-4]
	\arrow["k"', dotted, from=2-3, to=1-4]
	\arrow[from=1-3, to=2-3]
\end{tikzcd}\]
Let $g:y\to z$ be the restriction of $h$ to $\Sigma^{n}[1,2]$ and $l:y\to z$ be the restriction of $k$ to $\Sigma^{n}[0,1]$. We then have $[f][g]= id$, and $[h][f]=id$, and $[f]$ is then an isomorphism. 
\end{proof}

\begin{lemma}
\label{lemma:homotopycategory_are_idenpendant_of}
Let $s,t$ and $s',t'$ be two pairs of parallel cells, and $\psi:\partial \Db_n\times [1]_t\to C$ a homotopy between $s\cup t:\partial \Db_n\to C$ and $s'\cup t':\partial \Db_n\to C$. Then 
$$\pi_n(s,t,C)\cong \pi_n(s',t',C)$$
\end{lemma}
\begin{proof}
For each $x:s\to t$, there exists a lifting $h_x$ in the following diagram:
\[\begin{tikzcd}
	{\Db_n\times\{0\}\cup\partial\Db_n\times [1]_t} & C \\
	{\Db_n\times [1]_t}
	\arrow[from=1-1, to=2-1]
	\arrow["x\cup\psi", from=1-1, to=1-2]
	\arrow["h"', dotted, from=2-1, to=1-2]
\end{tikzcd}\]
and we define $F(x)$ as the restriction of $h_x$ to $\Db_n\times \{1\}$. For a $(n+1)$-cell $f:x\to y$, there exists a lifting $h_f$ in the following diagram:
\[\begin{tikzcd}
	{\Db_{n+1}\times \{0\}\cup \partial\Db_{n+1}\times [1]_t} & C \\
	{\Db_{n+1}\times [1]_t}
	\arrow["{f\cup h_x\cup h_y}", from=1-1, to=1-2]
	\arrow[from=1-1, to=2-1]
	\arrow["{h_f}"', from=2-1, to=1-2]
\end{tikzcd}\]
and we define $F(f)$ as the restriction of $h_f$ to $\Db_{n+1}\times \{1\}$. Furthermore, the unicity up to homotopy of lifting implies that $[F(f)]$ is independent of the choice of the lifting, and that $f\sim g$ implies $[F(f)]=[F(g)]$.
If $g:y\to z$ is an other morphism, and $\psi:\Sigma^n[2]_t \to C$ corresponds to the composition of $f$ and $g$, 
there is a lift in the following diagram:
\[\begin{tikzcd}
	{\Sigma^n [2]_t\cup (\Sigma^n\partial[2])\times [1]_t} && C \\
	{\Sigma^n [2]_t\times [1]_t}
	\arrow["{ \phi \cup h_f\cup h_g\cup h_{f\circ g}}", from=1-1, to=1-3]
	\arrow[from=1-1, to=2-1]
	\arrow[dotted, no head, from=2-1, to=1-3]
\end{tikzcd}\]
Restricted to $\Sigma^n [2]_t\times \{1\}$ this shows that $F$ commutes with compositions. We then have defined a functor 
$$F:\pi_n(s,t,C)\to \pi_n(s',t',C).$$

Using exactly the same procedure, where we just invert $0$ and $1$, we define a functor:
$$G:\pi_n(s',t',C)\to \pi_n(s,t,C).$$
Now, we have a lift in the following diagram:
\[\begin{tikzcd}
	{\Db_{n}\times \Lambda^{2}[2]^\sharp\cup\partial\Db_n\times[2]^\sharp} &&& C \\
	{\Db_n\times[2]^\sharp}
	\arrow["{h_x\cup h_{F(x)}\cup\psi(id\times s^0)}", from=1-1, to=1-4]
	\arrow[from=1-1, to=2-1]
	\arrow["{k_x}"', dotted, from=2-1, to=1-4]
\end{tikzcd}\]
The restriction of $k_x$ to $\Db_n\times [0,1]_t$ provides a marked cell $x\to G(F(x))$, which corresponds to an isomorphism in $\pi_n(s,t,C)$ according to proposition \ref{prop:in_the_homotopy_category_marked_is_iso}. If $f:x\to y$ is a $(n+1)$-cell, there is a lifting in the following diagram:
\[\begin{tikzcd}
	{\Db_{n+1}\times \Lambda^{2}[2]^\sharp\cup\partial\Db_{n+1}\times[2]^\sharp} &&& C \\
	{\Db_{n+1}\times[2]^\sharp}
	\arrow["{h_f\cup h_{F(f)}\cup k_x\cup k_y}", from=1-1, to=1-4]
	\arrow["{k_f}"', dotted, from=2-1, to=1-4]
	\arrow[from=1-1, to=2-1]
\end{tikzcd}\]
The restriction of $k_f$ to $\Db_{n+1}\times[0,1]_t$ induces in $\pi_n(s,t,C)$ a commutative diagram:
\[\begin{tikzcd}
	x & GFx \\
	y & GFy.
	\arrow["{[GFf]}", from=1-2, to=2-2]
	\arrow["{[f]}"', from=1-1, to=2-1]
	\arrow[from=2-1, to=2-2]
	\arrow[from=1-1, to=1-2]
\end{tikzcd}\]
We then have a marked invertible natural transformation $\psi: id\to GF$. Similarly we can construct an other marked invertible natural transformation $id\to GF$, which shows the desired equivalence of categories.
\end{proof}
\begin{definition}
Let $a$ be an element of  $\Hom_{ho(\mSset)}(\partial \Db_n,C)$. We define  the marked category
\begin{equation}
\label{eq:def of pi a}
\pi_n(a, C) := \pi_n(s,t,C)
\end{equation}
where $s,t$ is a pair of parallel arrows such that $s\cup t$ represents $a$.
The previous proposition shows that this is well defined.
\end{definition}

\subsection{A criterion to be a weak equivalence}
\begin{definition}
A morphism $p:C\to D$ between complicial sets is a \wcnotion{$\Db$-equivalence}{Dequivalence@$\Db$-equivalence} if 
$$\pi_0(C)\to \pi_0(D)$$
is an equivalence of  marked categories, and for any $n>0$ and pair of parallel arrow $s,t$, the induced functor
$$\pi_n(s,t,C)\to \pi_n(ps,pt,D)$$
is an equivalence of marked categories. 

A \wcnotion{$\Db$-trivial fibration}{Dtrivial fibration@$\Db$-trivial fibration} is a fibration having the right lifting property against $\partial\Db_n\to \Db_n$ and $\Db_n\to (\Db_{n})_t$.

\end{definition}

\begin{lemma}
\label{lemma:fibration_are_isofibration}
Let $\alpha\in\{-,+\}$.
The morphism $i^\alpha_{n+1}:\Db_n\to (\Db_{n+1})_t$ is an acyclic cofibration. 
\end{lemma}
\begin{proof}
We have a pushout diagram
\[\begin{tikzcd}
	{\Db_n\times\{\alpha\}\cup\partial\Db_n\times [1]_t} & {\Db_n\times \{\alpha\}} \\
	{\Db_n\times [1]_t} & {(\Db_n)_t}
	\arrow[""{name=0, anchor=center, inner sep=0}, "{id\cup \partial\times s^0}", from=1-1, to=1-2]
	\arrow[from=1-1, to=2-1]
	\arrow[from=2-1, to=2-2]
	\arrow["{i^\alpha_{n+1}}", from=1-2, to=2-2]
	\arrow["\lrcorner"{anchor=center, pos=0.125, rotate=180}, draw=none, from=2-2, to=0]
\end{tikzcd}\]
The left hand morphism being an acyclic cofibration, this concludes the proof.
\end{proof}

\begin{lemma}
\label{lemma:acyclic_cofibration_are_G_equivalence}
Acyclic cofibrations between complicial sets are $\Db$-equivalences.
\end{lemma}
\begin{proof}
Let $i:A\to B$ be an acyclic cofibration. The morphism $i$ admits a retraction $r:B\to A$: 
\[\begin{tikzcd}
	A & A \\
	B.
	\arrow["id", from=1-1, to=1-2]
	\arrow["i"', from=1-1, to=2-1]
	\arrow["r"', from=2-1, to=1-2]
\end{tikzcd}\]
and a homotopy $\psi$ between $id_B$ and $ir$ which is constant on the image of $i$, obtained as the lift in the following diagram:
\[\begin{tikzcd}
	{B\times\{0\}\coprod _{A\times\{0\}}A\times[1]_t} & B \\
	{B\times[1]_t}
	\arrow[from=1-1, to=2-1]
	\arrow["\phi"', dashed, from=2-1, to=1-2]
	\arrow[from=1-1, to=1-2]
\end{tikzcd}\]
Let $n>0$ be an integer, and $s$, $t$ be two $(n-1)$-cells of $C$. The retraction implies that $i_!$ is an injection on morphisms. For any $n$-cell $y:i(s)\to i(t)$ in $B$, the homotopy $\psi$ induces a marked cell $y\to ir(y)$ which corresponds to an isomorphism in $\pi_n(is,it,B)$ according to proposition \ref{prop:in_the_homotopy_category_marked_is_iso}. The functor $i_!$ is then essentially surjective. For any $(n+1)$-cell $f:i(x)\to i(y)$, the homotopy $\psi$ induces an equivalence $[ir(f)]\sim [f]$. The morphism $i_!$ is a surjection on morphisms and detects marked $(n+1)$-cells. All put together, $i_!$ is fully faithfull and essentially surjective, and is then a marked equivalence. We proceed similarly to show that $i_!:\pi_0(A)\to \pi_0(B)$ is a marked equivalence.
\end{proof}

\begin{lemma}
\label{lemma:2_out_of_3_for_G_equivalence}
Suppose given a commutative triangle between complicial sets
\[\begin{tikzcd}
	& B \\
	A && C
	\arrow["g"', from=2-1, to=2-3]
	\arrow["f", from=1-2, to=2-3]
	\arrow["i", from=2-1, to=1-2]
\end{tikzcd}\]
If $i$ is an acyclic cofibration, and $g$ is a $\Db$-equivalence, then $f$ is a $\Db$-equivalence.
\end{lemma}
\begin{proof}
Let $s,t$ be any pair of parallel arrows in $B$. There exists a pair of parallel arrows $s',t'$ in $A$ such that $s\cup t$ and $is'\cup it'$ correspond to the same element in $[\partial\Db_n,B]$. We then have a diagram:
\[\begin{tikzcd}
	& { \pi(s,t,B)} & {\pi(fs,ft,C)} \\
	{\pi(s,t,B)} & { \pi(is,it,B)} & {\pi(gs,gt,C).}
	\arrow["\sim", from=2-1, to=2-2]
	\arrow[from=2-2, to=2-3]
	\arrow["\sim", from=1-3, to=2-3]
	\arrow["\sim", from=1-2, to=2-2]
	\arrow[from=1-2, to=1-3]
	\arrow["\sim"', curve={height=18pt}, from=2-1, to=2-3]
\end{tikzcd}\]
where arrows labeled by $\sim$ are marked equivalences according to lemmas \ref{lemma:homotopycategory_are_idenpendant_of} and \ref{lemma:acyclic_cofibration_are_G_equivalence}. By two out of three, this shows that $ \pi(s,t,B)\to \pi(fs,ft,C)$ is an marked equivalence, and $f$ is then a $\Db$-equivalence.
\end{proof}

\begin{prop}
\label{prop:caracterisation_of_G_fibration}
Let $p:C\to D$ be a fibration between complicial sets.
The morphism $p$ is a $\Db$-trivial fibration if and only if it is a $\Db$-equivalence.
\end{prop}
\begin{proof}
If $p$ is a $\Db$-trivial fibration, it is obvious that it is a $\Db$-equivalence. For the converse, suppose $p$ is a fibration and a $\Db$-equivalence, and consider a diagram 
\[\begin{tikzcd}
	{\partial\Db_n} & C \\
	{\Db_n} & D
	\arrow[from=1-1, to=2-1]
	\arrow[from=1-1, to=1-2]
	\arrow["x"', from=2-1, to=2-2]
	\arrow["p", from=1-2, to=2-2]
\end{tikzcd}\]
As $p$ is a $\Db$-equivalence this implies that there exists a cell $\overline{x}:\Db_n\to C$ together with a marked $(n+1)$-cell $y:p(\overline{x})\to y$. All this data corresponds to a diagram:
\[\begin{tikzcd}
	{\Db_n} & C \\
	{(\Db_{n+1})_t} & D
	\arrow["p", from=1-2, to=2-2]
	\arrow["{\delta^0_{n+1}}"', from=1-1, to=2-1]
	\arrow["{\bar{x}}", from=1-1, to=1-2]
	\arrow["y"', from=2-1, to=2-2]
\end{tikzcd}\]
The left hand morphism being an acyclic cofibration according to \ref{lemma:fibration_are_isofibration}, this diagram admits a lift $h:(\Db_{n+1})_t\to C$. The restriction of $h$ to $i^+_{n+1}$ provides a lift in the first diagram. Now, we consider a diagram of shape: 
\[\begin{tikzcd}
	{\Db_n} & C \\
	{(\Db_n)_t} & D
	\arrow[from=1-1, to=2-1]
	\arrow["g", from=1-1, to=1-2]
	\arrow[from=2-1, to=2-2]
	\arrow["p", from=1-2, to=2-2]
\end{tikzcd}\]
with $n>1$.
Let $s,t$ be respectively the $(n-1)$-source and the $(n-1)$-target of $g$. Hypotheses imply that $[p(g)]$ is marked in $\pi_n(s,t,D)$ and because $p$ is a $\Db$-equivalence, so is $[g]$. The case $n=1$ is similar.
 The morphism $f$ is then a $\Db$-trivial fibration.
\end{proof}

\begin{lemma}
\label{lemma:slice_G_fibrations}
Let $p:X\to Y$ be a $\Db$-trivial fibration between complicial sets. Then for any $x\in X_0$, the induced fibrations
$$X_{/x}\to X\times_Y Y_{/p(x)} ~~\mbox{and}~~ X_{x/}\to X\times_Y Y_{p(x)/}$$
are $\Db$-trivial fibrations.
\end{lemma}
\begin{proof}
We define $\mathbb{P}(p,n)$ to be the statement that $p$ has the right lifting property against 
$$ \Db_n\cup \partial\Db_n\star [0]\to \Db_{n+1}\star[0] \mbox{ and }(\Db_n)_t\cup \Db_n\star [0]\to (\Db_{n})_t\star[0]$$
and against
$$[0]\costar \partial \Db_n\cup \Db_n\to [0]\costar \Db_{n+1} \mbox{ and }[0]\star \Db_n\cup (\Db_n)_t\to [0]\costar (\Db_n)_t$$
We then have to show that for any $n$, $\mathbb{P}(p,n)$ holds.

First, it is obvious that each $\Db$-equivalence $p$ satisfies $\mathbb{P}(p,0)$. As $p$ is a fibration, the corollaries \ref{cor:star and zigzag} and \ref{cor:costar and zigzag} then imply that $\mathbb{P}(p,n+1)$ is equivalent to $\mathbb{P}(p(a,b),n)$ for any $a,b\in X_0$, where $p(a,b)$ is the induced morphism: $X(a,b)\to Y(p(a),p(b))$. 

Using the fact that $p(a,b)$ is a $\Db$-trivial fibration as soon as $p$ is, this shows the desired result.
\end{proof}

\begin{lemma}
\label{lemma:G_fibration_right lifting property_against_partial}
 $\Db$-Trivial fibrations between complicial sets have the right lifting property against $\partial[n]\to [n]$.
\end{lemma}
\begin{proof}
Let $C$ be the class of cofibrations having the right lifting property against $\Db$-equivalences. The lemma \ref{lemma:slice_G_fibrations} implies that for any 
 $K\to L$ in $C$, the induced morphism:
$$L\cup K\star[0]\to L\star[0]$$
is in $C$. 
The class $C$ is then closed under Leibniz join. Furthermore, it includes $\partial[1]\to [1]$, and then, by induction, it includes $\partial[n]\to[n]$ for any integer $n$.
\end{proof}

\begin{lemma}
\label{lemma:G_fibration_right lifting property_against_sat}
$\Db$-Trivial fibrations between complicial sets have the right lifting property against $[n]\to [n]_t$.
\end{lemma}
\begin{proof}
Let $p$ be $\Db$-trivial fibrations between complicial sets, and
 $C_{n,p}$ be the set of objects $A$ such that $p$ has the right lifting property against:
$$A\to \tau^i_{n-1}(A).$$
This set is then closed under colimits, and by zigzags of acyclic cofibrations.
Let $k\leq n$ be two integers. We define $\mathbb{P}(k,n,p)$ to be the statement that 
$$ \Sigma [n-k]_{\circ}\star[k-1]~~~\mbox{and}~~~ [k-1]_{\circ}\costar\Sigma [n-k] $$
are in $C_{n+1,p}$.
The statement $\mathbb{P}(0,0,f)$ corresponds to the belonging of $\Db_1$ to $C_{1,p}$, which is obviously true. Suppose that $0<k$ and $\mathbb{P}(k-1,n,p)$. 
According to theorem \ref{theo:cyl_formula}, the object $\Sigma [n-k]_{\circ}\star[k-1]$ is linked by a zigzag of acyclic cofibrations to the colimit of 
$$
(\Sigma [n-k]_{\circ} \fwedge [1])\star [k-2] \leftarrow (\Sigma [n-k]_{\circ})\star [k-2] \to (\Sigma [n-k+1]_{\circ})\star [k-2]
$$
The center object and the left hand object are in $C_{n+1,p}$ because there are invariant under $\tau^i_{n}$, and the 
 right hand object is in $C_{n+1,p}$ by induction hypothesis. The object $\Sigma [n-k]_{\circ}\star[k-1]$ is then in $C_{n+1,p}$.
We demonstrate similarly that $[k-1]_{\circ}\costar\Sigma [n-k]$ is in $C_{n+1,p}$.

This then implies $\mathbb{P}(k,n,p)$. Eventually, $\mathbb{P}(0,n+1,p)$ is equivalent to $\mathbb{P}(n,n,p(a,b))$ for any pair of objects $(a,b)\in X_0$.
The statement $\mathbb{P}(k,n,p)$ is then true for any $k,n$ and $\Db$-trivial fibrations between complicial sets $p$. The statment $\mathbb{P}{n,n,p}$ implies that $p$ has the right lifting property against $[n]\to [n]_t$.
\end{proof}

\begin{theorem}
\label{theo:f_weak_equivalence_ssi_f_G_equivalence}
Let $p$ be a map between complicial sets. Then $p$ is a weak equivalence if and only if it is a $\Db$-equivalence.
\end{theorem}
\begin{proof}
According to lemmas \ref{lemma:acyclic_cofibration_are_G_equivalence} and \ref{lemma:2_out_of_3_for_G_equivalence} we can restrict ourselves to the case where $p$ is a fibration. If it is a weak equivalence, $p$ is then a trivial fibration and is then a $\Db$-equivalence. Suppose now that $p$ is a $\Db$-equivalence. According to proposition \ref{prop:caracterisation_of_G_fibration}, $p$ is then a $\Db$-trivial fibration. Lemmas \ref{lemma:G_fibration_right lifting property_against_partial} and \ref{lemma:G_fibration_right lifting property_against_sat} imply that $p$ is a trivial fibration.
\end{proof}

\begin{definition}
Let $p:X\to Y$ be a morphism between complicial sets. The morphism $p$ is \snotion{essentially surjective}{for marked simplicial sets} if for any $x\in Y_0$, there exists $\bar{x}\in X_0$ together with a marked cell $\bar{x}\to x$. It detects marked $1$-cells if the functor $\pi_0(X)\to \pi_0(Y)$ detects marked $1$-cells.
Eventually, the morphism $f$ is \snotion{fully faithful}{for marked simplicial sets} if the induced morphisms: 
$$X(a,b)\to Y(pa,pb)$$
are weak equivalences for any $a,b\in X_0$. 
\end{definition}

\begin{cor}
Let $p$ be a map between complicial sets. Then $p$ is a weak equivalence if and only if it is fully faithful, essentially surjective, and detects marked $1$-cells.
\end{cor}

\begin{proof}
If $p$ is a weak equivalence, it is then fully faithful, essentially surjective, and detects marked $1$-cells. Conversely, suppose $p$ is fully faithful, essentially surjective, and detects marked $1$-cells. The morphism $\pi_0(X)\to \pi_0(Y)$ is fully faithful, essentially surjective, and detects marked arrows; it is then an equivalence of marked categories. For $(a,b)$ a pair of $0$-cells, we have equalities:
\[\begin{tikzcd}
	{\pi_1(a,b,X)} & {\pi_0(X(a,b))} \\
	{\pi_1(pa,pb,Y)} & {\pi_0(Y(pa,pb)).}
	\arrow["{\pi_0p(a,b)}", from=1-2, to=2-2]
	\arrow["{\pi_1p}"', from=1-1, to=2-1]
	\arrow[Rightarrow, no head, from=2-1, to=2-2]
	\arrow[Rightarrow, no head, from=1-1, to=1-2]
\end{tikzcd}\]
The morphism $\pi_1(a,b,p)$ is then an equivalence of categories. For $(s,t)$ a pair of parallel arrows of dimension $>1$, if we denote by $a$ and $b$ the $0$-source and the $0$-target of $s$ and $t$, we have a diagram:
\[\begin{tikzcd}
	{\pi_n(s,t,X)} & {\pi_{n-1}(s,t,X(a,b))} \\
	{\pi_n(pa,pb,Y)} & {\pi_{n-1}(s,t,Y(pa,pb)).}
	\arrow["{\pi_{n-1}(s,t ,p(a,b))}", from=1-2, to=2-2]
	\arrow["{\pi_np}"', from=1-1, to=2-1]
	\arrow[Rightarrow, no head, from=2-1, to=2-2]
	\arrow[Rightarrow, no head, from=1-1, to=1-2]
\end{tikzcd}\]
The morphism $\pi_n(a,b,p)$ is then an equivalence of categories. The morphism $p$ is then a $\Db$-equivalence, and according to \ref{theo:f_weak_equivalence_ssi_f_G_equivalence}, a weak equivalence.
\end{proof}

\subsection{A criterion to be a weakly invertible transformation}
\label{section:A criterion to be a weakly invertible transformation}
The purpose of this section is to show the following proposition:
\begin{prop}
\label{prop:criterimu_to_be_an_weak_equivalence}
Let $i:\mSset\to \mSset$ and $j:\mSset\to \mSset$ be two left Quillen functors and $\psi:i\to j$ a natural transformation. If 
$\psi(\Db_n):i (\Db_n) \to j (\Db_n)$ is a weak equivalence for any $n$, then $\psi(X):i(X)\to j(X)$ is a weak equivalence for any $X$.
\end{prop}
For the remaining of this section, we fix two left Quillen functors $i$, $j$ and a natural transformation $\psi:i\to j$ satisfying the previous hypothesis. We denote by $N_i$ and $N_j$ the right adjoints of $i$ and $j$.

\begin{lemma}
\label{lemma:psipartial_is_a_weak_equivalence}
Morphisms $\psi(\partial\Db_n):i(\partial\Db_n)\to j(\partial\Db_n)$ are weak equivalences. 
\end{lemma}
\begin{proof}
We proceed by induction on $n$. The case $n=0$ is trivial. Suppose then the result true at the stage $n-1$. Remark then that $\partial \Db_n$ is the colimit and the homotopy colimit of the span
$$\Db_{n-1}\leftarrow \partial\Db_{n-1}\to \Db_{n-1}$$
As $i$ and $j$ are left Quillen functors, the induction hypothesis implies that $\psi(\partial\Db_n):i(\partial\Db_n)\to j(\partial\Db_n)$ is a weak equivalence.
\end{proof}

\begin{lemma}
\label{lemma:psisat_is_a_weak_equivalence}
Morphisms $\psi((\Db_n)_t):i((\Db_n)_t)\to j((\Db_n)_t)$ are weak equivalences. 
\end{lemma}
\begin{proof}
There is a diagram:
\[\begin{tikzcd}
	{i\Db_{n-1}} && {j\Db_{n-1}} \\
	{i(\Db_n)_t} && {j(\Db_n)_t}
	\arrow["{\psi(\Db_n)}", from=1-1, to=1-3]
	\arrow["{i(i^-_n)}"', from=1-1, to=2-1]
	\arrow["{j(i^-_n)}", from=1-3, to=2-3]
	\arrow["{\psi((\Db_n)_t)}"', from=2-1, to=2-3]
	\arrow["\sim"', draw=none, from=1-1, to=1-3]
	\arrow["\sim", draw=none, from=1-1, to=2-1]
	\arrow["\sim"', draw=none, from=1-3, to=2-3]
\end{tikzcd}\]
By two out of three, this shows that $\psi((\Db_n)_t)$ is a weak equivalence.
\end{proof}

\begin{lemma}
\label{lemma:j*is_a_trivial_fibration}
For any complicial set $Y$, the canonical morphism $N_jY\to N_i Y$ is a weak equivalence.
\end{lemma}
\begin{proof}
Let $Y$ be a complicial set. For any integer $n$, we have by adjunction a bijection
$$\Hom_{ho(\mSset)}(\Db_n, N_jY)\cong \Hom_{ho(\mSset)}(\Db_n, N_iY)$$
and according to lemmas \ref{lemma:psipartial_is_a_weak_equivalence} and \ref{lemma:psisat_is_a_weak_equivalence}, we have bijections
$$\Hom_{ho(\mSset)}(\partial \Db_n, N_jY)\cong \Hom_{ho(\mSset)}(\partial\Db_n, N_iY)$$
$$\Hom_{ho(\mSset)}((\Db_n)_t, N_jY)\cong \Hom_{ho(\mSset)}((\Db_n)_t, N_iY).$$
Let $a$ be an element of $\Hom_{ho(\mSset)}(\partial \Db_n, N_jY)$. We recall that the category $\pi_n(a,N_jY)$ is defined in \ref{eq:def of pi a}. The previous equivalences implies that we have an isomorphism of category
$$\pi_n(a,N_jY)\cong \pi_n(a,N_jY).$$
which concludes the proof according to theorem \ref{theo:f_weak_equivalence_ssi_f_G_equivalence}.
\end{proof}

\begin{proof}[Proof of the proposition \ref{prop:criterimu_to_be_an_weak_equivalence}]
Let $X$ be any marked simplicial set and $Y$ a complicial set. We have equalities:
\[\begin{tikzcd}
	{\Hom_{ho(\mSset)}(j_!X,Y)} & {\Hom_{ho(\mSset)}(X,j^*Y)} \\
	{\Hom_{ho(\mSset)}(i_!X,Y)} & {\Hom_{ho(\mSset)}(X,i^*Y)}
	\arrow[from=1-2, to=2-2]
	\arrow[from=1-1, to=2-1]
	\arrow[Rightarrow, no head, from=1-1, to=1-2]
	\arrow[Rightarrow, no head, from=2-1, to=2-2]
\end{tikzcd}\]
Lemma \ref{lemma:j*is_a_trivial_fibration} implies that the right hand morphism is a bijection, and so is the left hand morphism. 
For any $X$, $\psi(X)$ is then a weak equivalence.
\end{proof}

\subsection{Weak characterization of the identity}
 For the rest of this section, we fix a left Quillen functor $i:\mSset\to \mSset$ such that there exists a zigzag of weakly invertible natural transformations:
$$i(\Db_{\uvar}) \leftrightsquigarrow \Db_{\uvar}.$$

\begin{lemma}
\label{lemma:weak characteroeiation 1}
Let $n$ be any integer, the following natural transformations are pointwise acyclic cofibrations:
$$i\tau^i_n\to \tau^i_{n}i\tau^i_n \leftarrow \tau^i_{n}i.$$
\end{lemma}
\begin{proof}
These are natural transformations between left Quillen functors. The hypothesis implies that they induce weak equivalences on globes of dimension inferior or equal to $n$. Remark that for any $k> n$, as $i_{k-1}^-:\Db_{k-1}\to (\Db_{k})_t$ is an acyclic cofibration and $\tau^i_n$ preserves them, 
$\tau^i_n\Db_{k-1}\to \tau^i_n\Db_{k}$ is an acyclic cofibration. A direct induction implies that $\Db_{n}= \tau^i_n\Db_{n}\to \tau^i_n\Db_{k}$ is an acyclic cofibration.
We then have a commutative diagram: 
\[\begin{tikzcd}
	{i{\tau^i_n}(\Db_k)} & {{\tau^i_n}i {\tau^i_n}(\Db_k)} & {{\tau^i_n}i(\Db_k)} \\
	& {i(\Db_{n})}
	\arrow["\sim"', from=2-2, to=1-1]
	\arrow["\sim"', from=2-2, to=1-2]
	\arrow["\sim", from=2-2, to=1-3]
	\arrow[from=1-1, to=1-2]
	\arrow[from=1-3, to=1-2]
	\arrow[draw=none, from=2-2, to=1-1]
	\arrow[draw=none, from=2-2, to=1-2]
	\arrow[draw=none, from=2-2, to=1-3]
\end{tikzcd}\]
where all morphisms labelled by $\sim$ are weak equivalences.

By two out of three, this implies that theses natural transformations induce weak equivalences on all globes, and proposition \ref{prop:criterimu_to_be_an_weak_equivalence} concludes the proof.
\end{proof}

\begin{prop}
\label{prop:modification_of_the_value_on_marked_representables}
 There exists a zigzag of weakly invertible natural transformations 
$$i\leftrightsquigarrow j$$
where $j$ is a left Quillen functor such that $j([n])=i([n])$ and $j([n]_t)=\tau^i_{n-1}i([n])$, and such that the image of $[n]\to [n]_t$ by $j$ is induced by the canonical morphism $id\to \tau^i_{n-1}(id)$.
\end{prop} 
\begin{proof}
We define $\tilde{i}$ (resp. $j$) to be the colimit preserving functor defined on representables by $\tilde{i}([n]):=i([n])$ and $\tilde{i}:=([n]_t)=\tau^i_{n-1}i([n]_t)$ (resp. $j([n]):=i([n])$ and $j([n]_t):=\tau^i_{n-1}i([n])$). We then have a zigzag of natural transformations 
$$i\xrightarrow{\sim} \tilde{i}\xleftarrow{\sim} j.$$
that are pointwise acyclic cofibrations according to \ref{lemma:weak characteroeiation 1}.
This implies that both $\tilde{i}$ and $j$ are left Quillen functors.
\end{proof}

\vspace{1cm}

In the following lemmas, we use the Steiner theory recalled in section \ref{section:Steiner thery}. 
\begin{lemma}
\label{lemma:unicity_of_composition}
Let $m$ be an integer and $X$ and $Y$ be two $\omega$-categories admitting a loop free and atomic basis. We denote by $0$, $1$ and $t$ the three points of $\Sigma X\vee[1]$.
 Let $$f: \Sigma^m ([X,1]\star Y)\to \Sigma^m( ([X,1]\vee[1])\star Y)$$ be a morphism fitting in the following diagram:
\[\begin{tikzcd}
	{\Sigma^m((\{0\}\coprod\{1\})\star Y)} && {\Sigma^m( ([X,1]\vee[1])\star Y)} \\
	{\Sigma^m([X,1]\star Y)} && {\Sigma^m([X,1]\star Y)}
	\arrow["f", from=2-1, to=1-3]
	\arrow["{\Sigma^m(g\star Y)}", from=1-1, to=1-3]
	\arrow[from=1-3, to=2-3]
	\arrow["id"', from=2-1, to=2-3]
	\arrow[from=1-1, to=2-1]
\end{tikzcd}\]
where $g$ sends $0$ on $0$, and sends $1$ on $t$ and the right vertical morphism induced by the retraction $[X,1] \vee[1]\to [X,1]$.

Then $f$ is $\Sigma^m(\triangledown\star Y)$. 
\end{lemma}
\begin{proof}
All these categories admit loop free and atomic basis. We can then show this lemma in the category of augmented directed complexes. Furthermore, in this category, the suspension only makes an index shift, so we can assume without loss of generality that $m=0$.

The commutativity of the diagram implies that 
$$
\begin{array}{rclr}
f(0\star x)&=& 0\star x\\
f(1\star x)&=&t\star x\\
f([x,1] \star y)&=& [x,1] \star y +r_{x ,y} &\\
\end{array}
$$
where $r_{x,y}$ is a positive sum of elements of $(B_{[1]\star Y})_{|x|+|y|+1}$.
We show by induction on $|x|+|y|$ that: 
$$\begin{array}{rcll}
r_{x,y}&=& [1]\star y&\mbox{ if $|x|= 0$}\\
&=&0&\mbox{ if $|x|> 0$} .\\
\end{array}$$

Suppose the result true when the sum of dimensions of $x$ and $y$ is $(k-1)$. Let $x, y$ be two cells such that $|x|+|y|=k$.
\textbf{Case $|x|=0$.} The commutativity of $f$ with $\partial$ and the induction hypothesis imply that 
$$\begin{array}{rcl}
\partial r_{x,y} &=& f(\partial ([x,1]\star y)) - \partial ([x,1]\star y)\\
 &=& \{t\}\star y - \{0\}\star y + f([x,1]\star \partial y) - \{1\}\star y + \{0\}\star y - [x,1]\star \partial y\\
 &=& \{t\}\star y - \{1\}\star y + [1]\star \partial y\\
\end{array}$$
and $r_{x,y}$ is then equal to $[1]\star y$. \textbf{Case $|x|>0$.} The commutativity of $f$ with $\partial$ implies that 
$$ \partial r_{x,y} = 0$$
and $r_{x,y}$ is then equal to $0$.
\end{proof}

\begin{lemma}
\label{lemma:unicity_of_composition 2}
Let $m$ be an integer and $X$ and $Y$ be two $\omega$-categories admitting a loop free and atomic basis. We denote by $0$, $1$ and $t$ the three points of $\Sigma X\vee[1]$. 
 Let $$f: \Sigma^m ([X,1]\star Y)\to \Sigma^m( ([X,1]\vee[1])\star Y)$$ be a morphism fitting in the following diagram:
\[\begin{tikzcd}
	{\Sigma^m(\{t\}\star Y)} & {\Sigma^m(\{1\}\star Y)} \\
	& {\Sigma^m(([X,1]\vee[1])\star Y)} & {\Sigma^m([X,1]\star Y)} \\
	{\Sigma^m([X,1]\star Y)}
	\arrow["f", from=2-2, to=2-3]
	\arrow["id"', curve={height=12pt}, from=3-1, to=2-3]
	\arrow[hook, from=3-1, to=2-2]
	\arrow["\cong", from=1-1, to=1-2]
	\arrow[from=1-2, to=2-3]
	\arrow[from=1-1, to=2-2]
\end{tikzcd}\]
Then $f$ is the morphism induced by the retraction $[X,1]\vee[1]\to [X,1]$.
\end{lemma}
\begin{proof}
The proof is an easy computation using Steiner theory, similar to the one done in lemma \ref{lemma:unicity_of_composition}, and left to the reader.
\end{proof}

\begin{definition}
\label{defi:def of C}
Let $C$ be the subcategory of marked simplicial sets whose
\begin{enumerate}
\item[$-$] objects are the marked simplicial sets $X$ such that $\R(X)$ has no non-trivial automorphisms, and such that there exists a (necessary unique) isomorphism $$\phi_X:\R(iX) \to\R(X),$$
\item[$-$] morphisms are the maps $f:X\to Y$ making the induced diagram
\[\begin{tikzcd}
	{\R(i(X))} & {\R(X)} \\
	{\R(i(Y))} & {\R(Y)}
	\arrow["{\phi_X}", from=1-1, to=1-2]
	\arrow["{\phi_Y}"', from=2-1, to=2-2]
	\arrow["{\R(f)}", from=1-2, to=2-2]
	\arrow["{\R(i(f))}"', from=1-1, to=2-1]
\end{tikzcd}\]
commutative.
\end{enumerate}
We recall that the functor $R:\mSset\to \ocat$ is defined in construction \ref{cons:Street nerve}.
\end{definition}

\begin{remark}
\label{rem:about_P_modified_3}
As $\R$ sends acyclic cofibrations to isomorphisms, $C$ is stable by zigzags of acyclic cofibrations. Moreover, as $\R$ and $i$ preserve colimits, for any diagram $F:I\to C$ such that the $\omega$-category $\R(\colim_IF)$ has no non-trivial automorphisms, $\colim_IF$ is in $C$. Eventually, the colimit of any natural transformation between two such diagrams is in $C$.
\end{remark}

\begin{lemma}
\label{lemma:about_P_modified_2}
Let $(k,n)$ be a couple of integers such that $k\leq n$.  We set the convention $[-1]:=\emptyset$.
For any integer $m$, the following assertion holds:
\begin{enumerate}
\item $\Sigma^m( \Sigma [n-k]_{\circ}\star[k-1])$ and $\Sigma^m( [k-1]_{\circ}\costar \Sigma [n-k])$ are in $C$.
\item For any $-1\leq l\leq k-1$ and $0\leq p\leq n-k$, and any monomorphisms $[l]\to [k-1]$ and $[p]\to [n-k]$, the morphisms
$$\Sigma^m( \Sigma [p]_{\circ}\star[l]) \to \Sigma^m( \Sigma [n-k]_{\circ}\star[k-1])~~\mbox{and}~~
\Sigma^m( [l]_{\circ} \costar \Sigma[p])\to \Sigma^m( [k-1]_{\circ} \costar \Sigma[n-k])$$
are in $C$.
\item For any $\epsilon\in \{0,1\}$, the morphisms
$$\Sigma^m( \{\epsilon\}\star[k-1]) \to \Sigma^m( \Sigma [n-k]_{\circ}\star[k-1])~~\mbox{and}~~
\Sigma^m( [k-1]_{\circ} \costar \{\epsilon\})\to \Sigma^m( [k-1]_{\circ} \costar \Sigma[n-k])$$
are in $C$.
\item If $k>0$, the morphisms
$$\Sigma^m( \emptyset\star[k-1]) \to \Sigma^m( \Sigma [n-k]_{\circ}\star[k-1])~~\mbox{and}~~
\Sigma^m( [k-1]_{\circ} \costar \emptyset)\to \Sigma^m( [k-1]_{\circ} \costar \Sigma[n-k])$$
are in $C$.
\end{enumerate}
\end{lemma}
\begin{proof}
We will proceed by induction on $(k,n)$.

- The case $(0,0)$ corresponds to the belonging of globes to $C$, which is true by the assumptions we made on the functor $i$ and by the proposition \ref{prop:the globes a non non trivial automorphisms} that assert that the globes have no non-trivial automorphisms.

- We now suppose that the case $(n-1,n-1)$ holds and we are willing to show the case $(0,n)$. The assertions $(1)$ and $(2)$ are direct consequences of the case $(n-1,n-1)$ after remarking the isomorphisms:
$$\Sigma^m \Sigma [n]\cong \Sigma^{m+1}((\Sigma[0]_{\circ}) \star[n-2])~~~~~
\Sigma^m \Sigma [n]_{\circ}\cong \Sigma^{m+1}([n-2]_{\circ}\costar(\Sigma[0]))
$$
It remains to show the third assertion.  Let $m$ be any integer and $\epsilon\in \{0,1\}$. 
By induction hypothesis and by the belonging of globes to $C$, the following morphism
$$\Sigma^m( \{\epsilon\})\to \Sigma^m(\Sigma \{0\})\cong \Sigma^{m+1}\{0\}\to \Sigma^{m+1}((\Sigma[0]_{\circ}) \star[n-2])\cong\Sigma^m \Sigma [n]$$
is in $C$. As the morphism $\Sigma^m( \{\epsilon\})\to \Sigma^m \Sigma [n]$ is their composite, it belongs to $C$. We proceed similarly to show that
$\Sigma^m( \{\epsilon\})\to \Sigma^m \Sigma [n]_{\circ}$ belongs to $C$. This concludes the proof of the case  $(0,n)$.

- Suppose the result true for the couples $(k-1,n)$, $(k-1,n-1)$ and $(k-1,k-1)$ for an integer $k$ strictly superior to $0$ and inferior or equal to $n$. We are willing to show the case $(k,n)$. Let $m$ be any integer.

As $R$ commutes with Gray operations and pushouts, the lemma \ref{lemma:non trivial automorphisme 4} implies that 
$\Sigma^m ( (\Sigma[n-k]_{\circ}\coprod_{[0]}[1])\star [k-2])$
together with all the objects appearing in the statement of this lemma are sent by $\R$ to $\omega$-categories with loop free and atomic basis and admitting no non-trivial automorphisms. Remark \ref{rem:about_P_modified_3} implies that for one of these objects (resp. a morphism between them) to belong to $C$, it is sufficient to show that it is linked by a zigzag of acyclic cofibrations to the colimit, computed in $\mSset$, of a diagram with value in $C$ (resp. in the arrow category of $C$).

As $\Sigma [0]_\circ=[1]$, the case $(k-1,k-1)$ implies that the morphism 
$$\Sigma^m(\{0\}\star [k-1])\to \Sigma^m([1]\star [k-1])$$
is in $C$. Combined with the case $(k-1,n-1)$, this implies that the diagram
\[\begin{tikzcd}
	{\Sigma^m ( (\Sigma[n-k]_{\circ})\star [k-2])} & {\Sigma^m ( (\Sigma[n-k]_{\circ})\star [k-2])} \\
	{\Sigma^m ( [0]\star [k-2])} & {\Sigma^m ( [0]\star [k-2])} \\
	{\Sigma^m ( [0]\star [k-2])} & {\Sigma^m ( [1]\star [k-2])}
	\arrow[from=2-2, to=1-2]
	\arrow[from=2-2, to=3-2]
	\arrow[from=1-1, to=1-2]
	\arrow["id"', from=2-1, to=3-1]
	\arrow[from=2-1, to=1-1]
	\arrow["id", from=2-1, to=2-2]
	\arrow[from=3-1, to=3-2]
\end{tikzcd}\]
is in $C$, and so is it's vertical colimits. 
As the codomain is weakly equivalent to $\Sigma^m ( (\Sigma[n-k]_{\circ}\fwedge[1])\star [k-2])$, this implies that $C$ includes the canonical morphism
\begin{equation}
\label{eq:eqlemmaunicity_of_composition1}
\Sigma^m ( (\Sigma[n-k]_{\circ})\star [k-2]) \hookrightarrow \Sigma^m ( (\Sigma[n-k]_{\circ}\fwedge[1])\star [k-2]).
\end{equation}
We can show similarly that the canonical morphism
\begin{equation}
\label{eq:eqlemmaunicity_of_composition1.5}
\Sigma^m ( [1]\star [k-2]) \hookrightarrow \Sigma^m ( (\Sigma[n-k]_{\circ}\fwedge[1])\star [k-2]).
\end{equation}
is in $C$.

The image by $\R$ of the canonical morphism 
$$\Sigma^m ( (\Sigma[n-k]_{\circ}\fwedge[1])\star [k-2])\to \Sigma^m ( (\Sigma[n-k]_{\circ})\star [k-2]) $$
induced by the retraction $\Sigma[n-k]_{\circ}\fwedge[1]\to \Sigma[n-k]_{\circ}$ fulfills the condition of lemma \ref{lemma:unicity_of_composition 2} and then belongs to $C$. 
The lemma \ref{lemma:unicity_of_composition} then implies that the morphism 
\begin{equation}
\label{eq:eqlemmaunicity_of_composition2}
\Sigma^m ( \triangledown\star [k-2]):\Sigma^m ( (\Sigma[n-k]_{\circ})\star [k-2])\to \Sigma^m ( (\Sigma[n-k]_{\circ}\fwedge[1])\star [k-2])
\end{equation}
is in $C$. 
We will use freely in the rest of the proof that morphisms \eqref{eq:eqlemmaunicity_of_composition1}, \eqref{eq:eqlemmaunicity_of_composition1.5} and \eqref{eq:eqlemmaunicity_of_composition2} are in $C$.
 
Theorem \ref{theo:cyl_formula} implies that the object $\Sigma^m( \Sigma [n-k]_{\circ}\star[k-1])$ is linked by a zigzag of acyclic cofibrations to the colimit of 
$$
\Sigma^m ( (\Sigma[n-k]_{\circ}\fwedge[1])\star [k-2]) \leftarrow
\Sigma^m ( \Sigma[n-k]_{\circ}\star [k-2]) \to
\Sigma^m ( \Sigma[n-k+1]_{\circ}\star [k-2])
$$
and the induction hypothesis implies that it belongs to $C$. 
We proceed similarly to show that $\Sigma^m( [k-1]_{\circ}\costar \Sigma [n-k])$ belongs to $C$.

Let $0\leq l\leq k-1$ and $-1\leq p\leq n-k$ be two integers, and $f:[l]\to [k-1]$ and $g:[p]\to [n-k]$ be two monomorphisms. Suppose first that $f$ is of shape $[0]\star f'$ for $f':[l-1]\to [k-2]$.
In this case, $\Sigma^m( \Sigma [p]_{\circ}\star[l]) \to \Sigma^m( \Sigma [n-k]_{\circ}\star[k-1])$ is linked by a zigzag of acyclic cofibrations to the vertical colimit of the diagram
\[\begin{tikzcd}
	{ \Sigma^m ( (\Sigma[p]_{\circ}\fwedge[1])\star [l-1])} & {\Sigma^m ( (\Sigma[n-k]_{\circ}\fwedge[1])\star [k-2])} \\
	{ \Sigma^m ( \Sigma[p]_{\circ}\star [l-1])} & { \Sigma^m ( \Sigma[n-k]_{\circ}\star [k-2])} \\
	{ \Sigma^m ( \Sigma[p+1]_{\circ}\star [l-1])} & { \Sigma^m ( \Sigma[n-k+1]_{\circ}\star [k-2])}
	\arrow[from=2-2, to=1-2]
	\arrow[from=2-2, to=3-2]
	\arrow[from=2-1, to=2-2]
	\arrow[from=1-1, to=1-2]
	\arrow[from=2-1, to=1-1]
	\arrow[from=2-1, to=3-1]
	\arrow[from=3-1, to=3-2]
\end{tikzcd}\]
and the induction hypothesis implies that it belongs to $C$. 
Suppose now that $f$ avoids the initial object of $[k-1]$. In this case, the morphism $\Sigma^m( \Sigma [p]_{\circ}\star[l]) \to \Sigma^m( \Sigma [n-k]_{\circ}\star[k-1])$
 is linked by a zigzag of acyclic cofibrations to the vertical colimit of the diagram
\[\begin{tikzcd}
	{\Sigma^m( \Sigma [p]_{\circ}\star[l])} & {\Sigma^m ( (\Sigma[n-k]_{\circ})\star [k-2])} & {\Sigma^m ( (\Sigma[n-k]_{\circ}\fwedge[1])\star [k-2])} \\
	&& { \Sigma^m ( \Sigma[n-k]_{\circ}\star [k-2])} \\
	&& { \Sigma^m ( \Sigma[n-k+1]_{\circ}\star [k-2])}
	\arrow[from=2-3, to=1-3]
	\arrow[from=2-3, to=3-3]
	\arrow[hook, from=1-2, to=1-3]
	\arrow[from=1-1, to=1-2]
\end{tikzcd}\]
and the induction hypothesis implies that it belongs to $C$.
We prove similarly that $$\Sigma^m( [l]_{\circ} \costar \Sigma[p])\to \Sigma^m( [k-1]_{\circ} \costar \Sigma[n-k])$$ belongs to $C$.

The morphism $\Sigma^m( \{0\}\star[k-1]) \to \Sigma^m( \Sigma [n-k]_{\circ}\star[k-1])$  is linked by a zigzag of acyclic cofibrations to the vertical colimit of the diagram
\[\begin{tikzcd}
	& {\Sigma^m ( (\Sigma[n-k]_{\circ}\fwedge[1])\star [k-2])} \\
	& { \Sigma^m ( \Sigma[n-k]_{\circ}\star [k-2])} \\
	{\Sigma^m( \{0\}\star[k-1])\cong \Sigma^m ( (\Sigma\{n-k+1\})\star [k-2])} & { \Sigma^m ( \Sigma[n-k+1]_{\circ}\star [k-2])}
	\arrow[from=2-2, to=1-2]
	\arrow[from=2-2, to=3-2]
	\arrow[from=3-1, to=3-2]
\end{tikzcd}\]
and the induction hypothesis implies that it belongs to $C$.  The morphism $\Sigma^m( \{1\}\star[k-1]) \to \Sigma^m( \Sigma [n-k]_{\circ}\star[k-1])$   
is linked by a zigzag of acyclic cofibrations to the vertical colimit of the diagram
\[\begin{tikzcd}
	{\Sigma^m( \{1\}\star[k-1])\cong \Sigma^m ( [1]\star [k-2])} & {\Sigma^m ( (\Sigma[n-k]_{\circ}\fwedge[1])\star [k-2])} \\
	& { \Sigma^m ( \Sigma[n-k]_{\circ}\star [k-2])} \\
	& { \Sigma^m ( \Sigma[n-k+1]_{\circ}\star [k-2])}
	\arrow[from=2-2, to=1-2]
	\arrow[from=2-2, to=3-2]
	\arrow[hook, from=1-1, to=1-2]
\end{tikzcd}\]
and the induction hypothesis implies that it belongs to $C$.
We prove similarly that for any $\epsilon\in \{0,1\}$, $$\Sigma^m( [k-1]_{\circ} \costar \{\epsilon\})\to \Sigma^m( [k-1]_{\circ} \costar \Sigma[n-k])$$ belongs to $C$.

Eventually, the morphism $\Sigma^m( \emptyset\star[k-1]) \to \Sigma^m( \Sigma [n-k]_{\circ}\star[k-1])$  is 
is linked by a zigzag of acyclic cofibrations to the vertical colimit of the diagram
\[\begin{tikzcd}
	{\Sigma^m( \{1\}\star[k-2])} & { \Sigma^m ( [1]\star [k-2])} & {\Sigma^m ( (\Sigma[n-k]_{\circ}\fwedge[1])\star [k-2])} \\
	&& { \Sigma^m ( \Sigma[n-k]_{\circ}\star [k-2])} \\
	&& { \Sigma^m ( \Sigma[n-k+1]_{\circ}\star [k-2])}
	\arrow[from=2-3, to=1-3]
	\arrow[from=2-3, to=3-3]
	\arrow[hook, from=1-2, to=1-3]
	\arrow[from=1-1, to=1-2]
\end{tikzcd}\]
and the induction hypothesis implies that it belongs to $C$. We prove similarly that $$\Sigma^m( [k-1]_{\circ} \costar \emptyset)\to \Sigma^m( [k-1]_{\circ} \costar \Sigma[n-k])$$ belongs to $C$.

We have then proven the case $(k,n)$, and this concludes the proof.
\end{proof}

\begin{lemma}
\label{lemma:about_P_modified}
Let $F:\Delta\to \ocat$ be a functor and $\phi:F\to \R$ be a invertible transformation such that for any monomorphism $i:[k]\to [n]$, the induced square
\[\begin{tikzcd}
	{F([k])} & {\R([k])} \\
	{F([n])} & {\R([n])}
	\arrow["{\phi_{[k]}}", from=1-1, to=1-2]
	\arrow["{\phi_{[n]}}"', from=2-1, to=2-2]
	\arrow["{R(i)}", from=1-2, to=2-2]
	\arrow["{F(i)}"', from=1-1, to=2-1]
\end{tikzcd}\]
commutes. Then $\phi$ is an invertible natural transformation between $F$ and $\R$.
\end{lemma}
\begin{proof}
We can suppose without loss of generality that for all integer $n$, $F([n])=\R([n])$. The hypotheses implies that for any monomorphism $i:[n]\to [m]$, $F(i)=\R(i)$ and it then remains to show that for any degeneracy $p:[n]\to [m]$, $F(p)=\R(p)$.

We proceed by induction and we then suppose that for any $0<k\leq n$ and any degeneracy $s:[k]\to [k-1]$, $F(s)=\R(s)$. As any  morphism of $\Delta$ factors as a degeneracy followed by a monomorphism, the induction hypothesis implies that for any $f:[k]\to [n]$ with $k\leq n$,  $F(f)=\R(f)$.

Let $s:[n+1]\to [n]$ be a degeneracy. 
We have a \textit{a priori} non commutative diagram:
\[\begin{tikzcd}[ampersand replacement=\&]
	{\colim_{[k]\underset{\neq id}{\hookrightarrow}[n+1] }\R([k])} \& {\colim_{[k]\underset{\neq id}{\hookrightarrow}[n+1] }\R([k])} \\
	{{\R}([n+1])} \& {{\R}([n+1])} \\
	{{\R}([n])} \& {{\R}([n])}
	\arrow["{F(s)}"', from=2-1, to=3-1]
	\arrow[from=1-1, to=2-1]
	\arrow[from=1-2, to=2-2]
	\arrow["{\R(s)}", from=2-2, to=3-2]
	\arrow[Rightarrow, no head, from=2-1, to=2-2]
	\arrow[Rightarrow, no head, from=1-1, to=1-2]
	\arrow[Rightarrow, no head, from=3-1, to=3-2]
\end{tikzcd}\]
The induction hypothesis implies that the outer and the upper square commute. As $R$ commutes with colimits, $\colim_{[k]\to\partial [n]}\R([k])$ is equivalent to $\R(\partial[n])$. Moreover, the inclusion $\R(\partial[n])\to \R([n])$ induces an isomorphisms on cells of dimension lower or equal to $n$. 
For the lower square to commutes, we then only have to check that the top cell of $\R([n+1])$ is sent on the same element on ${\R}([n])$. That is the case because the two paths send it to an unity as there is no non trivial $(n+1)$-cells in $\R([n])$. 

We then have $F(s)=\R(s)$, which concludes the induction and then the proof.
\end{proof}

\begin{prop}
\label{prop:existence_of_comparaison_with_street}
There exists an invertible natural transformation $\R i\to \R$.
\end{prop} 
\begin{proof}
As $\Sigma[0]_{\circ}$ is isomorphic to $[1]$, the case $(n,n)$ for any integer $n$ of the lemma \ref{lemma:about_P_modified_2} imply that there exists an invertible transformation $\phi:(\R i)_{|\Delta}\to \R_{|\Delta}$ which is natural when restricted to the full subcategory of $\Delta$ whose morphisms are the monomorphisms. 

The lemma \ref{lemma:about_P_modified} then implies that $\phi:(\R i)_{|\Delta}\to \R_{|\Delta}$ is natural. We can extend it to a natural transformation $\phi':(\R i)_{|t\Delta}\to \R_{|t\Delta}$ thanks to the proposition \ref{prop:modification_of_the_value_on_marked_representables}. 

Eventually, as both $\R i$ and $\R$ preserves colimits, we can extend $\phi'$ to a invertible natural transformation between $\R i$ and $\R$.
\end{proof}

\begin{theorem}
\label{theo:criterion_to_be_linked_to_identity}
Let $i: \mSset\to \mSset$ be a left Quillen functor. Suppose that there exists a zigzag of weakly invertible natural transformations:
$$i(\Db_{\uvar}) \leftrightsquigarrow \Db_{\uvar}.$$
Then, there exists a zigzag of weakly invertible natural transformations between $i$ and $id$. In particular, $i$ is a left Quillen equivalence.
\end{theorem} 
\begin{proof} 
The proposition \ref{prop:existence_of_comparaison_with_street} implies that we have a natural transformation $\psi:i\to i_{str}$. 
Furthermore, hypotheses imply that this natural transformation is a weak equivalence on globes. According to proposition \ref{prop:criterimu_to_be_an_weak_equivalence}, $\psi$ is then a weakly invertible natural transformation.
We then have a zigzag of weakly invertible natural transformations: 
$$i\xrightarrow{\sim} i_{str}\xleftarrow{\sim} id.$$
\end{proof}

\begin{cor}
\label{cor:criterion_to_be_linked_to_identity_case stratified}
Let $i: \stratSset\to \stratSset$ be a left Quillen functor. Suppose that there exists a zigzag of weakly invertible natural transformations:
$$i(\Db_{\uvar}) \leftrightsquigarrow \Db_{\uvar}.$$
Then, there exists a zigzag of weakly invertible natural transformations between $i$ and $id$. In particular, $i$ is a left Quillen equivalence.
\end{cor} 
\begin{proof} 
We recall that the adjunction between stratified and marked simplicial sets is denoted by:
\[\begin{tikzcd}
	{(\uvar)_{\mk}:\stratSset} & {\mSset:\iota}
	\arrow[""{name=0, anchor=center, inner sep=0}, shift left=2, from=1-1, to=1-2]
	\arrow[""{name=1, anchor=center, inner sep=0}, shift left=2, from=1-2, to=1-1]
	\arrow["\dashv"{anchor=center, rotate=-90}, draw=none, from=0, to=1]
\end{tikzcd}\]
The proposition \ref{prop:model structure on marked presheaves} states that this adjunction is a Quillen equivalence and that the functor $\iota$ preserves acyclic cofibrations.

Remark now that the functor $(\uvar)_{\mk}\circ i\circ \iota:\mSset\to \mSset$ verifies the hypothesis of theorem \ref{theo:criterion_to_be_linked_to_identity} and we then have a zigzag of of weakly invertible natural transformations:
$$(\uvar)_{\mk}\circ i\circ \iota  \leftrightsquigarrow id$$
This induces a zigzag of of weakly invertible natural transformations:
$$i\to \iota\circ(\uvar)_{\mk}\circ i\circ \iota \circ (\uvar)_{\mk} \leftrightsquigarrow \iota\circ(\uvar)_{\mk}\leftarrow id$$
\end{proof}


\chapter{Complicial sets as a model of $\io$-categories}	
\label{chapter:complicial set as a model of io categories}

\minitoc
\vspace{1cm}
%
%
%
%
%
%
%
%
%
%
%

Let $n\in \Nb\cup\{\omega\}$. We will denote $\incat{n}$ (resp. $\incat{n}^{\nc}$) the $(\infty,1)$-category of \textit{(resp. non-complete) $(\infty,n)$-categories}, i.e., of complete Segal spaces (resp. Segal spaces) on $\Theta_n$. Following the terminology of Barwick and Schommer-Pries (\cite{Barwick_on_the_unicity_of_the_theory_of_higher_categories}), we call a \textit{model of (resp. non-complete) $(\infty,n)$-categories} any model category whose corresponding $(\infty, 1)$-category is $\incat{n}^{\nc}$ (resp. $\incat{n}$).

There are many model categories corresponding to these $(\infty,1)$-categories; see, for instance, \cite{Ara_Higher_quasi_cat}, \cite{Bergner_Comparison_of_model_of_infini_n_categories}, \cite{Bergner_Comparison_of_model_for_infini_n_categories_II}, \cite{Bergner_reedy_category_and_the_theta_construction} (we refer to \cite{Barwick_on_the_unicity_of_the_theory_of_higher_categories} for a comprehensive presentation of these models and their equivalence). For example, one can mention $n$-fold Segal spaces and Simpson's and Tamsamani's Segal $n$-categories, among others.

It was conjectured (\cite{Street_algebra_of_orianted_simplexes}, \cite{Verity_a_complicial_compendium}, \cite{Barwick_on_the_unicity_of_the_theory_of_higher_categories}) that Verity's $n$-complicial sets were also a model of $(\infty,n)$-categories. This would imply that Campion-Kapulkin-Maehara's $n$-comical sets also are, as they are shown to be Quillen equivalent to $n$-complicial sets in \cite{Doherty_Equivalence_of_cubical_and_simplicial_approaches}.

The case $n=2$ follows from results by Gagna, Harpaz, and Lanari (\cite{Gagna_on_the_equivallence_of_all_model_for_infini2_cat}). The purpose of this chapter is to demonstrate this conjecture for any $n\in \Nb\cup\{\omega\}$.

To this end, we first address the more general problem of finding sufficient conditions on a model category $A$ to build a \textit{Gray cylinder} $C\mapsto I\otimes C$ and a \textit{Gray cone} $C\mapsto e\star C$ on stratified Segal precategories enriched in $A$. These two operations should be linked by the following homotopy cocartesian square
\[\begin{tikzcd}
	{\{0\}\otimes C} & {I\otimes C} \\
	e & {e\star C}
	\arrow[from=1-2, to=2-2]
	\arrow[from=1-1, to=2-1]
	\arrow[from=2-1, to=2-2]
	\arrow[from=1-1, to=1-2]
\end{tikzcd}\]
where $e$ is the terminal object. The conditions that $A$ has to fulfill are encapsulated in the notion of a \textit{Gray module} (paragraph \ref{defi:Gray module}). Thanks to the Gray cylinder and cone, we can show the following theorem:

\begin{itheorem}[\ref{theo:complicialGray module}]
If $A$ is a complicial Gray module, there is a Quillen adjunction between the model structure for complicial sets on stratified simplicial sets and stratified Segal precategories enriched in $A$, where the left adjoint sends $[n]$ to $e \star e \star \ldots \star e \star \emptyset$.
\end{itheorem}

By iterating the previous construction, we will be able to construct a Quillen adjunction between a model of (resp. non-complete) $(\infty,n)$-categories and stratified simplicial sets. We will then demonstrate the conjecture:

\begin{itheorem}[\ref{theo:theorem model 1}]
Let $n\in \mathbb{N}\cup \{\omega\}$. The model structure for $n$-complicial sets is a model of non-complete $(\infty,n)$-categories. The model structure for saturated $n$-complicial sets is a model of $(\infty,n)$-categories.
\end{itheorem}

\section{Preliminaries }

\subsection{Segal $A$-precategories}

\begin{definition}
\label{definition:admissible}
A nice model structure on a category $A$ of stratified presheaves on a category $B$ is \textit{admissible}\index[notion]{admissible model category} if 
\begin{enumerate}
\item $B$ is an elegant Reedy category (definition \ref{defi:reedy})
\item the terminal element $e$ is representable.
\item for any representable $a$, the morphism $a^\sharp\to e$ is a weak equivalence, where $(\_)^{\sharp}$ is the functor defined in construction \ref{construction: definition of sharp}
\end{enumerate}
\end{definition}

Through this section and the one that follows, we fix an admissible category $A$ of stratified presheaves on a category $B$.

\begin{definition}
We have an adjunction 
\begin{equation}
\label{eq:ob adj}
\begin{tikzcd}
	{\iota:\Set} & {A:ob}
	\arrow[""{name=0, anchor=center, inner sep=0}, shift left=2, from=1-1, to=1-2]
	\arrow[""{name=1, anchor=center, inner sep=0}, shift left=2, from=1-2, to=1-1]
	\arrow["\dashv"{anchor=center, rotate=-90}, draw=none, from=0, to=1]
\end{tikzcd}
\end{equation}
where the left adjoint sends a set $S$ onto $\coprod_S e$ and the right adjoint is the evaluation at $e$. The objects lying in the image of $\iota$ are called \notion{discrete objects}.
\end{definition}

\begin{definition}
An object $C$ of $\Fun( \Delta^{op},A)$ is a \notion{Segal $A$-precategory} if $C_0$ is discrete. We denote by \wcnotation{$\Seg(A)$}{(seg@$\Seg(A)$} the full subcategory of $\Fun(\Delta^{op},A)$ spanned by the Segal $A$-precategories.
\end{definition}

\begin{construction}
\label{defi:segal technique}
Let $a$ be an object of $A$ and $n$ an integer. We denote by $|[a,n]|$ the object of $\Fun(\Delta^{op},A)$ whose value on $m$ is $a\times \iota (\Hom_\Delta([m],[n]))$. This assignment defines a functor 
$$\begin{array}{ccc}
A\times \Delta &\to& \Fun(\Delta^{op},A)\\
(a,[n])&\mapsto & |[a,n]|
\end{array}$$
We define the Segal $A$-precategory \wcsnotation{$[a,n]$}{((g10@$[a,n]$}{for $A$-Segal precategories} as the pushout: 
\[\begin{tikzcd}
	{\underset{k\leq n}{\cup}{|[a,\{k\}]|}} & {|[a,n]|} \\
	{|[e,0]|} & {[a,n]}
	\arrow[from=1-1, to=1-2]
	\arrow[from=1-1, to=2-1]
	\arrow[from=2-1, to=2-2]
	\arrow[from=1-2, to=2-2]
	\arrow["\lrcorner"{anchor=center, pos=0.125, rotate=180}, draw=none, from=2-2, to=1-1]
\end{tikzcd}\]
The object $[e,0]$, which is the terminal Segal $A$-precategory, is once again denoted $e$.

The assignment $(a,n)\mapsto [a,n]$ induces by left Kan extension a unique functor 
$$[\uvar,\uvar]:A\times \Sset \to \Seg(A).$$ By construction, for any $K\in \Sset$, the functor $[\uvar,K]\to \Seg(A)_{[\emptyset,K]/}$ preserves colimits.
Finally, note that the image of this functor is dense in $\Seg(A)$.
\end{construction}

\begin{construction}
For $\{n_i\}_{i\leq k}$ and $\{a\to a_i\}_{i\leq k}$ two finite sequences, we denote by \wcsnotation{$[a_0,n_0]\vee[a_1,n_1]\vee...\vee [a_k,n_k]$}{((g20@$[a_0,n_0]\vee[a_1,n_1]\vee...\vee [a_k,n_k]$}{for Segal $A$-precategories} the Segal $A$-precategory fitting in the following pushout:
\[\begin{tikzcd}
	{\amalg_{i\leq k}[a,n_i]} & {[a,\Sigma_{i\leq k}n_i]} \\
	{\amalg_{i\leq k}[a_i,n_i]} & {[a_0,n_0]\vee[a_1,n_1]\vee...[a_k,n_k]}
	\arrow[""{name=0, anchor=center, inner sep=0}, from=1-1, to=1-2]
	\arrow[from=1-2, to=2-2]
	\arrow[from=1-1, to=2-1]
	\arrow[from=2-1, to=2-2]
	\arrow["\lrcorner"{anchor=center, pos=0.125, rotate=180}, draw=none, from=2-2, to=0]
\end{tikzcd}\]

The case we will use the most is that of the Segal $A$-precategories $[e,1]\vee[a,n]$ and $[a,n]\vee[e,1]$ corresponding to the sequence $((1,n),(a\to e,a\to a))$ and $((n,1),(a\to a,a\to e))$.
\end{construction}

\begin{definition}
\label{defi:defi of delta[B]}
Let $B$ be the Reedy category and $M$ the subset of objects of $B$ such that $A$ is the category of $M$-stratified presheaves on $B$. Remark that we have a fully faithful functor $\Delta[B]\to \Seg(A)$ where the first category is defined in \ref{defi: of Delta[..]}. We define $\Delta[M]$ as the set of objects of shape $[b,n]$ for $b\in M$ and $n>0$. 
We can easily check that the category $\Seg(A)$ is the category of $\Delta[M]$-stratified presheaves on $\Delta[B]$.

A cellular model for $\stratSeg(A)$ is given by the set of morphisms $[b,\partial n]\cup [a,n]\to [b,n]$ for $n$ an integer, and $a\to b$ a generating cofibration of $A$.

Eventually, for any Segal $A$-precategory $C$, we have an isomorphism $$C\cong \colim_{\Delta[tB]_{/C}}[b,n].$$

Following the definition \ref{defi:entire}, a morphism between Segal precategories is \textit{entire} if it is the identity on the underlying $\Delta[B]$-presheaves.
\end{definition}

\subsection{Stratified Segal $A$-precategories}

\begin{definition} A \notion{stratified Segal $A$-precategory} is a pair $(C,tC)$ where $C$ is a Segal $A$-precategory and $tC$ is a subset of $ob(C_1)$ that factors $s^0: C_0\to ob(C_1)$. A \textit{morphism of stratified Segal $A$-precategories} $(C,tC)\to (D,tD)$ is the data of a morphism $f:C\to D$ such that $f(tC)\subset tD$. The category of stratified Segal $A$-precategories is denoted by \wcnotation{$\stratSeg(A)$}{(tseg@$\stratSeg(A)$}. 
We have a canonical fully faithful functor $\Seg(A)\to \stratSeg(A)$ sending $C$ onto $(C,Im(s^0))$. We will identify Segal $A$-precategories with their image by this functor.
\end{definition}

\begin{definition}
We define $[e,1]_t:= ([e,1],[e,1]_1)$. The subcategory of objects of shape $[a,n]$ or $[e,1]_t$ is then dense in $\stratSeg(A)$.
\end{definition}

\begin{definition}
Let $B$ be the Reedy category and $M$ the subset of objects of $B$ such that $A$ is the category of $M$-stratified presheaves on $B$. We recall that we defined the category $\Delta[B]$ and the set of morphisms $\Delta[M]$ in definition \ref{defi:defi of delta[B]}. We set $t\Delta[M]$ as the union of $\Delta[M]$ and the singleton $\{[e,1]_t\}$. We can easily check that the category $\stratSeg(A)$ is the category of $t\Delta[M]$-stratified presheaves on $\Delta[B]$.
\end{definition}

\begin{remark}
The set of generating cofibrations for $\stratSeg(A)$ then consists of morphisms of shape $[e,1]\to [e,1]_t$ or $[a,n]\cup[b,\partial n]\to [b,n]$ where $a\to b$ is a generating cofibration of $A$. \sym{((g21@$[e,1]_t$}	
For any stratified Segal $A$-precategory $C$, we then have an isomorphism 
$$C\cong \colim_{t\Delta[tB]_{/C}}\uvar.$$
where $t\Delta[tB]$ is the full subcategory of $\stratSeg(A)$ whose objects are in $\Delta[B]$ or $t\Delta[M]$.
\end{remark}

\begin{remark}
As stratified $A$-segal categories is a category of stratified presheaves, the construction \ref{construction: definition of sharp} induces an endofunctor $$(\uvar)^\sharp:\stratSeg(A)\to \stratSeg(A).$$  \index[notation]{((b61@$(\uvar)^{\sharp}$!\textit{for stratified Segal $A$-precategories}}
\end{remark}

\begin{construction}
\label{defi:segal technique new}
By setting $[a,[1]_t]:=[a,1]^{\sharp}$ and $[a,[n]_t]:=[a,n]$ for $n>1$, we can extend the functor given in the construction \ref{defi:segal technique} to a functor 
$$[\uvar,\uvar]:A\times \tPsh{\Delta}\to\stratSeg(A)$$

Given $a$ in $A$ and $K\in \tPsh{\Delta}$, the two functors $[a,\uvar]:\tPsh{\Delta}\to \stratSeg(A)$ and $[\uvar,K]:\tPsh{\Delta}\to \stratSeg(A)_{[\emptyset,K]/}$ preserve colimits.
\end{construction}

\begin{prop}
\label{prop:model structure on stratified Segal category}
There exists an admissible nice model structure on $\stratSeg(A)$, called the \snotion{Segal model structure}{on $\stratSeg(A)$}, and denoted \wcnotation{$\stratSeg(A^\seg)$}{(tsegseg@$\stratSeg(A)^\seg$}, whose weak equivalences are the smallest precocomplete class containing 
\begin{enumerate}
\item for any weak equivalence $a\to b$ of $A$, the morphism $[a,1]\to [b,1]$,
\item for any object $a$ of $A$, and any integer $n$, the morphism $[a,\Sp_n]\to [a,n]$,
\item the morphism $[e,1]_t\to e$.
\end{enumerate}

The model category $\stratSeg(A)^\seg$ admits a left Bousfield localization along the morphism $[e,E^{eq}]\to [0]$. It is denoted \wcnotation{$\stratSeg(A^\cseg)$}{(tsegcseg@$\stratSeg(A)^\cseg$},, and called the \textit{complete Segal model structure}.
\end{prop}

\begin{proof}
Let $\alpha:\Psh{\Delta}\to \Psh(t\Delta[B])$ be the left adjoint sending $[n]$ onto $[e,n]^\sharp$, and let $J$ be the union of the morphisms in $(1)$, $(2)$, or $(3)$. The model structure $\stratSeg(A)^{\seg}$ is defined as the one obtained by applying the theorem \ref{theo:free_model_structure_on_marking} to $\alpha$ and the set $J$. According to the theorem \textit{op. cit.}, the set of weak equivalences, denoted $W$, is the smallest precocomplete class of morphisms that contains
\begin{enumerate}
\item for any weak equivalence $a\to b$ of $A$, the morphism $[a,1]\to [b,1]$,
\item for any object $a$ of $A$, and any integer $n$, the morphism $[a,\Sp_n]\to [a,n]$,
\item[$(3)'$] for any integer $n$, the morphism $[e,1]_t\times [e,n]^{\sharp}\to [e,1]^{t}$,
\item[$(4)'$] for any integer $n$, and for any $b\in tB$, the morphism
$[b,n]\times [e,n]^{\sharp}\to [b,n]$
to weak equivalences.
\end{enumerate}
We denote by $W'$ the smallest precocomplete class of morphisms that contains $J$. We then directly have $W'\subset W$.

We will now show that the inclusion $W\subset W'$ holds. By construction, $[b,[1]_t]:=[b,1]^{\sharp}$ fits in the pushout
\[\begin{tikzcd}
	{\coprod_{ob(b)}[e,1]} & {[b^\sharp,1]} \\
	{\coprod_{ob(b)}[e,1]_t} & {[b,1]^\sharp}
	\arrow[from=1-1, to=1-2]
	\arrow[from=1-1, to=2-1]
	\arrow[from=1-2, to=2-2]
	\arrow[from=2-1, to=2-2]
\end{tikzcd}\]
As $A$ is admissible (definition \ref{definition:admissible}), the morphism $[b,1]^\sharp \to [e,1]^\sharp\cong[e,1]_t\to [0]$ is then in $W'$. The proposition \ref{prop:characterization of morphism depuis strratsset.} then implies that the functor 
$$[b,\uvar]:=\tPsh{\Delta}^1\to \stratSeg(A)$$
sends weak equivalences to $W'$. In particular, the morphism $[b,[n]\times [m]^\sharp]\to [b,[n]]$ is in it. 

Remark now that we have a pushout 
\[\begin{tikzcd}
	{\coprod_{k\leq n}[b,\{k\}\times [m] ^\sharp]} & {[b,[n]\times [m]^\sharp]} \\
	{\coprod_{k\leq n}[e,\{k\}\times [m] ^\sharp]} & {[b,n]\times [e,m]^\sharp}
	\arrow[from=1-1, to=1-2]
	\arrow[from=1-1, to=2-1]
	\arrow[from=1-2, to=2-2]
	\arrow[from=2-1, to=2-2]
\end{tikzcd}\]
The top horizontal morphism is a monomorphism, and the left vertical morphism is in $W'$, so is the right vertical one. By two out of three, this implies that $[b,n]\times [e,n]^{\sharp}\to [b,n]$ is in $W'$. We show similarly that $[e,1]_t\times [e,n]^{\sharp}\to [e,1]_{t}$ is in $W'$, and we then have $W\subset W'$ and so $W=W'$. 

It remains to check that this nice model structure is admissible. The condition $(1)$ of definition \ref{definition:admissible} follows from proposition \ref{prop:delta[B] is reedy}, we already showed that during the proof that the condition $(2)$ was fulfilled, and the last one is true as $[0]$ is both representable and the terminal object of $\stratSeg(A)$.

The case of the model structure $\stratSeg(A)^{\seg}$ is handled similarly by replacing $J$ with $J\cup \{ \{0\}\to [e,E^{eq}]\}$.
\end{proof}

\begin{notation}
When speaking about \textit{acyclic cofibrations} or \textit{weak equivalences} of stratified Segal $A$-precategories, we will always refer implicitly to the Segal model structure. We will speak of \textit{complete weak equivalences} and \textit{complete acyclic cofibrations} when we are working in the complete Segal model structure.
\end{notation}

\subsection{Models of $(\infty,n)$-categories}

\begin{construction}
Given two small categories $A$, $B$, we will denote 
$$ \uvar\boxdot\uvar:\Psh{A}\times \Psh{B}\to \Psh{A\times B}$$
the functor sending two presheaves $X,Y$ to the presheaf satisfying:
$$( X\boxdot Y)(a,b):= X(a)\times Y(b).$$
\end{construction}

\begin{prop}
\label{prop:model structure on theta delta}
Let $n\in \Nb\cup \{\omega\}$. There exists a nice model structure on $\Psh{\Theta_n\times\Delta}$, called the \snotion{Segal model structure}{on $\Psh{\Theta_n\times\Delta}$}, and denoted \wcnotation{${\Psh{\Theta_n\times\Delta}}^{\seg}$}{(pshThetanseg@${\Psh{\Theta_n\times\Delta}}^{\seg}$}, whose weak equivalences are the smallest precocomplete class containing
\begin{enumerate}
\item for an $a\in \Theta_n$ and integer $n$, the morphism $a\boxdot [n]\to a\boxdot[0]$,
\item for any $a\to b$ in $\W_n$, the morphism $a\boxdot [0]\to b\boxdot [0]$.
\end{enumerate} 

The model category $\Psh{\Theta_n\times\Delta}^{\seg}$ admits a left Bousfield localization along the set of morphisms 
$\{\Sigma^kE^{eq}\to \Sigma^k[0],~k<n\}$. It is called the \snotion{complete Segal model structure}{on $\Psh{\Theta_n\times\Delta}$}, and denoted \wcnotation{${\Psh{\Theta_n\times\Delta}}^{\cseg}$}{(pshThetancseg@${\Psh{\Theta_n\times\Delta}}^{\cseg}$}.
\end{prop}

\begin{proof}
The inclusion $\{[0]\}\times \Delta\to \Theta\times \Delta$ induces a left Quillen functor $\alpha:\Psh{\Delta}\to \Psh{\Theta\times \Delta}$. The (resp. complete) Segal model structure is obtained by applying theorem \ref{theo:free_model_structure} to $\alpha$ and the set of morphisms $\W_n$ (resp. $\W_n\cup\W^\sat_n$).
\end{proof}

\begin{definition}
\label{defi: of pi0 of theta}
We have an adjunction
\[\begin{tikzcd}
	{\pi_0:\Psh{\Theta_n\times \Delta}} & {\ncat{n}:\iota\boxdot[0]}
	\arrow[""{name=0, anchor=center, inner sep=0}, shift left=2, from=1-1, to=1-2]
	\arrow[""{name=1, anchor=center, inner sep=0}, shift left=2, from=1-2, to=1-1]
	\arrow["\dashv"{anchor=center, rotate=-90}, draw=none, from=0, to=1]
\end{tikzcd}\]
where $\pi_0$ sends $a\boxdot[n]$ to $a$, and $\iota:\ncat{n}\to \Psh{\Theta_n}$ is the right adjoint of the localization functors given in theorem \ref{theo:theta and ocat}. Moreover, note that the functor $\pi_0$ sends weak equivalences of $\Psh{\Theta_n\times \Delta}^\seg$ to isomorphisms.

Composing with the localization $\ncat{n}\to \ncat{n}^{\comp}$, this induces another adjunction
\[\begin{tikzcd}
	{\Psh{\Theta_n\times \Delta}} & {\ncat{n}^{\comp}}
	\arrow[""{name=0, anchor=center, inner sep=0}, shift left=2, from=1-1, to=1-2]
	\arrow[""{name=1, anchor=center, inner sep=0}, shift left=2, from=1-2, to=1-1]
	\arrow["\dashv"{anchor=center, rotate=-90}, draw=none, from=0, to=1]
\end{tikzcd}\]
where the left adjoint sends weak equivalences of $\Psh{\Theta_n\times \Delta}^\cseg$ to isomorphisms. We recall that the category of complete $n$-categories $\ncat{n}^{\comp}$ is defined in \ref{defi:of complete cat}.
\end{definition}

\begin{definition}
Let $n\in \Nb\cup\{\omega\}$. A \textit{premodel of non-complete $(\infty,n)$-categories} (resp. a \textit{premodel of $(\infty,n)$-categories}) is a adjunction
$$\begin{tikzcd}
	{\pi_0:A} & {\ncat{n}:\pi_0^*}
	\arrow[""{name=0, anchor=center, inner sep=0}, shift left=2, from=1-1, to=1-2]
	\arrow[""{name=1, anchor=center, inner sep=0}, shift left=2, from=1-2, to=1-1]
	\arrow["\dashv"{anchor=center, rotate=-90}, draw=none, from=0, to=1]
\end{tikzcd}~~~~~\mbox{(resp.} \begin{tikzcd}
	{\pi_0:A} & {\ncat{n}^{\comp}:\pi_0^*}
	\arrow[""{name=0, anchor=center, inner sep=0}, shift left=2, from=1-1, to=1-2]
	\arrow[""{name=1, anchor=center, inner sep=0}, shift left=2, from=1-2, to=1-1]
	\arrow["\dashv"{anchor=center, rotate=-90}, draw=none, from=0, to=1]
\end{tikzcd}\mbox{)}$$
where $A$ admits a nice model structure and $\pi_0$ sends weak equivalences to isomorphisms. It is a \notion{model of non-complete $(\infty,n)$-categories} (resp. a \notion{model of $(\infty,n)$-categories}) if there exists a zigzag of Quillen equivalences over $\ncat{n}$ (resp. $\ncat{n}^\comp$) between $M$ and $\Psh{\Theta_n\times \Delta}^{\seg}$ (resp. $\Psh{\Theta_n\times \Delta}^\cseg$).

By abuse of language, we will often leave the functor $\pi_0$ implicit and just say that $A$ is a model of (resp. non-complete) $(\infty,n)$-categories.
\end{definition}

\begin{definition}
\label{defi:globular object in a model}
Let $\pi_0:A\to \ncat{n}$ be a premodel of non-complete $(\infty,n)$-categories. 
A \notion{globular object} is a functor $\alpha:\Db_{\uvar}:\Gb_{\leq n}\to M$, where $\Gb$ is the category of globes, and such that there exists a natural equivalence $\pi_0\alpha(\Db_n)\cong\Db_n$ so that the induced morphism $\alpha(\Db_n)\to\pi_0^*\Db_n$ is a weak equivalence.
\end{definition}

\begin{example}
A globular object for $\Psh{\Theta_n\times \Delta}$ is given by the functor $\Gb_{\leq n}\to \Theta_n\to \Psh{\Theta_n\times \Delta}$.
\end{example}

We now present a result strongly inspired by proposition 15.10 of \cite{Barwick_on_the_unicity_of_the_theory_of_higher_categories}.

\begin{prop}[Barwick--Schommer-Pries]
\label{prop:Shommer preis}
Let $n\leq m\leq \omega$.
Let $A$ be a model of (resp. non-complete) $(\infty,n)$-categories, $B$ a model of (resp. non-complete) $(\infty,m)$-categories, $\Gb^A_{\leq n}\to A$ and $\Gb^B_{\leq m}\to B$ two globular objects, and $j:A\to B$ a left adjoint that preserves, up to a zig-zag of weak equivalence, globes of dimension less than or equal to $n$.

Then, if the left Bousfield localization $B^{n}$ of $B$ along $\{\Gb^B_{m+1}\to \Gb^B_m,~m>n\}$ exists, the left Quillen adjoint $i:A\to B^n$ is a left Quillen equivalence. In particular, $B$ is a model of $(\infty,n)$-categories.
\end{prop}

\begin{proof}
We will use the theory of $(\infty,1)$-categories for this proof. We denote $\incat{m}^{\nc}$ the $(\infty,1)$-categories corresponding to any non-complete model of $(\infty,m)$-categories. This $(\infty,1)$-category then corresponds to the localization of $\iPsh{\Theta_m}$ along the set $\W_m$, where $\mathrm{Psh}^\infty$ denotes the $(\infty,1)$-categorical presheaves construction. We then have an adjunction 
\[\begin{tikzcd}
	{\pi_0:\incat{n}^{\nc}} & {\ncat{n}:\iota}
	\arrow[""{name=0, anchor=center, inner sep=0}, shift left=2, from=1-1, to=1-2]
	\arrow[""{name=1, anchor=center, inner sep=0}, shift left=2, from=1-2, to=1-1]
	\arrow["\dashv"{anchor=center, rotate=-90}, draw=none, from=0, to=1]
\end{tikzcd}\]
whose right adjoint is fully faithful. We will denote $i_n:\incat{n}\to \incat{m}$ the left Kan extension of the functor
$$\Theta_n\to \Theta_m\to \incat{m}.$$
The functor of the statement corresponds to a left adjoint $j:\incat{n}^{\nc}\to \incat{m}^{\nc}$ that preserves globes. The composite functor 
 $$\Theta_n\to  \incat{n}^{\nc}\to  \incat{m}^{\nc}\to  \ncat{m}$$
 then also preserves globes and \cite[lemma 4.10]{Barwick_on_the_unicity_of_the_theory_of_higher_categories} implies that it corresponds to the morphism 
 $$\Theta_n\to  \Theta_m\to   \ncat{m}.$$ 
 By adjunction, this induces a transformation $j(a)\to i_n(a)$ natural in $a:\Theta_n$. The hypothesis implies that it is an equivalence when evaluated in globes, and as $j$ sends spine inclusion to equivalence, this natural transformation is invertible when evaluated at any globular sum. This natural transformation is then invertible, and this  implies the result.

The proof for (complete) models of $(\infty,m)$-categories is similar.
\end{proof}

\vspace{1cm}
We will now introduce another model of (non-complete) $(\infty,n)$-categories, using the family of categories $\Xi_n$ defined in section \ref{section:Xi}, and the notion of marked presheaves exposed in section \ref{section:Marked and stratified presheaves}.

\begin{definition}
Let $n\in \Nb\cup\{\omega\}$. Let $M$ be the set of globes in $\Xi_n$, as defined in \ref{defi:globes for xi}. We will simply denote $\tPsh{\Xi_n}$ and $t\Xi_n$ the categories $\tPshM{\Xi_n}$ and $t_M\Xi_n$ from constructions \ref{defi:of pshMB} and \ref{construction: of tB}. 

Following definition \ref{defi:of pshMB}, the elements of $\Xi_n$ are either of the shape $a$ for $a\in \Xi_n$, or $(\Db_k)_t$ for $k\leq n$.
\end{definition}

\begin{remark}
\label{rem:xi and seg}
Remark that we have a canonical identification $\stratSeg(\tPsh{\Xi_n})\cong \tPsh{\Xi_{n+1}}$.
\end{remark}

\begin{prop}
\label{prop:model structure on Xiset}
Let $n\in \Nb\cup \{\omega\}$. There exists a nice model structure on $\tPsh{\Xi_n}$, called the \snotion{Segal model structure}{on $\tPsh{\Xi_n}$}, and denoted \wcnotation{${\tPsh{\Xi_n}}^{\seg}$}{(tsegseh@${\tPsh{\Xi_n}}^{\seg}$}, whose weak equivalences are the smallest precocomplete class containing
\begin{enumerate}
\item  the set $\M_n$ defined in \ref{definition: of Mn},
\item the morphisms $(\Db_{k+1})_t\to \Db_k$ for any $k< n$.
\end{enumerate}

The model category $\tPsh{\Xi_n}^{\seg}$ admits a left Bousfield localization along the set of morphisms 
$\{\Sigma^kE^{eq}\to \Sigma^k[0],~k<n\}$. It is called the \snotion{complete Segal model structure}{on $\tPsh{\Xi_n}$}, and denoted \wcnotation{${\tPsh{\Xi_n}}^{\cseg}$}{(tsegsei@${\tPsh{\Xi_n}}^{\cseg}$}.
\end{prop}

\begin{proof}
Remark that the objects of $\tPsh{\Xi_1}$ correspond to pairs $(X,tX)$ where $X$ is a simplicial set and $tX$ is a set of $1$-simplices containing degeneracies. By remark \ref{rem:on forgetting some marking}, the desired model structures on $\tPsh{\Xi_1}$ are the ones induced by $\tPsh{\Delta}^1$ and $\tPsh{\Delta}^1_\sat$, and proposition \ref{prop:characterization of morphism depuis strratsset.} implies that they enjoy the desired characterization. Using the identification given in remark \ref{rem:xi and seg}, the result for any $n<\omega$ follows by induction using proposition \ref{prop:model structure on stratified Segal category}.

Eventually, for the case $n=\omega$, we endow $\tPsh{\Xi}$ with the two model structures making the inclusion $\tPsh{\Xi_n}\to \tPsh{\Xi}$ left Quillen functors for the Segal and complete Segal model structures.
\end{proof}

\begin{remark}
The (resp. complete) Segal model structure on $\stratSeg(\tPsh{\Xi_n})$ corresponds through the identification given in remark \ref{rem:xi and seg} to the (resp. complete) Segal model structure on $\tPsh{\Xi_{n+1}}$.
\end{remark}

\begin{remark}
As the model structure $\tPsh{\Delta}^1$ is cartesian, and by the construction of the model structure on stratified Segal precategories, we have, for any object $X$ of $\tPsh{\Xi_n}$ and any integer $k$, a weak equivalence $X \times [k]^\sharp \to X$. Here, the functor $(\uvar)^\sharp$ is the one introduced in construction \ref{construction: definition of sharp}.
\end{remark}

\begin{definition}
\label{defi of pi0 for xi}
We will consider the adjunction
\[\begin{tikzcd}
	{\pi^\Xi_0:\tPsh{\Xi_n}} & {\ncat{n}:\iota^{\Xi}}
	\arrow[""{name=0, anchor=center, inner sep=0}, shift left=2, from=1-1, to=1-2]
	\arrow[""{name=1, anchor=center, inner sep=0}, shift left=2, from=1-2, to=1-1]
	\arrow["\dashv"{anchor=center, rotate=-90}, draw=none, from=0, to=1]
\end{tikzcd}\]
where $\pi_0^\Xi$ sends $a$ onto $j(a)$ and $(\Db_k)_t$ onto $\Db_{k-1}$ for any $k<n$, with $j$ being the functor defined in construction \ref{cons: of the functor $j$}. Remarks moreover that the functor $\pi_0^\Xi$ sends weak equivalences of $\tPsh{\Xi}^\seg$ to isomorphisms.

Composing with the localization $\ncat{n}\to \ncat{n}^{\comp}$, this induces another adjunction
\[\begin{tikzcd}
	{\tPsh{\Xi_n}} & {\ncat{n}^{\comp}}
	\arrow[""{name=0, anchor=center, inner sep=0}, shift left=2, from=1-1, to=1-2]
	\arrow[""{name=1, anchor=center, inner sep=0}, shift left=2, from=1-2, to=1-1]
	\arrow["\dashv"{anchor=center, rotate=-90}, draw=none, from=0, to=1]
\end{tikzcd}\]
where the left adjoint sends weak equivalences of $\tPsh{\Xi}^\cseg$ to isomorphisms.
\end{definition}

\begin{remark}
A globular object for $\tPsh{\Xi_n}$ is given by the functor $\Gb_{\leq n} \to \Xi_n \to \tPsh{\Xi_n}$ where the first functor is defined in \ref{defi:globes for xi}.
\end{remark}

\begin{construction}
\label{cons:between xi et theta delta}
Let $n\in \Nb\cup \{\omega\}$. 
We recall that the functor $j:\Xi_n\to \Theta_n$ is defined in \ref{cons: of the functor $j$}. We have an adjunction
\[\begin{tikzcd}
	{p:\Psh{\Theta_n\times \Delta}} & {\tPsh{\Xi_n}}
	\arrow[""{name=0, anchor=center, inner sep=0}, shift left=2, from=1-1, to=1-2]
	\arrow[""{name=1, anchor=center, inner sep=0}, shift left=2, from=1-2, to=1-1]
	\arrow["\dashv"{anchor=center, rotate=-90}, draw=none, from=0, to=1]
\end{tikzcd}\]
where the left adjoint sends $a\boxdot [n]$ to $j^*a\times[n]^\sharp$. 

Remark that the functor $p$ induces a commutative triangle:
\[\begin{tikzcd}
	& {\tPsh{\Xi_n}} \\
	{\Psh{\Theta_n\times \Delta}} & {\zncat^\nc}
	\arrow["{\pi_0^\Xi}", from=1-2, to=2-2]
	\arrow["p", from=2-1, to=1-2]
	\arrow["{\pi_0}"', from=2-1, to=2-2]
\end{tikzcd}\]
\end{construction}

\begin{prop}
\label{prop:xi is a model}
The adjunction given in construction \ref{cons:between xi et theta delta} is a Quillen equivalence between the (resp. complete) segal model structures. As a consequence, $\tPsh{\Xi_n}^{\seg}$ (resp. $\tPsh{\Xi_n}^{\cseg}$) is a model of non-complete $(\infty,n)$-categories (resp. of $(\infty,n)$-categories).
\end{prop}

\begin{proof}
We will show only the Segal case, as the complete Segal one directly follows. We can adapt the argument of proposition \ref{prop:model structure on Xiset} to obtain a model structure on $\Psh{t\Xi_n}$ whose weak equivalences are the smallest precocomplete class of morphisms containing $\M_n$ and the set $\{(\Db_{k+1})_t\to \Db_k,~k<n\}$. Moreover, the canonical functor $\Psh{t\Xi_n}\to \tPsh{\Xi_n}$ is a left Quillen equivalence as explained in remark \ref{rem: free model structure on marking}. The functor $p$ of the statement factors through a functor $p': \Psh{\Theta_n\times \Delta}\to \Psh{t\Xi_n}$ that preserves monomorphisms. By construction of the model structure on $\Psh{t\Xi_n}$, and using theorem \ref{theo:unit and counit are in Sigma} and proposition \ref{prop:weak equivalence are precocomplete}, the functor $p'$ preserves weak equivalences, and is then a left Quillen functor. We moreover have another functor $q:\Psh{t\Xi_n}\to \Psh{\Theta_n\times \Delta}$ sending $a\in \Theta$ to $a\boxdot [0]$ and $(\Db_n)_t$ onto the pushout:
\[\begin{tikzcd}
	{\Db_k\boxdot\{1\}} & {\Db_k\boxdot[1]} \\
	{\Db_{k-1}} & {q((\Db_k)_t)}
	\arrow[from=1-1, to=1-2]
	\arrow[from=1-1, to=2-1]
	\arrow[from=1-2, to=2-2]
	\arrow[from=2-1, to=2-2]
\end{tikzcd}\]
This functor preserves monomorphisms, and we can easily check that it preserves weak equivalences. It is then a left Quillen adjoint. Remark now that we have a natural transformation $p'q\to id$. It is the identity when evaluated on elements of $\Xi$, and by two out of three, it is a weak equivalence when evaluated on elements of shape $(\Db_n)_t$. By theorem \ref{theo:hom colimi}, this implies that it is a weak equivalence when evaluated on any element of $\Psh{t\Xi_n}$. Conversely, the left Quillen functor $qp'$ obviously preserves the globules, and then is weakly equivalent to the identity by proposition \ref{prop:Shommer preis}. The functors $p'$ and $q$ are then homotopical inverses, and $p'$ is then a left Quillen equivalence, which concludes the proof.
\end{proof}

\subsection{Gray module}


\begin{definition}
A family  of \snotionsym{intelligent $n$-truncation}{(taui@$\tau^i_n$}{for stratified Segal $A$-precategories}\textit{s} for  $n\in \Nb\cup\{\omega\}$ for a model category $A$ is a family of left Quillen functors $\tau^i_{\uvar}:(\mathbb{N}\cup\{\omega\})^{op}\to \End(A)$ such that
\begin{enumerate}[itemsep=0mm]
\item[$-$] $\tau^i_\omega = id$,
\item[$-$] for any $n\leq m$, $\tau^i_n\tau^i_m=\tau^i_n$,
\item[$-$] for any $n\leq m$, the natural transformation $\tau^i_m\to \tau^i_n$ is an entire monomorphism,
\end{enumerate}
\end{definition}

\begin{definition}
\label{defi:Gray module}
Let $A$ be an admissible category  of stratified presheaves on a category $B$ (definition \ref{definition:admissible}) .
A \notion{Gray module} \textit{structure} for the model category $A$ is the data of 
\begin{enumerate}
\item[$-$] a family of {intelligent $n$-truncation} for any $n \in \Nb \cup \{\omega\}$.
\item[$-$] a left Quillen functor $\uvar \otimes \uvar: \stratSset^1 \times A \to A$,
\item[$-$] for any $a$ in $A$, and any pair of stratified simplicial sets $K,L$, a natural morphism $K \otimes (L \otimes a) \to (K \times L) \otimes a$.
\end{enumerate}
such that 
\begin{enumerate}
\item for any stratified simplicial set $M$, the following square commutes
\[\begin{tikzcd}
	{K\otimes (L\otimes (M\otimes a))} & {(K\times L)\otimes (M\otimes a)} \\
	{K\otimes ((L\times M)\otimes a)} & {(K\times L\times M) \otimes a}
	\arrow[from=1-1, to=1-2]
	\arrow[from=1-2, to=2-2]
	\arrow[from=2-1, to=2-2]
	\arrow[from=1-1, to=2-1]
\end{tikzcd}\]
\item The functor $[0] \otimes \uvar: A \to A$ is the identity.
\item For any integer $n$, for any object $a$  such that $\tau^i_n(a)=a$ and  for any stratified simplicial set $K$, we have  $\tau^i_{n+1}(K \otimes a)=K\otimes a$.

Such a Gray module is \wcnotion{saturated}{saturated Gray module} if the functor $\otimes$ induces a left Quillen functor $\uvar \otimes \uvar: \stratSset^1_\seg \times A \to A$.
\end{enumerate}
\end{definition}
\begin{remark} 
We wish to emphasize the fact that in the previous definition, $\stratSset$ is endowed with the (resp. saturated) $1$-complicial model structure.
\end{remark}

\begin{example}
\label{example:of Gray module}

For any $d\in \mathbb{N}\cup \{\omega\}$, the model category $\stratSset^d$ (resp. $\stratSset^d_\sat$), corresponding to the (resp. saturated) $d$-complicial model structure, and where $K\otimes L := \tau^i_1(K)\boxtimes L$, is an example of (resp. saturated) Gray module.
\end{example}

\begin{example}
\label{example:Xi is Gray module}
 
We recall from \ref{rem:on forgetting some marking} that the functor $u:\tPsh{\Delta}\to\mathrm{t}_{\leq 1}\Psh{\Delta}\cong\tPsh{\Xi_1}$ is a left Quillen equivalence between the (resp. saturated) $1$-complicial model structure and the (resp. complete) Segal model structure. The model category $\tPsh{\Xi_1}^\seg$ (resp. $\tPsh{\Xi_1}^\cseg$) then admits a (resp. saturated) Gray module structure where $K\otimes L:= u(K)\times L$.
\end{example}

\begin{construction}
\label{cons:truncation for segals}

Let $A$ be a nice model category of stratified presheaves on an elegant Reedy category, endowed with {intelligent $n$-truncation} for $n \in \Nb \cup \{\omega\}$. We now construct a family of {intelligent $n$-truncation} for $n \in \Nb \cup \{\omega\}$ for $\stratSeg(A)$.

Let $k$ be any non-negative integer. The \textit{intelligent $k$-truncation functor}, denoted by $\tau^i_k$, is the colimit-preserving functor such that $\tau^i_k([a,n]) = [\tau^i_{k-1}(a),n]$ and $\tau^i_k[e,1]_t = [e,1]_t$. The intelligent \textit{$0$-truncation functor}, denoted by $\tau^i_0$, is the colimit-preserving functor such that $\tau^i_0([a,n])$ fits in the following pushout 
\[\begin{tikzcd}
	{\underset{ob(a)\times \Hom([1],[n])}{\coprod}[e,1]} & {[\tau^i_0(a),n]} \\
	{\underset{ob(a)\times \Hom([1],[n])}{\coprod}[e,1]_t} & {\tau^i_0([a,n])}
	\arrow[from=1-1, to=1-2]
	\arrow[from=1-2, to=2-2]
	\arrow[from=1-1, to=2-1]
	\arrow[from=2-1, to=2-2]
\end{tikzcd}\]
and such that $\tau^i_0[e,1]_t = [e,1]_t$. Using proposition \ref{prop:model structure on stratified Segal category} and remark \ref{rem:mapping from a free model structure}, we can easily check that the intelligent $k$-truncation are left Quillen functors for the Segal and complete Segal model structure.
\end{construction}

\begin{construction}
\label{cons:otimes pour segal}
We consider the colimit-preserving functor 
$$\uvar \otimes \uvar:\Psh{\Delta} \times \Seg(A) \to \Seg(A)$$
whose value on $([n],[a,m])$ fits in the pushout
\[\begin{tikzcd}
	{\coprod_{l\leq m}\colim_{[k_0,k_1]\to [n]\otimes\{l\}}[[k_0]\otimes a, k_1]} & {\colim_{[k_0,k_1]\to [n]\otimes[m]}[[k_0]\otimes a, k_1]} \\
	{\coprod_{l\leq m}\colim_{[k_0,k_1]\to [n]\otimes\{l\}}[ e, k_1]} & {[n]\otimes[a,m]}
	\arrow[from=1-1, to=2-1]
	\arrow[from=2-1, to=2-2]
	\arrow[from=1-2, to=2-2]
	\arrow[""{name=0, anchor=center, inner sep=0}, from=1-1, to=1-2]
	\arrow["\lrcorner"{anchor=center, pos=0.125, rotate=180}, draw=none, from=2-2, to=0]
\end{tikzcd}\]
where $\uvar\otimes\uvar:\ncat{1}\times\ncat{1}\to \ncat{2}$ is the Gray tensor product defined in theorem \ref{theo:otimes in zocat}.
We extend $\uvar \otimes \uvar$ to a functor 
$$\uvar \otimes \uvar:\stratSset \times \stratSeg(A) \to \stratSeg(A)$$
by setting $[1]_t \otimes[a,m]$ as the colimit
\[\begin{tikzcd}
	{\coprod_{l\leq m}\colim_{[k_0,k_1]\to [1]\otimes\{l\}}[[k_0]\otimes a, k_1]} & {\colim_{[k_0,k_1]\to [1]\otimes[m]}[[k_0]^\sharp\otimes a, k_1]} \\
	{\coprod_{l\leq m}\colim_{[k_0,k_1]\to [1]\otimes\{l\}}\tau^i_0[e, k_1]} & {[1]_t\otimes[a,m]}
	\arrow[from=1-1, to=2-1]
	\arrow[from=2-1, to=2-2]
	\arrow[from=1-2, to=2-2]
	\arrow[""{name=0, anchor=center, inner sep=0}, from=1-1, to=1-2]
	\arrow["\lrcorner"{anchor=center, pos=0.125, rotate=180}, draw=none, from=2-2, to=0]
\end{tikzcd}\]
and for any integer $k>1$, $$[k]_t \otimes [a,n] := [k]\otimes[a,n],$$
and  eventually, for any stratified simplicial set $K$, by setting $K\otimes[e,1]_t$ as the pushout
\[\begin{tikzcd}
	{\coprod_{c\in ob(K)}\tau^i_1(\{c\}\otimes[e,1])} & {\tau^i_1(K\otimes[e,1])} \\
	{\coprod_{c\in ob(K)}\{c\}\otimes[e,1]_t} & {K\otimes[e,1]_t}
	\arrow[from=1-1, to=2-1]
	\arrow[from=2-1, to=2-2]
	\arrow[from=1-2, to=2-2]
	\arrow[""{name=0, anchor=center, inner sep=0}, from=1-1, to=1-2]
	\arrow["\lrcorner"{anchor=center, pos=0.125, rotate=180}, draw=none, from=2-2, to=0]
\end{tikzcd}\]
\end{construction}

\begin{notation}
We will denote by $K_1\otimes ... \otimes K_n\otimes C$ the object $(K_1\otimes ( .... \otimes (K_n\otimes C)...))$
\end{notation}

\begin{prop}
\label{prop:otimes est quillen dans Segal}
The functor $\otimes: \stratSset \times \stratSeg(A) \to \stratSeg(A)$ is a left Quillen functor when $\stratSset$ is endowed with the (resp. saturated) $1$-complicial model structure and $\stratSeg(A)$ with the (resp. complete) Segal model structure.
\end{prop}

\begin{proof} 
We first fix an object $a$ in $A$. The functor $\uvar \otimes [a,\uvar]: \Sset \times \Sset \to \stratSeg(A)$ is the composite
\[\begin{tikzcd}
	{\Sset \times \Sset} & {\Psh{\Theta_2}} & {\Psh{\Delta[\Delta]} \cong \Seg(\Sset)} & {\stratSeg(A)}
	\arrow["\uvar \otimes \uvar", from=1-1, to=1-2]
	\arrow["{i^*}", from=1-2, to=1-3]
	\arrow["{\Seg(\uvar \otimes a)}", from=1-3, to=1-4]
\end{tikzcd}\]
According to proposition \ref{prop:weak equivalence are precocomplete} and theorems \ref{theo:unit and counit are in Sigma} and \ref{theo:otimes presserves W}, this functor then sends $\W_1 \times \W_1$ to weak equivalences of $\stratSeg(A)^\seg$, and $\W_1^\sat \times \W_1^\sat$ to weak equivalences of $\stratSeg(A)^\cseg$.
We can show similarly that $\uvar \otimes [e,1]_t: \Sset \to \stratSeg(A)$ and $[1]_t \otimes [a,\uvar]: \Sset \to \stratSeg(A)$ send $\W_1$ to weak equivalences of $\stratSeg(A)^\seg$ and $\W_1^\sat$ to weak equivalences of $\stratSeg(A)^\cseg$.

We now fix a marked simplicial set $K$ and an integer $n$. Let $i:a \to b$ be a weak equivalence of $A$. The morphism $K \otimes [a,n] \to K \otimes [b,n]$ is a colimit of natural transformations that are pointwise weak equivalences. As this colimit is indexed by the elegant Reedy category $\Theta_{/K \otimes [n]}$ and verifies the condition of theorem \ref{theo:hom colimi}, the morphism $K \otimes [i,n]: K \otimes [a,n] \to K \otimes [b,n]$ is a weak equivalence.

We are now willing to show that for all stratified Segal $A$-precategories $C$, the morphism $[1]_t \otimes C \to C$ is a weak equivalence.
As $[1]_t\otimes\uvar$ preserves monomorphisms, the functor $t\Delta[tB]_{/C}\to \Arr(\stratSeg(A))$ whose value on $x$ is $[1]_t \otimes x \to x$ is Reedy cofibrant. The theorem \ref{theo:hom colimi} then implies that it is sufficient to show that $[1]_t \otimes C \to C$ is a weak equivalence when $C$ is representable, i.e., when it is of shape $[a,n]$ or $[e,1]_t$. Moreover, as we already showed that $[1]_t \otimes [a,\uvar]$ sends spine inclusions to weak equivalences, we can reduce to the case where $C$ is either $[a,1]$ or $[e,1]_t$. 
The proposition \ref{prop:explicit Gray} then implies that $[1]_t \otimes [a,1]$ is the colimit of the diagram
\[\begin{tikzcd}
	{[e,1]_t \vee [a,1]} & {[a,1]} & {[[1]_t \otimes a,1]} & {[a,1]} & {[a,1] \vee [e,1]_t}
	\arrow["{[a,d^1]}"', from=1-2, to=1-1]
	\arrow["{[a,d^1]}", from=1-4, to=1-5]
	\arrow["{[d^1,1]}"', from=1-4, to=1-3]
	\arrow["{[d^0,1]}", from=1-2, to=1-3]
\end{tikzcd}\]
We then have cocartesian squares 
\[\begin{tikzcd}
	{[e,1]_t\vee[a,1]} & {[1]_t\otimes[a,1]} & {[a,1]\vee[e,1]_t} & \bullet \\
	{[a,1]} & \bullet & {[a,1]} & {[[1]_t\otimes a,1]}
	\arrow[from=1-1, to=1-2]
	\arrow[from=1-1, to=2-1]
	\arrow[from=2-1, to=2-2]
	\arrow[from=1-2, to=2-2]
	\arrow[from=1-3, to=2-3]
	\arrow[from=2-3, to=2-4]
	\arrow[from=1-3, to=1-4]
	\arrow[from=1-4, to=2-4]
	\arrow["\lrcorner"{anchor=center, pos=0.125, rotate=180}, draw=none, from=2-4, to=1-3]
	\arrow["\lrcorner"{anchor=center, pos=0.125, rotate=180}, draw=none, from=2-2, to=1-1]
\end{tikzcd}\]
whose left vertical morphisms are weak equivalences. As weak equivalences are stable under pushouts along cofibrations and by composition, the canonical morphism $[1]_t\otimes[a,1]\to [[1]_t\otimes a,1]$ is a weak equivalence. As the canonical morphism $[1]_t \otimes [a,1] \to [a,1]$ is the composite of $[1]_t \otimes [a,1] \to [[1]_t \otimes a,1]$ with the weak equivalence $[[1]_t \otimes a,1] \to [a,1]$, it is a weak equivalence. 

We proceed similarly to demonstrate that for all marked complicial sets $K$, $K \otimes [e,1]_t \to K \otimes [0]$ is a weak equivalence. 

The propositions \ref{prop:model structure on stratified Segal category} and \ref{prop:characterization of morphism depuis strratsset.} and the remark \ref{rem:mapping from a free model structure} then imply that the functor $\otimes: \stratSset^1 \times \stratSeg(A) \to \stratSeg(A)$ is a left Quillen functor between the appropriate model structures.
\end{proof}

\begin{construction}
\label{cons:liens entre otimes et times pour segal}
Let $a$ be an object of $A$ and $l,m,n$ three integers. By construction, $[l]\otimes[m]\otimes[a,n]$ is a quotient of 
$$P_{a,l,m,n}:=\colim_{[[k_0],k_1]\to [m]\otimes[n]}\colim_{[[k_2],k_3]\to [l]\otimes[k_1]}[[k_2]\otimes[k_0]\otimes a, k_3]$$
while $([l]\times [m])\otimes [a,n]$ is a quotient of
$$Q_{a,l,m,n}:=\colim_{[[k_4],k_3]\to ([l]\times[n])\otimes [m]}[[k_4]\otimes a, k_3].$$
Lemma \ref{lemma:technical steiner} and the Gray module structure on $A$ then induce a morphism $$P_{a,l,m,n}\to Q_{a,l,m,n}.$$ 
We can check that this morphism passes to the quotient and then induces a natural morphism 
$$[l]\otimes[m]\otimes[a,n]\to ([l]\times [m])\otimes [a,n].$$
By extension by colimit, this induces, for any Segal $A$-category $C$, and any pair of simplicial sets $K,L$, a morphism 
$$K\otimes L\otimes C\to (K\times L)\otimes C.$$
Moreover, we can check that this natural transformation between $\uvar\otimes \uvar\otimes \uvar$ and $(\uvar\times \uvar)\otimes \uvar$ extends to stratified simplicial sets and stratified Segal $A$-categories. Eventually, by construction and using the equality \eqref{eq:alpha}, we get a commutative square
\[\begin{tikzcd}
	{K\otimes L\otimes M\otimes C} & {(K\times L)\otimes M\otimes C} \\
	{K\otimes(L\times M)\otimes  C} & {(K\times L\times M)\otimes C}
	\arrow[from=1-1, to=2-1]
	\arrow[from=2-1, to=2-2]
	\arrow[from=1-2, to=2-2]
	\arrow[from=1-1, to=1-2]
\end{tikzcd}\]
for any stratified Segal $A$-category $C$ and any stratified simplicial sets $K,L,M$.
\end{construction}

\begin{theorem}
\label{theo:Gray structure on seg}
A (resp. saturated) Gray  module structure  on $A$ induces a (resp. saturated) Gray module  structure on $\stratSeg(A)$. The family of intelligent truncations is defined in \ref{cons:truncation for segals}, and the tensoring by $\stratSset$ is defined in \ref{cons:otimes pour segal}. The natural comparison maps between $K\otimes (L\otimes C)$ and $(K\times L)\otimes C$ are provided by the construction \ref{cons:liens entre otimes et times pour segal}.
\end{theorem}
\begin{proof}
The proposition \ref{prop:otimes est quillen dans Segal} states that the functor $\uvar\otimes\uvar$ constructed in \ref{cons:otimes pour segal} is a left Quillen functor between the appropriate model structure. The first condition of the definition \ref{defi:Gray module} follows from construction \ref{cons:liens entre otimes et times pour segal}, and the two other are obviously fulfilled.	
\end{proof}

\begin{cor}

\label{cor:Gray module on Xi}
For any $n\in  \Nb\cup \{\omega\}$, $\tPsh{\Xi_n}^{\seg}$ is a Gray module and $\tPsh{\Xi_n}^{\cseg}$ is a saturated Gray module.
\end{cor}

\begin{proof}
We will only show that $\tPsh{\Xi_n}^{\seg}$ is a Gray module, the other statement being proven similarly. We can first show by induction on $n$ that $\tPsh{\Xi_n}^{\seg}$ is a Gray module. The base case is given in example \ref{example:Xi is Gray module} and the inductive step by theorem \ref{theo:Gray structure on seg} where we use the identification given in remark \ref{rem:xi and seg}.

For the case $n=\omega$, we define $\otimes$ as the unique left adjoint such that for any integer $n$, the following diagram commutes:
\[\begin{tikzcd}
	{\tPsh{\Delta}\times \tPsh{\Theta_n}} & {\tPsh{\Delta}\times \tPsh{\Theta_{n+1}}} & {\tPsh{\Theta_{n+1}}} \\
	{\tPsh{\Delta}\times \tPsh{\Theta}} && {\tPsh{\Theta}}
	\arrow["{(id,\iota)}", from=1-1, to=1-2]
	\arrow[from=1-1, to=2-1]
	\arrow["\otimes", from=1-2, to=1-3]
	\arrow[from=1-3, to=2-3]
	\arrow["\otimes"', from=2-1, to=2-3]
\end{tikzcd}\]
and the family of $n$-truncations is given by the functor marking any cell of dimension higher or equal to $n$. We can easily check that this endows $\Psh{\Theta_n}$ with a structure of Gray module.
\end{proof}

\subsection{Complicial Gray module}
\begin{construction}
\label{cons:joint in gray module}
Let $A$ be a Gray module and $a$ an object of $A$.
We define $e\star a$ as the pushout:
\[\begin{tikzcd}
	{\{0\}\times a} & {[1]\otimes a} \\
	e & {e\star a}
	\arrow[from=1-1, to=2-1]
	\arrow[from=1-1, to=1-2]
	\arrow[from=1-2, to=2-2]
	\arrow[from=2-1, to=2-2]
	\arrow["\lrcorner"{anchor=center, pos=0.125, rotate=180}, draw=none, from=2-2, to=1-1]
\end{tikzcd}\]
We consider the natural transformations $s^0\star a:e\star e\star a\to e\star a$ and $d^0\star a:a\to e\star a$,
induced respectively by the morphism
$$
\begin{array}{cclcccccccc}
[1]\otimes [1]\otimes a&\to & ([1]\times [1])\otimes a &\to & [1]\otimes a\\
&&(\{i\}\times \{j\})\otimes a&\mapsto & \{i\wedge j\}\otimes a.
\end{array}$$
and the morphism 
$$\{1\}\otimes a \to [1]\otimes a.$$
These natural transformations induce commutative diagrams:
\[\begin{tikzcd}
	{e\star e\star e\star a } & { e\star e\star a } && {e\star a} & { e\star e\star a } & {e\star a} \\
	{ e\star e\star a } & {e\star a} &&& {e\star a}
	\arrow["{s^0\star a}", from=1-2, to=2-2]
	\arrow["{s^0\star a}"', from=2-1, to=2-2]
	\arrow["{s^0\star (e\star a)}", from=1-1, to=1-2]
	\arrow["{e\star (s^0\star a)}"', from=1-1, to=2-1]
	\arrow["{e\star d^0}", from=1-4, to=1-5]
	\arrow["{d^0\star (e\star a)}", from=1-5, to=1-6]
	\arrow["{s^0\star a}", from=1-5, to=2-5]
	\arrow["id", curve={height=-6pt}, from=1-6, to=2-5]
	\arrow["id"', curve={height=6pt}, from=1-4, to=2-5]
\end{tikzcd}\]
The (inverted) composition $g,f\mapsto g\circ f$ is a monoidal structure on the category of endomorphisms of $A$ and the natural transformation $s^0:e\star e \star\uvar \to e\star \uvar$ defines a structure of monoid for $e\star\uvar$.
This induces a functor $\Delta\times A\to A$ sending $([n],a)$ to $e\star e\star ....\star a$. We extend this to a functor $\Delta_t\times A\to A$ in defining $[n]_t\star a$ as the pushout:
\[\begin{tikzcd}
	{\underset{k\geq -1}{\coprod}~~\underset{b,~\tau^i_k(b)=b}{\coprod}~~\underset{b\to a}{\coprod}[n]\star b} & {[n]\star a} \\
	{\underset{k\geq -1}{\coprod}~~\underset{b,~\tau^i_k(b)=b}{\coprod}~~\underset{b\to a}{\coprod}\tau^i_{n+k}([n]\star b)} & {[n]_t\star a}
	\arrow[from=2-1, to=2-2]
	\arrow[""{name=0, anchor=center, inner sep=0}, from=1-1, to=1-2]
	\arrow[from=1-2, to=2-2]
	\arrow[from=1-1, to=2-1]
	\arrow["\lrcorner"{anchor=center, pos=0.125, rotate=180}, draw=none, from=2-2, to=0]
\end{tikzcd}\]
where $\tau^i_{-1}$ is the constant functor with value $\emptyset$.

By left Kan extention, this gives a colimit preserving functor 
\begin{equation}
\stratSset\times  \stratSeg(A)\to \stratSeg(A).
\end{equation}
and evaluated on the empty Segal $A$-category, a colimit preserving functor 
\begin{equation}
\label{eq:def of the first adjonctionwith stratified simplicials et}
\stratSset\to \stratSeg(A).
\end{equation}
\end{construction}

\begin{definition}
\label{defi:complicial Gray module}
A  (resp. satutared) Gray module $A$ is a \wcnotion{complicial}{complicial Gray module} if
\begin{enumerate}
\item For any  $a$, the morphisms $\Lambda^1[2]\star a\to [2]_t\star a$ and  $\{\epsilon\}\star a\to [1]_t\star a$ with $\epsilon\in \{-,+\}$ are acyclic cofibrations. 
\item The functor $\stratSset\to \stratSeg(A)$ defined in \eqref{eq:def of the first adjonctionwith stratified simplicials et} is a left Quillen functor between the (resp. saturated) complicial model structure and the (resp. complete) model structure.
\end{enumerate}
\end{definition}

\begin{remark}
In general, $[n]\otimes e$ and $[n]\star \emptyset$ are two very different objects. Indeed $[n]\otimes e$ has to be invariant up to homotopy under $\tau^i_1$ which is not the case for $[n]\star \emptyset$. Analogously $[k]\otimes ([l]\otimes [a])$ and $([k]\otimes [l])\otimes [a]$ have \textit{a priori} no links.
 \end{remark}
 
 \begin{notation}
We will denote by $[n_0]\otimes[n_1]\otimes..[n_k]\otimes a$ the object $[n_0]\otimes([n_1]\otimes..([n_k]\otimes a))$.
\end{notation}

\begin{example}
\label{example:stratsset is complicial Gray module}

The (resp. saturated) Gray module structure on $\stratSset^d$ given in example \ref{example:of Gray module} is complicial. 
Indeed, if $n$ is any integer, we define $[n]^{\diamond}:=[0]\diamond [0]\diamond...\diamond [0]$ and $[n]_t^{\diamond}:= \tau^i_n([n]^{\diamond})$. This induces a colimit preserving functor $K\mapsto K^{\diamond}$. The join coming from $\tau^i_1(\uvar)\boxtimes \uvar$ then corresponds to the functor $(K,L)\mapsto K^\diamond\diamond L$. The proposition \ref{prop:equivalence between diamond and join product} provides a natural transformation $K^{\diamond}\diamond L\to K\star L$, which implies that the first functor is left Quillen.
\end{example}

\begin{example}
\label{example:Xi is complicial Gray module}

The functor $u:\tPsh{\Delta}\to\mathrm{t}_{\leq 1}\Psh{\Delta}\cong\tPsh{\Xi_1}$ preserves the Gray module structure. The previous example then implies that the Gray module structure of $\tPsh{\Xi_1}^{\seg}$ and the saturated Gray module structure of $\tPsh{\Xi_1}^{\cseg}$ are complicial.
\end{example}

\section{Complicial Gray module structure on $\stratSeg(A)$}
The purpose of this section is to show that for any complicial Gray module $A$, the Gray module structure on $\stratSeg(A)$ constructed in \ref{theo:Gray structure on seg} is complicial. This is achieved in theorem \ref{theo:complicialGray module}.

\vspace{1cm}

We fix a complicial Gray module $A$ until the end of this section. When the model structure is not specified, we will always assume that $\tPsh{\Delta}$ is endowed with the complicial model structure and $\stratSeg(A)$ is endowed with the Segal model structure. The weak equivalences of $\tPsh{\Delta}_\sat$ will be called saturated, and the ones of $\stratSeg(A)^\cseg$ complete.

\subsection{$\circ$-cone in $\stratSeg(A)$ }

To show that the Gray module $\stratSeg(A)$ is complicial, we need to demonstrate that the adjunction with marked simplicial sets constructed in \ref{cons:joint in gray module} is a Quillen adjunction. This adjunction is constructed using an op-cone $e\star\uvar:\stratSeg(A)\to \stratSeg(A)$ arising from the Gray module structure of $\stratSeg(A)$. However, for technical reasons, it will be useful to work with another op-cone that is  constructed in \ref{cons:star pour segal}.
We have chosen to also denote this op-cone on $\stratSeg(A)$ by $e\star\uvar$, as it is the only one we will use from now on.

Proposition \ref{prop:comparisons between star and otimes in segal} shows that these two op-cones are weakly equivalent, implying that the two adjunctions with stratified simplicial sets they induce are weakly equivalent.

\begin{construction}
\label{cons:star pour segal0}
We consider the colimit-preserving functor 
$$e\star\uvar: \Seg(A)\to \Seg(A)$$
whose value on $[a,m]$ fits in the pushout
\[\begin{tikzcd}
	{\coprod_{l\leq m}\colim_{[k_0,k_1]\to 1\star\{l\}}[[k_0]\otimes a, k_1]} & {\colim_{[k_0,k_1]\to 1\star[m]}[[k_0]\otimes a, k_1]} \\
	{\coprod_{l\leq m}\colim_{[k_0,k_1]\to 1\star\{l\}}[e, k_1]} & {e\star[a,m]}
	\arrow[from=1-1, to=2-1]
	\arrow[from=2-1, to=2-2]
	\arrow[from=1-2, to=2-2]
	\arrow[""{name=0, anchor=center, inner sep=0}, from=1-1, to=1-2]
	\arrow["\lrcorner"{anchor=center, pos=0.125, rotate=180}, draw=none, from=2-2, to=0]
\end{tikzcd}\]
This functor is called the \snotionsym{Gray $\circ$-cylinder}{((d31@$[1]\otimes \uvar$}{for stratified Segal $A$-precategories}.
where $1\star\uvar:\ncat{1}\to \ncat{2}$ denotes the Gray $\circ$-cone defined in \ref{cons:Gray cone for omega cat}.
The  morphism $d^0:[m]\to 1\star[m]$ induces a  morphism
$$d^0\star [a,m]: [a,m]\cong \colim_{[k_1]\to [m]}[a,k_1]\to e\star[a,m].$$
By left Kan extension, this induces a transformation 
$$d^0\star C: C\to e\star C$$
natural in $C:\Seg(A)$.
\end{construction}

\begin{construction}
\label{cons:star pour segal}
We extend $e\star\uvar$ as a functor 
$$e\star\uvar:\stratSeg(A)\to \stratSeg(A)$$
by setting $e\star[e,1]_t$ as the colimit
\[\begin{tikzcd}
	{[e,1]} & {\tau^i_1(e\star[e,1])} \\
	{[e,1]_t} & {e\star[e,1]_t}
	\arrow[from=1-1, to=2-1]
	\arrow[from=2-1, to=2-2]
	\arrow[from=1-2, to=2-2]
	\arrow[""{name=0, anchor=center, inner sep=0}, "{d^0\star[e,1]}", from=1-1, to=1-2]
	\arrow["\lrcorner"{anchor=center, pos=0.125, rotate=180}, draw=none, from=2-2, to=0]
\end{tikzcd}\]
The natural transformation $d^0\star\uvar$ extends to a transformation 
$$d^0\star C: C\to e\star C$$
natural in $C:\stratSeg(A)$.
\end{construction}

\begin{prop}
\label{prop:comparisons between star and otimes in segal}
For any stratified Segal $A$-precategory $X$, there exists a weak equivalence 
$$\{0\}\coprod_{\{0\}\otimes X} [1]\otimes X\to e\star X$$
natural in $X$.
\end{prop}
\begin{proof}
As the two functors $\{0\}\coprod_{\{0\}\otimes\uvar} [1]\otimes \uvar$ and $ e\star \uvar$ are left Quillen functors, it is sufficient to construct this comparison  when $C$ is of shape $[a,n]$ or $[e,1]_t$. In this case, the canonical morphism $[1]\otimes [n]\to 1\star [n]$ of $\zo$-categories induces comparison morphisms
$$[1]\otimes [a,n]\to e\star [a,n]~~~~~[1]\otimes[e,1]_t\to e\star[e,1]_t$$
that respectively send $\{0\}\otimes[a,n]$ and $\{0\}\otimes[e,1]_t$ to $e\star\emptyset$. The two previous morphisms then induce natural morphisms
$$\{0\}\coprod_{\{0\}\otimes[a,n]}[1]\otimes [a,n]\to e\star [a,n]~~~~~~~~\{0\}\coprod_{\{0\}\otimes[e,1]_t}[1]\otimes[e,1]_t\to e\star[e,1]_t$$
Now, remark that these two morphisms fit in the following cocartesian squares:
\[\begin{tikzcd}
	{\colim_{[[k_0],k_1]\to \{0\}\coprod_{\{0\}\otimes[n]}[1]\otimes[n]}[[k_0]\otimes a,k_1]} & {e\coprod_{\{0\}\otimes[a,n]}[1]\otimes [a,n]} \\
	{\colim_{[[k_0],k_1]\to 1\star[n]}[[k_0]\otimes a,k_1]} & { e\star [a,n]} \\
	{\colim_{[[k_0],k_1]\to \{0\}\coprod_{\{0\}\otimes[1]}[1]\otimes[1]}[[k_0]\otimes a,k_1]} & {e\coprod_{\{0\}\otimes[a,n]}[1]\otimes [e,1]_t} \\
	{\colim_{[[k_0],k_1]\to 1\star[1]}[[k_0]\otimes a,k_1]} & {e\star[e,1]_t}
	\arrow[""{name=0, anchor=center, inner sep=0}, from=1-1, to=1-2]
	\arrow[from=2-1, to=2-2]
	\arrow[from=1-2, to=2-2]
	\arrow[from=1-1, to=2-1]
	\arrow[from=3-1, to=4-1]
	\arrow[from=3-2, to=4-2]
	\arrow[from=4-1, to=4-2]
	\arrow[""{name=1, anchor=center, inner sep=0}, from=3-1, to=3-2]
	\arrow["\lrcorner"{anchor=center, pos=0.125, rotate=180}, draw=none, from=2-2, to=0]
	\arrow["\lrcorner"{anchor=center, pos=0.125, rotate=180}, draw=none, from=4-2, to=1]
\end{tikzcd}\]
We claim that the functor whose value on a $\Theta_2$-set $X$ is $\colim_{[[k_0],k_1]\to X}[[k_0]\otimes a,k_1]$ sends $\overline{\W_2}$ to weak equivalences. Combined with proposition \ref{prop:comparaison in theta set between otimes and star}, it will conclude the proof. 

To show the desired claim, remark that this functor is the composite
\[\begin{tikzcd}
	{\Psh{\Theta_2}} & {\Psh{\Delta[\Delta]}\cong \Seg(\Sset)} & {\Seg(A)}
	\arrow["{i^*}", from=1-1, to=1-2]
	\arrow["{\Seg(\uvar\otimes a)}", from=1-2, to=1-3]
\end{tikzcd}\]
and the results follow from theorem \ref{theo:unit and counit are in Sigma} and proposition \ref{prop:weak equivalence are precocomplete}.
\end{proof}

\begin{prop}
\label{prop:star est quillen dans Segal}
The functor $e\star\uvar: \stratSeg(A)\to \stratSeg(A)$ is a left Quillen functor for the Segal and complete Segal space model structure. 
\end{prop}
\begin{proof}
The proposition \ref{prop:comparisons between star and otimes in segal} implies that $e\star\uvar$ is pointwise weakly equivalent to 
the functor $\{0\}\coprod_{\{0\}\otimes \uvar}[1]\otimes \uvar$. As this last functor is a homotopy colimit of functors preserving (resp. complete) weak equivalence, the  functor $e\star\uvar$ also preserves them. As $e\star\uvar$ also preserves monomorphisms, this concludes the proof.
\end{proof}

\begin{construction}
\label{cons:liens entre star et starstar pour segal}
Let $a$ be an object of $A$ and $l,m$ two integers. By construction, $e\star e[a,m]$ is a quotient of 
$$P_{a,l,m}:=\colim_{[k_0,k_1]\to 1\star [m]}\colim_{[k_2,k_3]\to [l]\otimes[k_1]}[[k_2]\otimes[k_0]\otimes a, k_3]$$
while $e\star [a,m]$ is a quotient of
$$Q_{a,l,m}:=\colim_{[k_4,k_3]\to 1\star [m]}[[k_4]\otimes a, k_3].$$

Lemma \ref{lemma:techiniqcal steiner2version star} and the Gray module structure on $A$ then induce a morphism $$P_{a,l,m}\to Q_{a,l,m}.$$ 
We can check that this morphism passes to the quotient and then induces a natural morphism 
$$s^0\star [a,n]:e\star e\star [a,n]\to e\star [a,n].$$
By extension by colimit, this induces, for any Segal $A$-category $C$, a morphism 
$$s^0\star C:e\star e\star C\to e\star C.$$
We can moreover check that this natural transformation between $e\star e \star \uvar$ and $e\star \uvar$ extends to stratified Segal $A$-categories. Finally, by construction and using the equality \eqref{eq:beta}, we get a commutative square
\[\begin{tikzcd}
	{e\star e\star e\star C} & { e\star e\star C} \\
	{ e\star e\star C} & {e\star C}
	\arrow["{e\star s^0\star C}"', from=1-1, to=2-1]
	\arrow["{s^0\star C}"', from=2-1, to=2-2]
	\arrow["{s^0\star C}", from=1-2, to=2-2]
	\arrow["{s^0\star e\star C}", from=1-1, to=1-2]
\end{tikzcd}\]
for any stratified Segal $A$-category $C$.
\end{construction}

\begin{prop}
\label{prop:explicit expression of e star a,1}
The stratified Segal $A$-precategory $e\star[a,1]$ is the colimit of the diagram
\[\begin{tikzcd}
	{[e\star a,1]} & {[a,1]} & {[e,1]\vee[a,1]}
	\arrow["{[d^0\star a,1]}"', from=1-2, to=1-1]
	\arrow["{[a,d^1]}", from=1-2, to=1-3]
\end{tikzcd}\]
and the stratified Segal $A$-precategory $e\star[e,1]_t$ is the colimit of the diagram
\[\begin{tikzcd}
	{[[1]_t,1]} & {[e,1]} & {[e,1]\vee[e,1]_t}
	\arrow["{[d^0\star e,1]}"', from=1-2, to=1-1]
	\arrow["{[e,d^1]}", from=1-2, to=1-3]
\end{tikzcd}\]
\end{prop}
\begin{proof}
We recall that $e\star a$ is the object of $A$ fitting in the following cocartesian square
\[\begin{tikzcd}
	{\{0\}\otimes a} & {[1]\otimes a} \\
	e & {e\star a}
	\arrow[from=1-1, to=2-1]
	\arrow[from=2-1, to=2-2]
	\arrow[from=1-1, to=1-2]
	\arrow[from=1-2, to=2-2]
\end{tikzcd}\]
The results then directly follow from the construction of the functor $e\star\uvar:\stratSeg(A)\to \stratSeg(A)$ and from proposition \ref{prop:exppliocit Graycone}.
\end{proof}
\begin{remark}
The last proposition can be seen as an analogue in stratified simplicial sets of the third formula of theorem \ref{theo:appendice formula for star}.
\end{remark}

\begin{prop}
\label{prop:explicit expression of e star a,2}
The stratified  Segal $A$-precategory $e\star[a,2]$ is the colimit of the diagram
\[\begin{tikzcd}
	{[[2]\botimes a,1]} & {[e\star a,1]} & {[e\star a,1]\vee[a,1]} & {[e,1]\vee[a,1]} & {[e,1]\vee[a,2]}
	\arrow["{[d^0\otimes a,1]}"', from=1-2, to=1-1]
	\arrow["{[[1]\otimes a,d^1]}", from=1-2, to=1-3]
	\arrow["{[d^0\otimes a,2]}"', from=1-4, to=1-3]
	\arrow["{[a,d^1]}", from=1-4, to=1-5]
\end{tikzcd}\]
where $[2]\botimes a$ and $[e\star a,1]\vee[a,1]$ are the pushouts:
\[\begin{tikzcd}
	{[1]\otimes a\amalg [1]\otimes a} && {[2]\otimes a} & {[[1]\otimes a,1]\amalg [[1]\otimes a,2]} & {[[1]\otimes a,2]} \\
	{e\star a\amalg e\star a} && {[2]\botimes a} & {[e\star a,1]\amalg[a,1]} & {[e\star a,1]\vee[a,1]}
	\arrow[""{name=0, anchor=center, inner sep=0}, "{d^1\otimes a\amalg d^2\otimes a}", from=1-1, to=1-3]
	\arrow["{d^1\botimes a\amalg d^2\botimes a}"', from=2-1, to=2-3]
	\arrow[from=1-1, to=2-1]
	\arrow[from=1-3, to=2-3]
	\arrow[""{name=1, anchor=center, inner sep=0}, "{[[1]\otimes a,d^2\amalg d^1]}", from=1-4, to=1-5]
	\arrow[from=1-4, to=2-4]
	\arrow[from=2-4, to=2-5]
	\arrow[from=1-5, to=2-5]
	\arrow["\lrcorner"{anchor=center, pos=0.125, rotate=180}, draw=none, from=2-3, to=0]
	\arrow["\lrcorner"{anchor=center, pos=0.125, rotate=180}, draw=none, from=2-5, to=1]
\end{tikzcd}\]
\end{prop}
\begin{proof}
The result directly follows from the construction of the functor $e\star\uvar:\stratSeg(A)\to \stratSeg(A)$ and of proposition \ref{prop:exppliocit Graycone2}.
\end{proof}

\begin{prop}
\label{prop:explicit expression of e star e star a,1}
The stratified Segal $A$-precategory
 $e\star e\star[a,1]$ is the colimit of the diagram
$$
\begin{tikzcd}[column sep=0.7 cm]
	{[[2]\botimes a,1]} & {[[1]\otimes a,1]} & {[[1],1]\vee[a,1]} & {[e,1]\vee[a,1]} & {[e,2]\vee[a,1]} \\
	{[e\star a,1]} &&& {[a,1]} & {[e,1]\vee[a,1]} \\
	{[[1]\star a,1]} &&& {[e\star a,1]} & {[e,1]\vee[e\star a,1]}
	\arrow["{[d^0\otimes a,1]}"', from=1-2, to=1-1]
	\arrow["{[[1]\otimes a,d^1]}", from=1-2, to=1-3]
	\arrow["{[d^0\otimes a,2]}"', from=1-4, to=1-3]
	\arrow["{[a,d^1]}", from=1-4, to=1-5]
	\arrow["{[a,d^1]}", from=2-4, to=2-5]
	\arrow["{[d^1\botimes a,1]}", from=2-1, to=1-1]
	\arrow["{[a,d^1]}", from=2-4, to=1-4]
	\arrow["{[a,d^2]}"', from=2-5, to=1-5]
	\arrow["{[d^1\star a,1]}"', from=2-1, to=3-1]
	\arrow["{[d^{0}\star a,2]}", from=2-5, to=3-5]
	\arrow["{[d^{0}\star a,1]}"', from=2-4, to=2-1]
	\arrow["{[d^0\star a,1]}", from=3-4, to=3-1]
	\arrow["{[d^{0}\star a,1]}"', from=2-4, to=3-4]
	\arrow["{[e\star a,d^1]}"', from=3-4, to=3-5]
\end{tikzcd}$$
where $[2]\botimes a$ and $[[1],1]\vee[a,1]$ are the pushouts:
\[\begin{tikzcd}
	{[1]\otimes a\amalg [1]\otimes a} && {[2]\otimes a} & {[[1]\otimes a,1]\amalg [[1]\otimes a,2]} & {[[1]\otimes a,2]} \\
	{e\star a\amalg e\star a} && {[2]\botimes a} & {[[1],1]\amalg[a,1]} & {[[1],1]\vee[a,1]}
	\arrow[""{name=0, anchor=center, inner sep=0}, "{d^1\otimes a\amalg d^2\otimes a}", from=1-1, to=1-3]
	\arrow["{d^1\botimes a\amalg d^2\botimes a}"', from=2-1, to=2-3]
	\arrow[from=1-1, to=2-1]
	\arrow[from=1-3, to=2-3]
	\arrow["{[[1]\otimes a,d^2\amalg d^1]}", from=1-4, to=1-5]
	\arrow[from=1-4, to=2-4]
	\arrow[from=2-4, to=2-5]
	\arrow[from=1-5, to=2-5]
	\arrow["\lrcorner"{anchor=center, pos=0.125, rotate=180}, draw=none, from=2-5, to=1-4]
	\arrow["\lrcorner"{anchor=center, pos=0.125, rotate=180}, draw=none, from=2-3, to=0]
\end{tikzcd}\]
\end{prop}
\begin{proof}
The proposition \ref{prop:explicit expression of e star a,2} implies that 
the Segal $A$-precategory $e\star([e,1]\vee[a,1])$ is  the colimit of the  diagram
\[\begin{tikzcd}[column sep=0.7 cm]
	{[[2]\botimes a,1]} & {[[1]\otimes a,1]} & {[[1],1]\vee[a,1]} & {[e,1]\vee[a,1]} & {[e,2]\vee[a,1]}
	\arrow["{[d^0\otimes a,1]}"', from=1-2, to=1-1]
	\arrow["{[[1]\otimes a,d^1]}", from=1-2, to=1-3]
	\arrow["{[d^0\otimes a,2]}"', from=1-4, to=1-3]
	\arrow["{[a,d^1]}", from=1-4, to=1-5]
\end{tikzcd}\]
The fact that $e\star e \star[a,1]$ is the colimit of the given diagram then follows from the explicit expression of 
$e\star [\uvar a,1]$ as a colimit given in proposition \ref{prop:explicit expression of e star a,1}.
\end{proof}

\subsection{Adjunction with $\stratSset$}
\begin{construction}
\label{cons: of the adjunction with simplcii}
The (inverted) composition $g,f\mapsto g\circ f$ is a monoidal structure on the category of endomorphisms of $\stratSeg(A)$. The construction \ref{cons:liens entre star et starstar pour segal} shows that $e\star \uvar$ is a monoid for this monoidal structure. This induces a cosimplicial object: 
$$
\begin{array}{rcl}
\Delta &\to & \End(\stratSeg(A))\\
~[n] &\mapsto & [n]\star\uvar :=\underbrace{e\star e\star...\star e}_{n+1}\star \uvar
\end{array}
$$
We extend this functor to $\Delta_t$ by setting, for a stratified Segal $A$-precategory $C$ and an integer $n>0$:
\[\begin{tikzcd}
	{\underset{k\geq -1}{\coprod}~~\underset{D,~\tau^i_k(D)=D}{\coprod}~~\underset{D\to C}{\coprod}[n]\star D} & {[n]\star C} \\
	{\underset{k\geq -1}{\coprod}~~\underset{D,~\tau^i_k(D)=D}{\coprod}~~\underset{D\to C}{\coprod}\tau^i_{n+k}([n]\star D)} & {[n]_t\star C}
	\arrow[from=2-1, to=2-2]
	\arrow[""{name=0, anchor=center, inner sep=0}, from=1-1, to=1-2]
	\arrow[from=1-2, to=2-2]
	\arrow[from=1-1, to=2-1]
	\arrow["\lrcorner"{anchor=center, pos=0.125, rotate=180}, draw=none, from=2-2, to=0]
\end{tikzcd}
\]
where $\tau^i_{-1}$ is the constant functor with value $\emptyset$.
By left Kan extention, this gives a colimit preserving functor 
\begin{equation}
\stratSset\times  \stratSeg(A)\to \stratSeg(A).
\end{equation}
and evaluated on the empty Segal $A$-category, a colimit preserving functor 
\begin{equation}
\label{eq:def of the fundamental adjunction}
\stratSset\to \stratSeg(A).
\end{equation}
The image of $([n],\emptyset)$ (resp. $([n]_t,\emptyset)$) is  noted as $[n]$ (resp. $[n]_t$).

By construction, for $K,L$ two stratified sets and $D$ a stratified Segal $A$-precategory, we have $K\star (L\star C)\cong (K\star L)\star C$.
\end{construction}

\begin{remark}
We now have two functors from stratified simplicial sets to stratified Segal $A$-precategories. The one constructed in \ref{cons:star pour segal0}, and the one coming from the Gray module structure  of $\stratSeg(A)$ and constructed in \ref{cons:joint in gray module}. Moreover, Proposition \ref{prop:comparisons between star and otimes in segal} induces a weakly invertible natural transformation between them.

Both are denoted in the same way, but this should not create confusion because we will only consider the one constructed in \ref{cons:star pour segal0}.
\end{remark}

\begin{prop}
\label{prop:leibnizt joint is Quillen}
Let $K$ be a stratified simplicial set. The morphism $K\star\uvar:\stratSeg(A)\to \stratSeg(A)$ is a left Quillen endfunctor for the Segal and complete Segal space model structure. 
Moreover, if $i$ is a cofibration of stratified simplicial sets and $g$ an (resp. complete) acyclic cofibration of stratified Segal $A$-precategories, the morphism $i\hstar g$ is an (resp. complete) acyclic cofibration. 
\end{prop}
\begin{proof}
Since $\star$ preserves monomorphisms, the functor $\uvar\star\uvar:\Delta_{/K}\to \End( \stratSeg(A))$ is Reedy cofibrant. The theorem \ref{theo:hom colimi} then implies that it is sufficient to show that  for any integer $n$, $[n]\star\uvar $ is a left Quillen functor. In this case, this is a repeated application of proposition \ref{prop:star est quillen dans Segal}.
By diagram chasing and the use of two out of three, this implies the second assertion.
\end{proof}

\subsection{Complicial horn inclusions}
\label{section:Complicial horn inclusion}

\begin{notation*}
In this section, we will often consider morphisms $\tilde{a}\to \tilde{b}$ that fit into cocartesian squares:
\[\begin{tikzcd}
	a & b \\
	{\tilde{a}} & {\tilde{b}}
	\arrow[from=1-1, to=2-1]
	\arrow["i", from=1-1, to=1-2]
	\arrow[from=2-1, to=2-2]
	\arrow[from=1-2, to=2-2]
	\arrow["\lrcorner"{anchor=center, pos=0.125}, draw=none, from=1-1, to=2-2]
\end{tikzcd}\]
where $a\to \tilde{a}$ and $b\to \tilde{b}$ are epimorphisms.
To avoid complicating the notations unnecessarily, the induced morphism $\tilde{a}\to \tilde{b}$ will just be denoted $i$.
\end{notation*}

\begin{definition}
\label{defi:marked segal}
A \notion{marked Segal $A$-precategory} is a stratified Segal $A$-precategory having the right lifting property against all entire acyclic cofibrations. We denote by \wcnotation{$\mSeg(A)$}{(mseg@$\mSeg(A)$} the full subcategory of marked Segal $A$-precategory. We then have an adjunction: 
\[\begin{tikzcd}
	{(\uvar)_{\mk}:\stratSeg(A)} & {\mSeg(A):\iota}
	\arrow[""{name=0, anchor=center, inner sep=0}, shift left=2, from=1-2, to=1-1]
	\arrow[""{name=1, anchor=center, inner sep=0}, shift left=2, from=1-1, to=1-2]
	\arrow["\dashv"{anchor=center, rotate=-90}, draw=none, from=1, to=0]
\end{tikzcd}\]
where the left adjoint $(\uvar)_{\mk}$ sends a stratified Segal $A$-precategory $(C,tC)$ to the marked Segal $A$-precategory $(C,\overline{tC})$, where $\overline{tC}$ is the smaller stratification that includes $tC$ and makes $(C,\overline{tC})$ a marked Segal $A$-precategory, and where the right adjoint is a fully faithful inclusion.
Remark furthermore that at the level of preshaves, these two adjoints are the identity. We denote by\wcnotation{$r_C:C\to C_{\mk}$}{(rc@$r_C:C\to C_{\mk}$} the canonical inclusion. The proposition \ref{prop:X to Xmk is acycli cof} states that $r_C$ is an entire acyclic cofibration.

There is an isomorphism $(e\star C_{\mk})_{\mk}\cong (e\star C)_{\mk}$. Indeed $e\star\uvar$ preserves both entire cofibrations and weak equivalences, we have two entire acyclic cofibration $e\star C\to (e\star C)_{\mk}$ and $e \star C\to (e\star C_{\mk})_{\mk}$.  As the two codomain are marked, they are isomorphic. 
\end{definition}
The fact that will be used the most with the marked Segal $A$-precategory is their right lifting property with respect to morphisms of shape $[\tau^i_n(a),\Lambda^1[2]]\cup [a,2]\to [\tau^i_n(a),2]$. This fact will  be used freely.

\vspace{1cm}

 We recall that $[2]\botimes a$ is the following pushout:
\[\begin{tikzcd}
	{[1]\otimes a\amalg [1]\otimes a} && {[2]\otimes a} \\
	{e\star a\amalg e\star a} && {[2]\botimes a}
	\arrow[""{name=0, anchor=center, inner sep=0}, "{d^1\otimes a\amalg d^2\otimes a}", from=1-1, to=1-3]
	\arrow["{d^1\botimes a\amalg d^2\botimes a}"', from=2-1, to=2-3]
	\arrow[from=1-1, to=2-1]
	\arrow[from=1-3, to=2-3]
	\arrow["\lrcorner"{anchor=center, pos=0.125, rotate=180}, draw=none, from=2-3, to=0]
\end{tikzcd}\]
\begin{definition}
We define $[e,1]\vee(e\star[a,1])$ as the colimit of the following diagram
\[\begin{tikzcd}
	{[e,1]\vee[e\star a,1]} & {[e,1]\vee[a,1]} & {[e,2]\vee[a,1]}
	\arrow["{[d^0\star a,2]}"', from=1-2, to=1-1]
	\arrow["{[a,d^2]}", from=1-2, to=1-3]
\end{tikzcd}\]
The canonical composite morphism 
$$[e\star a,1]\xrightarrow{[e\star a,d^1]}[e,1]\vee[e\star a,1]\to [e,1]\vee(e\star[a,1])$$ 
is also denoted by $[e\star a,d^1]$. Eventually, we define $\overline{[1]\star[a,1]}$ as the following pushout
\[\begin{tikzcd}
	{[1]\star\{0\}} & {{[1]\star[a,1]}} \\
	{[2]_t} & {\overline{[1]\star[a,1]}}
	\arrow[from=1-1, to=2-1]
	\arrow[from=2-1, to=2-2]
	\arrow[from=1-2, to=2-2]
	\arrow[from=1-1, to=1-2]
	\arrow["\lrcorner"{anchor=center, pos=0.125, rotate=180}, draw=none, from=2-2, to=1-1]
\end{tikzcd}\] 
\end{definition}
\begin{lemma}
\label{lemma:le lemme quon voulais pas faire}
There is a weak equivalence from $\overline{[1]\star[a,1]}$ to the colimit of the diagram
\[\begin{tikzcd}
	{[[1]\star a,1]} & {[e\star a,1]} & {[e,1]\vee(e\star[ a,1])}
	\arrow["{[e\star a,d^1]}", from=1-2, to=1-3]
	\arrow["{[d^0\star a,1]}"', from=1-2, to=1-1]
\end{tikzcd}\]
making $\overline{[1]\star[a,1]}$ the homotopy colimit of the previous diagram.
\end{lemma}
\begin{proof}
The proposition \ref{prop:explicit expression of e star e star a,1} implies that $(\overline{[1]\star [a,1]})_{\mk}$ is the colimit of the diagram
\begin{equation}
\label{eq:changemeet markage2}
\begin{tikzcd}[column sep=0.7cm]
	{[[2]^2\botimes a,1]} & {[[1]_t\otimes a,1]} & {[[1]_t,1]\vee[a,1]} & {[e,1]\vee[a,1]} & {[e,2]\vee[a,1]} \\
	{[e\star a,1]} &&& {[a,1]} & {[e,1]\vee[a,1]} \\
	{[[1]\star a,1]} &&& {[e\star a,1]} & {[e,1]\vee[e\star a,1]}
	\arrow["{[d^0\otimes a,1]}"', from=1-2, to=1-1]
	\arrow["{[[1]\otimes a,d^1]}", from=1-2, to=1-3]
	\arrow["{[d^0\otimes a,2]}"', from=1-4, to=1-3]
	\arrow["{[a,d^1]}", from=1-4, to=1-5]
	\arrow["{[a,d^1]}", from=2-4, to=2-5]
	\arrow["{[d^1\botimes a,1]}", from=2-1, to=1-1]
	\arrow["{[a,d^1]}", from=2-4, to=1-4]
	\arrow["{[a,d^2]}"', from=2-5, to=1-5]
	\arrow["{[d^{0}\star a,2]}", from=2-5, to=3-5]
	\arrow["{[d^{0}\star a,1]}"', from=2-4, to=2-1]
	\arrow["{[d^{0}\star a,1]}"', from=2-4, to=3-4]
	\arrow["{[e\star a,d^1]}"', from=3-4, to=3-5]
	\arrow["{[d^1\star a,1]}"', from=2-1, to=3-1]
	\arrow["{[d^0\star a,1]}", from=3-4, to=3-1]
\end{tikzcd}
\end{equation}
In the previous diagram, the fact that we have $[[1]_t\otimes a,1]$  instead of $[[1]\otimes a,1]$ comes from the fact that we have considered $(\overline{[1]\star [a,1]})_{\mk}$ instead of $\overline{[1]\star [a,1]}$.

Consider now the morphism
\begin{equation}
\label{eq:changemeet markage}
[[2]^2\botimes a,1]\coprod_{[[1]_t\otimes a,1]}   [[1]_t,1]\vee[a,1]\to e\star[a,1]
\end{equation}
induces by the vertical colimit of the diagram
\begin{equation}
\label{eq:changemeet markage5}
\begin{tikzcd}
	{[[2]^2\botimes a,1]} & {[[1]_t\otimes a,1]} & {[[1]_t,1]\vee[a,1]} \\
	{[e\star a,1]} & {[a,1]} & {[e,1]\vee[a,1]}
	\arrow["{[d^0\otimes a,1]}"', from=1-2, to=1-1]
	\arrow["{[[1]\otimes a,d^1]}", from=1-2, to=1-3]
	\arrow["{[s^0,1]\vee[a,1]}", from=1-3, to=2-3]
	\arrow[from=2-2, to=2-3]
	\arrow[from=2-2, to=2-1]
	\arrow["{[s^0\otimes a,1]}", from=1-2, to=2-2]
	\arrow["{[s^0\botimes a,1]}"', from=1-1, to=2-1]
\end{tikzcd}
\end{equation}
As all the horizontal morphisms of \eqref{eq:changemeet markage5} are cofibrations, the colimit of each line is a homotopy colimit. As all the vertical morphisms of \eqref{eq:changemeet markage5} are weak equivalences, the morphism \eqref{eq:changemeet markage} also is a weak equivalence.  

Consider now the span
\begin{equation}
\label{eq:deojfoizejfgorezj}
 e\star[a,1]\xleftarrow{\eqref{eq:changemeet markage}} [[2]^2\botimes a,1]\coprod_{[[1]_t\otimes a,1]}   [[1]_t,1]\vee[a,1]\to (\overline{[1]\star [a,1]})_{\mk}
 \end{equation}
As the right hand morphism is a cofibration, and as  \eqref{eq:changemeet markage} is a weak equivalence, the canonical morphism from 
$(\overline{[1]\star [a,1]})_{\mk}$ to the colimit of \eqref{eq:deojfoizejfgorezj} is a weak equivalence.
Using the diagram \eqref{eq:changemeet markage2}, the colimit of \eqref{eq:deojfoizejfgorezj} is also the colimit of the following diagram
\begin{equation}
\label{eq:deojfoizejfgorezj3}
\begin{tikzcd}
	{e\star[a,1]} & {[e,1]\vee[a,1]} & {[e,2]\vee[a,1]} \\
	{[e\star a,1]} & {[a,1]} & {[e,1]\vee[a,1]} \\
	{[[1]\star a,1]} & {[e\star a,1]} & {[e,1]\vee[e\star a,1]}
	\arrow["{[a,d^1]}", from=1-2, to=1-3]
	\arrow["{[a,d^1]}", from=2-2, to=2-3]
	\arrow["{[a,d^1]}", from=2-2, to=1-2]
	\arrow["{[a,d^2]}"', from=2-3, to=1-3]
	\arrow["{[d^{0}\star a,2]}", from=2-3, to=3-3]
	\arrow["{[d^{0}\star a,1]}"', from=2-2, to=2-1]
	\arrow["{[d^{0}\star a,1]}"', from=2-2, to=3-2]
	\arrow["{[e\star a,d^1]}"', from=3-2, to=3-3]
	\arrow["{[d^1\star a,1]}"', from=2-1, to=3-1]
	\arrow["{[d^0\star a,1]}", from=3-2, to=3-1]
	\arrow[from=1-2, to=1-1]
	\arrow[from=2-1, to=1-1]
	\arrow["\lrcorner"{anchor=center, pos=0.125}, draw=none, from=1-1, to=2-2]
\end{tikzcd}
\end{equation}
As the  upper left square is cocartesian, the colimit of the diagram \ref{eq:deojfoizejfgorezj3} is equivalent to the colimit of
the diagram
\begin{equation}
\label{eq:deojfoizejfgorezj4}
\begin{tikzcd}
	&& {[e,2]\vee[a,1]} \\
	&& {[e,1]\vee[a,1]} \\
	{[[1]\star a,1]} & {[e\star a,1]} & {[e,1]\vee[e\star a,1]}
	\arrow["{[a,d^2]}"', from=2-3, to=1-3]
	\arrow["{[d^{0}\star a,2]}", from=2-3, to=3-3]
	\arrow["{[e\star a,d^1]}"', from=3-2, to=3-3]
	\arrow["{[d^0\star a,1]}", from=3-2, to=3-1]
\end{tikzcd}
\end{equation}
As the proposition \ref{prop:explicit expression of e star a,1} implies that  the colimit of the the diagram \ref{eq:deojfoizejfgorezj4} is equivalent to the one of the diagram given in the statement, this concludes the proof.
\end{proof}

\begin{lemma}
\label{lemma:le lemme quon voulais pas faire2}
The morphism 
$$[e,1]\vee(e\star[a,1])\cup e\star [e\star a,1]\to[e,1]\vee(e\star[e\star a,1])$$
is a weak equivalence. 
\end{lemma}
\begin{proof}
We have a cocartesian square
\begin{equation}
\label{eq:lemma:le lemme quon voulais pas faire2}
\begin{tikzcd}
	{[e,1]\cup e\star[ a,1]} & {[e,1]\cup e\star[e\star a,1]} \\
	{[e,1]\vee(e\star[a,1])} & {[e,1]\vee(e\star[a,1])\cup e\star [e\star a,1]}
	\arrow[from=1-1, to=2-1]
	\arrow[from=2-1, to=2-2]
	\arrow["{[e,1]\cup e\star[d^0\star a,1]}", from=1-1, to=1-2]
	\arrow[from=1-2, to=2-2]
\end{tikzcd}
\end{equation}
Remark that the left vertical morphism is the vertical colimit and homotopy colimit of the diagram
\[\begin{tikzcd}
	{[e,1]\cup[e\star a,1]} & {[e,1]\cup[a,1]} & {[e,1]\cup[e,1]\vee[a,1]} \\
	{[e,1]\vee[e\star a,1]} & {[e,1]\vee[a,1]} & {[e,2]\vee[a,1]}
	\arrow[from=2-2, to=2-1]
	\arrow[from=2-2, to=2-3]
	\arrow[from=1-2, to=1-1]
	\arrow[from=1-2, to=1-3]
	\arrow[from=1-2, to=2-2]
	\arrow[from=1-3, to=2-3]
	\arrow[from=1-1, to=2-1]
\end{tikzcd}\]
and is then a weak equivalence. 
This implies that the right vertical morphism of \eqref{eq:lemma:le lemme quon voulais pas faire2} is a weak equivalence. 
Similarly, $[e,1]\cup e\star[e\star a,1]\to [e,1]\vee(e\star[e\star a,1])$ is a weak equivalence. 
By two out of three this concludes the proof.
\end{proof}

\begin{lemma}
\label{lem:outer horn inclusion2}
The morphism $\{1\}\star [0]\to [1]_t\star [0]$ is an acyclic cofibration.
\end{lemma}
\begin{proof}
Using proposition \ref{prop:explicit expression of e star a,1} we deduce that $[1]_t\star [0]$ is the colimit of the diagram 
\[\begin{tikzcd}
	{[[1]_t,1]} & {[e,1]} & {[e,1]_t\vee[e,1]}
	\arrow[from=1-2, to=1-3]
	\arrow[from=1-2, to=1-1]
\end{tikzcd}\]
The inclusion $\{1\}\star [0]\to [1]_t\star [0]$ is then the composite of the following sequence
\[\begin{tikzcd}
	& {[e,1]} & {[[1]_t,1]} \\
	{[e,1]} & {[e,1]_t\vee[e,1]} & {[1]_t\star [0]}
	\arrow["{[e,d^0]}", from=2-1, to=2-2]
	\arrow[from=1-2, to=2-2]
	\arrow[from=1-3, to=2-3]
	\arrow["{[d^0,1]}", from=1-2, to=1-3]
	\arrow[from=2-2, to=2-3]
	\arrow["\lrcorner"{anchor=center, pos=0.125, rotate=180}, draw=none, from=2-3, to=1-2]
\end{tikzcd}\]
As the morphism $[e,d^0]$ and $[d^0,1]$ are acyclic cofibrations, this concludes the proof.
\end{proof}

\begin{lemma}
\label{lem:outer horn inclusion1}
The morphism $\{1\}\star[a,1]\to [1]_t\star[a,1]$ is an acyclic cofibration.
\end{lemma}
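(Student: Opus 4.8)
The plan is to realise $\{1\}\to[1]_t$ as the outer complicial horn inclusion $\Lambda^1[1]\to[1]^1$, to push it through the functor $\uvar\star[a,1]\colon\mSset\to\mSeg(A)$, and then to build $[1]_t\star[a,1]$ from $\{1\}\star[a,1]$ by a finite chain of acyclic cofibrations. The first thing to pin down is the identification of $\{1\}\star[a,1]$: in the cosimplicial object $[n]\mapsto[n]\star\uvar=\underbrace{e\star\dots\star e}_{n+1}\star\uvar$ the coface $d^0$ cut out by $\{1\}\hookrightarrow[1]$ is the \emph{outer} unit $C\mapsto(\emptyset\star C\to e\star C)$ evaluated at $e\star[a,1]$; hence $\{1\}\star[a,1]$ is canonically $e\star[a,1]$, sitting in $[1]\star[a,1]=e\star(e\star[a,1])$ as ``everything except the outer cone point $v$'', while the edge $v\to w$ onto the inner cone point $w$ is the image of the $1$-simplex of $[1]$, which $[1]_t$ marks.

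That the map is a cofibration is best seen via a splitting: the collapse $p\colon[1]_t\to[0]$ is a map of $\mSset$ (it sends the marked edge to a degenerate, hence marked, edge) and is a retraction of $\{1\}\hookrightarrow[1]_t$, so $p\star[a,1]$ retracts our map, which is therefore a (split) monomorphism; the same remark makes ``our map is a weak equivalence'' equivalent to ``$p\star[a,1]$ is a weak equivalence''. For the weak-equivalence part I would factor the map as $\{1\}\star[a,1]=e\star[a,1]\to[e,1]_t\vee(e\star[a,1])\to[1]_t\star[a,1]$, the wedge being taken at the cone point $w$. The first arrow is a pushout of the vertex inclusion $[e,0]\to[e,1]_t$; since $[e,1]_t\to e$ is a weak equivalence of $\mSeg(A)$ (Proposition \ref{prop:model structure on marked Segal category}) and the vertex inclusion is a section of it, $[e,0]\to[e,1]_t$ is an acyclic cofibration, and hence so is the first arrow by left properness.

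The heart of the proof is that the second arrow $[e,1]_t\vee(e\star[a,1])\to[1]_t\star[a,1]$ is an acyclic cofibration. Here I would use the explicit presentation of $[1]\star[a,1]$ as the colimit of the displayed $3\times5$ diagram together with the truncation pushout defining $[n]_t\star\uvar$ (Definition \ref{def:definition of the cosimplicial object}), and filter $[1]_t\star[a,1]$ over $[e,1]_t\vee(e\star[a,1])$ by successively adjoining the cells ``forced'' by the second cone point $v$ once the marked edge $v\to w$ is present: the composite $1$-cells $v\to x_i$ (pushouts of Segal extensions $[b,1]\vee\dots\vee[b,1]\to[b,n]$), the Gray $2$- and higher cells carried by the pieces $[[1]\otimes a,1]$, $[[2]\botimes a,1]$, $[[1]\star a,1]$ of the diagram (pushouts of generating Reedy cofibrations built from acyclic cofibrations of $A$, using $[0]\otimes\uvar=\mathrm{id}$ and the left Quillen-ness of $\uvar\star a\colon\mSset^{\omega}\to A_{a/}$), and finally the markings imposed by $[1]_t$ (pushouts of marked Segal and saturation extensions, exploiting that the marked edge $v\to w$ makes the relevant triangles invertible). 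Each stage is a pushout of a generating acyclic cofibration of $\mSeg(A)$ or is handled by two-out-of-three and left properness; composing gives the result.

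The main obstacle is exactly the bookkeeping of that last filtration: one must verify that, after the marking is imposed, the colimit diagram for $[1]\star[a,1]$ really does decompose into this finite list of pushouts of standard generating acyclic cofibrations, and in particular that the $\otimes$-decorated pieces contribute only cells collapsible relative to the marked edge $v\to w$ — which is where the comparison maps $K\otimes(L\otimes a)\to(K\times L)\otimes a$ and the Gray-module hypotheses of Definition \ref{defi:Gray module} are genuinely used. Everything else is formal.
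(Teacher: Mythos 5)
Your setup is sound: the identification $\{1\}\star[a,1]\cong e\star[a,1]$, the observation that the map is split by $s^0\star[a,1]\colon[1]_t\star[a,1]\to[0]\star[a,1]$ so that acyclicity of the section is equivalent to acyclicity of the retraction, and the first factor $e\star[a,1]\to[e,1]_t\vee(e\star[a,1])$ being a pushout of the acyclic cofibration $\{1\}\to[e,1]_t$ are all correct. But the proof then stops exactly where the lemma's content begins: the claim that $[e,1]_t\vee(e\star[a,1])\to[1]_t\star[a,1]$ decomposes into a finite chain of pushouts of generating acyclic cofibrations is asserted, not verified, and you yourself flag this bookkeeping as ``the main obstacle.'' It is not merely bookkeeping. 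For instance, the piece $[[2]\botimes a,1]$ is \emph{not} attached to the rest of the diagram along a single acyclic cofibration of $A$: the face $d^0\otimes a\colon[1]\otimes a\to[2]\botimes a$ is an outer face and is not acyclic on its own; acyclicity only holds for the union of $d^0\otimes a$ and $d^2\botimes a$ (this is the content of the inner-horn argument in Lemma \ref{lem:abstract thinness 1}). So a cell-by-cell filtration of the complement of $[e,1]_t\vee(e\star[a,1])$ by generating Reedy/Segal/marked extensions, in the order you describe, is not obviously available, and no such filtration is exhibited.

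The paper takes the other branch of your own reduction and avoids the filtration entirely: it shows the \emph{retraction} $s^0\star[a,1]$ is a weak equivalence. Concretely, it passes to the nice replacement $R([1]_t\star[a,1])$ (Lemma \ref{lemma:on Ra}, Definition \ref{defi:nice marked segal}), presents it explicitly as the colimit of the $3\times 5$ diagram displayed in Section \ref{section:Complicial horn inclusion}, and identifies $s^0\star[a,1]$ as $r_{[1]_t\star[a,1]}$ followed by a pushout of a disjoint union of explicit weak equivalences, namely $[[2]^2\botimes a,1]\to[e\star a,1]$, $[[1]_t,1]\vee[a,1]\to[e,1]\vee[a,1]$, $[e,[2]^0]\vee[a,1]\to[e,1]\vee[a,1]$, and $[e,1]_t\vee[b,1]\to[b,1]$ for $b=a$ or $e\star a$. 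Each of these is a weak equivalence for directly checkable reasons, so $s^0\star[a,1]$ is one, and two-out-of-three finishes the proof. If you want to salvage your approach, the honest path is to carry out your filtration explicitly against the displayed colimit presentation of $[1]\star[a,1]$ and the truncation pushout defining $[1]_t\star\uvar$ — at which point you will essentially be forced into the same explicit diagram manipulations the paper performs on the retraction side.
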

\begin{proof}
The Segal $A$-precategory $[1]_t\star [a,1]$ is the colimit  and the homotopy colimit of the diagram
\[\begin{tikzcd}
	{[1]\star\emptyset} && {[[1]\star a,1]} \\
	{[1]_t\star\emptyset} & {\overline{[1]\star[a,1]}} & {[[1]_t\star a,1]}
	\arrow[from=1-3, to=2-2]
	\arrow[from=1-1, to=2-1]
	\arrow[from=1-1, to=2-2]
	\arrow[from=1-3, to=2-3]
\end{tikzcd}\]
The lemma \ref{lemma:le lemme quon voulais pas faire} then implies that we have a weak equivalence from $[1]_t\star [a,1]$ to the colimit, denoted by $K$, of the diagram 
\[\begin{tikzcd}
	{[[1]_t\star a,1]} & {[e\star a,1]} & {[e,1]_t\vee(e\star [ a,1])}
	\arrow["{[e\star a,d^1]}", from=1-2, to=1-3]
	\arrow["{[d^0\star a,1]}"', from=1-2, to=1-1]
\end{tikzcd}\]
As the left hand morphism is a weak acyclic cofibration, so is the canonical morphism 
$$[e,1]_t\vee(e\star [ a,1])\to K.$$
We then have a commutative square
\[\begin{tikzcd}
	{\{1\}\star[a,1]} & {[1]_t\star[a,1]} \\
	{[e,1]_t\vee(e\star[a,1])} & K
	\arrow[from=2-1, to=2-2]
	\arrow[from=1-1, to=2-1]
	\arrow[from=1-1, to=1-2]
	\arrow[from=1-2, to=2-2]
\end{tikzcd}\]
where the two horizontal morphisms and the right vertical morphism are weak equivalences. The result the follows by two out of three.
\end{proof}

\begin{lemma}
\label{lem:horn_inclusion_2}
The morphism 
$\Lambda^1[2]\star [0]\to [2]_t\star [0]$
is an acyclic cofibration.
\end{lemma}
\begin{proof}
The Segal $A$-precategory $[2]_t\star [0]$ is the colimit of the following diagram
\[\begin{tikzcd}
	{[[2]_t,1]} & {[[2],1]} & {\overline{[1]\star[1]}}
	\arrow[from=1-2, to=1-1]
	\arrow[from=1-2, to=1-3]
\end{tikzcd}\]
The lemma \ref{lemma:le lemme quon voulais pas faire} then implies that we have a weak equivalence from $[2]_t\star [0]$ to the colimit, denoted by $K$, of the diagram 
\[\begin{tikzcd}
	{[[2]_t,1]} & {[[1],1]} & {[e,1]\vee(e\star[e,1])}
	\arrow["{[[1],d^1]}", from=1-2, to=1-3]
	\arrow["{[d^{0},1]}"', from=1-2, to=1-1]
\end{tikzcd}\]
On the other side, $\Lambda^1[2]\star [0]$ is the colimit of the diagram 
\[\begin{tikzcd}
	&&& {[e,1]} \\
	{[[1],1]} & {[e,1]} & {[e,2]} && {[[1],1]} & {[e,1]} & {[e,2]}
	\arrow["{[d^0,1]}"', from=2-6, to=2-5]
	\arrow["{[d^1,1]}", from=1-4, to=2-5]
	\arrow["{[e,d^1]}", from=2-3, to=2-2]
	\arrow["{[d^0,1]}", from=2-2, to=2-1]
	\arrow["{[e,d^0]}"', from=1-4, to=2-3]
	\arrow["{[e,d^1]}", from=2-6, to=2-7]
\end{tikzcd}\]
The composite $\Lambda^1[2]\star [0]\to [2]_t\star [0]\to K$ fits in the sequence of acyclic cofibrations
\[\begin{tikzcd}[column sep =0.3cm]
	{[e,d^0]\cup[e,d^2]} & {[e,3]} & {[\Lambda^1[2],1]} & {[[2]_t,1]} \\
	{\Lambda^1[2]\star [0]} & \bullet & \bullet & K \\
	& {([e,1]\cup[[1],1])\cup[e,1]\vee[\partial[1],1]} & {[e,1]\vee[[1],1]}
	\arrow[from=1-1, to=2-1]
	\arrow[""{name=0, anchor=center, inner sep=0}, from=1-1, to=1-2]
	\arrow[from=1-2, to=2-2]
	\arrow[from=2-1, to=2-2]
	\arrow[from=1-3, to=1-4]
	\arrow["\lrcorner"{anchor=center, pos=0.125, rotate=180}, draw=none, from=2-4, to=1-3]
	\arrow[from=2-3, to=2-4]
	\arrow[""{name=1, anchor=center, inner sep=0}, from=2-2, to=2-3]
	\arrow[from=1-4, to=2-4]
	\arrow[from=1-3, to=2-3]
	\arrow[from=3-2, to=2-2]
	\arrow[from=3-2, to=3-3]
	\arrow[from=3-3, to=2-3]
	\arrow["\lrcorner"{anchor=center, pos=0.125, rotate=180}, draw=none, from=2-2, to=0]
	\arrow["\lrcorner"{anchor=center, pos=0.125, rotate=180}, draw=none, from=3-3, to=1]
\end{tikzcd}\]
and is then a weak equivalence.
By two out of three, this concludes the proof.
 \end{proof}

\begin{lemma}
\label{lem:horn_inclusion_3}
The morphism 
$\Lambda^1[2]\star [a,1]\to [2]_t\star [a,1]$
is an acyclic cofibration.
\end{lemma}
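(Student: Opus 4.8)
The plan is to reduce everything to the explicit combinatorial descriptions built above. Since $\Lambda^1[2]$ is the pushout $[1]\amalg_{[0]}[1]$ of the faces $d^2$ and $d^0$ of $[2]$ glued along the vertex $1$, and since $\uvar\star[a,1]\colon\mSset\to\mSeg(A)$ preserves colimits, $\Lambda^1[2]\star[a,1]$ is the pushout of two copies of the colimit diagram describing $[1]\star[a,1]$ along the one describing $[0]\star[a,1]=e\star[a,1]$, glued through $d^0\star[a,1]$ on one side and $d^1\star[a,1]$ on the other. In the same spirit, starting from the diagram for $e\tstar[a,2]$ recalled above and the formula for $[n]_t\star C$ of Definition~\ref{def:definition of the cosimplicial object}, one writes down a colimit diagram for $[2]\star[a,1]$ and then for $[2]_t\star[a,1]$; exactly as in the proof of Lemma~\ref{lem:outer horn inclusion1}, it is convenient to carry this out after applying the nice-ification $RC$ of Definition~\ref{defi:nice marked segal}, so that the objects that occur are all of the form $[b,m]$, $[e,1]$ or $[e,1]_t$ and the structure maps become faces, degeneracies, Segal maps, or collapses $[e,1]_t\vee[b,m]\to[b,m]$.

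With these descriptions at hand, I would exhibit a finite filtration $R(\Lambda^1[2]\star[a,1])=X_0\hookrightarrow X_1\hookrightarrow\cdots\hookrightarrow X_r=R([2]_t\star[a,1])$ in which every inclusion $X_i\hookrightarrow X_{i+1}$ is a pushout of one of: a Segal extension or a generating Reedy cofibration of $\Seg(A)$ (hence of an elementary anodyne extension); an inner horn inclusion $[b,\Lambda^k[m]]\to[b,m]$ with $0<k<m$, which is an acyclic cofibration since inner anodyne maps lie in the saturated class generated by the Segal extensions (\cite[Proposition~3.7.4]{Cisinski_Higher_categories_and_homotopical_algebra}, as in Proposition~\ref{prop:Gray cinder is Quillen 5}); one of the marked outer-horn acyclic cofibrations $\{1\}\star[b,1]\to[1]_t\star[b,1]$ or $\{1\}\star e\to[1]_t\star e$ of Lemmas~\ref{lem:outer horn inclusion1}--\ref{lem:outer horn inclusion2}, applied with $b=a$ or $b=e\star a$; or a collapse $[e,1]_t\vee[b,m]\to[b,m]$, respectively $[e,1]_t\to e$, which is an acyclic cofibration because $[e,1]_t\to e$ is one (Proposition~\ref{prop:model structure on marked Segal category}) together with the analysis of Lemma~\ref{lem:saturation2}. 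Since acyclic cofibrations are stable under pushout and composition, $R(\Lambda^1[2]\star[a,1])\to R([2]_t\star[a,1])$ is an acyclic cofibration. Now $\Lambda^1[2]\star[a,1]\to[2]_t\star[a,1]$ is a monomorphism, because composing it with the monomorphism $[2]_t\star[a,1]\to R([2]_t\star[a,1])$ yields $\Lambda^1[2]\star[a,1]\to R(\Lambda^1[2]\star[a,1])\to R([2]_t\star[a,1])$, a monomorphism; so it is a cofibration. And two-out-of-three applied to the commutative square whose vertical maps are the acyclic cofibrations $r_{\Lambda^1[2]\star[a,1]}$ and $r_{[2]_t\star[a,1]}$ shows it is a weak equivalence. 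Hence it is an acyclic cofibration.

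The crux of the argument is the middle step: correctly computing $R([2]_t\star[a,1])$ and organizing the filtration so that each attaching map is visibly one of the listed elementary moves. The subtle point is the thin $2$-simplex of $[2]_t$: through the formula for $[n]_t\star C$ it produces the $\botimes$-pushouts (such as the $[2]\botimes a$ already appearing in $[1]\star[a,1]$) and $t$-truncated summands, and it is precisely in order to absorb these that the \emph{marked} outer-horn Lemmas~\ref{lem:outer horn inclusion1}--\ref{lem:outer horn inclusion2} and the $[e,1]_t$-collapses are needed; for the ordinary, non-marked inner horn $\Lambda^1[2]\to[2]$ the inner-anodyne pushouts alone would already suffice.
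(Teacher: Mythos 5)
Your overall strategy (reduce to explicit colimit presentations, filter, and finish by two-out-of-three against the nice-ifications) is the same as the paper's, but the proof as written has a genuine gap: the filtration, which is the entire content of the lemma, is asserted rather than constructed, and the list of ``elementary moves'' you allow yourself does not contain the one ingredient that actually handles the hardest piece. Concretely, unwinding Definition~\ref{def:definition of the cosimplicial object}, the thin $2$-simplex of $[2]_t$ contributes to $[2]_t\star[a,1]$ a summand built from the object $[2]_t\star a$ of $A$ (the paper isolates it inside $\overline{[1]\star[e\star a,1]}$, whose presentation contains $[[2]_t\star a,1]$ and $t^2([1]\star e)$). Comparing this with the corresponding piece of $\Lambda^1[2]\star[a,1]$ requires knowing that the Leibniz join $(\Lambda^1[2]\to[2]_t)\hstar(\emptyset\to a)$, computed \emph{in $A$}, is an acyclic cofibration --- this is exactly where the Gray-module axiom on $A$ (Definition~\ref{defi:Gray module}, the hypothesis that $\uvar\star a$ lifts to a left Quillen functor $\mSset^\omega\to A_{a/}$) enters, via the comparison of the paper's objects $K_2$ and $K_3$. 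None of your listed moves (inner anodynes in the $\Delta$-direction, the marked outer-horn Lemmas~\ref{lem:outer horn inclusion1}--\ref{lem:outer horn inclusion2}, the $[e,1]_t$-collapses, or Segal extensions) supplies this; your closing claim that the marked outer-horn lemmas and the collapses are what ``absorb'' the thin $2$-simplex is therefore incorrect for this summand, and a filtration built only from your listed moves cannot reach $[2]_t\star[a,1]$.

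Two smaller points. First, your opening reduction via Lemma~\ref{lemma:leibnizt joint is Quillen} (gluing the two faces of $\Lambda^1[2]$ and pushing out along $(d^0)\hstar([e,1]\cup[a,1]\to[e,1]\vee[a,1])$) is exactly the paper's first step and is fine. Second, be careful with the role of $R$: nice-ification does not turn the objects occurring in these colimits into objects of the form $[b,m]$ with simple structure maps --- in the paper $R(\overline{[1]\star[e\star a,1]})$ still contains terms such as $[[2]^2\botimes(e\star a),1]$, and $R$ is used only to merge certain $\botimes$-pushouts and to invoke Lemma~\ref{lemma:on Ra}; moreover $R$ is not a left adjoint, so a cellular filtration should be built between the original objects and only then compared with the nice-ifications, as you do in your final two-out-of-three step.
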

\begin{proof}
The lemma \ref{lem:horn_inclusion_2} implies that the inclusion $\Lambda^1[2]\star [a,1]\to \Lambda^1[2]\star [a,1]\cup [2]_t\star \{0\}$ is an acyclic cofibration.
Using proposition \ref{prop:explicit expression of e star a,1}, we deduce that the
 Segal $A$-precategory $[2]_t\star[a,1]$ is the colimit of the diagram
\[\begin{tikzcd}
	{[1]\star[e,1]} && {[1]\star[a,1]} \\
	{\overline{\overline{[1]\star[e,1]}}} & {[1]\star([e,1]\vee[a,1])} & {\overline{\overline{[1]\star[e\star a,1]}}}
	\arrow["{[1]\star[d^0\star a,1]}", from=1-3, to=2-3]
	\arrow["{[1]\star[a,d^1]}"', from=1-3, to=2-2]
	\arrow[from=1-1, to=2-1]
	\arrow[from=1-1, to=2-2]
\end{tikzcd}\]
while $ \Lambda^1[2]\star [a,1]\cup [2]_t\star \{0\}$ is the colimit  of the diagram
\[\begin{tikzcd}
	{\{1\}\star[e,1]} && {\{1\}\star[a,1]} \\
	{\overline{\overline{[1]\star[e,1]}}} & {\{1\}\star([e,1]\vee[a,1])\cup[1]\star[a,1]} & {\{1\}\star[e\star a,1]}
	\arrow["{\{1\}\star[a,d^1]}"', from=1-3, to=2-2]
	\arrow["{\{1\}\star[d^0\star a,1]}", from=1-3, to=2-3]
	\arrow[from=1-1, to=2-1]
	\arrow[from=1-1, to=2-2]
\end{tikzcd}\]
where $\overline{\overline{[1]\star[e,1]}}:=[2]_t\star [0]$ and where $\overline{\overline{[1]\star[e\star a,1]}}$ is the following pushout:
\[\begin{tikzcd}
	{ [[2]\star a,1]} & {e\star [[1]\star a,1]} & {\overline{[1]\star [e\star a,1]}} \\
	{ [[2]_t\star a,1]} && {\overline{\overline{[1]\star[e\star a,1]}}}
	\arrow[from=1-1, to=1-2]
	\arrow[from=1-2, to=1-3]
	\arrow[from=1-1, to=2-1]
	\arrow[from=2-1, to=2-3]
	\arrow[from=1-3, to=2-3]
	\arrow["\lrcorner"{anchor=center, pos=0.125, rotate=180}, draw=none, from=2-3, to=1-2]
\end{tikzcd}\]
Let $K_1$ be the following pushout:
\[\begin{tikzcd}
	{\{1\}\star ([e,1]\vee [a,1]) \cup [1]\star ([e,1]\cup [a,1]) } & {\Lambda^1[2]\star[a,1]\cup [2]_t\star\{0\}} \\
	{[1]\star ([e,1]\vee[a,1])} & {K_1}
	\arrow[from=1-1, to=2-1]
	\arrow[""{name=0, anchor=center, inner sep=0}, from=1-1, to=1-2]
	\arrow[from=2-1, to=2-2]
	\arrow[from=1-2, to=2-2]
	\arrow["\lrcorner"{anchor=center, pos=0.125, rotate=180}, draw=none, from=2-2, to=0]
\end{tikzcd}\]
The left-hand morphism is equal to $(d^0:[0]\to [1])\hstar ([e,1]\cup[a,1]\to [e,1]\vee[a,1])$ which is an acyclic cofibration according to proposition \ref{prop:leibnizt joint is Quillen}.
Furthermore, the morphism $K_1\to [2]_t\star [a,1]$ fits in the following pushout:
\[\begin{tikzcd}
	{\overline{[1]\star[ a,1]}\cup\{1\}\star[e\star a,1]} & {K_1} \\
	{\overline{\overline{[1]\star[e\star a,1]}}} & {[2]_t\star[a,1]}
	\arrow[from=1-2, to=2-2]
	\arrow[""{name=0, anchor=center, inner sep=0}, from=1-1, to=1-2]
	\arrow[from=1-1, to=2-1]
	\arrow[from=2-1, to=2-2]
	\arrow["\lrcorner"{anchor=center, pos=0.125, rotate=180}, draw=none, from=2-2, to=0]
\end{tikzcd}\]
To conclude, we will prove that the left vertical morphism is a weak equivalence.

The lemma \ref{lemma:le lemme quon voulais pas faire} implies that we have a weak equivalence from $\overline{[1]\star[ a,1]}\cup\{1\}\star[e\star a,1]$ to the colimit, denoted by $K_2$, of the diagram 
\[\begin{tikzcd}
	{[[1]\star a,1]} & {[e\star a,1]} & {[e,1]\vee(e\star[a,1])\cup \{1\}\star[e\star a,1]}
	\arrow["{[e\star a,d^1]}", from=1-2, to=1-3]
	\arrow["{[d^0\star a,1]}"', from=1-2, to=1-1]
\end{tikzcd}\]
We now define $K_3$ as the colimit of the diagram
\[\begin{tikzcd}
	{[\Lambda^1[2]\star  a,1]} & {[[1]\star a,1]} & {[e,1]\vee(e\star[e\star a,1])}
	\arrow["{[d^0\star a,1]}"', from=1-2, to=1-1]
	\arrow["{[[1]\star a,d^1]}", from=1-2, to=1-3]
\end{tikzcd}\]
The canonical morphism $K_2\to K_3$ fits in the cocartesian square
\[\begin{tikzcd}
	{[e,1]\vee(e\star[a,1])\cup \{1\}\star[e\star a,1]} & {K_2} \\
	{[e,1]\vee(e\star[e\star a,1])} & {K_3}
	\arrow[from=1-1, to=2-1]
	\arrow[from=2-1, to=2-2]
	\arrow[""{name=0, anchor=center, inner sep=0}, from=1-1, to=1-2]
	\arrow[from=1-2, to=2-2]
	\arrow["\lrcorner"{anchor=center, pos=0.125, rotate=180}, draw=none, from=2-2, to=0]
\end{tikzcd}\]
and is then a weak equivalence according to the lemma \ref{lemma:le lemme quon voulais pas faire2}.

On the other side, the lemma \ref{lemma:le lemme quon voulais pas faire} also implies that we have a weak equivalence from $\overline{\overline{[1]\star[e\star a,1]}}$ to the colimit, denoted by $K_4$, of the diagram
\[\begin{tikzcd}
	{[[2]_t\star  a,1]} & {[[1]\star a,1]} & {[e,1]\vee(e\star[e\star a,1])}
	\arrow["{[d^0\star a,1]}"', from=1-2, to=1-1]
	\arrow["{[[1]\star a,d^1]}", from=1-2, to=1-3]
\end{tikzcd}\]

Remark now that all the morphisms appearing in the diagrams that define $K_3$ and $K_4$ are cofibrations.
As $\Lambda^1[2]\star a \to [2]_t\star a$ is a weak equivalence in $A$, this implies that the canonical morphism $K_3\to K_4$ is also a weak equivalence. We then have  commutative diagram:
\[\begin{tikzcd}
	{[1]\star[a,1]\cup \{1\}\star[e\star a,1]} && {\overline{[1]\star[e\star a,1]}} \\
	{K_2} & {K_3} & {K_4}
	\arrow["\sim"', from=2-2, to=2-3]
	\arrow[from=1-1, to=1-3]
	\arrow["\sim", from=1-3, to=2-3]
	\arrow["\sim"', from=1-1, to=2-1]
	\arrow["\sim"', from=2-1, to=2-2]
\end{tikzcd}\]
where all arrows labelled by $\sim$ are weak equivalences. By two out of three, this implies the result.
\end{proof}

\begin{prop}
\label{prop:horn_inclusion_4}
For any stratified Segal $A$-precategory $C$, the morphisms $\Lambda^1[2]\star C\to [2]_t\star C$ and $\{\epsilon\}\star C\to [1]_t\star C$ with $\epsilon\in\{0,1\}$ are
 acyclic cofibrations.
Moreover, for any cofibration of stratified Segal $A$-precategory $i$, and $j$ being either $\{1\}\to [1]_t$ or $\Lambda^1[2]\to [2]_t$, the morphism $j\hstar i$ is an acyclic cofibration.
\end{prop}
\begin{proof}
We begin with the first assertion. By two out of three, we can suppose that $\epsilon:=1$.
The proposition \ref{prop:leibnizt joint is Quillen} implies that  $\Lambda^1[2]\star\uvar$ and $[2]_t\star\uvar$ are left Quillen functors.  As every object is a homotopy colimit of objects of shape $[a,n]$ or $[e,1]_t$, we can reduce to the case where $C$ is of this shape.
Using Segal extensions, we can reduce to the case where $C$ is $[a,1]$, $[0]$ or $[e,1]_t$.

If $C$ is $[a,1]$ or $[0]$, the result follows from lemmas \ref{lem:outer horn inclusion2}, \ref{lem:outer horn inclusion1}, \ref{lem:horn_inclusion_2} and \ref{lem:horn_inclusion_3}.
Eventually, for $C := [e,1]_t$, we have a diagram:
\[\begin{tikzcd}
	{\{1\}\star[e,1]_t} & {\{0\}\star [0]} && {\Lambda^1[2]\star [e,1]_t} & {\Lambda^1[2]\star [0]} \\
	{[1]_t\star[e,1]_t} & {[1]_t\star [0]} && {[2]_t\star[e,1]_t} & {[2]_t\star [0]}
	\arrow[from=1-4, to=2-4]
	\arrow[from=1-4, to=1-5]
	\arrow[from=2-4, to=2-5]
	\arrow[from=1-5, to=2-5]
	\arrow[from=2-1, to=2-2]
	\arrow[from=1-1, to=2-1]
	\arrow[from=1-2, to=2-2]
	\arrow[from=1-1, to=1-2]
\end{tikzcd}\]
The proposition \ref{prop:leibnizt joint is Quillen} and the lemmas \ref{lem:outer horn inclusion2} and \ref{lem:horn_inclusion_2} imply that all
 horizontal morphisms and right vertical morphisms are weak equivalences. By two out of three, this implies that the left vertical morphisms are weak equivalences.

This concludes the proof of the first assertion. The second one is obtained with some diagram chasing.
\end{proof}

\begin{prop}
\label{prop:horn_inclusion}
The functor $\stratSset\to \stratSeg(A)$ sends complicial horn inclusions to weak equivalences.
\end{prop}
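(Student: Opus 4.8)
The plan is to realise each complicial horn inclusion $\Lambda^k[n]\to[n]^k$, after applying the functor $\uvar\star\emptyset\colon\mSset\to\mSeg(A)$ of definition \ref{def:definition of the cosimplicial object}, as an iterated Leibniz join one of whose factors is an elementary morphism already controlled by lemma \ref{lem:horn_inclusion_4}, namely $\Lambda^1[1]\to[1]^1=(\{1\}\to[1]_t)$, $\Lambda^0[1]\to[1]^0=(\{0\}\to[1]_t)$, or $\Lambda^1[2]\to[2]^1=(\Lambda^1[2]\to[2]_t)$ (recall that $[1]^1=[1]^0=[1]_t$ and $[2]^1=[2]_t$).

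First I would establish a purely combinatorial decomposition of $\Lambda^k[n]\to[n]^k$ in marked simplicial sets. For $1\le k\le n-1$, the identification of underlying simplicial sets $[n]\cong[k-2]\star[2]\star[n-k-2]$ (with the convention $[-1]=\emptyset$) refines --- once one checks that a face of $[n]^k$ is thin precisely when its $[2]$--block is all of $\{k-1,k,k+1\}$ --- to $[n]^k\cong[k-2]\star[2]^1\star[n-k-2]$; applying $\partial(X\star Y)=\partial X\star Y\cup X\star\partial Y$ twice and noting that deleting the $k$-th face of $[n]$ replaces the $\partial[2]$--block by $\Lambda^1[2]$, one obtains that the horn inclusion is the triple Leibniz join
\[
(\partial[k-2]\to[k-2])\ \hstar\ (\Lambda^1[2]\to[2]^1)\ \hstar\ (\partial[n-k-2]\to[n-k-2]),
\]
which degenerates to a double join when $k=1$ or $k=n-1$ (one outer factor being $\mathrm{id}_\emptyset$). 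Similarly, using $[n]\cong[1]\star[n-2]$ with $[n]^0\cong[1]_t\star[n-2]$, respectively $[n]\cong[n-2]\star[1]$ with $[n]^n\cong[n-2]\star[1]_t$, the outer horn inclusions become $(\{0\}\to[1]_t)\hstar(\partial[n-2]\to[n-2])$ and $(\partial[n-2]\to[n-2])\hstar(\{1\}\to[1]_t)$.

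Next I would transport these decompositions along $\uvar\star\emptyset$. This functor is colimit preserving and preserves cofibrations, so each $\partial[m]\star\emptyset\to[m]\star\emptyset$ is a cofibration of $\mSeg(A)$; and since $(K\star L)\star\emptyset=K\star(L\star\emptyset)$ (definition \ref{def:definition of the cosimplicial object}), the associativity of the Leibniz join lets me rewrite the inner case of $\Lambda^k[n]\star\emptyset\to[n]^k\star\emptyset$ as
\[
(\partial[k-2]\to[k-2])\ \hstar\ \bigl((\Lambda^1[2]\to[2]^1)\hstar(\partial[n-k-2]\star\emptyset\to[n-k-2]\star\emptyset)\bigr),
\]
a Leibniz join (for the $\mSset$--$\mSeg(A)$ join) of a cofibration of $\mSset$ with the morphism in brackets, and likewise the outer cases as $(\{0\}\to[1]_t)\hstar(\partial[n-2]\star\emptyset\to[n-2]\star\emptyset)$ and $(\partial[n-2]\to[n-2])\hstar(\{1\}\star\emptyset\to[1]_t\star\emptyset)$.

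To conclude: the bracketed morphism $(\Lambda^1[2]\to[2]^1)\hstar(\partial[n-k-2]\star\emptyset\to[n-k-2]\star\emptyset)$ is an acyclic cofibration of $\mSeg(A)$ by lemma \ref{lem:horn_inclusion_4}, and so is $\{1\}\star\emptyset\to[1]_t\star\emptyset$ (the case $C=\emptyset$ of that lemma); for $\{0\}\star\emptyset\to[1]_t\star\emptyset$ one invokes the $\{0\}$--analogue of lemma \ref{lem:outer horn inclusion2}, whose proof is identical, or argues by two-out-of-three from the weak equivalence $[1]_t\star\uvar\to[0]\star\uvar$ produced in the proof of lemma \ref{lem:outer horn inclusion1}. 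Then the second assertion of lemma \ref{lemma:leibnizt joint is Quillen} shows that the Leibniz join of a cofibration of marked simplicial sets with an acyclic cofibration of $\mSeg(A)$ is again an acyclic cofibration; applying this to the remaining boundary-inclusion factor --- and, in the case $k=0$, applying the $\{0\}$--analogue of lemma \ref{lem:horn_inclusion_4} (or, since $\{0\}\to[1]_t$ is anodyne in $\mSset^\omega$, the Gray module hypothesis on $A$) to the factor $\{0\}\to[1]_t$ --- shows that $\Lambda^k[n]\star\emptyset\to[n]^k\star\emptyset$ is an acyclic cofibration, hence in particular a weak equivalence. The step I expect to be the main obstacle is the first one: verifying the iterated-join decompositions together with their induced markings, and in particular checking that the several join-pieces glue to exactly the punctured-cube colimit defining the Leibniz join and that the boundary and degenerate cases $k\in\{0,1,n-1,n\}$ (and the conventions $[-1]=\emptyset$, $\partial[0]=\emptyset$) are accounted for correctly.
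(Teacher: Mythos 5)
Your proof is correct and follows the same strategy as the paper's: decompose each complicial horn inclusion as an iterated Leibniz join and conclude with lemmas \ref{lemma:leibnizt joint is Quillen} and \ref{lem:horn_inclusion_4}; the inner case is identical to the printed argument. For the outer horns you are in fact more careful than the paper: your identifications $\Lambda^0[n]\to[n]^0=(\{0\}\to[1]_t)\hstar(\partial[n-2]\to[n-2])$ and $\Lambda^n[n]\to[n]^n=(\partial[n-2]\to[n-2])\hstar(\{1\}\to[1]_t)$ are the correct ones (the paper's proof has the two vertex inclusions interchanged, which as written would produce $\Lambda^1[n]$ and $\Lambda^{n-1}[n]$ rather than the outer horns), and you rightly observe that the $k=0$ case then requires the $\{0\}$-analogue of lemmas \ref{lem:outer horn inclusion1}--\ref{lem:horn_inclusion_4}; this is indeed available at no cost, since both $\{0\}\star C\to[1]_t\star C$ and $\{1\}\star C\to[1]_t\star C$ are sections of the map $s^0\star C$ shown to be a weak equivalence in the proof of lemma \ref{lem:outer horn inclusion1}, after which the density and Leibniz arguments of lemma \ref{lem:horn_inclusion_4} go through verbatim. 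One caveat: your parenthetical appeal to the Gray-module hypothesis for the factor $\{0\}\to[1]_t$ does not apply here, because that axiom (definition \ref{defi:Gray module}) governs the Leibniz join valued in $A$, not the join $\mSset\times\mSeg(A)\to\mSeg(A)$ constructed only afterwards; keep the $\{0\}$-analogue as your actual route for that step.
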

\begin{proof}
Let $k\leq n$ be two integers. First, we suppose that $0<k<n$. We then have an equality $$(\Lambda^k[n]\to [n]^k)= (\partial[k-2]\to [k-2])\hstar (\Lambda^1[2]\to [2]_t)\hstar (\partial [n-k-2]\to [n-k-2]).$$ This is an acyclic cofibration according to propositions \ref{prop:leibnizt joint is Quillen} and  \ref{prop:horn_inclusion_4}. If $k=0$, we have an equality $$(\Lambda^0[n]\to [n]^0) = (\{1\}\to [e,1]_t)\hstar (\partial[n-2]\to [n-2])$$
and the right hand morphism is an acyclic cofibration again thanks to proposition \ref{prop:horn_inclusion_4}. Eventually, for $k=n$, note that $$(\Lambda^n[n]\to [n]^n) = (\partial[n-2]\to [n-2])\hstar (\{0\}\to [e,1]_t).$$ This morphism is an acyclic cofibration according to proposition \ref{prop:leibnizt joint is Quillen}.
\end{proof}

\subsection{Complicial thinness extensions}
\label{section:Complicial thinness extensions}
\begin{notation*}
In this section, we will often consider morphisms $\tilde{a}\to \tilde{b}$ that fit into cocartesian squares:
\[\begin{tikzcd}
	a & b \\
	{\tilde{a}} & {\tilde{b}}
	\arrow["i", from=1-1, to=1-2]
	\arrow[from=1-1, to=2-1]
	\arrow[from=1-2, to=2-2]
	\arrow[from=2-1, to=2-2]
	\arrow["\lrcorner"{anchor=center, pos=0.125, rotate=180}, draw=none, from=2-2, to=1-1]
\end{tikzcd}\]
where $a\to \tilde{a}$ and $b\to \tilde{b}$ are epimorphisms.
To avoid complicating the notations unnecessarily, the induced morphism $\tilde{a}\to \tilde{b}$ will just be denoted $i$.
\end{notation*}

\begin{lemma}
\label{lemma:thinnes extension case 0 and n}
Morphisms $([n]^0)'\to ([n]^0)''$ and $([n]^n)' \to ([n]^n)''$ are acyclic cofibrations.
\end{lemma}
\begin{proof}
For $k$ equal to $0$ or $n$, we have pushout diagrams: 
\[\begin{tikzcd}
	{[n]^k} & {([n]^k)'} & {([n]^k)''} \\
	{[n-1]} & {[n-1]_t} & {[n-1]_t}
	\arrow[from=1-1, to=2-1]
	\arrow[from=1-1, to=1-2]
	\arrow[from=1-2, to=2-2]
	\arrow[from=2-1, to=2-2]
	\arrow[from=1-2, to=1-3]
	\arrow[from=1-3, to=2-3]
	\arrow["id"', from=2-2, to=2-3]
	\arrow["\lrcorner"{anchor=center, pos=0.125, rotate=180}, draw=none, from=2-2, to=1-1]
	\arrow["\lrcorner"{anchor=center, pos=0.125, rotate=180}, draw=none, from=2-3, to=1-2]
\end{tikzcd}\]
Propositions \ref{prop:leibnizt joint is Quillen} and \ref{prop:horn_inclusion_4} imply that both $s^0:[n]^0\to [n-1]$ and $s^{n-1}:[n]^{n-1}\to [n-1]$ are weak equivalences. As horizontal morphisms are cofibrations, the left properness imply that all the vertical morphisms are weak equivalences.
 By two out of three, this shows that $([n]^k)' \to ([n]^k)''$ is a weak equivalence. 
\end{proof}

\begin{construction}
\label{cons:the big construction}
The propositions \ref{prop:explicit expression of e star a,1} and \ref{prop:explicit expression of e star a,2} provide canonical morphisms:
$$\begin{array}{cc}
\alpha_a:[e\star a,1]\to e\star [a,1]&\beta_a:[e,1]\vee [ a,1]\to e\star [a,1] \\
 \delta_a:[e\star a,1]\vee[a,1]\to e\star [a,2] & \epsilon_a:[[2]\botimes a,1]\to e\star[a,2] \end{array}$$
where $[2]\botimes a$ and $[e\star a,1]\vee[a,1]$ are the following pushouts:
\[\begin{tikzcd}
	{[1]\otimes a\amalg [1]\otimes a} && {[2]\otimes a} && {[[1]\otimes a,1]\amalg[[1]\otimes a,1]} & {[[1]\otimes a,2]} \\
	{e\star a\amalg e\star a} && {[2]\botimes a} && {[e\star a,1]\amalg [a,1]} & {[e\star a,1]\vee[a,1]}
	\arrow[""{name=0, anchor=center, inner sep=0}, "{d^1\otimes a\amalg d^2\otimes a}", from=1-1, to=1-3]
	\arrow["{d^1\botimes a\amalg d^2\botimes a}"', from=2-1, to=2-3]
	\arrow[from=1-1, to=2-1]
	\arrow[from=1-3, to=2-3]
	\arrow[from=1-5, to=2-5]
	\arrow[""{name=1, anchor=center, inner sep=0}, "{[[1]\otimes a,d^2\amalg d^0]}", from=1-5, to=1-6]
	\arrow[from=2-5, to=2-6]
	\arrow[from=1-6, to=2-6]
	\arrow["\lrcorner"{anchor=center, pos=0.125, rotate=180}, draw=none, from=2-3, to=0]
	\arrow["\lrcorner"{anchor=center, pos=0.125, rotate=180}, draw=none, from=2-6, to=1]
\end{tikzcd}\]
Moreover they fit in the following commutative diagram:
\[\begin{tikzcd}
	{[a,1]} & {[e,1]\vee[a,1]} & {[a,1]} & {[e\star a,1]} \\
	\textcolor{white}{e\star [a,1]} & {e\star [a,1]} & {[e,1]\vee[a,1]} & {e\star [a,1]} \\
	{[e\star a,1]} & {[e\star a,1]\vee[a,1]} & {[e\star a,1]} & {[[2]\botimes a,1]} \\
	{e\star[ a,1]} & {e\star[a,2]} & {e\star[ a,1]} & {e\star[a,2]} \\
	{[[1]\otimes a,1]} & {[[2]\botimes a,1]} & {[e\star a,1]} & {[[2]\botimes a,1]} \\
	{[e\star a,1]\vee[a,1]} & {e\star[a,2]} & {e\star[ a,1]} & {e\star[a,2]}
	\arrow["{\delta_a}", from=3-2, to=4-2]
	\arrow["{\alpha_a}"', from=3-3, to=4-3]
	\arrow["{[d^1\botimes a,1]}", from=3-3, to=3-4]
	\arrow["{\epsilon_a}", from=3-4, to=4-4]
	\arrow["{e\star [a,d^1]}"', from=4-3, to=4-4]
	\arrow["{\alpha_a}"', from=3-1, to=4-1]
	\arrow["{e\star[a,d^2]}"', from=4-1, to=4-2]
	\arrow["{[e\star a,d^2]}", from=3-1, to=3-2]
	\arrow["{[a,d^1]}"', from=1-3, to=2-3]
	\arrow["{[d^{0}\star a,1]}", from=1-3, to=1-4]
	\arrow["{\alpha_a}", from=1-4, to=2-4]
	\arrow["{\beta_a}", from=2-3, to=2-4]
	\arrow["{\beta_a}", from=1-2, to=2-2]
	\arrow["{[a,d^0]}", from=1-1, to=1-2]
	\arrow["{(3):}"{description}, shift right=5, curve={height=30pt}, draw=none, from=3-1, to=4-1]
	\arrow["{[[1]\otimes a,d^1]}"', from=5-1, to=6-1]
	\arrow["{\epsilon_a}", from=5-2, to=6-2]
	\arrow["{[d^0\otimes a,1]}", from=5-1, to=5-2]
	\arrow["{(5):}"{description}, shift right=5, curve={height=30pt}, draw=none, from=5-1, to=6-1]
	\arrow["{\delta_a}"', from=6-1, to=6-2]
	\arrow["{:(4)}"{description}, shift left=5, curve={height=-30pt}, draw=none, from=3-4, to=4-4]
	\arrow["{d^{0}\star {[a,1]}}"', from=1-1, to=2-2]
	\arrow["{(1):}"{description}, shift right=5, curve={height=30pt}, draw=none, from=1-1, to=2-1]
	\arrow["{[d^2\botimes a,1]}", from=5-3, to=5-4]
	\arrow["{e\star[a,d^0]}"', from=6-3, to=6-4]
	\arrow["{\alpha_a}"', from=5-3, to=6-3]
	\arrow["{\epsilon_a}", from=5-4, to=6-4]
	\arrow["{:(6)}"{description}, shift left=5, curve={height=-30pt}, draw=none, from=5-4, to=6-4]
	\arrow["{:(2)}"{description}, shift left=5, curve={height=-30pt}, draw=none, from=1-4, to=2-4]
\end{tikzcd}\]
\end{construction}
\begin{definition}
Let $b$ be an object of $A$ and $x:a\to b,~x':a'\to b$ two morphisms. The element $b$ is \wcnotion{$n$-relying on $x$}{relying on $x$@$n$-relying on $x$} if for any $k\geq -1$, the following square is homotopy cocartesian:
\[\begin{tikzcd}
	{[k]\star a} & {[k]\star b} \\
	{\tau^i_{n+k+1}([k]\star a)} & {\tau^i_{n+k+1}([k]\star b)}
	\arrow[from=1-1, to=1-2]
	\arrow[from=2-1, to=2-2]
	\arrow[from=1-1, to=2-1]
	\arrow[from=1-2, to=2-2]
\end{tikzcd}\]
The element $b$ is \wcnotion{$n$-relying on $x$ and $x'$}{relying on $x$ and $x'$@$n$-relying on $x$ and $x'$} if for any $k\geq -1$, the following square is homotopy cocartesian:
\[\begin{tikzcd}
	{[k]\star a\amalg [k]\star a'} & {[k]\star b} \\
	{\tau^i_{n+k+1}([k]\star a)\amalg \tau^i_{n+k+1}([k]\star a')} & {\tau^i_{n+k+1}([k]\star b)}
	\arrow[tail reversed, no head, from=2-1, to=1-1]
	\arrow[from=1-1, to=1-2]
	\arrow[from=2-1, to=2-2]
	\arrow[tail reversed, no head, from=2-2, to=1-2]
\end{tikzcd}\]
\end{definition}

\begin{remark}
We recall that we denote by $C_{\mk}$ the marked Segal $A$-precategory associated to a stratified Segal $A$-precategory $C$. 
The canonical inclusion $C\to C_{\mk}$ is denoted $r_C$ and is an acyclic cofibration according to the proposition \ref{prop:X to Xmk is acycli cof}. These notions and notations are defined in definition \ref{defi:marked segal}.
The fact that will be used the most with the marked Segal $A$-precategory is their right lifting property with respect to morphisms of shape $[\tau^i_n(a),\Lambda^1[2]]\cup [a,2]\to [\tau^i_n(a),2]$. This fact will  be used freely.
\end{remark}
\begin{definition}
\label{def:order relation case 1}
Let $C$ be a Segal $A$-precategory. We define the relation \wcnotation{$\geq_{n}$}{((g37@$\geq_{n}$} on morphisms of shape $[a,1]\to C$ for $a$ verifying $\tau^i_{n}a = a$, as the smallest reflexive and transitive relation such that $(x:[a,1]\to C) \geq_n (x':[a',1]\to C)$ whenever one of the three following conditions is verified:
\begin{enumerate}
\item The elements $a$ and $a'$ are equal and there exists a lifting the following diagram:
\[\begin{tikzcd}
	{[a,1]} \\
	& {[a,1]\vee[\tau^i_{n-1}a,1]} & C \\
	{[a,1]}
	\arrow["{[a,d^2]}"', from=1-1, to=2-2]
	\arrow["x", curve={height=-12pt}, from=1-1, to=2-3]
	\arrow["{[a,d^1]}", from=3-1, to=2-2]
	\arrow[dotted, from=2-2, to=2-3]
	\arrow["{x'}"', curve={height=12pt}, from=3-1, to=2-3]
\end{tikzcd}\]
\item The elements $a$ and $a'$ are equal and there exists a lifting in the following diagram:
\[\begin{tikzcd}
	{[a,1]} \\
	& {[\tau^i_{n-1}a,1]\vee[a,1]} & C \\
	{[a,1]}
	\arrow["{[a,d^0]}"', from=1-1, to=2-2]
	\arrow["x", curve={height=-12pt}, from=1-1, to=2-3]
	\arrow["{[a,d^1]}", from=3-1, to=2-2]
	\arrow[dotted, from=2-2, to=2-3]
	\arrow["{x'}"', curve={height=12pt}, from=3-1, to=2-3]
\end{tikzcd}\]
\item There exists an element $b$ which is $(n-1)$-relying on $a\to b$ and dotted arrows in the following diagram:
\[\begin{tikzcd}
	{[a,1]} \\
	& {[b,1]} & {C_{\mk}} \\
	{[a',1]}
	\arrow["{ }"', from=1-1, to=2-2]
	\arrow[dotted, from=3-1, to=2-2]
	\arrow["{r_C\circ x}", curve={height=-12pt}, from=1-1, to=2-3]
	\arrow["{r_C\circ x'}"', curve={height=12pt}, from=3-1, to=2-3]
	\arrow[dotted, from=2-2, to=2-3]
\end{tikzcd}\]
\end{enumerate}
\end{definition}

\begin{definition}
\label{def:order relation case 2}
We also set $(\bar{x}:[\bar{a},1]\to C,\bar{x}':[\bar{a}',1]\to C)\geq_n \bar{x}'':[\bar{a}'',1]\to C$ if there exists three elements $x:[a,1]\to C$, $x':[a',1]\to C$ and $x'':[a'',1]\to C$ such that $\bar{x}\geq_n x$, $\bar{x}'\geq_n x'$, $x''\geq_n \bar{x}''$ and one of the two following conditions is verified:
\begin{enumerate}
\item The elements $a$, $a'$ and $a''$ are equal and there exists a dotted arrow:
\[\begin{tikzcd}
	{[a,1]\cup[a,1]} \\
	& {[a,2]} & C \\
	{[a,1]}
	\arrow["{[a,d^2\cup d^0]}"'{pos=0.1}, from=1-1, to=2-2]
	\arrow["{[a,d^1]}"{pos=0.4}, from=3-1, to=2-2]
	\arrow["{x\cup x'}", curve={height=-12pt}, from=1-1, to=2-3]
	\arrow["{x''}"', curve={height=12pt}, from=3-1, to=2-3]
	\arrow[dotted, from=2-2, to=2-3]
\end{tikzcd}\]
\item There exists an element $b$ which is $(n-1)$-relying on $a\to b$ and $a'\to b$ and dotted arrows in the following diagram:
\[\begin{tikzcd}
	{[a,1]\amalg[a',1]} \\
	& {[b,1]} & {C_{mk}} \\
	{[a'',1]}
	\arrow[from=1-1, to=2-2]
	\arrow[dotted, from=3-1, to=2-2]
	\arrow["{r_C\circ x\amalg r_C\circ x'}", curve={height=-12pt}, from=1-1, to=2-3]
	\arrow["{r_C\circ x''}"', curve={height=12pt}, from=3-1, to=2-3]
	\arrow[dotted, from=2-2, to=2-3]
\end{tikzcd}\]
\end{enumerate}
\end{definition}

\begin{prop}
\label{prop:meaning of geq case 1}
Let $C$ be a stratified Segal $A$-precategory and $x:[a,1]\to C$, $y:[a',1]\to C$ two morphisms such that $x\geq_n y$. The morphism 
$$ C\coprod_{[a,1]} \tau^i_n( [a,1])\to \tau^i_n( [a',1])\coprod_{[a',1]}C\coprod_{[a,1]} \tau^i_n( [a,1])$$
is an acyclic cofibration. 
\end{prop}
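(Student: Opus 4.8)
The plan is to reduce the statement to the defining generators of the relation $\geq_n$. Since the conclusion — that $C\coprod_{[a,1]}t^n([a,1])\to t^n([a',1])\coprod_{[a',1]}C\coprod_{[a,1]}t^n([a,1])$ is an acyclic cofibration — is a property closed under composition (by stability of acyclic cofibrations under composition and by two out of three), and since $\geq_n$ is defined as the smallest reflexive and transitive relation generated by conditions (1), (2), (3) of Definition \ref{def:order relation case 1}, it suffices to verify the statement when $x\geq_n y$ holds by one of these three conditions. Reflexivity is trivial (the map in question is an isomorphism). For transitivity, if $x\geq_n y$ and $y\geq_n z$, one stacks the two pushout-of-$t^n$ squares along $C\coprod_{[a,1]}t^n([a,1])$ in the middle and uses that the composite of two acyclic cofibrations is an acyclic cofibration; some care is needed to identify the intermediate object $t^n([a',1])\coprod_{[a',1]}C\coprod_{[a,1]}t^n([a,1])$ with the source of the second map, but this is a routine colimit manipulation.

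For the three base cases I would argue as follows. In case (3), there is an element $b$ which is $(n-1)$-relying on $a\to b$, together with factorizations $r_C\circ x:[a,1]\to[b,1]\to RC$ and $r_C\circ x':[a',1]\to[b,1]\to RC$. The key point is that $(n-1)$-relying, via the homotopy cocartesian squares with $k=0$ (where $[0]\star a = e\star a$ and recall $t^{n}([0]\star a)$), controls exactly the comparison between attaching $t^n$ along $a$ versus along $b$. One uses that $RC\cong R(e\star RC)$-type identifications are not needed here; rather, since $r_C$ is a strong acyclic cofibration and $t^n$ preserves weak equivalences (being left Quillen), passing to $RC$ changes nothing up to weak equivalence, and the factorization through $[b,1]$ lets one replace the pushout along $[a,1]$ by a pushout along $[b,1]$, which is harmless by the $(n-1)$-relying hypothesis for $k=0$. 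Cases (1) and (2) are the ``whiskering'' cases: here $a=a'$, and $x,x'$ are related through a map out of $[a,1]\vee[t^{n-1}a,1]$ (or $[t^{n-1}a,1]\vee[a,1]$). The crucial input is that the second edge is valued in a $t^{n-1}$-invariant object, so after applying $t^n$ the two edges $[a,d^2]$ and $[a,d^1]$ into $[a,2]$ become identified up to an acyclic cofibration — concretely, one builds the comparison map as a pushout of a morphism of the form $([a,2]\text{-type gadget})\to(t^n\text{-marked version})$ whose acyclicity follows from Lemma \ref{lem:horn_inclusion_4} (the inner-horn/outer-horn joins are acyclic) combined with the fact that marking an edge that factors through a $t^{n-1}$-invariant object is harmless, exactly as in the truncation cocartesian squares of Construction \ref{cons:truncation functor}.

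I expect the main obstacle to be case (3) — more precisely, correctly exploiting the $n$-relying hypothesis. The definition of $n$-relying packages a family of homotopy cocartesian squares indexed by all $k\geq -1$ comparing $[k]\star a\to[k]\star b$ with its $t^{n+k+1}$-truncation, but the statement of Proposition \ref{prop:meaning of geq case 1} only visibly involves $t^n$ applied to $[a,1]$, i.e. the ``$k=0$, suspension'' instance $[0]\star a = e\star a$ and $t^{n+1}(e\star a)$ versus the one-simplex $[a,1]$. The bookkeeping to match ``$t^n$ of the Segal precategory $[a,1]$'' against ``$t^{n+1}$ of $e\star a$'' — using that $[a,1] = \Sigma^{\circ}$-type object and that $t^1$ of a $[-,1]$ is built by marking the unique $1$-simplex as in the $t^1$ pushout square of the truncation definition — requires care, and is where I would spend most of the effort; the whiskering cases (1)–(2) should then follow by a closely analogous but simpler argument.
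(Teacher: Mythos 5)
Your skeleton --- replace $C$ by a nice model $RC$, verify the statement on the three generating cases of Definition \ref{def:order relation case 1}, and deal with the transitive closure by two-out-of-three --- is exactly the paper's. The paper expresses each generating case as the assertion that the square with vertices $[a,1]$, $C$, $t^n[a,1]$ and $t^n[a,1]\amalg_{[a,1]}C\amalg_{[a',1]}t^n[a',1]$ is homotopy cocartesian, compares it with the genuinely cocartesian (hence homotopy cocartesian) square defining $t^n[a,1]\amalg_{[a,1]}C$, and for a chain $x=x_0,\dots,x_m=y$ maps both $C\amalg_{[a,1]}t^n[a,1]$ and $t^n[a',1]\amalg_{[a',1]}C\amalg_{[a,1]}t^n[a,1]$ into the single object $C\amalg_{\coprod_k[a_k,1]}\coprod_k t^n[a_k,1]$ before applying two-out-of-three; this is the precise form of your ``stacking'', and the intermediate object is genuinely larger than the target, so the final two-out-of-three step is not optional.

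The gap is in the generating cases, which carry all the content (the paper, to be fair, only asserts them). Two concrete problems. First, in case (3) the instance of ``$b$ is $(n-1)$-relying on $a\to b$'' that you need is $k=-1$, not $k=0$: for $k=-1$ one has $[-1]\star a=a$ and $t^{(n-1)+(-1)+1}=t^{n-1}$, so the hypothesis gives that $a\to b$ over $t^{n-1}a\to t^{n-1}b$ is homotopy cocartesian; applying the colimit- and cofibration-preserving functor $[\uvar,1]$ and using $t^n([a,1])=[t^{n-1}a,1]$ turns this into the homotopy pushout $[a,1]\to[b,1]$ over $t^n[a,1]\to t^n[b,1]$, which is what allows you to replace the gluing of $t^n[a,1]$ along $x$ by a gluing of $t^n[b,1]$ along the common factorization through $[b,1]$, and symmetrically for $x'$. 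The $k=0$ instance you invoke concerns $e\star a$ and $t^n(e\star a)$ and is what feeds the stability results (Propositions \ref{prop:geqn stable by star} and \ref{prop:2geqn stable by star}), not this proposition. Second, for cases (1)--(2) Lemma \ref{lem:horn_inclusion_4} is not the relevant tool: it concerns the Gray cone $\Lambda^1[2]\star C\to[2]_t\star C$ on marked Segal precategories, whereas what you need is internal to a single $2$-cell of $C$. The mechanism is the niceness of $RC$, namely its right lifting property against the strong (entire) acyclic cofibrations of shape $[t^n(a),\Lambda^1[2]]\cup[a,2]\to[t^n(a),2]$ singled out immediately after Definition \ref{defi:nice marked segal}; this is what encodes that composing with an already-truncated cell preserves truncatedness and yields the required homotopy cocartesian square from the lifting $[a,1]\vee[t^{n-1}a,1]\to C$. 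As written, your plan points at the wrong instances of the hypotheses at precisely the step you identify as the hard one.
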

\begin{proof}
By two out of three, we can suppose without loss of generality that $C$ is already a marked Segal $A$-precategory. We suppose first that $x$ and $y$ fulfill one of the three cases of definition \ref{def:order relation case 1}. The following square is then homotopy cartesian: 
\[\begin{tikzcd}
	{[a,1]} & C \\
	{\tau^i_n[a,1]} & {\tau^i_n[a,1]\amalg_{[a,1]} C\coprod_{[a',1]} \tau^i_n[a',1]}
	\arrow["x", from=1-1, to=1-2]
	\arrow[from=1-1, to=2-1]
	\arrow[from=1-2, to=2-2]
	\arrow[from=2-1, to=2-2]
\end{tikzcd}\]
As the cocartesian square:
\[\begin{tikzcd}
	{[a,1]} & C \\
	{\tau^i_n[a,1]} & {\tau^i_n[a,1]\amalg_{[a,1]} C}
	\arrow["x", from=1-1, to=1-2]
	\arrow[from=1-1, to=2-1]
	\arrow[from=1-2, to=2-2]
	\arrow[from=2-1, to=2-2]
	\arrow["\lrcorner"{anchor=center, pos=0.125, rotate=180}, draw=none, from=2-2, to=1-1]
\end{tikzcd}\]
is also homotopy cocartesian, this
 implies that $$C\coprod_{[a,1]} \tau^i_n( [a,1])\to \tau^i_n( [a',1])\coprod_{[a',1]}C\coprod_{[a,1]} \tau^i_n( [a,1])$$ is an acyclic cofibration. Suppose now that there exists a familly of morphisms $(x_k:[a_k,1])_{k\leq m}\to C$ such that $x_0=x$, $x_m=y$ and for any $k$, $x_k$ and $x_{k+1}$ fullfill one of the three cases of definition \ref{def:order relation case 1}. We then have two homotopy cocartesian squares:
\[\begin{tikzcd}
	{C\coprod_{[a',1]} \tau^i_n[a',1]} & {[a,1]} & C \\
	{C\coprod_{\coprod_{k\leq m}[a_k,1]}\coprod_{k\leq m}\tau^i_n[a_k,1]} & {\tau^i_n[a,1]} & {C\coprod_{\coprod_{k\leq m}[a_k,1]}\coprod_{k\leq m}\tau^i_n[a_k,1]}
	\arrow[from=1-2, to=1-3]
	\arrow[from=1-2, to=2-2]
	\arrow[from=1-3, to=2-3]
	\arrow[from=2-2, to=2-3]
	\arrow[from=1-2, to=1-1]
	\arrow[from=2-2, to=2-1]
	\arrow[from=1-1, to=2-1]
\end{tikzcd}\]
As before, this implies that $$C\coprod_{[a,1]} \tau^i_n( [a,1])\to C\coprod_{\coprod_{k\leq m}[a_k,1]}\coprod_{k\leq m}\tau^i_n[a_k,1]$$ and $$\tau^i_n( [a',1])\coprod_{[a',1]}C\coprod_{[a,1]} \tau^i_n( [a,1])\to C\coprod_{\coprod_{k\leq m}[a_k,1]}\coprod_{k\leq m}\tau^i_n[a_k,1]$$ are acyclic cofibrations. 
By two out of three, this implies the result.
\end{proof}

One can show similarly:

\begin{prop}
\label{prop:meaning of geq case 2}
Let $C$ be a stratified Segal $A$-precategory, and $x:[a,1]\to C$, $y:[a',1]\to C$ and $z:[a'',1]\to C$ three morphisms such that $(x,y)\geq_n z$. The morphism 
$$ \tau^i_n( [a',1])\coprod_{[a',1]}C\coprod_{[a,1]} \tau^i_n( [a,1])\to \tau^i_n( [a',1])\coprod_{[a',1]}C\coprod_{[a,1]} \tau^i_n( [a,1]) \coprod_{[a'',1]} \tau^i_n( [a'',1])$$
is an acyclic cofibration. 
\end{prop}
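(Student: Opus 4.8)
The plan is to follow the proof of Proposition \ref{prop:meaning of geq case 1} almost verbatim, feeding in Proposition \ref{prop:meaning of geq case 1} itself to absorb the three instances of the unary relation $\geq_n$ hidden inside Definition \ref{def:order relation case 2}. First I would reduce, exactly as at the start of the proof of Proposition \ref{prop:meaning of geq case 1}, to the case where $C$ is already a nice marked Segal $A$-precategory, since replacing $C$ by $RC$ changes neither the validity of the hypothesis $(x,y)\geq_n z$ nor whether the morphism in the statement is a weak equivalence. Then, unwinding Definition \ref{def:order relation case 2}, I would fix morphisms $x_0\colon[a_0,1]\to C$, $x_0'\colon[a_0',1]\to C$ and $x_0''\colon[a_0'',1]\to C$ with $x\geq_n x_0$, $y\geq_n x_0'$, $x_0''\geq_n z$, such that $x_0,x_0',x_0''$ satisfy condition (1) or condition (2) of that definition.

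The heart of the matter is the \emph{direct} case, in which $x_0,x_0',x_0''$ themselves satisfy (1) or (2); here I would argue exactly as in the first part of the proof of Proposition \ref{prop:meaning of geq case 1}. In case (1) the marked $2$-simplex $[a_0,2]\to C$ exhibits the marking of $x_0''$ as produced from those of $x_0$ and $x_0'$ by a complicial thinness extension; in case (2) the object $b$ that is $(n-1)$-relying on $a_0\to b$ and $a_0'\to b$, together with the two lifts, does the same via the structure maps of the monoid $e\star\uvar$ and the explicit descriptions of $e\star[a,1]$, $e\star[a,2]$ and $[1]\star[a,1]$ computed in the previous subsection. In both cases this yields that the square with corners $[a_0,1]\amalg[a_0',1]$, $C$, $t^n[a_0,1]\amalg t^n[a_0',1]$, and $t^n[a_0',1]\coprod_{[a_0',1]}C\coprod_{[a_0,1]}t^n[a_0,1]\coprod_{[a_0'',1]}t^n[a_0'',1]$ is homotopy cartesian; comparing it with the homotopy cocartesian (by left-properness) pushout square defining $t^n[a_0',1]\coprod_{[a_0',1]}C\coprod_{[a_0,1]}t^n[a_0,1]$ then shows that $t^n[a_0',1]\coprod_{[a_0',1]}C\coprod_{[a_0,1]}t^n[a_0,1]\to t^n[a_0',1]\coprod_{[a_0',1]}C\coprod_{[a_0,1]}t^n[a_0,1]\coprod_{[a_0'',1]}t^n[a_0'',1]$ is an acyclic cofibration.

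Finally I would deduce the general statement by a diagram chase. Write $P\to Q$ for the morphism of the statement, and let $P'$ be obtained from $P$ by adjoining the $t^n$-markings along $x_0$, $x_0'$, $x_0''$ and along $z$; note that every instance of $\geq_n$ in play persists after composition with the structural maps out of $C$, since the lifting conditions of Definitions \ref{def:order relation case 1} and \ref{def:order relation case 2} do. Applying Proposition \ref{prop:meaning of geq case 1} to $x\geq_n x_0$ and to $y\geq_n x_0'$ adjoins the markings along $x_0,x_0'$ to $P$ by acyclic cofibrations, the direct case above then adjoins the marking along $x_0''$ by an acyclic cofibration, and Proposition \ref{prop:meaning of geq case 1} applied to $x_0''\geq_n z$ adjoins the marking along $z$ by an acyclic cofibration; hence $P\to P'$ is an acyclic cofibration. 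Applying Proposition \ref{prop:meaning of geq case 1} to $x\geq_n x_0$ and $y\geq_n x_0'$, and then the direct case once more, now all with $Q$ in place of $C$, shows likewise that $Q\to P'$ is an acyclic cofibration (this time no marking along $z$ needs to be added, as $Q$ already has it). Since $P\to Q$ is a cofibration and $P\to P'$ factors as $P\to Q\to P'$, two out of three finishes the proof. As in Proposition \ref{prop:meaning of geq case 1}, I expect the one genuine difficulty to be the verification, in the direct case, that the square above is homotopy cartesian — this is where the explicit cone and join computations and the complicial thinness extensions have to be combined; everything else is formal.
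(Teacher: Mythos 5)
Your proposal is correct and follows exactly the route the paper intends: the paper gives no proof of this proposition beyond the remark ``One can show similarly'' (referring to Proposition~\ref{prop:meaning of geq case 1}), and your argument is the natural adaptation of that proof --- reduction to a nice $C$, the homotopy cocartesian square comparison in the direct case of Definition~\ref{def:order relation case 2}, and a two-out-of-three chase through Proposition~\ref{prop:meaning of geq case 1} to absorb the auxiliary relations $x\geq_n x_0$, $y\geq_n x_0'$, $x_0''\geq_n z$. The one step you flag as the genuine difficulty (the homotopy cocartesianness of the square in the direct case) is asserted at the same level of detail as the corresponding step in the paper's proof of Proposition~\ref{prop:meaning of geq case 1}, so no gap relative to the paper.
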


\begin{lemma}
\label{lem:abstract thinness 0}
Let $n$ be a non null integer and $a$ an element such that $\tau^i_{n}(a)=a$. The object $[2]^2\otimes a$ is $n$-relying on $d^1\botimes a:e\star a\to [2]^2\botimes a$.
\end{lemma}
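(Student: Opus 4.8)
The plan is to unwind the definition of ``$n$-relying'' and verify it by hand. One must show that for every $k\ge -1$ the square
\[\begin{tikzcd}
	{[k]\star (e\star a)} & {[k]\star ([2]^2\botimes a)} \\
	{t^{n+k+1}([k]\star (e\star a))} & {t^{n+k+1}([k]\star ([2]^2\botimes a))}
	\arrow[from=1-1, to=1-2]
	\arrow[from=1-1, to=2-1]
	\arrow[from=1-2, to=2-2]
	\arrow[from=2-1, to=2-2]
\end{tikzcd}\]
whose horizontal maps are induced by $d^1\botimes a$, is homotopy cocartesian. As that upper map is a cofibration (Lemma \ref{lemma:leibnizt joint is Quillen}), the functor $t^{n+k+1}$ preserves cofibrations, and $\mSeg(A)$ is left proper, this is the same as showing that the comparison map
\[ [k]\star ([2]^2\botimes a)\coprod_{[k]\star (e\star a)} t^{n+k+1}\big([k]\star (e\star a)\big)\ \longrightarrow\ t^{n+k+1}\big([k]\star ([2]^2\botimes a)\big) \]
is a weak equivalence.

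First I would reduce to a statement about objects of $A$ only. By Construction \ref{cons:the big construction} the object $[2]^2\botimes a$ is obtained from the $d^1$-summand $e\star a$ by a pushout $[2]^2\botimes a=(e\star a)\coprod_{C}D$ along a cofibration $C\to D$ of objects of $A$ that involves only the functor $\uvar\otimes a$ and the cone $e\star\uvar$ (here $C$ is the $d^1$-face $[1]\otimes a$ and $D$ is the $2$-cell, with its ``$^2$''-thin marking, after the $d^2$-face has already been coned off). Since $[k]\star\uvar$ and $t^{n+k+1}$ are colimit-preserving, and $[k]\star(e\star a)=[k+1]\star a$ by associativity, the comparison map becomes a map between two pushouts; by the gluing lemma it is a weak equivalence as soon as
\[ [k]\star D\coprod_{[k]\star C} t^{n+k+1}([k]\star C)\ \longrightarrow\ t^{n+k+1}([k]\star D) \]
is one, so it suffices to treat the pair $(C,D)$.

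The key input here is the third clause of the Gray module axioms: since $t^n(a)=a$, every object $[m]\otimes a$ is invariant under $t^{n+1}$, and the same holds for $C$ and for $e\star a$; hence $[k]\star C$ is invariant under $t^{n+k+2}$, and passing to $t^{n+k+1}$ only makes thin the top cells produced by the $k{+}1$ iterated cones. On the other hand the thinness recorded by the superscript ``$^2$'' has already made the corresponding cell of $D$ thin, so (using $n\ge 1$) $t^{n+k+1}([k]\star D)=[k]\star D$. The map to analyse then reads $[k]\star D\coprod_{[k]\star C}t^{n+k+1}([k]\star C)\to [k]\star D$, and I would conclude by identifying its source with $[k]\star D$ through a direct computation of both as the same colimit of objects of the form $[b,m]$ and $[e,1]_t$, relying on the explicit low-dimensional descriptions of $e\star[a,m]$ and of $[2]^2\botimes a$ from Construction \ref{cons:the big construction}; left properness then promotes the pushouts involved to homotopy pushouts.

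The step I expect to be the real obstacle is precisely this last piece of degreewise bookkeeping: checking that the cells by which $[2]^2\botimes a$ genuinely differs from $e\star a$ sit, after the join with $[k]$, strictly above the truncation level $n+k+1$, so that $t^{n+k+1}$ sees nothing new on the $D$-side. This is not a formal consequence of the abstract axioms and has to be read off the explicit pushout presentations of Construction \ref{cons:the big construction} (this is also where the hypothesis $n\ge 1$ enters).
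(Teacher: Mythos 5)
There is a genuine gap, and in fact the intended proof is an order of magnitude simpler than what you attempt. The whole point of this lemma is that $d^1\botimes a:e\star a\to [2]^2\botimes a$ is a \emph{weak equivalence} in $A$: in $[2]^2$ the top $2$-cell and the edge $[1,2]$ are thin, so $d^1[1]\to \Lambda^2[2]\to [2]^2$ is built from complicial horn inclusions, $\uvar\otimes a$ is left Quillen, and the defining pushouts of $\botimes$ transport this to $d^1\botimes a$. Since $[k]\star\uvar$ and $t^{n+k+1}$ are left Quillen functors, \emph{both horizontal maps} of the square in the definition of $n$-reliance are then weak equivalences (between cofibrant objects), and any commutative square whose two parallel horizontal maps are weak equivalences is automatically homotopy cocartesian. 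That is the entire proof in the paper. Your argument never identifies this fact, and instead tries to compare the two sides of the square cell by cell, which is exactly the comparison the weak-equivalence observation lets you avoid.

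Beyond missing the key point, two steps of your proposal do not hold up. First, the claim that $t^{n+k+1}([k]\star D)=[k]\star D$ because ``the superscript $2$ has already made the corresponding cell thin'' is unjustified and false in general: the Gray-module axiom only yields invariance of $[k]\star([2]^2\otimes a)$ under $t^{n+k+2}$, so $t^{n+k+1}$ genuinely adds markings in dimension $n+k+1$ on \emph{both} sides of the square; the lemma is precisely about what happens at that level, so you cannot argue that the truncation ``sees nothing new on the $D$-side''. Second, the concluding ``degreewise bookkeeping'' cannot be carried out as described: the objects $e\star a$ and $[2]^2\botimes a$ live in the abstract Gray module $A$, not in $\mSeg(A)$, so there are no presentations as colimits of objects $[b,m]$ and $[e,1]_t$ to compute with, and the explicit descriptions of $e\star[a,m]$ from the earlier sections describe the cone on Segal precategories, not the cone $e\star a$ internal to $A$. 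You flag this step yourself as ``the real obstacle'' and leave it unverified; it is where the argument breaks down.
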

\begin{proof}
As the morphism $d^1\botimes a:e\star a\to [2]^2\botimes a$ is a weak equivalence, so are the horizontal morphisms of the following diagram:
\[\begin{tikzcd}
	{[k]\star e\star a} & {[k]\star([2]^2\botimes a)} \\
	{\tau^i_{n+k+1}([k]\star e\star a)} & {\tau^i_{n+k+1}([k]\star([2]^2\botimes a))}
	\arrow["\sim", from=1-1, to=1-2]
	\arrow[from=1-1, to=2-1]
	\arrow[from=1-2, to=2-2]
	\arrow["\sim"', from=2-1, to=2-2]
\end{tikzcd}\]
As the vertical morphisms are cofibrations, this implies that this square is homotopy cocartesian.
\end{proof}

\begin{lemma}
\label{lem:abstract thinness 1}
Let $n$ be a non null integer and $a$ an element such that $\tau^i_{n}(a)=a$. The object $[2]\botimes a$ is $n$-relying on $d^0\otimes a:[1]\otimes a\to [2]\botimes a$ and $d^2\otimes a:e\star a\to [2]\otimes a$. Moreover, $[2]\botimes a\coprod_{d^0\otimes a} \tau^i_{n}([1]\otimes a)$ (resp. $[2]\botimes a\coprod_{d^2\botimes a} \tau^i_{n}(e\star a)$) is $n$-relying on $d^{2}\otimes a$ (resp. $d^0\botimes a$).
\end{lemma}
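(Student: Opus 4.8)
The statement involves the object $[2]\botimes a$, which by Construction~\ref{cons:the big construction} is the pushout
\[
e\star a\amalg e\star a \xleftarrow{d^1\otimes a\amalg d^2\otimes a} [1]\otimes a\amalg[1]\otimes a \longrightarrow [2]\otimes a,
\]
with the three face inclusions $d^0\otimes a, d^1\botimes a, d^2\botimes a$. I want to show $[2]\botimes a$ is $n$-relying on the \emph{pair} $(d^0\otimes a:[1]\otimes a\to[2]\botimes a,\ d^2\botimes a:e\star a\to[2]\botimes a)$, and then the two ``moreover'' statements about the relevant pushouts. The key structural input is that $\uvar\star\uvar$ commutes with the pushout defining $[2]\botimes a$ and with the coproduct, together with the fact (Definition~\ref{defi:Gray module} / Lemma~\ref{lemma:leibnizt joint is Quillen}) that $[k]\star\uvar$ is left Quillen, hence sends cofibrations to cofibrations and preserves weak equivalences of cofibrant objects.

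\textbf{Step 1: Reduce ``$n$-relying on a pair'' to a pushout-of-homotopy-cocartesian-squares argument.} Applying $[k]\star\uvar$ to the defining pushout of $[2]\botimes a$ expresses $[k]\star([2]\botimes a)$ as the pushout of $[k]\star(e\star a)\amalg[k]\star(e\star a)$ and $[k]\star([2]\otimes a)$ over $[k]\star([1]\otimes a)\amalg[k]\star([1]\otimes a)$ — using the associativity $[k]\star([m]\star b)=([k]\star[m])\star b$ and the compatibility of $\star$ with finite colimits in the second variable. The same is true after applying $t^{n+k+1}$, since the truncation functors are colimit preserving and left Quillen. So the square in the definition of ``$n$-relying on $d^0\otimes a$ and $d^2\botimes a$'' — whose legs are the inclusion of $[k]\star([1]\otimes a)\amalg[k]\star(e\star a)$ into $[k]\star([2]\botimes a)$ and its truncation — is obtained by gluing the corresponding squares for $[1]\otimes a$ and $[2]\otimes a$. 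Here I use: the square for $[2]\otimes a$ with both faces $d^0$ and the ``other $d^2$'' (i.e. $[2]\otimes a$ is $n$-relying on $[1]\otimes a\amalg[1]\otimes a\to[2]\otimes a$ via both inner/outer faces), which follows from the third axiom of a Gray module ($K\otimes a$ is $t^{n+1}$-invariant when $a$ is $t^n$-invariant), so that $[k]\star([m]\otimes a)$ is $t^{(k+1)+n+1}$-invariant and the truncation squares at level $n+k+1 \le n+k+2$ are trivially homotopy cocartesian; and the fact that $[2]\botimes a$ is built by pushout along $e\star a\amalg e\star a$, which swallows the remaining faces. The precise bookkeeping of which faces land where is the content I would verify carefully.

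\textbf{Step 2: Deduce the two ``moreover'' claims.} For the first, $[2]\botimes a\coprod_{d^0\otimes a}t^n([1]\otimes a)$ $n$-relying on $d^2\otimes a$: form the relevant square for this new object at level $n+k+1$, and observe it is the pushout of the square witnessing Step~1 (for the pair) along the map that replaces the $[1]\otimes a$-corner by its $t^n$-truncation; since pushouts of homotopy cocartesian squares along cofibrations are homotopy cocartesian in a left proper model structure (and everything in sight is cofibrant, $\star$ and $t^\bullet$ being left Quillen), the conclusion follows. The second ``moreover'' ($[2]\botimes a\coprod_{d^2\botimes a}t^n(e\star a)$ $n$-relying on $d^0\botimes a$) is symmetric, swapping the roles of the two corners.

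\textbf{Main obstacle.} The genuine difficulty is \emph{not} any homotopical subtlety — it is the combinatorial identification in Step~1 of how $[2]\botimes a$ decomposes after applying $[k]\star\uvar$ and $t^{n+k+1}$, and in particular checking that $[2]\otimes a$ (the ordinary Gray-tensor simplex) is $n$-relying on its outer-plus-inner face inclusion $[1]\otimes a\amalg[1]\otimes a\to[2]\otimes a$, which is where the third Gray-module axiom ($t^{n+1}$-invariance of $K\otimes a$) is really used. Once that invariance is in hand the truncation squares collapse and the gluing of homotopy cocartesian squares is formal; but getting the indices and the matching of the three face maps $d^0\otimes a$, $d^1\botimes a$, $d^2\botimes a$ exactly right is the part that requires care.
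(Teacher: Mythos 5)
Your overall skeleton (reduce the claim for $[2]\botimes a$ to a claim for $[2]\otimes a$ via the defining pushout, then paste homotopy cocartesian squares and deduce the two ``moreover'' statements by cancellation) matches the structure of the paper's proof. But the justification of the central step is wrong, and this is a genuine gap, not bookkeeping. You claim that $[2]\otimes a$ is $n$-relying on the pair of face inclusions $[1]\otimes a\amalg[1]\otimes a\to[2]\otimes a$ because $[k]\star([2]\otimes a)$ is $t^{n+k+2}$-invariant and ``the truncation squares at level $n+k+1\le n+k+2$ are trivially homotopy cocartesian.'' This has the truncations backwards: in this paper $t^m\to t^n$ for $n\le m$ \emph{adds} markings as the index decreases, so $t^{n+k+2}$-invariance says nothing about $t^{n+k+1}$; the vertical maps $[k]\star([2]\otimes a)\to t^{n+k+1}([k]\star([2]\otimes a))$ are genuinely nontrivial entire cofibrations (they mark the cells in dimension $n+k+1$), and the whole content of the lemma is to identify where those new markings come from. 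If your principle were true, any $t^{n+1}$-invariant object would be $n$-relying on anything (take the pair to be empty), which fails already for $[1]\otimes a$ with $a$ a $t^n$-invariant object having a nontrivial top cell, and would make the entire $\geq_n$ machinery of this section pointless.

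The missing idea is the inner horn. The paper first proves the statement for $[2]\otimes a$ by factoring through $\Lambda^1[2]\otimes a=[1]\otimes a\coprod_{\{1\}\otimes a}[1]\otimes a$: the square for $\Lambda^1[2]\otimes a$ is \emph{strictly} cocartesian because the gluing locus $\{1\}\otimes a=a$ satisfies $t^{n+k+1}([k]\star a)=[k]\star a$ (this is where $t^n(a)=a$ is really used), and the square comparing $\Lambda^1[2]\otimes a$ with $[2]\otimes a$ has horizontal weak equivalences because $\Lambda^1[2]\to[2]$ is a weak equivalence and both $\uvar\otimes a$, $[k]\star\uvar$ and the truncations are left Quillen. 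Pasting gives the homotopy cocartesian square for $[2]\otimes a$, and only then does one paste with the (strictly cocartesian) square defining $[2]\botimes a$ to conclude. Without the weak equivalence $\Lambda^1[2]\otimes a\to[2]\otimes a$ there is no reason for your Step~1 square to be homotopy cocartesian, so the proof as written does not go through.
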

\begin{proof}
Consider the following diagram:
\[\begin{tikzcd}[column sep=0.3cm]
	{[k]\star([1]\otimes a)\amalg [k]\star([1]\otimes a)} & {[k]\star(\Lambda^1[2]\otimes a)} & {[k]\star([2]\otimes a)} \\
	{\tau^i_{n+k+1}([k]\star([1]\otimes a))\amalg \tau^i_{n+k+1}([k]\star([1]\otimes a))} & {\tau^i_{n+k+1}([k]\star(\Lambda^1[2]\otimes a))} & {\tau^i_{n+k+1}([k]\star([2]\otimes a))}
	\arrow[from=1-1, to=2-1]
	\arrow[""{name=0, anchor=center, inner sep=0}, from=1-1, to=1-2]
	\arrow[from=1-2, to=2-2]
	\arrow[from=2-1, to=2-2]
	\arrow["\sim", from=1-2, to=1-3]
	\arrow["\sim"', from=2-2, to=2-3]
	\arrow[from=1-3, to=2-3]
	\arrow["\lrcorner"{anchor=center, pos=0.125, rotate=180}, draw=none, from=2-2, to=0]
\end{tikzcd}\]
The left square is cocartesian and so homotopy cocartesian. Horizontal morphisms of the right square are weak equivalences, so this square is also homotopy cocartesian.
The outer square is then homotopy cocartesian and this implies that 
$[[2]\otimes a,1]$ is $n$-relying on $d^0\otimes a$ and $d^2\otimes a$. We then have a diagram:
\[\begin{tikzcd}[column sep=0.3cm]
	{[k]\star([1]\otimes a)\amalg [k]\star([1]\otimes a)} & {[k]\star([2]\otimes a)} & {[k]\star([2]\botimes a)} \\
	{\tau^i_{n+k+1}([k]\star([1]\otimes a))\amalg \tau^i_{n+k+1}([k]\star([1]\otimes a))} & {\tau^i_{n+k+1}([k]\star([2]\otimes a))} & {\tau^i_{n+k+1}([k]\star([2]\botimes a))}
	\arrow[""{name=0, anchor=center, inner sep=0}, from=1-2, to=1-3]
	\arrow[from=1-2, to=2-2]
	\arrow[from=2-2, to=2-3]
	\arrow[from=1-3, to=2-3]
	\arrow[from=2-1, to=2-2]
	\arrow[from=1-1, to=1-2]
	\arrow[from=1-1, to=2-1]
	\arrow["\lrcorner"{anchor=center, pos=0.125, rotate=180}, draw=none, from=2-3, to=0]
\end{tikzcd}\]
where the two squares are homotopy cocartesian and so is the outer one. This implies the first assertion and the two others follow easily.
\end{proof}

\begin{lemma}
\label{lem:abstract thinness 2}
Let $n$ be an integer strictly superior to $1$ and $a$ such that $\tau^i_{n}(a) = a$.
We consider the projection $\pi:[a,2]\to [a,1]\vee[\tau^i_{n-1}(a),1]$ and $\pi':[a,2]\to [\tau^i_{n-1}(a),1]\vee[a,1]$. We then have inequalities
$$ e\star\pi\circ \epsilon_a\circ [d^0\otimes a, 1]\geq_{n+1} e\star\pi\circ \epsilon_a\circ [d^1\botimes a, 1]$$ and 
$$ e\star\pi'\circ \epsilon_a\circ [d^2\botimes a, 1]\geq_{n+1}e\star \pi\circ \epsilon_a\circ [d^1\botimes a, 1].$$
\end{lemma}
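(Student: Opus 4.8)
The plan is to obtain each inequality by transitivity, as a composite of \emph{relying} moves (case~(3) of Definition~\ref{def:order relation case 1}, fed by Lemmas~\ref{lem:abstract thinness 0} and~\ref{lem:abstract thinness 1}), which let us pass between the mapping objects $[1]\otimes a$, $e\star a$ and $[2]\botimes a$, and \emph{whiskering} moves (cases~(1)/(2)); the latter are legitimate precisely because $\pi$ (resp.~$\pi'$) collapses the edge $\{1,2\}$ (resp.~$\{0,1\}$) of $[a,2]$ to $[e,1]$, so that the whiskering $1$-cell of $e\star([a,1]\vee[e,1])$ is forced to factor through a $t^{n}$-truncation.

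First I would record the typing point: since $t^{n}(a)=a$ and $n\geq 2$, the objects $[1]\otimes a$ and $e\star a=e\cup_{a}([1]\otimes a)$ are invariant under $t^{n+1}$ — the former by the last Gray-module axiom, the latter because $t^{n+1}$ preserves the defining pushout and fixes each of $e$ (which is $t^{m}$-invariant for every $m$), $a$ and $[1]\otimes a$ — so all the morphisms below are admissible representatives for a $\geq_{n+1}$-comparison, with common target $C:=e\star([a,1]\vee[e,1])$.

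Next, using the commutative diagrams $(1)$--$(6)$ of Construction~\ref{cons:the big construction} I would rewrite the three relevant arrows into $e\star[a,2]$: by $(5)$, $\epsilon_{a}\circ[d^{0}\otimes a,1]=\delta_{a}\circ[[1]\otimes a,d^{1}]$; by $(6)$, $\epsilon_{a}\circ[d^{2}\botimes a,1]=e\star[a,d^{0}]\circ\alpha_{a}$; by $(4)$, $\epsilon_{a}\circ[d^{1}\botimes a,1]=e\star[a,d^{1}]\circ\alpha_{a}$. For the first inequality I would then apply case~(3) of Definition~\ref{def:order relation case 1} with $b:=[2]\botimes a$, which by Lemma~\ref{lem:abstract thinness 1} is $n$-relying on $d^{0}\otimes a\colon[1]\otimes a\to b$: the maps $[d^{0}\otimes a,1]$ and $[d^{2}\botimes a,1]$ into $[[2]\botimes a,1]$, together with the arrow $r_{C}\circ(e\star\pi)\circ\epsilon_{a}\colon[[2]\botimes a,1]\to RC$, furnish the dotted arrows and hence $e\star\pi\circ\epsilon_{a}\circ[d^{0}\otimes a,1]\geq_{n+1}e\star\pi\circ\epsilon_{a}\circ[d^{2}\botimes a,1]$. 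For the remaining comparison $e\star\pi\circ\epsilon_{a}\circ[d^{2}\botimes a,1]\geq_{n+1}e\star\pi\circ\epsilon_{a}\circ[d^{1}\botimes a,1]$ — now between two arrows with mapping object $e\star a$ — I would produce a witnessing $2$-cell in $C$ (or in $RC$), built from $e\star(-)$ applied to the $2$-simplex $\pi\colon[a,2]\to[a,1]\vee[e,1]$ and precomposed with $\alpha_{a}$, exhibiting $e\star\pi\circ\epsilon_{a}\circ[d^{1}\botimes a,1]$ as the left-whiskering of $e\star\pi\circ\epsilon_{a}\circ[d^{2}\botimes a,1]$ by the $\{0,1\}$-edge; since $\pi$ sends the $\{1,2\}$-edge into $[e,1]$ and $e\star(-)$ of an $e$-edge is $t^{n+1}$-truncated (truncation-raising axiom), this falls under case~(2) of Definition~\ref{def:order relation case 1}, possibly after an auxiliary relying move along the weak equivalence $d^{1}\botimes a\colon e\star a\to[2]^{2}\botimes a$ of Lemma~\ref{lem:abstract thinness 0} to put the mapping objects in the form literally required. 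Transitivity then yields the first inequality; the second is proved the same way with $\pi'$ in place of $\pi$, the symmetric versions of $(5),(6),(3),(4)$, and the assertion of Lemma~\ref{lem:abstract thinness 1} that $[2]\botimes a$ is $n$-relying on $d^{2}\botimes a$.

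The step I expect to be hardest is this last whiskering comparison: tracking, through the explicit colimit presentations of $e\star[a,2]$ and $[e\star a,1]\vee[a,1]$ and through the definition of $\pi$, exactly which $1$-cell of $C$ plays the role of the whiskering cell, and checking that it factors through the prescribed $t^{n}$-truncation so that the hypotheses of cases~(1)/(2) (or~(3)) of Definition~\ref{def:order relation case 1} are met on the nose. Everything else is bookkeeping with transitivity and two-out-of-three.
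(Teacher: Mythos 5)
There is a genuine gap in your first step for each inequality. You invoke case~(3) of Definition~\ref{def:order relation case 1} with $b=[2]\botimes a$ and the claim that Lemma~\ref{lem:abstract thinness 1} makes it $n$-relying on the single map $d^{0}\otimes a\colon[1]\otimes a\to[2]\botimes a$. That lemma does not say this: for the bare object $[2]\botimes a$ it only establishes the \emph{joint} (two-legged) relying on $d^{0}\otimes a$ and $d^{2}\otimes a$ together; the single-face relying statements are reserved for the partially truncated pushouts $[2]\botimes a\coprod_{d^{0}\otimes a}t^{n}([1]\otimes a)$ and $[2]\botimes a\coprod_{d^{2}\botimes a}t^{n}(e\star a)$. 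And the single-face statement for the bare $[2]\botimes a$ is in fact false: the homotopy pushout of $t^{n+k+1}([k]\star([1]\otimes a))\leftarrow[k]\star([1]\otimes a)\to[k]\star([2]\botimes a)$ only truncates the cells supported on the $d^{0}$-face, while $t^{n+k+1}([k]\star([2]\botimes a))$ also truncates the $e\star a$-faces, which are merely $t^{n+1}$-invariant and so still carry nontrivial cells in the relevant degrees. A symptom of the problem is that your step~(a) never uses $\pi$ at all, so it would prove the comparison already for $\epsilon_a$ into $e\star[a,2]$ — proving too much, since the role of $\pi$ (collapsing one edge to $e$) is exactly what makes the inequality true and what distinguishes the horns $[n]^k$ downstream.

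The repair is to reverse the order of operations, which is what the paper does: first use diagram $(6)$ (resp.\ $(3)$ and $(5)$ for the second inequality) to show that the restriction of $r_{C}\circ(e\star\pi)\circ\epsilon_{a}$ along $[d^{2}\botimes a,1]$ (resp.\ of $r_{C}\circ(e\star\pi')\circ\epsilon_{a}$ along $[d^{0}\otimes a,1]$) factors through $[e\star e,1]$ inside $e\star[e,1]$ (resp.\ through a suitably truncated object), precisely because $\pi$ collapses that edge to $e$; conclude that the whole map out of $[[2]\botimes a,1]$ extends over $[[2]\botimes a\coprod_{d^{2}\botimes a}t^{n}(e\star a),1]$ (resp.\ over the $d^{0}$-pushout); and only then apply the single-face relying of that pushout from Lemma~\ref{lem:abstract thinness 1}, in one application of case~(3), to compare the $d^{0}\otimes a$- and $d^{1}\botimes a$-restrictions directly. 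This also makes your intermediate whiskering step~(b) unnecessary — which is just as well, since as you set it up the mapping objects of the two edges ($e\star a$ versus $a$) do not match the hypotheses of cases~(1)/(2) without further relying moves.
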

\begin{proof}
Using the diagram $(6).\ref{cons:the big construction}$ we get a diagram
\[\begin{tikzcd}
	{[e\star a,1]} & {[[2]\botimes a,1]} \\
	{[\tau^i_{n}(e\star a),1]} & {e\star[ a,1]} & {e\star[a,2]} \\
	& {e\star[\tau^i_{n-1}(a),1]} & {e\star([a,1]\vee[\tau^i_{n-1}(a),1])}
	\arrow["{[d^2\botimes a,1]}", from=1-1, to=1-2]
	\arrow["{e\star[a,d^0]}"', from=2-2, to=2-3]
	\arrow["{\alpha_a}", from=1-1, to=2-2]
	\arrow["{\epsilon_a}", from=1-2, to=2-3]
	\arrow["e\star\pi", from=2-3, to=3-3]
	\arrow[from=2-2, to=3-2]
	\arrow[from=3-2, to=3-3]
	\arrow[from=1-1, to=2-1]
	\arrow[from=2-1, to=3-2]
\end{tikzcd}\]
The morphism $r_{e\star([a,1]\vee[\tau^i_{n-1}(a),1])}\circ e\star\pi\circ \epsilon_a$ then factors through $[[2]\botimes a\coprod_{d^2\botimes a} \tau^i_{n}(e\star a),1]$. According to lemma \ref{lem:abstract thinness 1}, we then get the first inequalities.

For the second inequality, using the diagrams $(3).\ref{cons:the big construction}$ and $(5).\ref{cons:the big construction}$, we have a diagram:
\[\begin{tikzcd}
	& {[[1]\otimes a,1]} & {[[2]\botimes a,1]} \\
	& {[e\star a,1]\vee[a,1]} & {e\star[a,2]} \\
	{[e\star a,1]} & {e\star[ a,1]} & {e\star([\tau^i_{n-1}(a),1]\vee[a,1])} \\
	{[\tau^i_{n}(e\star a),1]} & {e\star[ \tau^i_{n-1}(a),1]}
	\arrow["{\alpha_a}"', from=3-1, to=3-2]
	\arrow["{[[1]\otimes a,d^1]}"', from=1-2, to=2-2]
	\arrow["{\epsilon_a}", from=1-3, to=2-3]
	\arrow["{[d^0\otimes a,1]}", from=1-2, to=1-3]
	\arrow["{\delta_a}", from=2-2, to=2-3]
	\arrow["{e\star \pi'}", from=2-3, to=3-3]
	\arrow["{[e\star a,d^2]}", from=3-1, to=2-2]
	\arrow["{e\star[a,d^2]}"{description}, from=3-2, to=2-3]
	\arrow[from=3-2, to=4-2]
	\arrow[from=4-2, to=3-3]
	\arrow["{\alpha_{\tau^i_{n-1}(a)}}"', from=4-1, to=4-2]
	\arrow[from=3-1, to=4-1]
\end{tikzcd}\]
This implies that $r_{e\star([\tau^i_{n-1}(a),1]\vee[a,1])}\circ e\star\pi'\circ e\star[a,d^2]\circ \alpha_a$ factors through $[\tau^i_n(e\star a),1]$. The morphism $r_{e\star([\tau^i_{n-1}(a),1]\vee[a,1])}\circ e\star\pi\circ \epsilon_a$ then factors through $[[2]\otimes a\coprod_{d^0\otimes a} \tau^i_{n}([1]\otimes a),1]$. According to lemma \ref{lem:abstract thinness 1}, we then get the second inequality.

\end{proof}

\begin{lemma}
\label{lem:abstract thinness 3}
Let $n$ be an integer strictly superior to $1$ and $a$ such that $\tau^i_{n}(a) = a$.
We then have $\delta_a\circ [e\star a,d^2]\geq_{n+1} \delta_a\circ [[1]\otimes a,d^1]$. 
\end{lemma}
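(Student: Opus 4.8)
The plan is to realise the inequality as a concatenation of two elementary instances of the relation $\geq_{n+1}$ (Definition \ref{def:order relation case 1}), using the commutative diagrams of Construction \ref{cons:the big construction} to make both sides explicit and Lemmas \ref{lem:abstract thinness 0}--\ref{lem:abstract thinness 1} to supply the ``$n$-relying'' witnesses, in complete analogy with the proof of Lemma \ref{lem:abstract thinness 2}.

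First I would set $C:=e\star[a,2]$ and rewrite everything inside $C$: the diagram $(3).\ref{cons:the big construction}$ gives $\delta_a\circ[e\star a,d^2]=e\star[a,d^2]\circ\alpha_a$, the diagram $(5).\ref{cons:the big construction}$ gives $\delta_a\circ[[1]\otimes a,d^1]=\epsilon_a\circ[d^0\otimes a,1]$, and the diagrams $(4).\ref{cons:the big construction}$ and $(6).\ref{cons:the big construction}$ identify the two ``intermediate'' edges $\epsilon_a\circ[d^1\botimes a,1]=e\star[a,d^1]\circ\alpha_a$ and $\epsilon_a\circ[d^2\botimes a,1]=e\star[a,d^0]\circ\alpha_a$. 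As in the proofs of Proposition \ref{prop:meaning of geq case 1} and Lemma \ref{lem:abstract thinness 2}, it is harmless to assume $C$ is a nice marked Segal $A$-precategory, so that $RC=C$.

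Second I would prove $\epsilon_a\circ[d^1\botimes a,1]\geq_{n+1}\epsilon_a\circ[d^0\otimes a,1]$ by case $(3)$ of Definition \ref{def:order relation case 1}: both morphisms factor through $[[2]\botimes a,1]\xrightarrow{\epsilon_a}C$ (via $[d^1\botimes a,1]$ and $[d^0\otimes a,1]$ respectively), and the required relying property of $[2]\botimes a$ on the face map $e\star a\to[2]\botimes a$ is provided by Lemma \ref{lem:abstract thinness 1} (passing if needed through $[2]^2\botimes a$ and Lemma \ref{lem:abstract thinness 0}). Then I would close the gap between $e\star[a,d^2]\circ\alpha_a=\delta_a\circ[e\star a,d^2]$ and $e\star[a,d^1]\circ\alpha_a=\epsilon_a\circ[d^1\botimes a,1]$: inside $e\star[a,2]$ these two edges differ by a whiskering along the edge with hom-object $a$ coming from $[a,d^0]$, so one applies case $(1)$ (or $(2)$) of Definition \ref{def:order relation case 1} once one knows that the image of this $a$-edge in $R(e\star[a,2])$ factors through a truncation $[t^n(-),1]$ --- which is exactly what the ``$n$-relying'' statement of Lemma \ref{lem:abstract thinness 1} for $[2]\botimes a\coprod_{d^2\botimes a}t^n(e\star a)$ delivers, through the diagram $(6).\ref{cons:the big construction}$, just as in the proof of Lemma \ref{lem:abstract thinness 2}. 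Transitivity of $\geq_{n+1}$ then concludes.

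The main obstacle is the diagram bookkeeping: matching each edge of $e\star[a,2]$, of $[[2]\botimes a,1]$ and of $e\star[a,1]$ with the correct face or degeneracy of $[2]$ so that the factorisations through the relying objects genuinely commute, and transporting the argument of Lemma \ref{lem:abstract thinness 2} --- which takes place in $e\star([a,1]\vee[e,1])$ and $e\star([e,1]\vee[a,1])$ --- to the slightly different configuration in $e\star[a,2]$. No idea beyond those already used in Lemmas \ref{lem:abstract thinness 0}--\ref{lem:abstract thinness 2} should be needed, and the hypothesis $n>1$ enters exactly as there, to keep $t^n(a)=a$ available at the relevant level when forming the pushouts $[2]\botimes a\coprod_{?}t^n(?)$.
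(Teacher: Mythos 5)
Your first paragraph is fine: the identifications coming from diagrams $(3)$--$(6)$ of Construction \ref{cons:the big construction} are exactly right. The gap is in the second paragraph, and it is not mere bookkeeping: your decomposition runs the logic of Definition \ref{def:order relation case 1} in the wrong direction. Cases $(1)$ and $(2)$ of that definition only ever produce inequalities of the form \emph{factor} $\geq$ \emph{composite}, provided the \emph{other} factor is truncated; an inequality of the form \emph{composite} $\geq$ \emph{factor} can only come from case $(3)$ and hence needs an $n$-relying hypothesis. The lemma itself is a ``factor $\geq$ composite'' statement for the triangle $[e\star a,1]\vee[a,1]\to e\star[a,2]$, whose second edge has hom-object $a=t^n(a)$; this is why the paper's proof is essentially a single application of case $(1)$ (restrict the first edge along $[1]\otimes a\to e\star a$, observe that the second edge then factors through $t^{n}([1]\otimes a)$, and conclude), with no relying argument at all. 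Both of your intermediate inequalities, by contrast, are of the ``composite $\geq$ factor'' type.

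Concretely, your step $\epsilon_a\circ[d^1\botimes a,1]\geq_{n+1}\epsilon_a\circ[d^0\otimes a,1]$ via case $(3)$ requires $[2]\botimes a$ to be $n$-relying on its $d^1$ face $d^1\botimes a:e\star a\to[2]\botimes a$. Neither cited lemma supplies this: Lemma \ref{lem:abstract thinness 0} concerns the \emph{marked} object $[2]^2\botimes a$, for which $d^1\botimes a$ is a weak equivalence (it is not one for $[2]\botimes a$), and $\epsilon_a$ does not factor through $[[2]^2\botimes a,1]$ in $R(e\star[a,2])$ — such factorizations only appear after composing with the quotients $e\star\pi$, $e\star\pi'$, which is precisely the content of Lemma \ref{lem:abstract thinness 2} and is unavailable in $e\star[a,2]$ itself; Lemma \ref{lem:abstract thinness 1} gives relying only on $d^0\otimes a$ and $d^2\otimes a$ \emph{jointly}, or on a single face after truncating the other. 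What you would need amounts to saying that marking one edge of a triangle forces the whole triangle into the truncation, which is false. The companion step $e\star[a,d^2]\circ\alpha_a\geq_{n+1}e\star[a,d^1]\circ\alpha_a$ is likewise not an instance of case $(1)$ for any of the constructed triangles, and it cannot be deduced from Proposition \ref{prop:geqn stable by star} either, since the input $[a,d^2]\geq_{n}[a,d^1]$ fails in $[a,2]$ (the $d^0$ edge is $t^{n}(a)$, not $t^{n-1}(a)$, so only the weaker $\geq_{n+1}$ holds there). The correct route is the direct one: work in the single triangle $[e\star a,1]\vee[a,1]$ and apply case $(1)$ using $t^n(a)=a$.
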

\begin{proof}
There is a diagram:
\[\begin{tikzcd}
	{[e\star a,1]} & {[e\star a,1]} & {[[1]\otimes a,1]} \\
	{e\star[a,2]} & {[e\star a,1]\vee[a,1]} & {[[1]\otimes a,1]\vee[a,1]} \\
	& {[[1]\otimes a,1]} & {[[1]\otimes a,1]}
	\arrow["{\delta_a}", from=2-2, to=2-1]
	\arrow["{[e\star a,d^2]}", from=1-2, to=2-2]
	\arrow["{[[1]\otimes a,d^1]}"', from=3-2, to=2-2]
	\arrow["{[[1]\otimes a,d^2]}", from=1-3, to=2-3]
	\arrow["id", from=3-3, to=3-2]
	\arrow["{[[1]\otimes a,d^1]}"', from=3-3, to=2-3]
	\arrow[from=1-3, to=1-2]
	\arrow[from=2-3, to=2-2]
	\arrow["id", from=1-1, to=1-2]
\end{tikzcd}\]
As the morphism $[[1]\otimes a,1]\vee[a,1]\to [e\star a,1]\vee[a,1]$ factors through $[[1]\otimes a,1]\vee[\tau^i_{n}([1]\otimes a),1]$, we get the desired inequality.
\end{proof}

\begin{prop}
\label{prop:geqn stable by star}
Let $a$ be an object such that $\tau^i_{n}(a)=a$.
Let $x:[a,1]\to C,y:[a',1]\to C$ be two morphisms, such that $x\geq_ny$, then if we denote by $\bar{x} := e\star x\circ \alpha_a$ and $\bar{y} :=e\star y\circ \alpha_{a'} $, we have $\bar{x}\geq_{n+1}\bar{y}$.
\end{prop}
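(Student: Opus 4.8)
The plan is to use that $\geq_{n+1}$ is, by construction, the smallest reflexive and transitive relation containing the three elementary clauses of Definition \ref{def:order relation case 1}. Writing $\Phi(x):=e\star x\circ\alpha_a$ for a morphism $x\colon[a,1]\to C$ with $t^na=a$ (then $t^{n+1}(e\star a)=e\star a$ by the Gray module axioms, exactly as in Lemmas \ref{lem:abstract thinness 2} and \ref{lem:abstract thinness 3}, so $\Phi(x)$ is an admissible source for $\geq_{n+1}$), it suffices to prove that $x\geq_n x'$ through one of the three clauses implies $\Phi(x)\geq_{n+1}\Phi(x')$; a general $x\geq_n y$ is a finite zig of such steps, so the conclusion then follows by transitivity of $\geq_{n+1}$. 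Throughout I would use the elementary fact — immediate from the definitions, and from extending $C\to RD$ along the strong acyclic cofibration $C\to RC$ in clause (3) — that $\geq_m$ is stable under post-composition with an arbitrary morphism of marked Segal $A$-precategories.

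For clauses (1) and (2), which are mirror images, take (1): $a=a'$ and there is $\ell\colon[a,1]\vee[t^{n-1}a,1]\to C$ with $\ell\circ[a,d^2]=x$ and $\ell\circ[a,d^1]=x'$. Using the universal property of the $\vee$-pushout together with the truncation $a\to t^{n-1}a$, I would first replace $\ell$ by a map $\tilde\ell\colon[a,2]\to C$ with $\tilde\ell\circ[a,d^2]=x$ and $\tilde\ell\circ[a,d^1]=x'$ whose $\{1,2\}$-edge still factors through $t^{n-1}a$. Applying the $\mSeg(A)$-cone and the commutative squares $(3)$, $(4)$, $(5)$ of Construction \ref{cons:the big construction}, one rewrites $\Phi(x)=e\star\tilde\ell\circ\delta_a\circ[e\star a,d^2]$ and $\Phi(x')=e\star\tilde\ell\circ\epsilon_a\circ[d^1\botimes a,1]$. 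Then Lemma \ref{lem:abstract thinness 3} gives $\delta_a\circ[e\star a,d^2]\geq_{n+1}\delta_a\circ[[1]\otimes a,d^1]$ in $e\star[a,2]$, which by square $(5)$ equals $\epsilon_a\circ[d^0\otimes a,1]$, and Lemma \ref{lem:abstract thinness 2} (read along the projection used to form $e\star\tilde\ell$, where the $t^{n-1}$-control on $\ell$ is precisely what makes the relevant composite land in the truncated object $[[2]\botimes a\coprod_{d^2\botimes a}t^n(e\star a),1]$ to which Lemmas \ref{lem:abstract thinness 0} and \ref{lem:abstract thinness 1} apply) gives $\epsilon_a\circ[d^0\otimes a,1]\geq_{n+1}\epsilon_a\circ[d^1\botimes a,1]$. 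Pushing both inequalities forward along $e\star\tilde\ell$ and concatenating by transitivity yields $\Phi(x)\geq_{n+1}\Phi(x')$. Clause (2) is the same chase run on the other side, with $\pi'$ and the squares $(1)$, $(2)$, $(6)$.

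For clause (3) there is $b\in A$ which is $(n-1)$-relying on $a\to b$, a map $[b,1]\to RC$, and dotted arrows $[a,1]\to[b,1]\leftarrow[a',1]$ over $RC$. I would take as auxiliary object $B:=e\star b\in A$. First, $B$ is $n$-relying on $e\star a\to e\star b$: by associativity of the cone monoid on $A$ one has $[k]\star(e\star a)=[k+1]\star a$ and $[k]\star(e\star b)=[k+1]\star b$, so the square required to be homotopy cocartesian at parameter $k$ and truncation level $n+k+1$ is exactly the one witnessing that $b$ is $(n-1)$-relying on $a\to b$ at parameter $k+1$, since $(n-1)+(k+1)+1=n+k+1$. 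Second, $e\star$ of $[b,1]\to RC$, precomposed with $\alpha_b$ and postcomposed with $e\star RC\to R(e\star RC)\cong R(e\star C)$ (the last isomorphism being the one recorded in Definition \ref{defi:nice marked segal}), produces a map $[e\star b,1]\to R(e\star C)$; naturality of $\alpha$ in the variable of $A$ together with functoriality of $e\star(-)$ shows that precomposing with $[e\star a,1]\to[e\star b,1]$ recovers $r_{e\star C}\circ\Phi(x)$, and likewise for $x'$. Hence clause (3) of Definition \ref{def:order relation case 1} at level $n+1$ applies to $\Phi(x)$ and $\Phi(x')$ with auxiliary object $B$, giving $\Phi(x)\geq_{n+1}\Phi(x')$.

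The main obstacle is clauses (1)/(2): beyond the purely formal manipulation with the six commutative squares of Construction \ref{cons:the big construction}, the real content is that the cone $e\star(-)$ raises truncation degrees by one (the Gray module axiom), so that a lifting witnessed at level $n-1$ produces inputs to which Lemmas \ref{lem:abstract thinness 2} and \ref{lem:abstract thinness 3} apply at level $n+1$; making the truncation indices line up and checking that the composites factor through the truncated objects $[2]\botimes a$ and $[[2]\botimes a\coprod_{d^2\botimes a}t^n(e\star a),1]$ is the delicate step. Clause (3), by contrast, is a fairly direct consequence of the "relying" formalism, naturality of $\alpha$, and the stability of $R(-)$ under $e\star(-)$.
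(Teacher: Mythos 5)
Your proof is correct and follows essentially the same route as the paper: the same reduction to the three generating clauses of Definition \ref{def:order relation case 1} followed by transitivity, the same use of the commutative squares of Construction \ref{cons:the big construction} together with Lemmas \ref{lem:abstract thinness 2} and \ref{lem:abstract thinness 3} for clauses (1)/(2), and the same ingredients (naturality of $\alpha$, the shift in the reliability index under $e\star(-)$, and $R(e\star RC)\cong R(e\star C)$) for clause (3). Your write-up is in fact somewhat more explicit than the paper's, in particular about the stability of $\geq_m$ under postcomposition and about where the $t^{n-1}$-factorisation from the definition is actually used.
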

\begin{proof}
First, we suppose that we are in the first case of the definition \ref{def:order relation case 1}. We can then suppose without loss of generality that $C= [a,1]\vee[\tau^i_{n-1}(a),1]$. We denote by $\pi$ the projection of $[a,2]$ on $[a,1]\vee[\tau^i_{n-1}(a),1]$.
 Using the diagrams $(3).\ref{cons:the big construction}$, $(4).\ref{cons:the big construction}$ and $(5).\ref{cons:the big construction}$, we have a diagram:
\[\begin{tikzcd}
	{[[1]\otimes a,1]} & {[[2]\botimes a,1]} & {[e\star a,1]} \\
	{[e\star a,1]\vee[a,1]} & {e\star[a,2]} & {e\star[ a,1]} \\
	{[e\star a,1]} & {e\star[ a,1]} & {e\star ( [a,1]\vee[\tau^i_{n-1}(a),1])}
	\arrow["{\delta_a}", from=2-1, to=2-2]
	\arrow["{\alpha_a}", from=1-3, to=2-3]
	\arrow["{\alpha_a}"', from=3-1, to=3-2]
	\arrow["{e\star[a,d^2]}", from=3-2, to=2-2]
	\arrow["{[e\star a,d^2]}", from=3-1, to=2-1]
	\arrow["{[d^0\otimes a,1]}", from=1-1, to=1-2]
	\arrow["e\star\pi", from=2-2, to=3-3]
	\arrow["{[[1]\otimes a,d^1]}"', from=1-1, to=2-1]
	\arrow["{\epsilon_a}", from=1-2, to=2-2]
	\arrow["{[d^1\botimes a,1]}"', from=1-3, to=1-2]
	\arrow["{e\star [a,d^1]}"', from=2-3, to=2-2]
\end{tikzcd}\]
Thanks to lemmas \ref{lem:abstract thinness 2} and \ref{lem:abstract thinness 3}, this implies the result.

If we are in the second case of \ref{def:order relation case 1}, we can suppose that $C = [\tau^i_{n-1}(a),1]\vee[a,1]$, and we note by $\pi'$ the projection from $[a,2]\to[\tau^i_{n-1}(a),1]\vee[a,1]$ . Using the diagrams $(4).\ref{cons:the big construction}$ and $(6).\ref{cons:the big construction}$, we have a diagram:
\[\begin{tikzcd}
	{[e\star a,1]} & {[[2]\botimes a,1]} & {[e\star a,1]} \\
	{e\star[ a,1]} & {e\star[a,2]} & {e\star[ a,1]} \\
	& {e\star([\tau^i_{n-1}(a),1]\vee[a,1])}
	\arrow["{\alpha_a}", from=1-3, to=2-3]
	\arrow["{\epsilon_a}", from=1-2, to=2-2]
	\arrow["{[d^2\botimes a,1]}", from=1-1, to=1-2]
	\arrow["{e\star[a,d^0]}"', from=2-1, to=2-2]
	\arrow["{\alpha_a}"', from=1-1, to=2-1]
	\arrow["{e\star\pi'}", from=2-2, to=3-2]
	\arrow["{e\star [a,d^1]}", from=2-3, to=2-2]
	\arrow["{[d^1\botimes a,1]}"', from=1-3, to=1-2]
\end{tikzcd}\]
Thanks to lemmas \ref{lem:abstract thinness 2}, this implies the result.

If we are in the third case, it is a direct consequence of the naturality of $\alpha$, of the definition of $n$-reliability and of the fact that $(e\star C)_{\mk}\cong (e\star C_{\mk})_{\mk}$ as remarked in \ref{defi:marked segal}.
\end{proof}

\begin{prop}
\label{prop:2geqn stable by star}
Let $x:[a,1]\to C$, $y:[a',1]\to C$ and $z:[a'',1]$ be three morphisms, such that $(x,y)\geq_nz$, then if we denote by $\bar{x} := e\star x\circ \alpha_a$, $\bar{y} :=e\star y\circ \alpha_{a'} $ and $\bar{z} :=e\star z\circ \alpha_{a''} $, we have $(\bar{x},\bar{y})\geq_{n+1}\bar{z}$.
\end{prop}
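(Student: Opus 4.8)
The plan is to follow the proof of Proposition~\ref{prop:geqn stable by star} and bootstrap from it. Writing $\widetilde{u} := e\star u\circ\alpha$ for the operation appearing in the statement, I would first unwind Definition~\ref{def:order relation case 2}: the hypothesis $(x,y)\geq_n z$ supplies morphisms $x_1,x_1',x_1''$ (with domains $[a_1,1],[a_1',1],[a_1'',1]$) such that $x\geq_n x_1$, $y\geq_n x_1'$, $x_1''\geq_n z$ and the triple $(x_1,x_1',x_1'')$ \emph{directly} realises case (1) or case (2) of that definition. Applying Proposition~\ref{prop:geqn stable by star} to the three one-source relations yields $\bar x\geq_{n+1}\widetilde{x_1}$, $\bar y\geq_{n+1}\widetilde{x_1'}$, $\widetilde{x_1''}\geq_{n+1}\bar z$; since the one-source relation $\geq_{n+1}$ is transitive by construction, it will then be enough to prove $(\widetilde{x_1},\widetilde{x_1'})\geq_{n+1}\widetilde{x_1''}$ under the assumption that $(x_1,x_1',x_1'')$ directly realises case (1) or case (2). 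Feeding the resulting witnessing triple back into Definition~\ref{def:order relation case 2} and composing with the three relations above produces $(\bar x,\bar y)\geq_{n+1}\bar z$. As usual I would assume $C$ nice.

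For case (2) I would argue exactly as in the third case of the proof of Proposition~\ref{prop:geqn stable by star}. If $b\in A$ is $(n-1)$-relying on $a_1\to b$ and $a_1'\to b$, with dotted arrows into $RC$ as in case (2) of Definition~\ref{def:order relation case 2}, I would take $b':=e\star b$. Using the associativity of the cone in $A$, which gives $[k]\star(e\star c)=[k+1]\star c$, the square witnessing $n$-reliability of $b'$ at index $k\ge -1$ coincides with the square witnessing $(n-1)$-reliability of $b$ at index $k+1$; hence $b'$ is $n$-relying on $e\star a_1\to b'$ and $e\star a_1'\to b'$, the objects $e\star a_1$, $e\star a_1'$ being invariant under $t^{n+1}$ by the Gray-module axiom since $a_1,a_1'$ are invariant under $t^{n}$. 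Post-composing the given dotted arrows with $e\star(\uvar)$ and invoking the naturality of $\alpha$ and the isomorphism $R(e\star C)\cong R(e\star RC)$ of Definition~\ref{defi:nice marked segal} then produces the dotted arrows $[e\star b,1]\to R(e\star C)$ demanded by case (2) of Definition~\ref{def:order relation case 2} at level $n+1$.

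Case (1) is the substantial one, and here I would mirror cases (1)--(2) of the proof of Proposition~\ref{prop:geqn stable by star}. If $a_1=a_1'=a_1''=:a$ with a filler $h:[a,2]\to C$ satisfying $h\circ[a,d^2]=x_1$, $h\circ[a,d^0]=x_1'$, $h\circ[a,d^1]=x_1''$, one reduces to $C=[a,2]$, so that $\widetilde{x_1},\widetilde{x_1'},\widetilde{x_1''}$ become $\alpha_a$ post-composed with $e\star[a,d^2]$, $e\star[a,d^0]$, $e\star[a,d^1]$ into $e\star[a,2]$. I would then use the commutative squares of Construction~\ref{cons:the big construction} --- in particular the factorisations through $\delta_a:[e\star a,1]\vee[a,1]\to e\star[a,2]$ and $\epsilon_a:[[2]\botimes a,1]\to e\star[a,2]$ --- together with Lemmas~\ref{lem:abstract thinness 2} and~\ref{lem:abstract thinness 3} (or minor variants) to perform preliminary $\geq_{n+1}$-moves below $\widetilde{x_1},\widetilde{x_1'}$ and above $\widetilde{x_1''}$, after which the three morphisms appear as the restrictions, along three edges, of a single map out of a $\botimes$-glued object built from $[2]\otimes a$, $[1]\otimes a$ and $e\star a$ by the pushouts of Construction~\ref{cons:the big construction} (in the spirit of $[2]\botimes a$ and $[2]^2\botimes a$). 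Lemmas~\ref{lem:abstract thinness 0} and~\ref{lem:abstract thinness 1} --- possibly supplemented by one more statement of the same flavour, proved by the same cocartesian-square chase --- then show that this object is $n$-relying on the relevant pair of edges, which realises $(\widetilde{x_1},\widetilde{x_1'})\geq_{n+1}\widetilde{x_1''}$ through case (2) of Definition~\ref{def:order relation case 2} at level $n+1$.

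The main obstacle will be the combinatorial bookkeeping in case (1): pinning down which $\botimes$-glued object plays the role of $b$ at level $n+1$, matching its edges with $\widetilde{x_1},\widetilde{x_1'},\widetilde{x_1''}$ after the preliminary moves coming from Lemmas~\ref{lem:abstract thinness 2}--\ref{lem:abstract thinness 3}, and verifying its reliability along the correct pair of arrows. Everything else is a diagram chase resting on Proposition~\ref{prop:geqn stable by star}, Corollary~\ref{cor:cone is Quillen}, and the behaviour of the truncations recorded in the Gray-module axioms, exactly as in the proofs already carried out.
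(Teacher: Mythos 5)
Your proposal is correct and follows essentially the same route as the paper's proof: reduce to the universal case $C=[a,2]$, identify $\bar{y}$ and $\bar{z}$ with $\epsilon_a\circ[d^2\botimes a,1]$ and $\epsilon_a\circ[d^1\botimes a,1]$ via the diagrams of Construction \ref{cons:the big construction}, replace $\bar{x}$ by $\tilde{x}=\epsilon_a\circ[d^0\otimes a,1]$ using Lemma \ref{lem:abstract thinness 3}, and conclude from the $n$-reliability of $[2]\botimes a$ given by Lemma \ref{lem:abstract thinness 1}, with case (2) handled by naturality of $\alpha$ and $R(e\star C)\cong R(e\star RC)$ exactly as in the paper. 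The only difference is that you make explicit the preliminary reduction via Proposition \ref{prop:geqn stable by star} and transitivity, which the paper leaves implicit.
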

\begin{proof}
Suppose first that we are in the first case of the definition \ref{def:order relation case 2}. We can then suppose without loss of generality that $C= [a,2]$.
We define $\tilde{x}:=\epsilon_a\circ [d^0\otimes a,1]$. Diagram $(6).\ref{cons:the big construction}$ and
lemma \ref{lem:abstract thinness 2} imply that $(\tilde{x},\bar{y})\geq_{n+1} \bar{z}$. Eventually, diagrams $(3).\ref{cons:the big construction}$ and $(5).\ref{cons:the big construction}$ induce a diagram:
\[\begin{tikzcd}
	{[e\star a,1]} & {[e\star a,1]\vee[a,1]} & {[[1]\otimes a,1]} \\
	{e\star[ a,1]} & {e\star[a,2]} & {[[2]\botimes a,1]}
	\arrow["{\delta_a}", from=1-2, to=2-2]
	\arrow["{\alpha_a}"', from=1-1, to=2-1]
	\arrow["{e\star[a,d^2]}"', from=2-1, to=2-2]
	\arrow["{[e\star a,d^2]}", from=1-1, to=1-2]
	\arrow["{[d^0\otimes a,1]}", from=1-3, to=2-3]
	\arrow["{[[1]\otimes a,d^1]}"', from=1-3, to=1-2]
	\arrow["{\epsilon_a}", from=2-3, to=2-2]
\end{tikzcd}\]
wich implies that $\bar{x}\geq_{n+1}\tilde{x}$.

If we are in the second case of the definition, it is a direct consequence of the naturality of $\alpha$, of the definition of $n$-reliability and of the fact that $(e\star C)_{\mk}\cong (e\star C_{\mk})_{\mk}$ as remarked in definition \ref{defi:marked segal}.
\end{proof}

\begin{lemma}
\label{lemma:case k 0}
For any $a$ such that $\tau^i_na=a$ and $x:[a,1]\to C$, if we denote by $\bar{x} := e\star x \circ d^0\star[a,1]$ and $\tilde{x}:= e\star x \circ \alpha_a\circ [d^0\star a,1]$, then $\bar{x}\geq_{n+1}\tilde{x}$.
\end{lemma}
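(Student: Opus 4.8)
The plan is to verify $\bar x\geq_{n+1}\tilde x$ directly from the second case of Definition \ref{def:order relation case 1}, using the commutative diagrams $(1)$ and $(2)$ of Construction \ref{cons:the big construction}. First I would record that since $t^n a=a$ one also has $t^{n+1}a=a$, so $\bar x,\tilde x\colon[a,1]\to e\star C$ are edges of the shape to which $\geq_{n+1}$ applies, and that the middle object $[t^{n}a,1]\vee[a,1]$ appearing in the second case of the definition is, when $t^n a=a$, exactly $[a,2]$.

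Next I would rewrite the two morphisms through $\beta_a$. Diagram $(1)$ of Construction \ref{cons:the big construction} gives $d^0\star[a,1]=\beta_a\circ[a,d^0]$, hence $\bar x=(e\star x)\circ\beta_a\circ[a,d^0]$; diagram $(2)$ gives $\alpha_a\circ[d^0\star a,1]=\beta_a\circ[a,d^1]$, hence $\tilde x=(e\star x)\circ\beta_a\circ[a,d^1]$, where $[a,d^0],[a,d^1]\colon[a,1]\to[e,1]\vee[a,1]$ are the canonical maps induced by $d^0$ and $d^1$. Thus both $\bar x$ and $\tilde x$ are restrictions of the single morphism $(e\star x)\circ\beta_a\colon[e,1]\vee[a,1]\to e\star C$ along the $d^0$- and $d^1$-inclusions.

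The lifting required by the second case of Definition \ref{def:order relation case 1} is then $\phi:=(e\star x)\circ\beta_a\circ q\colon[a,2]=[t^na,1]\vee[a,1]\to e\star C$, where $q\colon[a,2]\to[e,1]\vee[a,1]$ is the collapse map induced by the unique morphism $a\to e$ to the terminal object (equivalently $[a,2]\to[e,2]\to[e,1]\vee[a,1]$). Since $q$ restricts to the identity on the $d^0$-edge and to $a\to e$ on the long $d^1$-edge, naturality of the face inclusions in the $A$-variable gives $q\circ[a,d^0]=[a,d^0]$ and $q\circ[a,d^1]=[a,d^1]$ inside $[e,1]\vee[a,1]$; composing with $(e\star x)\circ\beta_a$ and invoking the two identities above yields $\phi\circ[a,d^0]=\bar x$ and $\phi\circ[a,d^1]=\tilde x$. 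By the second case of Definition \ref{def:order relation case 1}, this exhibits $\bar x\geq_{n+1}\tilde x$.

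The only delicate point is bookkeeping: one must match the face conventions of Construction \ref{cons:the big construction} (which $d^i$ is the first, the second, resp. the long edge of $[e,1]\vee[a,1]$) with those in the definition of $\geq_n$, and check that the long edge of $[e,1]\vee[a,1]$ has hom $e$, so that $q$ collapses exactly the first factor and the equality $q\circ[a,d^1]=[a,d^1]$ genuinely holds. Everything else is formal, and in particular no use of the $n$-reliability clause (the third case of the definition) nor of Propositions \ref{prop:geqn stable by star} and \ref{prop:2geqn stable by star} is needed here.
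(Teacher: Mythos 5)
Your argument is correct and is essentially the paper's own proof: both use diagrams $(1)$ and $(2)$ of Construction \ref{cons:the big construction} to rewrite $\bar x=(e\star x)\circ\beta_a\circ[a,d^0]$ and $\tilde x=(e\star x)\circ\beta_a\circ[a,d^1]$ and then invoke case $(2)$ of Definition \ref{def:order relation case 1}, with your only addition being to make explicit the collapse $q\colon[a,2]\to[e,1]\vee[a,1]$ that the paper leaves implicit. One bookkeeping slip to fix: it is the first short edge $d^2$ of $[a,2]$ that $q$ collapses along $a\to e$, while the long $d^1$-edge of $[e,1]\vee[a,1]$ has hom $a$ (not $e$) and $q$ is the identity there --- which is precisely why $q\circ[a,d^1]=[a,d^1]$ holds and your conclusion stands.
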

\begin{proof}
Using the diagrams $(1).\ref{cons:the big construction}$ and $(2).\ref{cons:the big construction}$, we have a diagram:
\[\begin{tikzcd}
	{[a,1]} & {[e\star a,1]} \\
	{[e,1]\vee[a,1]} & {e\star [a,1]} & C \\
	{[a,1]}
	\arrow["{[a,d^0]}", from=3-1, to=2-1]
	\arrow["{d^{0}\star {[a,1]}}"', from=3-1, to=2-2]
	\arrow["{\beta_a}", from=2-1, to=2-2]
	\arrow["{[d^{0}\star a,1]}", from=1-1, to=1-2]
	\arrow["{[a,d^1]}"', from=1-1, to=2-1]
	\arrow["{\alpha_a}", from=1-2, to=2-2]
	\arrow["{e\star x}", from=2-2, to=2-3]
\end{tikzcd}\]
which implies the desired inequality.
\end{proof}

\vspace{1cm}

We now use these results to show that the thinness extensions are weak equivalences.
\begin{definition}
\label{defi:a cocartesian square for intelingent truncation} 
 We define by induction on $n\geq 2$ the morphism $\iota_n:[[n-1],1]\to[n]$ where $\iota_2:= \alpha_{[0]}$ and $\iota_{n+1} := e\star\iota_n\circ \alpha_{[n-1]}$.
\end{definition}

 We can easily show by induction that $[n]$ is a colimit of terms which are all invariant under $\tau^i_{n-1}$ except the one corresponding to $\iota_n$.
 For any $n$ we then have a pushout square:
\[\begin{tikzcd}
	{[[n-1],1]} & {[n]} \\
	{[[n-1]_t,1]} & {[n]_t}
	\arrow["{\iota_n}", from=1-1, to=1-2]
	\arrow[from=1-1, to=2-1]
	\arrow[from=1-2, to=2-2]
	\arrow[from=2-1, to=2-2]
	\arrow["\lrcorner"{anchor=center, pos=0.125, rotate=180}, draw=none, from=2-2, to=1-1]
\end{tikzcd}\]

\begin{lemma}
\label{lem:thinness extension last step0}
For any $n$ and for any $k<n$, such that $k\neq n-2$, we have inequalities
$d^k\circ\iota_{n-1}\geq_{n-1}\iota_n \circ[d^k,1]$ and $(d^n\circ\iota_{n-1},d^{n-2}\circ \iota_{n-1})\geq_{n-1}\iota_n\circ [d^{n-2},1]$
\end{lemma}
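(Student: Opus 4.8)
The statement is Lemma~\ref{lem:thinness extension last step0}, an inductive claim about inequalities $\geq_n$ between the comparison maps $\iota_n:[[n-1],1]\to[n]$ precomposed and postcomposed with coface maps. I would prove it by induction on $n$, using the recursive definition $\iota_{n+1}=e\star\iota_n\circ\alpha_{[n-1]}$ together with the stability results Propositions~\ref{prop:geqn stable by star} and \ref{prop:2geqn stable by star} (which promote a $\geq_n$-inequality to a $\geq_{n+1}$-inequality after applying $e\star(-)\circ\alpha$), Lemma~\ref{lemma:case k 0} (which handles the $k=0$ face, where $d^0\star[a,1]$ enters rather than $\alpha_a\circ[d^0\star a,1]$), and the basic inequalities of Lemmas~\ref{lem:abstract thinness 2} and \ref{lem:abstract thinness 3}, which are exactly the $n=2$ (base) instances dressed up in the $\epsilon_a,\delta_a$ notation of Construction~\ref{cons:the big construction}.

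\emph{Base case.} For small $n$ I would unwind $\iota_2=\alpha_{[0]}$ and check the finitely many face maps $d^k$ with $k<2$, $k\neq 0$ directly from Lemmas~\ref{lem:abstract thinness 2} and \ref{lem:abstract thinness 3}: these lemmas, read with $a=[0]$, say precisely that $e\star\pi\circ\epsilon_{[0]}\circ[d^0\otimes[0],1]\geq_3 e\star\pi\circ\epsilon_{[0]}\circ[d^1\botimes[0],1]$ and the paired version, which unpack to the asserted inequalities $d^k\circ\iota_2\geq_2\iota_3\circ[d^k,1]$ and $(d^3\circ\iota_2,d^1\circ\iota_2)\geq_2\iota_3\circ[d^1,1]$ under the identifications coming from the explicit colimit description of $[1]\star[a,1]$ recorded before Lemma~\ref{lem:outer horn inclusion1}.

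\emph{Inductive step.} Assume the inequalities hold at level $n$; I want them at level $n+1$. Write $\iota_{n+1}=e\star\iota_n\circ\alpha_{[n-1]}$. For a face $d^k$ with $1\le k\le n$ and $k\neq n-1$ (the renumbered "not $n-2$" condition), one expresses $d^k$ on $[n+1]$ in terms of $e\star d^{k-1}$ on the outer cone factor, so that $d^k\circ\iota_n$ becomes $e\star(d^{k-1}\circ\iota_{n-1})\circ\alpha$ up to the commuting diagrams (1)–(6) of Construction~\ref{cons:the big construction} and the naturality of $\alpha$. By the inductive hypothesis $d^{k-1}\circ\iota_{n-1}\geq_{n-1}\iota_n\circ[d^{k-1},1]$ (or the paired inequality when $k-1=n-2$), and Proposition~\ref{prop:geqn stable by star} (resp. Proposition~\ref{prop:2geqn stable by star}) upgrades this to $\geq_n$ after $e\star(-)\circ\alpha$, which is exactly $d^k\circ\iota_n\geq_n\iota_{n+1}\circ[d^k,1]$. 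The remaining cases are $k=0$, handled by Lemma~\ref{lemma:case k 0} (noting $d^0$ on $[n+1]$ corresponds to $d^0\star(-)$, not to an outer coface), and the top/paired case $k=n+1$ together with $k=n-1$, handled by combining Proposition~\ref{prop:2geqn stable by star} with the base inequalities of Lemmas~\ref{lem:abstract thinness 2}–\ref{lem:abstract thinness 3}, since at the last step the two faces $d^{n+1}\circ\iota_n$ and $d^{n-1}\circ\iota_n$ both feed into $\iota_{n+1}\circ[d^{n-1},1]$ via a single $[[n-1],2]$-composite, mirroring the structure of diagram~(6).

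\emph{Main obstacle.} The delicate point is not the homotopical stability — that is packaged cleanly in Propositions~\ref{prop:geqn stable by star} and \ref{prop:2geqn stable by star} — but the bookkeeping of \emph{which} coface map on $[n+1]$ restricts to $e\star(\text{coface on }[n])$ and which becomes the $d^0\star(-)$ map, i.e. correctly matching the simplicial identities for the cosimplicial object $[n]\mapsto e\star\cdots\star e\star(-)$ against the coface maps of $\Delta$. This is exactly where the index shift by two ("$k\neq n-2$" and the special role of $d^{n}$ and $d^{n-2}$) comes from: $\iota_n$ lands in $[n]=e\star\cdots\star e$ by inserting a globular $1$-cell, and the two faces adjacent to the inserted cell behave differently from the others, forcing the paired inequality $(\cdot,\cdot)\geq_n\cdot$ rather than a plain one. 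I would set up the combinatorics once, carefully, via the commuting squares of Construction~\ref{cons:the big construction}, and then the induction runs formally.
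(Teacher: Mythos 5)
Your proposal follows essentially the same route as the paper: induction on $n$ via the recursion $\iota_{n+1}=e\star\iota_n\circ\alpha_{[n-1]}$, promoting the inductive inequality with Propositions~\ref{prop:geqn stable by star} and \ref{prop:2geqn stable by star}, handling $k=0$ separately through Lemma~\ref{lemma:case k 0}, and keeping track of the coface bookkeeping via the commuting diagrams of Construction~\ref{cons:the big construction}. The only cosmetic difference is in the base case, which the paper dispatches by noting the two morphisms at $n=2$, $k=1$ are literally equal rather than by invoking Lemmas~\ref{lem:abstract thinness 2}--\ref{lem:abstract thinness 3}.
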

\begin{proof}
We start by showing the first inequality by induction on $n$. If $n=2$, the only case is $k=1$, and the two morphisms are equal.

Suppose now the result true at the stage $n$. 
If $k>0$, we have
$$
\begin{array}{rllc}
d^{k}\circ\iota_{n}&=& e\star d^{k-1}\circ e\star \iota_{n-1}\circ \alpha_{[n-2]}\\
&\geq_{n}&e\star \iota_n \circ e\star [d^{k-1},1] \circ \alpha_{[n-2]}&\mbox{(induction hypothesis and \ref{prop:geqn stable by star})}\\
&=& e\star \iota_n \circ \alpha_{[n-1]} \circ [e\star d^{k-1},1]\\
&=& \iota_{n+1} \circ \alpha_{[n-1]} \circ [d^{k},1]
\end{array}
$$
We still have to deal with the case $k=0$. As $d^0:[n]\to [n+1]$ (resp $[d^0,1]:[[n-1],1]\to [[n],1]$) is equal to $d^0\star [n]$ (resp. $[d^0\star [n-1],1]$), this is exactly the content of lemma \ref{lemma:case k 0}.

For the second inequality, we proceed again by induction. We remark that this is true for $n=2$. Suppose now the result true at the stage $n$. We have
$$
\begin{array}{rllc}
(d^{n+1}\circ\iota_{n},d^{n-1}\iota_{n})&=& (e\star d^{n}\circ e\star \iota_{n-1}\circ\alpha_{[n-2]},e\star d^{n-2}\circ e\star \iota_{n-1}\circ\alpha_{[n-2]})\\
&\geq_{n-1}& e\star \iota_n\circ e\star[d^{n-2},1]\circ \alpha_{[n-2]}~~~~~\mbox{(induction hypothesis and \ref{prop:2geqn stable by star})}\\
&=& e\star \iota_n\circ e\star \alpha_{[n-1]}\circ[e\star d^{n-2},1]\\ 
&=& \iota_{n+1}\circ [d^{n-1},1]
\end{array}
$$
\end{proof}

\begin{lemma}
\label{lem:thinness extension last step1}
Let $0<k<n$ be two integers.
We denote by $\tau^k$ the projection $[n]\to [n]^k$. We then have $$\tau^k\circ \iota_n\circ [d^k,1]\geq_{n-1}\tau^k\circ d^k\circ\iota_{n-1}.$$
\end{lemma}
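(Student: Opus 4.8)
The plan is to exploit the two inequalities already established in Lemma \ref{lem:thinness extension last step0}, together with the fact that the projection $\tau^k\colon[n]\to[n]^k$ is compatible with the $\geq$-relation in the sense of Propositions \ref{prop:meaning of geq case 1} and \ref{prop:meaning of geq case 2}. Concretely, recall that $[n]^k$ is obtained from $[n]$ by marking the simplices that meet $\{k-1,k,k+1\}$; so $\tau^k$ is a pushout of the relevant $t^{n-1}$-truncation, and composing a morphism with $\tau^k$ has the effect of forcing thinness of exactly these simplices. The relation $\geq_{n-1}$ is generated precisely by diagrams that become liftable after such a truncation, so one expects $\geq_{n-1}$-comparisons in $[n]$ to push forward to $\geq_{n-1}$-comparisons in $[n]^k$, and moreover for some new comparisons to become available in $[n]^k$ that were not present in $[n]$.

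First I would fix the explicit combinatorial description of $[d^k,1]\colon[[n-1],1]\to[[n],1]$ and of the two composites $\iota_n\circ[d^k,1]$ and $d^k\circ\iota_{n-1}$ landing in $[n]$, using the inductive definition $\iota_{n+1}=e\star\iota_n\circ\alpha_{[n-1]}$ and the diagrams $(1)$–$(6)$ of Construction \ref{cons:the big construction}. The key point is that both composites are morphisms $[a,1]\to[n]$ with the same source $a=[n-2]$ (before any truncation), differing only by the face that gets "collapsed"; their images in $[n]^k$ agree on the underlying $1$-simplex of $[n]$ up to the markings that $\tau^k$ imposes. Then I would split into the two cases of Lemma \ref{lem:thinness extension last step0}: when $k\neq n-2$ we have $d^k\circ\iota_{n-1}\geq_{n-1}\iota_n\circ[d^k,1]$ directly in $[n]$, and composing with $\tau^k$ preserves this (the generating relations of Definition \ref{def:order relation case 1} are stable under postcomposition with a map that is itself a truncation-type pushout, since the requisite liftings persist); when $k=n-2$ we instead have the pair-inequality $(d^n\circ\iota_{n-1},d^{n-2}\circ\iota_{n-1})\geq_{n-1}\iota_n\circ[d^{n-2},1]$, and here I would use that in $[n]^{n-2}$ the two faces $d^n\circ\iota_{n-1}$ and $d^{n-2}\circ\iota_{n-1}$ become comparable to a common morphism — because the $2$-simplex witnessing the pair relation (case (1) of Definition \ref{def:order relation case 2}) is one of the simplices that $\tau^{n-2}$ makes thin, so the required filler exists after projecting. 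Combining, one gets $\tau^k\circ\iota_n\circ[d^k,1]\geq_{n-1}\tau^k\circ d^k\circ\iota_{n-1}$ (note the reversal of roles: the lemma asks for the inequality in this direction, which is what the projection produces once the collapsed face is made thin).

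The main obstacle I expect is bookkeeping: verifying that postcomposition with $\tau^k$ really does send each of the three generating clauses of $\geq_{n-1}$ (the two lifting conditions and the $n$-reliability condition) to a valid $\geq_{n-1}$-relation in $[n]^k$, and in the $k=n-2$ case, identifying exactly which simplex of $[n]$ is used as the witnessing $2$-simplex and checking it lies in the thin part of $[n]^{n-2}$. This is essentially a careful diagram-chase through Construction \ref{cons:the big construction}, using that $t^{n-1}$ commutes with $e\star\uvar$ up to the shift recorded in the definition of a Gray module, and that $[n]^k\to[n-1]$ (or the relevant quotient) is a pushout of a strong cofibration so that all the homotopy-cocartesian squares of Propositions \ref{prop:meaning of geq case 1} and \ref{prop:meaning of geq case 2} remain available. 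No genuinely new homotopical input is needed beyond those two propositions and Lemma \ref{lem:thinness extension last step0}; the work is entirely in matching up the combinatorics.
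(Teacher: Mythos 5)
Your plan has a genuine gap at its central step: the reversal of direction. Lemma \ref{lem:thinness extension last step0} gives $d^k\circ\iota_{n-1}\geq_{n-1}\iota_n\circ[d^k,1]$, and postcomposition with $\tau^k$ does preserve the relation $\geq_{n-1}$ (the generating liftings of Definitions \ref{def:order relation case 1} and \ref{def:order relation case 2} persist under postcomposition), but that only yields $\tau^k\circ d^k\circ\iota_{n-1}\geq_{n-1}\tau^k\circ\iota_n\circ[d^k,1]$ — the opposite of what the lemma asserts. The relation $\geq_{n-1}$ is by construction asymmetric (it records that one morphism factors through a partially truncated composition witness), and marking additional simplices in the target does not formally symmetrize it; your parenthetical remark that the projection ``produces'' the reversed inequality is an assertion, not an argument. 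Your case split is also off: the pair-inequality $(d^n\circ\iota_{n-1},d^{n-2}\circ\iota_{n-1})\geq_{n-1}\iota_n\circ[d^{n-2},1]$ cannot be ``reversed'' either (Definition \ref{def:order relation case 2} only defines pair $\geq_n$ single, never the converse), and it plays no role in this lemma — it feeds into Lemma \ref{lem:thinness extension last step2} and Proposition \ref{prop:thinness extension} instead.

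The paper's proof is structured quite differently: it is an induction on $n$ in which the inductive step for $k>1$ rewrites $\tau^k\circ\iota_{n+1}\circ[d^k,1]$ as $e\star(\cdots)\circ\alpha_{[n-2]}$ and applies the induction hypothesis via Proposition \ref{prop:geqn stable by star} (stability of $\geq_n$ under $e\star\uvar$), while the essential new content sits in the case $k=1$. There, an explicit diagram chase through Construction \ref{cons:the big construction} shows that the relevant morphism $[[2]\botimes[n-2],1]\to R([n+1]^1)$ factors through $[[2]\botimes[n-2]\coprod_{d^0\otimes a}t^{n-1}([1]\otimes[n-2]),1]$, and Lemma \ref{lem:abstract thinness 1} (the $n$-reliability of $[2]\botimes a$ on its faces) then supplies the clause-(3) witness for $\geq_{n-1}$ in the needed direction. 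This is exactly the mechanism by which the extra thin simplices of $[n+1]^1$ are converted into a reversed comparison, and it is the ingredient your proposal is missing; without it, no amount of bookkeeping over $\tau^k$ will produce the inequality in the stated direction.
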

\begin{proof}
We demonstrate the result by induction on $n$. For the initialization, the only case is $n=2$ and $k=1$, and is obvious. 
Suppose now the result true at the stage $n$, and let $k>1$. 
We have inequalities:
$$\begin{array}{rlll}
\tau^k\circ \iota_{n+1}\circ [d^k,1] &=& e\star \tau^k\circ e\star \iota_n \circ \alpha_{[n-1]}\circ [d^k,1]\\
&=&e\star \tau^k\circ e\star \iota_n \circ e\star [d^{k-1},1]\circ \alpha_{[n-2]}\\
&\geq_n & e\star \tau_k\circ e\star d^{k-1}\circ e\star \iota_{n-1}\circ \alpha_{[n-2]}& \mbox{(induction hypothesis and \ref{prop:geqn stable by star})}\\
&=& \tau_k\circ d^k\circ \iota_n
\end{array}$$
We still have to deal with the case $k=1$. Using diagrams $(1)$, $(2)$, $(4)$ and $(5)$, of construction \ref{cons:the big construction}, we get a diagram:
\[\begin{tikzcd}
	{[[n-1],1]} & {e\star[[n-2],1]} & {[n]} \\
	{[[2]\botimes [n-2],1]} & {e\star([e,1]\vee[[n-2],1])} & {[n+1]} & {[n+1]^1} \\
	{[[n-1],1]} & {e\star[[n-2],1]} & {e\star[[n-1],1]}
	\arrow["{d^1}", from=1-3, to=2-3]
	\arrow["{e\star\iota_n}"', from=3-3, to=2-3]
	\arrow["{e\star[d^0,1]}"', from=3-2, to=3-3]
	\arrow["{e\star [[n-1],d^1]}"', from=3-2, to=2-2]
	\arrow["{e\star [[n-1],d^0]}", from=1-2, to=2-2]
	\arrow["{e\star \iota_{n-1}}", from=1-2, to=1-3]
	\arrow["{e\star\beta_{[n-1]}}", from=2-2, to=2-3]
	\arrow["{\alpha_{[n-2]}}"', from=3-1, to=3-2]
	\arrow["{\alpha_{[n-2]}}", from=1-1, to=1-2]
	\arrow["{e\star\pi\circ \epsilon_{[n-2]}}", from=2-1, to=2-2]
	\arrow["{[d^2\botimes [n-2],1]}"', from=1-1, to=2-1]
	\arrow["{[d^1\botimes [n-2],1]}", from=3-1, to=2-1]
	\arrow["{\tau^1}", from=2-3, to=2-4]
\end{tikzcd}\]
where $\pi$ is the projection $[[n-2],2]\to [e,1]\vee[[n-2],1]$.
However, according to the diagrams $(5)$ and $(3)$ of \ref{cons:the big construction}, there is a diagram:
\[\begin{tikzcd}
	{[[1]\otimes [n-2],1]} & {[e\star [n-2],1]\vee[[n-2],1]} & {[e\star[n-2],1]} \\
	{[[2]\botimes [n-2],1]} & {[[n-2],2]} & {e\star[[n-2],1]} \\
	& {e\star([e,1]\vee[[n-2],1])} & {e\star[e,1]} \\
	& {[n+1]^1} & {[2]_t}
	\arrow["{[d^0\otimes [n-2],1]}"', from=1-1, to=2-1]
	\arrow["{[[1]\otimes[n-2],d^1]}", from=1-1, to=1-2]
	\arrow["{d^3\circ...\circ d^{n+1}}", from=4-3, to=4-2]
	\arrow["{\delta_{[n-2]}}", from=1-2, to=2-2]
	\arrow["{ \epsilon_{[n-2]}}"', from=2-1, to=2-2]
	\arrow["{e\star \pi}", from=2-2, to=3-2]
	\arrow["{[e\star[n-2],d^2]}"', from=1-3, to=1-2]
	\arrow["{\alpha_{[n-2]}}", from=1-3, to=2-3]
	\arrow[from=2-3, to=3-3]
	\arrow[from=3-3, to=3-2]
	\arrow[from=2-3, to=2-2]
	\arrow["{\tau_1\circ e\star\beta_{[n-1]}}"', from=3-2, to=4-2]
	\arrow[from=3-3, to=4-3]
\end{tikzcd}\]
This implies that $[[2]\botimes [n-2],1]\to [n+1]^{k}\to ([n+1]^{k})_{\mk} $ factors through $[[2]\botimes[n-2]\coprod_{d^0\otimes a}\tau^i_{n-1}([1]\otimes [n-2]),1]$. We can then apply lemma \ref{lem:abstract thinness 1}. 	
\end{proof}

\begin{lemma}
\label{lem:thinness extension last step2}
Let $0<k<n-1$ be two integers.
We denote by $\tau^k$ the projection $[n]\to [n]^k$. We then have $$(\tau^k\circ \iota_n\circ [d^{k-1},1], \tau^k\circ \iota_n\circ [d^{k+1},1])\geq_{n-1}\tau^k\circ \iota_n\circ [d^{k},1]$$ and $$\tau^{n-1}\circ \iota_n\circ [d^{n-2},1]\geq_{n-1}\tau^k\circ \iota_n\circ [d^{n-1},1].$$
\end{lemma}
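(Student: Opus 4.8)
I would prove both inequalities together by induction on $n$, in the style of lemmas \ref{lem:thinness extension last step0} and \ref{lem:thinness extension last step1}. The base case is $n=3$ (so $k=1$ in the first inequality), which I would check by an explicit diagram chase. For the inductive step at stage $n+1$, the recursion $\iota_{n+1}=e\star\iota_n\circ\alpha_{[n-1]}$, the naturality of $\alpha$, and the identities $d^j=e\star d^{j-1}$ for $j\geq 1$ give
\[
\iota_{n+1}\circ[d^j,1]=e\star\bigl(\iota_n\circ[d^{j-1},1]\bigr)\circ\alpha_{[n-2]}\qquad(j\geq 1),
\]
and, after composing with $\tau^k$ and using its compatibility with $e\star\uvar$ exactly as in the proof of lemma \ref{lem:thinness extension last step1}, each composite $\tau^k\circ\iota_{n+1}\circ[d^{j},1]$ with $j\geq 1$ is rewritten as $e\star(\text{a stage-}n\text{ composite})\circ\alpha_{[n-2]}$. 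Provided $k\geq 2$, so that $j-1\geq 1$ for all three faces $j\in\{k-1,k,k+1\}$, one then feeds the induction hypothesis at stage $n$ and index $k-1$ into proposition \ref{prop:2geqn stable by star} for the first (binary) inequality, and into proposition \ref{prop:geqn stable by star} for the second (unary) one, and reads off the claimed $\geq_{n-1}$-relation. For the second inequality only the top two faces occur, never $d^0$, so no exceptional case appears and this recursion terminates cleanly at $n=3$.

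The one face the mechanical reduction cannot absorb is $d^{k-1}=d^0$, that is, the case $k=1$ of the first inequality, and this — together with the base cases — is where the real work is. One must produce the relation $(\tau^1\circ\iota_n\circ[d^0,1],\tau^1\circ\iota_n\circ[d^2,1])\geq_{n-1}\tau^1\circ\iota_n\circ[d^1,1]$ directly, arguing as in the last paragraph of the proof of lemma \ref{lem:thinness extension last step1}. Using the commutative squares $(1)$--$(6)$ of construction \ref{cons:the big construction} with $a=[n-2]$, together with lemma \ref{lemma:case k 0} (for the $d^0$-face) and the comparisons of lemmas \ref{lem:abstract thinness 2} and \ref{lem:abstract thinness 3}, I would show that the three composites above are $\geq$-comparable to the three canonical arrows into $[[2]\botimes[n-2],1]$ obtained from $d^0\otimes[n-2]$, $d^1\botimes[n-2]$ and $d^2\botimes[n-2]$; one then invokes the second case of definition \ref{def:order relation case 2} with $b=[2]\botimes[n-2]$, whose $(n-1)$-reliability on $d^0\otimes[n-2]$ and $d^2\botimes[n-2]$ is precisely lemma \ref{lem:abstract thinness 1}. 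The base cases $n=3$ are instances of the same diagram chase, simpler because neither a reduction nor an exceptional face intervenes.

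The routine part is the bookkeeping of index shifts among $[n]^k$, $e\star([n]^{k-1})$, the faces $d^j$ and the projections $\tau^j$, which is forced by the analogous computations already carried out in lemmas \ref{lem:thinness extension last step0} and \ref{lem:thinness extension last step1}. I expect the main obstacle to be the $k=1$ case: before any $n$-reliability lemma applies, one must pin down the exact factorization of the relevant composite through a truncated object of the form $[[2]\botimes[n-2]\coprod_{d^0\otimes[n-2]}t^{n-1}([1]\otimes[n-2]),1]$, and arranging the diagram of construction \ref{cons:the big construction} so that this factorization is visible is the delicate step.
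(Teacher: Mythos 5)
Your proposal takes a genuinely different route from the paper, and the route you choose is considerably harder than the one actually used. The paper does not run an induction in the style of lemmas \ref{lem:thinness extension last step0} and \ref{lem:thinness extension last step1} at all, and it never invokes propositions \ref{prop:geqn stable by star} or \ref{prop:2geqn stable by star} here. Instead it observes that, by construction of the marked join, the composite $[[2]\star a,1]\to[2]\star[a,1]\to[2]_t\star[a,1]$ factors through $[[2]_t\star a,1]$; iterating this, the whole map $\tau^k\circ\iota_n\colon[[n-1],1]\to[n]^k$ factors through $[[n-1]^k,1]$ for $k<n-1$ (and $\tau^{n-1}\circ\iota_n$ through $[[n-1]^{n-2},1]$). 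All three faces $[d^{k-1},1],[d^k,1],[d^{k+1},1]$ then land in a single object $[b,1]$ with $b=[n-1]^k\in A$, and the desired $\geq_{n-1}$-relations are read off from case (2) of definition \ref{def:order relation case 2} (resp.\ case (3) of definition \ref{def:order relation case 1}), the required reliability of $b$ coming from the fact that complicial horn and thinness extensions are already acyclic cofibrations in $A$ itself. In other words, the hypothesis $0<k<n-1$ is used to push the whole statement one categorical level down into $A$, where it is an axiom; there is no case distinction on $k$ and no exceptional face.

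Your plan, by contrast, concentrates the entire content of the lemma in the $k=1$ case (which your reduction hits for every $n$, not only at the base) and in the base cases of the second inequality, and these are only sketched. The sketch as written has a concrete index problem: to produce a $\geq_{n-1}$-relation via case (2) of definition \ref{def:order relation case 2} you need a $b$ that is $(n-2)$-relying on the two maps, whereas lemma \ref{lem:abstract thinness 1} applied to $b=[2]\botimes[n-2]$ only yields $(n-1)$-reliability (since $t^{n-1}([n-2])=[n-2]$ but $t^{n-2}([n-2])\neq[n-2]$); that degree of reliability is what feeds a $\geq_n$-relation, as in the $k=1$ step of lemma \ref{lem:thinness extension last step1}, not a $\geq_{n-1}$-relation. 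You would have to redo the factorizations of construction \ref{cons:the big construction} around $[2]\botimes[n-3]$, whose structure maps have domains $[1]\otimes[n-3]$ and $e\star[n-3]$ rather than $[n-2]$, and it is not at all clear that the three composites $\tau^1\circ\iota_n\circ[d^j,1]$ factor compatibly through such an object. Similarly, the recursion for the second inequality does not "terminate cleanly": one step below $n=3$ it already produces a $d^0$-face, so the $n=3$ base case needs the same kind of direct argument. None of this is obviously fatal, but the step you defer is precisely where a proof would live, and the short factorization argument through $[[n-1]^k,1]$ is the idea your proposal is missing.
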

\begin{proof}
By construction, for any $a$, the morphism $[[2]\star a,1]\to [2]\star[a,1]\to [2]_t\star[a,1]$ factors through $[[2]_t\star a,1]$. By induction, this implies that the composite morphism $[[n-1],1]\xrightarrow{\iota_n}[n]\to [n]^k$ factors through $[[n-1]^k,1]$ for any $k<n-1$. This implies the first assertion. 

For the second one, note that $[[1],e]\to [2]\to [2]_t$ factors through $[[1]_t,e]$. By induction, this implies that the composite morphism $[[n-1],1]\xrightarrow{\iota_n}[n]\to [n]^{n-1}$ factors through $[[n-1]^{n-2},1]$ which gives the second one.
\end{proof}

\begin{prop}
\label{prop:thinness extension}
For any $0\leq k\leq n$, the morphism $([n]^k)' \to ([n]^k)''$ is a weak equivalence.
\end{prop}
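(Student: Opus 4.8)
The cases $k=0$ and $k=n$ are precisely Lemma~\ref{lemma:thinnes extention case 0 and n}, so fix $0<k<n$; the case $n\le 2$ (where necessarily $k=1$) is treated directly, so assume $n\ge 3$. Let $\tau^k\colon[n]\to[n]^k$ be the projection; we also write $\tau^k$ for its composites with the entire maps $[n]^k\to([n]^k)'\to([n]^k)''$. Inspecting the definitions, $([n]^k)''$ is obtained from $([n]^k)'$ by declaring exactly one further simplex thin, the $k$-face $d^k\colon[n-1]\to[n]$; hence, since $\mSset\to\mSeg(A)$ is colimit-preserving, $([n]^k)'\to([n]^k)''$ is the pushout of $[n-1]\to[n-1]_t$ along $\tau^k\circ d^k$. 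By the pushout square preceding Lemma~\ref{lem:thinness extension last step0}, together with the equality $[[m]_t,1]=t^{m+1}([[m],1])$ — valid for $m\ge 1$ because $t^{m+1}([[m],1])=[t^m([m]),1]$ and $t^m([m])=[m]_t$ in $A$ — the morphism $[n-1]\to[n-1]_t$ is the pushout of $[[n-2],1]\to t^{n-1}([[n-2],1])$ along $\iota_{n-1}$. Hence, with $h:=\tau^k\circ d^k\circ\iota_{n-1}$, it suffices to prove that the induced map
\[
([n]^k)'\ \longrightarrow\ ([n]^k)'\coprod_{[[n-2],1]}t^{n-1}([[n-2],1])
\]
is a weak equivalence.

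For an edge $z\colon[b,1]\to C$ with $t^{n-1}(b)=b$ call $C\to C\coprod_{[b,1]}t^{n-1}([b,1])$ the \emph{$z$-truncation of $C$}; our target map is the $h$-truncation of $([n]^k)'$. By Lemma~\ref{lem:thinness extension last step1}, $g:=\tau^k\circ\iota_n\circ[d^k,1]\ge_{n-1}h$; by Lemma~\ref{lem:thinness extension last step2}, $(g_-,g_+)\ge_{n-1}g$ with $g_\pm:=\tau^k\circ\iota_n\circ[d^{k\pm1},1]$ — only $g_-\ge_{n-1}g$ when $k=n-1$, where $d^{k+1}$ is undefined; and by Lemma~\ref{lem:thinness extension last step0}, or its binary variant when the face index is $n-2$, post-composed with $\tau^k$, we obtain $\tau^k\circ d^{k\pm1}\circ\iota_{n-1}\ge_{n-1}g_\pm$, possibly with a second term $\tau^k\circ d^{n}\circ\iota_{n-1}$. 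Now the $(k-1)$-face, the $(k+1)$-face, and, when it appears, the $n$-face of $[n]$ are all thin in $([n]^k)'$, and $\iota_{n-1}$ carries the top cell of $[[n-2],1]$ onto the top cell of $[n-1]$; hence the edges $\tau^k\circ d^{k\pm1}\circ\iota_{n-1}$ and $\tau^k\circ d^{n}\circ\iota_{n-1}$ factor through a thin $(n-1)$-simplex of $([n]^k)'$, so their truncation is an isomorphism, and stays so after any further truncation.

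It remains to propagate the property "the truncation is a weak equivalence" up this chain, exactly as in the proof of Proposition~\ref{prop:meaning of geq case 1}. Suppose $x\ge_{n-1}y$ and that the $x$-truncation of $C$ and of $C_y$ are weak equivalences. Then $C_x\to C_{x,y}$ is an acyclic cofibration by Proposition~\ref{prop:meaning of geq case 1}, so $C\to C_{x,y}$ is a weak equivalence; and since $C_y\to C_{x,y}$ is itself an $x$-truncation, hence a weak equivalence, two out of three on $C\to C_y\to C_{x,y}$ shows the $y$-truncation of $C$ is a weak equivalence — and the same reasoning applies with $C_h$ in place of $C$, the inequalities above being still valid as morphisms into $C_h$. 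The analogue with Proposition~\ref{prop:meaning of geq case 2} treats a relation $(x,y)\ge_{n-1}z$. Feeding in successively $\tau^k\circ d^{k\pm1}\circ\iota_{n-1}\ge_{n-1}g_\pm$, then $(g_-,g_+)\ge_{n-1}g$, then $g\ge_{n-1}h$, we conclude that the $h$-truncation of $([n]^k)'$ is a weak equivalence, as wanted. The main difficulty is precisely this bookkeeping: one must check that the inequalities of Lemmas~\ref{lem:thinness extension last step0}--\ref{lem:thinness extension last step2}, with the binary variants arising when a face index equals $n-2$ or when $k\in\{1,n-1\}$, really do assemble into a finite chain whose leaves are edges factoring through a thin face of $([n]^k)'$, together with the auxiliary facts used above ($[[m]_t,1]=t^{m+1}([[m],1])$, the non-degeneracy of $\iota_n$ on top cells, and the base case $n=2$).
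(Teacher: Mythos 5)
Your proof is correct and follows essentially the same route as the paper: the boundary cases via Lemma~\ref{lemma:thinnes extention case 0 and n}, then for $0<k<n$ the chaining of Lemmas~\ref{lem:thinness extension last step0}, \ref{lem:thinness extension last step1} and \ref{lem:thinness extension last step2} into the inequality $(\tau^k\circ d^{k-1}\circ\iota_{n-1},\tau^k\circ d^{k+1}\circ\iota_{n-1})\geq_{n-1}\tau^k\circ d^{k}\circ\iota_{n-1}$, converted into a weak equivalence by Propositions~\ref{prop:meaning of geq case 1} and \ref{prop:meaning of geq case 2}. The only difference is presentational (you phrase the conclusion as the single pushout along the thinned $k$-face, the paper as a homotopy cocartesian square over the $(k\pm1)$-faces), and you supply more of the bookkeeping that the paper leaves implicit.
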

\begin{proof}
The case $k=0$ and $k=n$ are demonstrated in lemma \ref{lemma:thinnes extension case 0 and n}. For the case $0<k<n$, lemmas \ref{lem:thinness extension last step0}, \ref{lem:thinness extension last step1} and \ref{lem:thinness extension last step2} imply that if we denote by $\tau_k$ the projection $[n]\to [n]^k$, we have an inequality: $(\tau_k\circ d^{k-1}\circ \iota_{n-1},\tau_k\circ d^{k+1}\circ \iota_{n-1})\geq_{n-1}\tau_k\circ d^k\circ \iota_{n-1}$. Together with the proposition \ref{prop:meaning of geq case 2}, this implies that the following square is homotopy cartesian:
\[\begin{tikzcd}
	{[n-1]\cup[n-1]} & {[n]^k} \\
	{[n-1]_t\cup[n-1]_t} & {([n]^k)''}
	\arrow["{d^{k+1}\cup d^{k-1}}", from=1-1, to=1-2]
	\arrow[from=1-1, to=2-1]
	\arrow[from=2-1, to=2-2]
	\arrow[from=1-2, to=2-2]
\end{tikzcd}\]
The morphism $([n]^k)' \to ([n]^k)''$ is then a weak equivalence.
\end{proof}

\subsection{Saturation extensions}
\label{section:Saturation extensions}

\begin{prop}
\label{prop:saturation extension}
Suppose that the Gray module structure on $A$ is saturated.
For any $n\geq -1$, the morphism $[n]\star [3]^{eq}\to [n]\star [3]^{\sharp}$ is an saturated complete acyclic cofibration.
\end{prop}
\begin{proof}
Let $\Lambda[3]^{eq}\to [3]^{eq}$ be the entire inclusion generated by $Im(d^3)\cup Im(d^0)\subset [3]$. This inclusion fits in the following sequence:
\[\begin{tikzcd}
	{\Lambda^1[2]} & {[2]_t} & {([3]^1)'} & {([3]^1)''} \\
	{\Lambda[3]^{eq}} & \bullet & \bullet & {[3]^{eq}} \\
	& {\Lambda^1[3]} & {[3]}
	\arrow[from=1-1, to=2-1]
	\arrow["{d^2}", from=1-2, to=2-2]
	\arrow[from=1-1, to=1-2]
	\arrow[from=2-1, to=2-2]
	\arrow["\lrcorner"{anchor=center, pos=0.125, rotate=180}, draw=none, from=2-2, to=1-1]
	\arrow[from=3-2, to=2-2]
	\arrow[from=3-3, to=2-3]
	\arrow[from=3-2, to=3-3]
	\arrow[from=2-2, to=2-3]
	\arrow["\lrcorner"{anchor=center, pos=0.125, rotate=-90}, draw=none, from=2-3, to=3-2]
	\arrow[from=1-3, to=2-3]
	\arrow[from=1-4, to=2-4]
	\arrow[from=1-3, to=1-4]
	\arrow[from=2-3, to=2-4]
	\arrow["\lrcorner"{anchor=center, pos=0.125, rotate=180}, draw=none, from=2-4, to=1-3]
\end{tikzcd}\]
This inclusion is then a weak equivalence according to propositions \ref{prop:horn_inclusion} and \ref{prop:thinness extension}.
Now, note that we have a pushout:
\[\begin{tikzcd}
	{[2]_t\amalg [2]_t} & {\Lambda[3]^{eq}} \\
	{[e,2]\coprod [e,2]} & {[e,[3]^{eq}]}
	\arrow[from=1-1, to=2-1]
	\arrow[from=1-1, to=1-2]
	\arrow[from=1-2, to=2-2]
	\arrow[from=2-1, to=2-2]
	\arrow["\lrcorner"{anchor=center, pos=0.125, rotate=180}, draw=none, from=2-2, to=1-1]
\end{tikzcd}\]
As the left vertical morphism is a weak equivalence, so is the right one. 
Let $\Lambda[3]^{\sharp}\to [3]^{\sharp}$ be the entire inclusion generated by $Im(d^3)\cup Im(d^0)\subset [3]$.
Using the same reasoning, we show that this cofibration is acyclic and that there is a weak equivalence $\Lambda[3]^\sharp \to [e,[3]^\sharp]$. We then have a commutative square:
\[\begin{tikzcd}
	{[e,[3]^{eq}]} & {\Lambda[3]^{eq}} & {[3]^{eq}} \\
	{[e,[3]^{\sharp}]} & {\Lambda[3]^{{\sharp}}} & {[3]^{{\sharp}}}
	\arrow["\sim"', from=1-2, to=1-1]
	\arrow["\sim", from=1-2, to=1-3]
	\arrow[from=1-3, to=2-3]
	\arrow["\sim", from=2-2, to=2-1]
	\arrow["\sim"', from=2-2, to=2-3]
	\arrow["\sim"', from=1-1, to=2-1]
	\arrow[from=1-2, to=2-2]
\end{tikzcd}\]
where all arrows labelled by $\sim$ are saturated weak equivalences. By two out of three, this implies that $[3]^{eq}\to [3]^\sharp$ is a saturated weak equivalence. Combined with the proposition \ref{prop:leibnizt joint is Quillen}, this concludes the proof.
\end{proof}

\subsection{Conclusion}

\begin{prop}
\label{prop:Quillen adjunction}
Let $A$ be a complicial (resp. saturated) Gray module.
The stratified cosimplicial object $\stratSset\to \stratSeg(A)$ constructed in \ref{cons: of the adjunction with simplcii} induces a Quillen adjunction between the (resp. saturated) complicial model structure and the (resp. complete) model structure.
\end{prop}

\begin{proof}
It is a direct consequence of theorem \ref{theo:model structure on complicial set} and propositions \ref{prop:horn_inclusion}, \ref{prop:thinness extension}, and \ref{prop:saturation extension}.
\end{proof}

\begin{theorem}
\label{theo:complicialGray module}
Let $A$ be a complicial (resp. saturated) Gray module. The (resp. saturated) Gray module structure on $\stratSeg(A)^\seg$ (resp. $\stratSeg(A)^\cseg$) given by theorem \ref{theo:Gray structure on seg} is complicial.
\end{theorem}

\begin{proof}
The constructions \ref{cons:joint in gray module} and \ref{cons:star pour segal0} provide two functors $\stratSset\times \stratSeg(A)\to \stratSeg(A)$. Moreover, the proposition \ref{prop:comparisons between star and otimes in segal} implies that they are weakly equivalent. By propositions \ref{prop:horn_inclusion_4} and \ref{prop:Quillen adjunction}, the functor of construction \ref{cons:star pour segal0} fulfills all the conditions of the definition \ref{defi:complicial Gray module}, and so does the one of construction \ref{cons:joint in gray module}.
\end{proof}

\begin{cor}
\label{cor:Gray module on xi is complicial}
Let $n\in \Nb\cup \{\omega\}$. The (resp. saturated) Gray module structure on $\tPsh{\Xi_n}^{\seg}$ (resp. on $\tPsh{\Xi_n}^{\cseg}$) given in corollary \ref{cor:Gray module on Xi} is complicial.
\end{cor}

\begin{proof}
The case $n<\omega$ follows from a direct induction using example \ref{example:Xi is complicial Gray module} for the base case, and \ref{theo:complicialGray module} for the inductive hypothesis.

To demonstrate the case $n=\omega$, note that for any integer $k$ we have by construction a commutative square:
\[\begin{tikzcd}
	{\tPsh{\Delta_{\leq k}}} & {\tPsh{\Xi_{k}}} \\
	{\tPsh{\Delta}} & {\tPsh{\Xi}}
	\arrow[from=1-1, to=1-2]
	\arrow[from=1-2, to=2-2]
	\arrow[from=2-1, to=1-1]
	\arrow[from=2-1, to=2-2]
\end{tikzcd}\]
As the functor $\tPsh{\Xi_{k}}\to \tPsh{\Xi}$ between the segal and complete segal model structures is left Quillen, and as each (resp. saturated) complicial elementary anodyne extension belongs to one of the $\tPsh{\Delta_{\leq k}}$, this then implies that the functor $\tPsh{\Delta}\to \tPsh{\Xi}$ is a left Quillen functor between the (resp. saturated) complicial model structure and the (resp. complete) model structure.
\end{proof}

\section{The homotopical Robert-Street conjecture}

\subsection{Complicial sets as a model of $(\infty,n)$-categories}

\begin{construction}
\label{cons: alpha and beta}
Let $n\in \Nb\cup \{\omega\}$.
By corollary \ref{cor:Gray module on xi is complicial}, $\tPsh{\Xi_n}^{\seg}$ is a complicial Gray module. We denote by
$$\alpha_n:\tPsh{\Delta}\to \tPsh{\Xi_n}$$ the functor constructed in \ref{cons:joint in gray module} which is then a left Quillen adjoint between the (resp. saturated) complicial model structure and the (resp. complete) Segal model structure by corollary \ref{cor:Gray module on xi is complicial}. We will denote simply $\alpha$ the functor $\alpha_{\omega}$.

Conversely, we can define by induction a left adjoint 
$$\beta:\tPsh{t\Xi} \to \stratSset$$
such that $\beta([0]):=[0]$, $\beta ([\textbf{a},n])$ fits in the pushout:
\[\begin{tikzcd}
	{\coprod_{k\leq n}\beta(a) \boxtimes\{k\}} & {\beta(a) \boxtimes\tau_1^i([n])} \\
	{\coprod_{k\leq n}\{k\}} & {\beta([a,n])}
	\arrow[from=1-1, to=1-2]
	\arrow[from=1-1, to=2-1]
	\arrow[from=1-2, to=2-2]
	\arrow[from=2-1, to=2-2]
	\arrow["\lrcorner"{anchor=center, pos=0.125, rotate=180}, draw=none, from=2-2, to=1-1]
\end{tikzcd}\]
and $\beta((\Db_n)_t):=\tau^i_{k-1}(\beta(\Db_n))$. By proposition \ref{prop:model structure on Xiset} and remark \ref{rem:mapping from a free model structure}, the functor $\beta$ is a left Quillen adjoint between the (resp. complete) Segal model structure and the (resp. saturated) complicial model structure.
\end{construction}

\begin{lemma}
\label{lemma:alpha preserves globes}
Let $n\in\Nb\cup \{\omega\}$.
The functor $\Gb_{\leq n}\to \tPsh{\Xi_n}$ sending $k$ to $\alpha_n(\Db_k)$ where $\Db_k$ is the complicial set defined in \ref{defi:definition of simplicial globes} is a globular object in the sense of definition \ref{defi:globular object in a model}.
\end{lemma}

\begin{proof}
For any $X$ in $\tPsh{\Delta}$, and $K$ in $\tPsh{\Xi_n}$, we denote $\tilde{\Sigma}(X)$ and $\tilde{\Sigma}(K)$ the objects fitting in the pushout square
\[\begin{tikzcd}
	{ X} & {[0]\star X} && {\{0\}\otimes K} & {[1]\otimes K} \\
	{[0]} & {\tilde{\Sigma}X} && {[0]} & {\tilde{\Sigma}K}
	\arrow[from=1-1, to=1-2]
	\arrow[from=1-1, to=2-1]
	\arrow[from=1-2, to=2-2]
	\arrow[from=1-4, to=1-5]
	\arrow[from=1-4, to=2-4]
	\arrow[from=1-5, to=2-5]
	\arrow[from=2-1, to=2-2]
	\arrow["\lrcorner"{anchor=center, pos=0.125, rotate=180}, draw=none, from=2-2, to=1-1]
	\arrow[from=2-4, to=2-5]
	\arrow["\lrcorner"{anchor=center, pos=0.125, rotate=180}, draw=none, from=2-5, to=1-4]
\end{tikzcd}\]
This defines two left Quillen functors 
$$\tilde{\Sigma}:\tPsh{\Delta}\to \tPsh{\Delta}_{\partial[1]/}~~~~\tilde{\Sigma}:\tPsh{\Xi}\to \tPsh{\Xi}_{\partial/[1]}$$
and note that by construction, the functor $\alpha$ and $\tilde{\Sigma}$ commute. 

The construction \ref{cons:sigma star} provides a weakly invertible natural transformation $(\Sigma X^{op})^{op}\to \tilde{\Sigma}(X)$. As the functor $\R:\stratSset\to \ocat$ preserves suspension and the op duality, we have $\R(\Db_{n})\cong \R(\tilde{\Sigma}^n[0])$, and the two induced morphisms $\Db_{n}\to \N(\Db_{n})\leftarrow \tilde{\Sigma}^n[0]$ are weak equivalences by a direct induction using theorem \ref{theo:strict susension}.

Conversely, in the category $\tPsh{\Xi}_{\partial/[1]}$, the proposition \ref{prop:explicit Gray} implies that the functor $\tilde{\Sigma}([\uvar,1])$ is the (homotopy) colimit of the diagram of left adjoints:
\[\begin{tikzcd}
	{[\uvar,1]} & {[\uvar,1]} && {[\uvar,1]} & {[\uvar,1]} \\
	{[0]} & {[1]_t\vee[\uvar,1]} & {[[1]\otimes \uvar,1]} & {[\uvar,1]\vee[1]_t} & {[0]}
	\arrow[from=1-1, to=2-1]
	\arrow["{[\uvar,d^0]}", from=1-1, to=2-2]
	\arrow["{[\uvar,d^1]}", from=1-2, to=2-2]
	\arrow["{[d^0,1]}", from=1-2, to=2-3]
	\arrow["{[d^1,1]}"', from=1-4, to=2-3]
	\arrow["{[\uvar,d^1]}"', from=1-4, to=2-4]
	\arrow["{[\uvar,d^2]}"', from=1-5, to=2-4]
	\arrow[from=1-5, to=2-5]
\end{tikzcd}\]
We then have a weakly invertible natural transformation $\tilde{\Sigma}([\uvar,1])\to [\tilde{\Sigma}(\uvar),1]$ as this last functor is the (homotopy) colimit of the diagram of functors
\[\begin{tikzcd}
	{[\uvar,1]} && {[\uvar,1]} \\
	{[0]} & {[[1]\otimes \uvar,1]} & {[0]}
	\arrow[from=1-1, to=2-1]
	\arrow[from=1-1, to=2-2]
	\arrow[from=1-3, to=2-2]
	\arrow[from=1-3, to=2-3]
\end{tikzcd}\]
Combining all the ingredients, we then obtain a natural family of weak equivalences:
$$\alpha(\Db_{k})\to \alpha(\N(\Db_k)) \leftarrow \alpha(\tilde{\Sigma}^k[0])\to \tilde{\Sigma}^k[0]\to \Db_k$$
which concludes the proof.
\end{proof}

\begin{definition}
We recall the adjunction
\[\begin{tikzcd}
	{R:\tPsh{\Delta}} & {\ocat:\N}
	\arrow[""{name=0, anchor=center, inner sep=0}, shift left=2, from=1-1, to=1-2]
	\arrow[""{name=1, anchor=center, inner sep=0}, shift left=2, from=1-2, to=1-1]
	\arrow["\dashv"{anchor=center, rotate=-90}, draw=none, from=0, to=1]
\end{tikzcd}\]
constructed in \ref{cons:Street nerve}. The left adjoint sends every weak equivalences of the complicial model structure to isomorphism. It is then a premodel of non-complete $(\infty,\omega)$-categories. Composing with the adjunction 
$$\begin{tikzcd}
	\ocat & {\ncat{n}}
	\arrow[""{name=0, anchor=center, inner sep=0}, shift left=2, from=1-1, to=1-2]
	\arrow[""{name=1, anchor=center, inner sep=0}, shift left=2, from=1-2, to=1-1]
	\arrow["\dashv"{anchor=center, rotate=-90}, draw=none, from=0, to=1]
\end{tikzcd}~~~~~~~\mbox{and}~~~~~~~
\begin{tikzcd}
	\ocat & {\ncat{n}^\comp}
	\arrow[""{name=0, anchor=center, inner sep=0}, shift left=2, from=1-1, to=1-2]
	\arrow[""{name=1, anchor=center, inner sep=0}, shift left=2, from=1-2, to=1-1]
	\arrow["\dashv"{anchor=center, rotate=-90}, draw=none, from=0, to=1]
\end{tikzcd},$$
this endows $\tPsh{\Delta}^n_\sat$ (resp. $\tPsh{\Delta}^n$) with the structure of a premodel of (resp. complete) $(\infty,n)$-categories.
\end{definition}
\begin{lemma}
\label{lemma:intermediary}
There is a natural isomorphism $R\beta \to \pi_0^\Xi$, where $\pi_0^\Xi$ is defined in \ref{defi of pi0 for xi}.
\end{lemma}

\begin{proof}
We can reduce to the case $n=\omega$. By construction, $R\beta$ sends all weak equivalences of $\tPsh{\Xi}$ to isomorphisms. It then factors through the localization of $\tPsh{\Xi}$ at the set of maps $\{\Db_{n+1}\to \Db_n,~ n>0\}$, which is nothing more than $\Psh{\Xi}$. We denote $f':\Psh{\Xi}\to \ocat$ the induced cocontinuous functor. As the class of morphisms sent by $f'$ to isomorphisms forms a precocomplete class and contains $\M$, we can deduce from theorem \ref{theo:unit and counit are in Sigma} that this localization is isomorphic to $\ocat$. The functor $f'$ then factors as:
$$\Psh{\Xi}\to  \ocat\xrightarrow{F} \ocat$$
where $F$ is a left adjoint. According to theorem \ref{theo:appendince unicity of operation}, it is sufficient to show that $F$ preserves globes to conclude. This follows from the construction of $\beta$ and proposition \ref{prop:suspensino and gray}.
\end{proof}

\begin{prop}
\label{prop: alpha and beta are adjoint equivalence}
The functor $$\alpha:\tPsh{\Delta}\to \tPsh{\Xi}~~~~~\mbox{and}~~~~~\beta:\tPsh{\Xi}\to \tPsh{\Delta}$$
are left Quillen equivalences between the (resp. saturated) complicial model structure and the (resp. complete) model structure.
\end{prop}

\begin{proof}
The lemma \ref{lemma:intermediary} implies that we have  $\R\beta\cong \pi_0$. Remark now that $\beta$ preserves globes by construction. Combined with proposition \ref{lemma:alpha preserves globes}, this implies that $\beta\alpha$ preserves globes up to a zigzag of weak equivalence, and so is a left Quillen equivalence by corollary \ref{cor:criterion_to_be_linked_to_identity_case stratified}. The proposition \ref{prop:existence_of_comparaison_with_street} then implies that we have $R\beta\alpha\cong R$, and so $\pi_0\alpha\cong \R\beta\alpha\cong R$.

Now, the proposition \ref{prop:Shommer preis} together with the fact that $\beta$ and $\alpha$ preserve globes up to a zigzag of weak equivalence implies that $\alpha\beta$ is a left Quillen equivalence. 
\end{proof}

\begin{lemma}
The model category $\tPsh{\Delta}^n$ (resp. $\tPsh{\Delta}^n$) is the left Bousfield localization of $\tPsh{\Delta}^n_\sat$ (resp. $\tPsh{\Delta}^n$) by the set of morphisms $\{\Db_{k+1}\to \Db_k,~ k\geq n\}$ where $\Db_n$ is the complicial set defined in \ref{defi:definition of simplicial globes}.
\end{lemma}

\begin{proof}
The statement for the saturated case follows from the non-saturated one. We then focus on the second case. Suppose we are given a left Quillen functor $F:\tPsh{\Delta}^n\to D$. Suppose first that $F$ sends $[k+1]\to [k+1]_t$ to a weak equivalence for any $k\geq n$. It then implies that for any stratified simplicial set, the morphism $A\to \tau_{k}^iA$ is sent to an equivalence, and so in particular, so is $\Db_{k+1}\to (\Db_{k+1})_t$, for any $k\leq n$. 

Conversely, suppose that $F$ sends $\Db_{k+1}\to \Db_k$ to a weak equivalence for any $k\geq n$. Let $k\geq n$ and let $C$ be the set of complicial sets $A$ such that $F$ sends $A\to \tau_k^i(A)$ to a weak equivalence. By the same proof as that of lemma \ref{lemma:G_fibration_right lifting property_against_sat}, we can show that $[k]$ belongs to $C$. The functor $F$ then sends $[k+1]\to [k+1]_t$ to a weak equivalence for any $k\geq n$.
\end{proof}

\begin{theorem}
\label{theo:theorem model 1}
Let $n\in \Nb\cup \{\omega\}$. The $n$-complicial model structure $\tPsh{\Delta}^{n}$ is a model of non-complete $(\infty,n)$-categories. The saturated $n$-complicial model structure $\tPsh{\Delta}^{n}_\sat$ is a model of $(\infty,n)$-categories.
\end{theorem}

\begin{proof}
The proposition \ref{prop: alpha and beta are adjoint equivalence} implies that $\tPsh{\Delta}$ (resp. $\tPsh{\Delta}_\sat$) is a model of non-complete $(\infty,n)$-categories (resp. of $(\infty,n)$-categories). As the proposition \ref{lemma:alpha preserves globes} implies that the functors $\alpha_n$ preserve globes, the result follows from proposition \ref{prop:Shommer preis}.
\end{proof}

\begin{cor}
\label{cor:fundamental adj}
Let $n\in \Nb$.
The adjunction between $\Psh{\Theta_n\times \Delta}^{\cseg}$ and $\stratSset^n_\sat$ constructed in \cite{Ozornova_a_quillen_adjunction_between_globular_and_complicial} is a Quillen equivalence.
\end{cor}

\begin{proof}
A direct induction using \cite[theorem 3.22]{Ozornova_a_quillen_adjunction_between_globular_and_complicial} implies that the left adjoint preserves globes. The results then follow from the fact that these two categories are models of $(\infty,n)$-categories and from proposition \ref{prop:Shommer preis}.
\end{proof}

\subsection{Complicial spaces as a model of $(\infty,\omega)$-categories V2}

\begin{prop}[Ozornova-Rovelli]
\label{prop:complicial model structure}
There exists a nice model structure on $\Psh{t\Delta\times \Delta}$, called the \emph{complicial model structure}, and denoted $\Psh{t\Delta\times \Delta}^k$. Its weak equivalences are the smallest precocomplete class of morphisms that contain
\begin{enumerate}
\item For any integers $n,m$, the morphisms $[n]\boxdot [m]\to [n]\boxdot[0]$ and $[n]_t\boxdot [m]\to [n]_t\boxdot[0]$,
\item For any integer $n$, the morphism $[n]_t\coprod_{[n]}[n]_t\boxdot [0]\to [n]_t\boxdot [0]$,
\item For any morphism anodyne extension $K\to L$ given in definition \ref{defi:anodyne extension complicial}, the morphism $K\boxdot [0]\to L\boxdot [0]$.
\end{enumerate}

The model category $\Psh{t\Delta\times \Delta}$ admits a left Bousfield localization along the morphism saturation extension. It is denoted $\Psh{t\Delta\times \Delta}^k_\sat$ and called the \emph{saturated complicial space model structure}.
\end{prop}

\begin{theorem}[Ozornova-Rovelli]
\label{theo:complicial model structure}
Let $n\in \Nb\cup \{\infty\}$.
The adjunction
\[\begin{tikzcd}
	{\pi:\Psh{t\Delta\times\Delta}} & {\tPsh{\Delta}}
	\arrow[""{name=0, anchor=center, inner sep=0}, shift left=2, from=1-1, to=1-2]
	\arrow[""{name=1, anchor=center, inner sep=0}, shift left=2, from=1-2, to=1-1]
	\arrow["\dashv"{anchor=center, rotate=-90}, draw=none, from=0, to=1]
\end{tikzcd}\]
where $\pi$ sends $[k]\boxdot [m]$ onto $[k]\times [m]^\sharp$ and $[k]_t\boxdot [m]$ onto $[k]_t\times [m]^\sharp$, induces a Quillen equivalence between the (resp. saturated) $n$-complicial space model structure and the (resp. saturated) $k$-complicial model structure.
\end{theorem}

\begin{proof}
This is \cite[Theorem 2.14]{Ozornova_model_structure_for_infini_n_categories}
\end{proof}

\begin{lemma}
\label{lemma:alpha and times}
There is a canonical natural transformation
$$\alpha(K \times [n]^\sharp) \to \alpha_n(K) \times [n]^\sharp$$
that is pointwise a weak equivalence of the Segal model structure on $\tPsh{\Xi}$.
\end{lemma}

\begin{proof}
Let's first deal with the case where $K$ is the terminal stratified simplicial set. Remark that the two functors $\uvar\otimes [0]:\tPsh{\Delta}\to \tPsh{\Xi}$ and  $\iota:\tPsh{\Delta}\to \tPsh{\Xi}$ are equivalent, where the first one is induced by the Gray module structure on $\tPsh{\Xi}$ given in corollary \ref{cor:Gray module on Xi} and the second is induced by the canonical functor $\tPsh{\Delta}\to \tPsh{\Xi_1}\hookrightarrow \tPsh{\Xi}$. As a consequence, we obtain a natural transformation between $\uvar\otimes \uvar:\tPsh{\Delta}\times  \tPsh{\Xi}\to \tPsh{\Xi}$ and $\iota(\uvar)\times \uvar:\tPsh{\Delta}\times  \tPsh{\Xi}\to \tPsh{\Xi_n}$. Remark now that this second functor induces another Gray module structure on $ \tPsh{\Xi}$, and the associated functor $\tPsh{t\Delta}\to \tPsh{\Xi_n}$ from construction \ref{cons:joint in gray module} sends $[n]^\sharp$ to $[n]^\sharp$. As a consequence, we obtain a natural transformation $\alpha([n]^\sharp)\to [n]^\sharp$. This induces a comparison
$$\alpha(K\times [n]^\sharp)\to \alpha(K)\times \alpha([n]^\sharp)\to \alpha(K)\times [n]^\sharp.$$
 To show that it is a weak equivalence, remark that we have a commutative square
\[\begin{tikzcd}
	{\alpha(K\times[n]^\sharp)} & {\alpha(K)\times [n]^\sharp} \\
	{\alpha(K\times [0])} & {\alpha(K)\times[0]}
	\arrow[from=1-1, to=1-2]
	\arrow[from=1-1, to=2-1]
	\arrow[from=1-2, to=2-2]
	\arrow[equals, from=2-1, to=2-2]
\end{tikzcd}\]
where vertical morphisms are weak equivalences. By two out of three, this implies the results.
\end{proof}

\begin{construction}
\label{cons: of realization infini}
We consider the left adjoint:
$$\R_\infty:\Psh{t\Delta\times \Delta}\to \tPsh{\Theta\times \Delta}$$
sending $[n]\boxdot[m]$ to $\iota(\O_n)\boxdot [m]$ and $[n]_t\boxdot[m]$ to the pushout:
\[\begin{tikzcd}
	\begin{array}{c} \\Db_n\boxdot([m]\times \{0\}) \end{array} && {\Db_n\boxdot([m]\times \{1\})} \\
	{\iota(O_n)\boxdot [m]} & {\Db_n\boxdot([m]\times [1])} & {\Db_{n-1}\boxdot([m]\times \{1\})}
	\arrow[from=1-1, to=2-1]
	\arrow[from=1-1, to=2-2]
	\arrow[from=1-3, to=2-2]
	\arrow[from=1-3, to=2-3]
\end{tikzcd}\]
where the morphism $\Db_n\to \iota(\O_n)$ corresponds to the unique non identity $n$-cell of $\O_n$.
\end{construction}

\begin{remark}
\label{rem:explanation of realization infini}
The value of the previous functor $[n]_t\boxdot[m]$ could deserve some explanation. We can in fact see that $\R_\infty([n]_t\boxdot[m])$ is nothing more than a homotopy pushout of the cospan
\[\begin{tikzcd}
	{\iota(\O_n)\boxdot [m]} & {\Db_{n}\boxdot [m]} & {\Db_{n-1}\boxdot [m]}
	\arrow[from=1-2, to=1-1]
	\arrow[from=1-2, to=1-3]
\end{tikzcd}\]
As the left leg is a monomorphism, we then have a weak equivalence
$$\R_\infty([n]_t\boxdot[m])\to(\iota(\O_n)\coprod_{\Db_{n-1}} \Db_{n})\boxdot [m]$$
\end{remark}

\begin{remark}
\label{rem:equivalence of model and iota}
Suppose we are given two models of non-complete $(\infty,\omega)$-categories $\pi_0:M\to \ocat$ and $\pi_0':N\to \ocat$. Let $L:M\to N$ be a left Quillen equivalence over $\ocat$. We denote $R:\N\to M$, $\pi_0^*:\ocat\to M$, and $(\pi_0')^*:\ocat\to N$ as the associated right adjoints. The equivalence $\pi_0^*\to R(\pi_0')^*$ induces a natural transformation $L\pi_0^*\to (\pi_0)^*$ that is pointwise weakly invertible as $L$ is a left Quillen equivalence.
\end{remark}

\begin{notation}
We will denote $\iota:\ocat\to \Psh{\Theta}$ the canonical inclusion.
\end{notation}

\begin{lemma}
\label{lemma: all th functor}
We have a diagram
\[\begin{tikzcd}
	{\Psh{t\Delta\times \Delta}} & {\tPsh{\Delta}} \\
	{\Psh{\Theta\times \Delta}} & {\tPsh{\Xi_k}^\seg}
	\arrow["\pi", from=1-1, to=1-2]
	\arrow["{\R_\infty}"', from=1-1, to=2-1]
	\arrow["\alpha", from=1-2, to=2-2]
	\arrow["p"', from=2-1, to=2-2]
\end{tikzcd}\]
that commutes up to a zigzag of weakly invertible transformations between left Quillen adjoints, where $\alpha$ and $p$ are respectively defined in constructions \ref{cons: alpha and beta} and \ref{cons:between xi et theta delta}.
\end{lemma}

\begin{proof}
We recall that the functor $\iota^\Xi:\ocat\to \tPsh{\Xi}$ is defined in \ref{defi of pi0 for xi}. We will now define several left adjoint:
$$F_0,F_1,F_2,F_3:\Psh{t\Delta\times \Delta}\to \tPsh{\Xi}^\seg$$
Their values on $[n]\boxdot [m]$ are given by 
$$
\begin{array}{rclrrcl}
F_0([n]\boxdot [m])&:=&p(\iota(\O_n))\times [m]^\sharp &&F_1([n]\boxdot [m])&:=&\iota^\Xi(\O_n)\times [m]^\sharp\\
F_2([n]\boxdot [m])&:=&\alpha(\N(\O_n))\times [m]^\sharp &&F_3([n]\boxdot [m])&:=&\alpha([n])\times [m]^\sharp
\end{array}
$$
and for any $i\leq 3$, 
$$F_i([n]_t\boxdot[m]):=\tau_{n-1}^iF_i([n]_t\boxdot [m])$$
Remark now that we have a zigzag natural transformation
$$ F_0\to F_1\leftarrow F_2\leftarrow F_3\leftarrow \alpha\pi$$
induced by remark \ref{rem:equivalence of model and iota} except the first one that comes from the lemma \ref{lemma:alpha and times}. They are by construction weak equivalences once evaluated on $[n]\boxdot [m]$, but also when evaluated on $[n]_t\boxdot [m]$ as the functor $\tau_i^{n-1}$ is a left Quillen functor. Remark now that we have a canonical natural transformation $p\R_\infty\to F_0$ that is the identity when evaluated on $[n]\boxdot[m]$. It then remains to show that it is an equivalence when evaluated on $[n]_t\boxdot[m]$. By the construction of the model structure on $\tPsh{\Xi_k}$, we can reduce to the case where $m=0$. In this case, remark that there exists a unique morphism $\Db_n\to p(\iota(\O_n)\boxdot[0])$ that does not factor through $\Db_{n-1}$. We then have $$\tau_{n-1}^i(p(\iota(\O_n)\boxdot[0])):=(\Db_n)_t\coprod_{\Db_n}p(\iota(\O_n)\boxdot[0])$$
which implies by remark \ref{rem:explanation of realization infini} that $p\R_\infty\to F_0$ is also an equivalence when computed on $[n]_t\boxdot [m]$.

Eventually, as the functors $F_i$ for $i\leq 3$ preserve monomorphisms, and as $\alpha \pi$ is a left Quillen adjoint, they all are left Quillen adjoints.
\end{proof}

\vspace{1cm}
The next theorem is inspired by \cite{gepner2026oriented}. It corresponds to the "model-categorical translation" of \cite[theorem 3.4.10]{gepner2026oriented}. Note that the proof of the later theorem relies on a conjecture (\cite[conjecture 3.4.2]{gepner2026oriented}) and that translating the theorem \ref{theorem:homotopical robert street} into the $(\infty,1)$-categorical setting provides a proof of both \cite[theorem 3.4.10]{gepner2026oriented} and conjecture \cite[conjecture 3.4.2]{gepner2026oriented}.

\begin{theorem}
\label{theorem:homotopical robert street}
Let $n\in \Nb\cup \{\omega\}$. The adjunction
\[\begin{tikzcd}
	{\R_\infty:\Psh{t\Delta\times \Delta}} & {\Psh{\Theta\times \Delta}:\N_\infty}
	\arrow[""{name=0, anchor=center, inner sep=0}, shift left=2, from=1-1, to=1-2]
	\arrow[""{name=1, anchor=center, inner sep=0}, shift left=2, from=1-2, to=1-1]
	\arrow["\dashv"{anchor=center, rotate=-90}, draw=none, from=0, to=1]
\end{tikzcd}\]
induces a Quillen equivalence between the (resp. saturated) $n$-complicial space model structure and the (resp. complete) Segal model structure. In particular, (resp. saturated) $n$-complicial spaces form a model of non-complete $(\infty,n)$-categories (resp. a model of $(\infty,n)$-categories).
\end{theorem}

\begin{proof}
This follows directly from theorem \ref{theo:complicial model structure} and from lemma \ref{lemma: all th functor}.
\end{proof}

\begin{remark}
Using the theory of $(\infty,1)$-categories, we can interpret the previous result as follows. Let $\iocat^\nc$ be the $\iun$-category of non-complete $(\infty,\omega)$-categories, i.e., the localization of the $\infty$-presheaves category $\iPsh{\Theta}$ along $\W$. Let $\iota:\ocat\to \iocat^\nc$ be the canonical inclusion from strict $\omega$-categories to non-complete $(\infty,\omega)$-categories. 
We have a functor $t\Delta\to  \iocat^{\nc}$ sending $[n]$ onto $\N(\iota[n])$ and $[n]_t$ onto $\tau_{n-1}^i(\N(\iota[n]))$, where $\tau_{n-1}^i$ is the functor that localizes cells of dimension greater than or equal to $n$. This induces an adjunction
\[\begin{tikzcd}
	{\iPsh{t\Delta}} & {\ocat^{\nc}}
	\arrow[""{name=0, anchor=center, inner sep=0}, shift left=2, from=1-1, to=1-2]
	\arrow[""{name=1, anchor=center, inner sep=0}, shift left=2, from=1-2, to=1-1]
	\arrow["\dashv"{anchor=center, rotate=-90}, draw=none, from=0, to=1]
\end{tikzcd}\]
As $\N(\iota[n])\to \tau_{n-1}^i(\N(\iota[n]))$ is an epimorphism, this adjunction specializes into an adjunction
\[\begin{tikzcd}
	{\R_\infty:\mathrm{tPsh}^{\infty}(\Delta)} & {\ocat^{\nc}:\N_\infty}
	\arrow[""{name=0, anchor=center, inner sep=0}, shift left=2, from=1-1, to=1-2]
	\arrow[""{name=1, anchor=center, inner sep=0}, shift left=2, from=1-2, to=1-1]
	\arrow["\dashv"{anchor=center, rotate=-90}, draw=none, from=0, to=1]
\end{tikzcd}\]
where $\mathrm{tPsh}^{\infty}(\Delta)$ is the sub $(\infty,1)$-category of $\iPsh{t\Delta}$ whose objects correspond to $\infty$-presheaves $X$ such that $X([n]_t)\to X([n])$ is a monomorphism. This later adjunction can be seen as a $(\infty,1)$-categorical analogue of the adjunction given in construction \ref{cons:Street nerve}. The theorem \ref{theorem:homotopical robert street} then implies that this adjunction induces an equivalence between $\iocat^{\nc}$ (resp. $\iocat$) and the localization of $\iPsh{t\Delta}$ at the set of (resp. saturated) generating complicial extensions. 
Therefore, theorem \ref{theorem:homotopical robert street} can be seen as an $\infty$-categorical generalization of the Robert-Street conjecture.
\end{remark}


\cleardoublepage
\phantomsection
\addcontentsline{toc}{part}{Index of symbols} 
\printindex[notation]
\clearpage
\phantomsection
\addcontentsline{toc}{part}{Index of notions} 
\printindex[notion]

\cleardoublepage
\phantomsection
\addcontentsline{toc}{part}{Bibliography} 
\bibliography{biblio}{}
\bibliographystyle{alpha}

\end{document}